\theoremstyle{plain}
\newtheorem{thm}{Theorem}[section]
\newtheorem*{thm*}{Theorem}
\newtheorem*{cor*}{Corollary}
\newtheorem{prop}[thm]{Proposition}
\newtheorem{lem}[thm]{Lemma}
\newtheorem{cor}[thm]{Corollary}
\newtheorem{claim}{Claim}
\newtheorem*{claim*}{Claim}
\theoremstyle{definition}
\newtheorem{defn}[thm]{Definition}
\newtheorem{ex}[thm]{Example}
\newtheorem{rem}[thm]{Remark}
\newtheorem{ques}[thm]{Question}
\newtheorem{setting}[thm]{Setting}
\newtheorem{notation}[thm]{Notation}
\theoremstyle{remark}
\numberwithin{equation}{thm}
\def\soc{\operatorname{Soc}}
\def\xx{\text{{\boldmath$x$}}}
\def\Z{\mathbb{Z}}
\def\D{\mathcal{D}}
\def\Gdim{\operatorname{Gdim}}
\def\pd{\operatorname{pd}}
\def\Ext{\operatorname{Ext}}
\def\Ker{\operatorname{Ker}}
\def\KK{\mathbb{K}}
\def\LL{\mathbb{U}}
\def\E{\operatorname{E}}
\def\Hom{\operatorname{Hom}}
\def\RHom{\mathrm{{\bf R}Hom}}
\def\Tor{\operatorname{Tor}}
\def\a{\mathrm a}
\def\e{\mathrm{e}}
\def\m{\mathfrak m}
\def\n{\mathfrak n}
\def\p{\mathfrak p}
\def\q{\mathfrak q}
\def\K{\mathrm{K}}
\def\H{\mathrm{H}}
\newcommand{\rma}{\mathrm{a}}
\newcommand{\rme}{\mathrm{e}}
\newcommand{\rmi}{\mathrm{i}}
\newcommand{\rmr}{\mathrm{r}}
\newcommand{\rmv}{\mathrm{v}}
\newcommand{\rmA}{\mathrm{A}}
\newcommand{\rmD}{\mathrm{D}}
\newcommand{\rmE}{\mathrm{E}}
\newcommand{\rmH}{\mathrm{H}}
\newcommand{\rmI}{\mathrm{I}}
\newcommand{\rmK}{\mathrm{K}}
\newcommand{\rmQ}{\mathrm{Q}}
\newcommand{\rmU}{\mathrm{U}}
\newcommand{\calD}{\mathcal{D}}
\newcommand{\calR}{\mathcal{R}}
\newcommand{\calX}{\mathcal{X}}
\newcommand{\fka}{\mathfrak{a}}
\newcommand{\fkm}{\mathfrak{m}}
\newcommand{\fkn}{\mathfrak{n}}
\newcommand{\fkp}{\mathfrak{p}}
\newcommand{\fkq}{\mathfrak{q}}
\newcommand{\fkM}{\mathfrak{M}}
\newcommand{\fkN}{\mathfrak{N}}
\newcommand{\mapright}[1]{%
\smash{\mathop{%
\hbox to 1cm{\rightarrowfill}}\limits^{#1}}}
\newcommand{\mapleft}[1]{%
\smash{\mathop{%
\hbox to 1cm{\leftarrowfill}}\limits_{#1}}}
\def\depth{\operatorname{depth}}
\def\Supp{\operatorname{Supp}}
\def\Ass{\operatorname{Ass}}
\def\height{\mathrm{ht}}
\def\Spec{\operatorname{Spec}}
\def\gr{\mbox{\rm gr}}
\def\R{{\mathcal R}}
\def\Y{{\mathcal Y}}
\def\R{{R}}
\def\yy{\text{\boldmath $y$}}
\def\PP{\mathbb{P}}
\def\II{\mathbb{I}}
\begin{document}
\title[Almost Gorenstein rings -- towards a theory of higher dimension --]{Almost Gorenstein rings\\
-- towards a theory of higher dimension --}
\author{Shiro Goto}
\address{Department of Mathematics, School of Science and Technology, Meiji University, 1-1-1 Higashi-mita, Tama-ku, Kawasaki 214-8571, Japan}
\email{goto@math.meiji.ac.jp}
\author{Ryo Takahashi}
\address{Graduate School of Mathematics, Nagoya University, Furocho, Chikusa-ku, Nagoya 464-8602, Japan}
\email{takahashi@math.nagoya-u.ac.jp}
\author{Naoki Taniguchi}
\address{Department of Mathematics, School of Science and Technology, Meiji University, 1-1-1 Higashi-mita, Tama-ku, Kawasaki 214-8571, Japan}
\email{taniguti@math.meiji.ac.jp}
\thanks{2010 {\em Mathematics Subject Classification.} 13H10, 13H15, 13A30}
\thanks{{\em Key words and phrases.} almost Gorenstein local ring, almost Gorenstein graded ring, Cohen--Macaulay ring, canonical module, parameter ideal, Ulrich module, multiplicity, $a$-invariant}
\thanks{SG was partly supported by JSPS Grant-in-Aid for Scientific Research 22540054; RT was partly supported by JSPS Grant-in-Aid for Young Scientists 22740008 and by JSPS Postdoctoral Fellowships for Research Abroad.}

\begin{abstract} The notion of almost Gorenstein local ring introduced by V. Barucci and R. Fr\"oberg for one-dimensional Noetherian local rings which are analytically unramified has been generalized by S. Goto, N. Matsuoka and T. T. Phuong to one-dimensional Cohen-Macaulay local rings, possessing canonical ideals. The present purpose is to propose a higher-dimensional notion and develop the basic theory. The graded version is also posed and explored.
\end{abstract}

\maketitle
{\footnotesize \tableofcontents}
\section{Introduction}\label{intro}
For the last fifty years, commutative algebra has been concentrated mainly in the study of Cohen-Macaulay rings/modules and has performed huge achievements (\cite{BH}). While tracking the development, the authors often encounter non-Gorenstein Cohen-Macaulay rings in divers branches of (and related to) commutative algebra.  On all such occasions, they have a query why there are so many Cohen-Macaulay rings which are not Gorenstein rings. Gorenstein rings are, of course,  defined by local finiteness of self-injective dimension (\cite{Bass}), enjoying beautiful symmetry. However as a view from the very spot, there is a substantial estrangement between two conditions of finiteness and infiniteness of self-injective dimension, and researches for the fifty years also show that Gorenstein rings turn over some part of their roles to canonical modules (\cite{HK}). It seems, nevertheless, still reasonable to ask for a new class of non-Gorenstein Cohen-Macaulay rings that could be called \textit{almost Gorenstein} and  are good next to Gorenstein rings. This observation has already motivated the research \cite{GMP} of one-dimensional case. The second step should be  to detect the notion of almost Gorenstein local/graded ring of higher dimension and develop the theory.

Almost Gorenstein local rings of dimension one were originally introduced in 1997 by Barucci and Fr\"oberg \cite{BF} in the case where the local rings are analytically unramified. As was mentioned by \cite{B} as for the proof of \cite[Proposition 25]{BF}, their framework was not sufficiently flexible  for the analysis of one-dimensional analytically ramified local rings. This observation has inspired  Goto, Matsuoka, and Phuong \cite{GMP}, where they posed a modified definition of one-dimensional almost Gorenstein local rings, which works well also in the case where the rings are analytically ramified. The present research aims to go beyond \cite{GMP} towards a theory of higher dimensional cases, asking for possible extensions of results known by \cite{B, BF, BDF, GMP}.

To explain our aim and motivation more precisely, let us start on our definition.

\begin{defn}\label{1.1}
Let $R$ be a Noetherian local ring with maximal ideal $\fkm$. Then $R$ is said to be an almost Gorenstein local ring, if the following conditions are satisfied.
\begin{enumerate}[(1)]
\item
$R$ is a Cohen-Macaulay local ring, which possesses the canonical module $\K_R$ and 
\item
there exists an exact sequence
$$0 \to R \to \mathrm{K}_R \to C \to 0$$
of $R$-modules such that $\mu_R(C) = \e_\fkm^0(C)$.
\end{enumerate}
Here $\mu_R(C)$ $($resp. $\e_\fkm^0(C)$$)$ denotes the number of elements in a minimal system of generators for $C$ $($resp. the multiplicity of $C$ with respect to $\fkm$$)$.
\end{defn}

With this definition every Gorenstein local ring is almost Gorenstein (take $C = (0)$) and the converse is also true, if $\dim R = 0$. In the exact sequence quoted in Definition \ref{1.1} (2), if $C \ne (0)$, then $C$ is a Cohen-Macaulay $R$-module with $\dim_RC = \dim R - 1$ and one has the equality $\fkm C = (f_2, f_3, \ldots, f_d)C$ for some elements $f_2, f_3, \ldots, f_d \in \fkm$ ($d = \dim R$), provided the residue class field $R/\fkm$ of $R$ is infinite. Hence $C$ is a \textit{maximally generated Cohen-Macaulay} $R$-module in the sense of \cite{BHU}, which is called in the present paper an \textit{Ulrich} $R$-module. Therefore, roughly speaking, our Definition \ref{1.1} requires  that if $R$ is an almost Gorenstein local ring, then $R$ might be a non-Gorenstein local ring but the ring $R$ can be embedded into its canonical module $\K_R$ so that the difference $\K_R/R$ should be tame and well-behaved.

In the case where $\dim R = 1$, if $R$ is an almost Gorenstein local ring, then $\fkm C = (0)$ and $R$ is an almost Gorenstein local ring exactly in the sense of \cite[Definition 3.1]{GMP}. The converse is also true, if $R/\fkm$ is infinite. (When the field $R/\fkm$ is too small, the converse is not true in general; see Remark \ref{3.4'} and \cite[Remark 2.10]{GMP}.) With Definition \ref{1.1}, as  we will later show, many results of \cite{GMP} of dimension one are extendable over higher-dimensional local rings, which supports the appropriateness of our definition.

Let us now state our results, explaining how this paper is organized. In Section \ref{ulmd} we give a brief survey on Ulrich modules, which we will need throughout this paper. In Section \ref{defag} we explore basic properties of almost Gorenstein local rings, including the so-called non-zerodivisor characterization. 
In Section \ref{chex}, we will give a characterization of almost Gorenstein local rings in terms of the existence of certain exact sequences of $R$-modules.
Let $M$ be an $R$-module.
For a sequence $\xx=x_1, x_2, \ldots, x_n$ of elements in $R$, the Koszul complex of $M$ associated to $\xx$ is denoted by $\KK_\bullet(\xx,M)$.
For each $z\in M$, we define a complex
$$
\LL(z,M)=(\cdots \to \underset{\begin{smallmatrix}\phantom{o}\\2\end{smallmatrix}}{0}\to\underset{\begin{smallmatrix}\phantom{o}\\1\end{smallmatrix}}{R}\xrightarrow{\varphi}\underset{\begin{smallmatrix}\phantom{o}\\0\end{smallmatrix}}{M}\to\underset{\begin{smallmatrix}\phantom{o}\\-1\end{smallmatrix}}{0} \to \cdots),
$$
where the map $\varphi$ is given by $a\mapsto az$. Let us say that an $R$-complex $C=(\cdots\to C_2\to C_1\to C_0\to0)$ is called an acyclic complex over $M$, if $\H_0(C)\cong M$ and $\H_i(C)=(0)$ for all $i>0$. With this notation the main result of Section \ref{chex} is stated as follows. 

\begin{thm}
Let $(R,\m)$ be a Cohen-Macaulay local ring with $\dim R = d \ge 1$ and the Cohen-Macaulay type $r$.
Suppose that $R$ admits the canonical module $\K_R$ and that the residue class field $R/\m$ of $R$ is infinite.
Then the following conditions are equivalent.
\begin{enumerate}[\rm(1)]
\item
$R$ is an almost Gorenstein local ring.
\item
There exist an $R$-sequence $\xx=x_1, x_2, \ldots,x_{d-1}$ and an element $y\in\K_R$ such that
$
\KK_\bullet(\xx,R)\otimes_R\LL(y,\K_R)
$
is an acyclic complex over $k^{r-1}$.
\item
There exist an $R$-sequence $\xx$ $($not necessarily of length $d-1$ $)$ and an element $y\in\K_R$ such that
$
\KK_\bullet(\xx,R)\otimes_R\LL(y,\K_R)
$
is an acyclic complex over an $R$-module annihilated by $\m$.
\end{enumerate}
\end{thm}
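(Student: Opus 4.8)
The plan is to translate the almost Gorenstein condition, which lives in the category of modules, into the acyclicity statement about the tensor product of a Koszul complex with the length-one complex $\LL(y,\K_R)$. The key observation is that for an element $y \in \K_R$ giving a map $\varphi \colon R \to \K_R$, the complex $\LL(y,\K_R)$ has $\H_0 = \Coker\varphi$ and $\H_1 = \Ker\varphi$; tensoring with the Koszul complex $\KK_\bullet(\xx,R)$ over a regular sequence $\xx$ detects whether $\xx$ is a regular sequence on the relevant modules via the rigidity of the Koszul homology. So the strategy is: in (1)$\Rightarrow$(2), start from the defining exact sequence $0 \to R \xrightarrow{\iota} \K_R \to C \to 0$ with $C$ Ulrich (using that $R/\m$ is infinite, as in the discussion following Definition \ref{1.1}), pick the element $y \in \K_R$ as the image of $1$, pick $\xx = x_1,\dots,x_{d-1}$ a minimal reduction-type regular sequence so that $\m C = \xx C$, and then compute $\KK_\bullet(\xx,R)\otimes_R\LL(y,\K_R)$. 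Since $\iota$ is injective, $\LL(y,\K_R)$ is quasi-isomorphic to $C$ concentrated in degree $0$; since $\xx$ is $R$-regular and also $C$-regular (as $C$ is Cohen--Macaulay of dimension $d-1$), tensoring with $\KK_\bullet(\xx,R)$ kills higher homology and leaves $C/\xx C$ in degree $0$. Finally $C/\xx C = C/\m C \cong k^{\mu_R(C)} = k^{r-1}$, the last equality because $\mu_R(C) = r - 1$ (the type of $R$ equals $\mu_R(\K_R)$, and the surjection $\K_R \to C$ together with the exact sequence drops the minimal number of generators by exactly one; this is part of the basic theory developed in the earlier sections).

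For (2)$\Rightarrow$(3) there is nothing to prove: $k^{r-1}$ is annihilated by $\m$, and an $R$-sequence of length $d-1$ is in particular an $R$-sequence. The substantive direction is (3)$\Rightarrow$(1). Here I would run the previous argument in reverse. Given $\xx = x_1,\dots,x_n$ an $R$-sequence and $y \in \K_R$ with $\KK_\bullet(\xx,R)\otimes_R\LL(y,\K_R)$ acyclic over some module $W$ with $\m W = 0$, first extract information degree by degree. Acyclicity in the top degree $n+1$ forces $\Ker\varphi = 0$: the top homology of the tensor complex is a submodule-type piece built from $\H_1(\LL(y,\K_R)) = \Ker\varphi$ and the fact that $\xx$ is $R$-regular, so it vanishes only if $\Ker\varphi = 0$. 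Hence $\varphi \colon R \to \K_R$ is injective and we obtain the exact sequence $0 \to R \to \K_R \to C \to 0$ with $C = \Coker\varphi$. Then acyclicity in degrees $1$ through $n$ says exactly that $\xx$ is a $C$-regular sequence, and $\H_0$ of the tensor complex is $C/\xx C \cong W$. Since $\m W = 0$, we get $\m C \subseteq \xx C$, hence $\m C = \xx C$ (the reverse inclusion being automatic as $\xx \in \m$), so $C$ is Ulrich: $C$ is Cohen--Macaulay of dimension $d-1$ (because an $R$-regular sequence of length $n$ killing $C$ down to something of finite length forces $n = d-1$ and $C$ Cohen--Macaulay) and $\mu_R(C) = \e_\m^0(C)$ by the standard multiplicity computation for modules with $\m C = \xx C$. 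This is precisely Definition \ref{1.1}(2), and (1) of the earlier Section already records that $R$ possesses $\K_R$ by hypothesis, so $R$ is almost Gorenstein.

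The main obstacle I anticipate is the bookkeeping in the homological computation of $\KK_\bullet(\xx,R)\otimes_R\LL(y,\K_R)$ — precisely identifying which homology group in which total degree corresponds to $\Ker\varphi$, to the Koszul homology $\H_i(\xx; C)$, and to $C/\xx C$, and checking that the spectral sequence (or the iterated mapping-cone / double-complex) of this tensor product degenerates in the needed range. The clean way to organize this is to view $\LL(y,\K_R)$ as quasi-isomorphic (when $\varphi$ is injective) to $C[0]$ and otherwise to keep track of the two-term complex directly; then $\KK_\bullet(\xx,R)\otimes_R\LL(y,\K_R)$ computes $\Tor^R_\bullet$ of the Koszul homology against $C$, and the rigidity of Koszul complexes (a regular sequence on a module produces no higher Koszul homology) does the rest. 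A secondary, easier point to nail down carefully is the numerology $\mu_R(C) = r-1$ in (1)$\Rightarrow$(2) and the dimension count $n = d-1$ in (3)$\Rightarrow$(1); both follow from depth/dimension counting along the short exact sequence $0 \to R \to \K_R \to C \to 0$ together with $\depth R = \dim R = d$ and standard properties of $\K_R$.
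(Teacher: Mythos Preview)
Your $(1)\Rightarrow(2)$ is a valid alternative to the paper's proof. The paper does not work with the full-dimensional exact sequence $0\to R\to\K_R\to C\to 0$ directly; instead it applies the non-zerodivisor characterization (Theorem~\ref{3.9}) repeatedly to pass to a one-dimensional quotient $R/(\xx)$ that is almost Gorenstein, chooses the exact sequence there, and then invokes the derived-category identification $\KK_\bullet(\xx,R)\otimes_R\LL(y,\K_R)\cong\LL_{R/(\xx)}(\bar y,\K_R/\xx\K_R)$ (Lemma~\ref{6131107}). Your direct computation via $\LL(y,\K_R)\simeq C[0]$ and Koszul homology of $C$ is cleaner in some respects, but you should say explicitly why $\xx$ can be chosen simultaneously $R$-regular and with $\fkm C=\xx C$; this needs the infinite residue field and a general-position argument (avoid the finitely many primes in $\Ass R$ while generating a minimal reduction of the maximal ideal of $R/[(0):_RC]$).

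Your $(3)\Rightarrow(1)$ has a genuine gap. The claim that ``acyclicity in the top degree $n+1$ forces $\Ker\varphi=0$'' is false for $n\ge 1$. Use the exact triangle $R\xrightarrow{y}\K_R\to\LL(y,\K_R)\rightsquigarrow$ and tensor with $\KK_\bullet(\xx,R)$: since $\xx$ is regular on both $R$ and $\K_R$, the long exact sequence gives $\H_i(\KK\otimes\LL)=0$ for all $i\ge 2$ automatically, $\H_1=\Ker(\bar y\colon R/(\xx)\to\K_R/\xx\K_R)$, and $\H_0=\Coker(\bar y)$. So the top-degree homology carries no information about $\Ker\varphi$; what acyclicity actually buys you is the injectivity of $\bar y$, not of $y$.

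From here there are two ways to finish. The paper's route: since $\K_R/\xx\K_R=\K_{R/(\xx)}$, the sequence $0\to R/(\xx)\xrightarrow{\bar y}\K_{R/(\xx)}\to M\to 0$ with $\fkm M=(0)$ shows $R/(\xx)$ is almost Gorenstein, and Theorem~\ref{3.9}(1) lifts this to $R$; note this never needs $y$ itself to be injective. Alternatively, your route can be repaired: from $\bar y$ injective and $\xx$ regular on $\K_R$, a Nakayama argument (if $ay=0$ then $a\in(\xx)$, write $a=\sum x_ib_i$, use regularity of $x_i$ on $\K_R$ inductively to get $(0:_Ry)=(\xx)(0:_Ry)$) gives $\Ker\varphi=0$, after which your identification of the remaining homology with Koszul homology $\H_i(\xx;C)$ and the conclusion that $C$ is Ulrich go through as you wrote.
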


In Section \ref{chcan} we give the following characterization of almost Gorenstein local rings in terms of canonical ideals. When $\dim R = 1$, this result corresponds to \cite[Theorem 3.11]{GMP}.

\begin{thm}\label{1.2}
Let $(R,\fkm)$ be a Cohen-Macaulay local ring with $d = \dim R \ge 1$ and infinite residue class field. Let $I ~(\ne R)$ be an ideal of $R$ and assume that $I \cong \K_R$ as an $R$-module. Then the following conditions are equivalent.
\begin{enumerate}[\rm(1)]
\item
$R$ is an almost Gorenstein local ring.
\item
$R$ contains a parameter ideal $Q = (f_1, f_2, \ldots, f_d)$ such that $f_1 \in I$ and
$
\fkm(I + Q) = \fkm Q.
$
\end{enumerate}
\end{thm}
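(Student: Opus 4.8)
My plan is to induct on $d=\dim R$. The base case $d=1$ is, via the elementary identity $\fkm(I+(f_1))=\fkm f_1\iff\fkm I=f_1\fkm$, exactly \cite[Theorem~3.11]{GMP}. For the inductive step I would first record two facts that let the statement pass between $R$ and a hyperplane section: since $I\cong\K_R$ is not contained in any minimal prime of $R$ (standard for canonical ideals) whereas $\depth(R/I)\ge d-1$ by the depth lemma applied to $0\to I\to R\to R/I\to 0$ (using $\depth I=\depth\K_R=d$), we get $\height_R I=1$, so $\dim R/I=d-1$ and $\depth(R/I)\ge d-1\ge 1$ once $d\ge 2$; and if $f\in\fkm$ is a non-zerodivisor on both $R$ and $R/I$, then $(I+fR)/fR\cong I/fI\cong\K_R/f\K_R\cong\K_{R/fR}$ and this is again a proper ideal. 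Throughout I invoke the non-zerodivisor characterization of Section~\ref{defag} in two forms: for general $f$, $R$ almost Gorenstein implies $R/fR$ almost Gorenstein, and conversely, for \emph{every} non-zerodivisor $f\in\fkm$, $R/fR$ almost Gorenstein implies $R$ almost Gorenstein.

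For $(1)\Rightarrow(2)$ with $d\ge 2$: since $R/\fkm$ is infinite, pick $f\in\fkm\setminus\fkm^2$ that is a non-zerodivisor on $R$ and on $R/I$ and general enough that $\bar R:=R/fR$ is almost Gorenstein. Applying the inductive hypothesis to $(\bar R,\bar I)$, where $\bar I$ is the image of $I$, produces a parameter ideal $\bar Q=(\bar g_1,\dots,\bar g_{d-1})$ of $\bar R$ with $\bar g_1\in\bar I$ and $\fkm(\bar I+\bar Q)=\fkm\bar Q$. Lift $\bar g_1$ to $g_1\in I$, lift the rest arbitrarily, and put $Q:=(g_1,\dots,g_{d-1},f)$, a parameter ideal of $R$ with $g_1\in I$. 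Reducing the $\bar Q$-relation modulo $fR$ gives $\fkm I\subseteq\fkm(g_1,\dots,g_{d-1})+fR\subseteq\fkm Q+fR$; the point is then that $\fkm I$ and $\fkm Q$ both lie in $\fkm^2$, so writing $x\in\fkm I$ as $x=q+af$ with $q\in\fkm Q$ forces $af=x-q\in\fkm^2$, hence $a\in\fkm$ because $f\notin\fkm^2$, hence $x\in\fkm Q$. Thus $\fkm I\subseteq\fkm Q$, i.e. $\fkm(I+Q)=\fkm Q$.

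For $(2)\Rightarrow(1)$ with $d\ge 2$: given $Q=(f_1,\dots,f_d)$ as in (2), replace $f_2$ by $f_2+c_1f_1+\sum_{i\ge 3}c_if_i$ for general $c_i\in R$; this changes neither $Q$ nor the hypothesis but, since no prime except $\fkm$ contains $Q$ and $\fkm\notin\Ass(R)\cup\Ass(R/I)$, makes $f:=f_2$ a non-zerodivisor on $R$ and on $R/I$. In $\bar R:=R/fR$ the image $\bar I$ of $I$ is a proper ideal isomorphic to $\K_{\bar R}$, the image $\bar Q$ of $Q$ is a parameter ideal of $\bar R$ containing $\bar f_1\in\bar I$, and, because forming images of ideals commutes with sums and products, $\fkm(\bar I+\bar Q)$ is the image of $\fkm(I+Q)=\fkm Q$, namely $\fkm\bar Q$. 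So $(\bar R,\bar I)$ satisfies condition (2) in dimension $d-1$; the inductive hypothesis makes $\bar R$ almost Gorenstein, and then $R$ is almost Gorenstein by the non-zerodivisor characterization.

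The step I expect to be the crux is this last appeal, ``$R/fR$ almost Gorenstein $\Rightarrow$ $R$ almost Gorenstein'' for the particular non-zerodivisor $f\in Q$ coming from condition~(2): there is no reason this $f$ is superficial for $\fkm$, since $Q$ (hence $f$) can be taken arbitrarily deep in $\fkm$ by choosing the embedding $\K_R\cong I$ deep enough, whereas one naively expects the non-zerodivisor characterization only for general $f$. What saves the day is that the implication in fact holds for \emph{every} non-zerodivisor $f\in\fkm$: lifting an embedding $R/fR\hookrightarrow\K_{R/fR}$ with Ulrich cokernel $\bar C\ (\ne 0)$ to $R\hookrightarrow\K_R$ with cokernel $C$, one checks $C$ is Cohen--Macaulay of dimension $d-1$ with $C/fC\cong\bar C$, and then $\mu_R(C)=\mu_{R/fR}(\bar C)=\e_\fkm^0(\bar C)=\e_\fkm^0(C/fC)\ge\e_\fkm^0(C)\ge\mu_R(C)$ (the first inequality because $f$ is a non-zerodivisor on the Cohen--Macaulay module $C$, the second because $C$ is Cohen--Macaulay over a ring with infinite residue field) forces $\mu_R(C)=\e_\fkm^0(C)$, i.e. $C$ is Ulrich. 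So the real work lies in securing this uniform form of the non-zerodivisor characterization (presumably already available from Section~\ref{defag}); the remainder is the bookkeeping above together with the one-dimensional input of \cite{GMP}.
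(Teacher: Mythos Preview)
Your argument is correct, but it follows a different route from the paper. The paper does not induct: for each implication it passes in a single step from $R$ to a one-dimensional quotient $\overline R=R/\fkq$ with $\fkq=(f_2,\dots,f_d)$ and then invokes \cite[Theorem~3.11]{GMP}. For $(1)\Rightarrow(2)$ the paper reads $Q$ off the Ulrich sequence directly: take $f_1=\varphi(1)$ in a chosen exact sequence $0\to R\xrightarrow{\varphi} I\to C\to 0$ with $C$ Ulrich, and choose $f_2,\dots,f_d$ so that they form an $R$-sequence, a system of parameters of $R/I$, and satisfy $\fkm C=(f_2,\dots,f_d)C$; then $\fkq\cap I=\fkq I$, and the one-line computation $\fkm I\subseteq(\fkm f_1+\fkq)\cap I=\fkm f_1+\fkq I\subseteq\fkm Q$ finishes. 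For $(2)\Rightarrow(1)$ the paper simply observes that $\fkq$ is already a parameter ideal for $R/I$ (since $I+Q=I+\fkq$), reduces modulo $\fkq$, and applies Theorem~\ref{3.9}(1). Your inductive descent by a single generic $f\in\fkm\setminus\fkm^2$, together with the neat ``$af\in\fkm^2$ forces $a\in\fkm$'' step to control the lift, is a perfectly good alternative; it trades the paper's use of $\fkq\cap I=\fkq I$ for the $f\notin\fkm^2$ trick. One remark: your closing paragraph worrying about whether ``$R/(f)$ almost Gorenstein $\Rightarrow$ $R$ almost Gorenstein'' needs $f$ to be generic is unnecessary --- Theorem~\ref{3.9}(1) already gives this for \emph{every} non-zerodivisor $f\in\fkm$, with no superficiality hypothesis (indeed your sketch there essentially reproduces its proof via Proposition~\ref{2.2}(5)). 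So you may cite it and drop that discussion.
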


With the same notation as Theorem \ref{1.2}, if $R$ is not a Gorenstein ring, we then have  $\e_1(I + Q) = \rmr (R)$ (here $\e_1(I+Q)$ (resp. $\rmr (R)$) denotes the first Hilbert coefficient of the ideal $I + Q$ of $R$ (resp. the Cohen-Macaulay type of $R$)). A structure theorem of the Sally module ${\mathcal S}_Q(I+Q)$ of $I+Q$ with respect to the reduction $Q$ shall be described. These results reasonably extend the corresponding ones in \cite[Theorem 3.16]{GMP} to higher-dimensional local rings.

In Section \ref{idealiz} we study the question of when the idealization  $A = R \ltimes X$ of a given $R$-module $X$ is an almost Gorenstein local ring. Our goal is the following, which extends  \cite[Theorem 6.5]{GMP} to higher-dimensional cases.

\begin{thm}\label{5.4}
Let $(R,\fkm)$ be a Cohen-Macaulay local ring of dimension $d \ge 1$, which possesses the canonical module $\K_R$. 
Suppose that $R/\fkm$ is infinite. Let $\fkp \in \Spec R$ such that  $R/\fkp$ is a regular local ring of dimension $d-1$. Then the following conditions are equivalent.
\begin{enumerate}[\rm(1)]
\item
$A = R \ltimes \fkp$ is an almost Gorenstein local ring.
\item
$R$ is an almost Gorenstein local ring. 
\end{enumerate}
\end{thm}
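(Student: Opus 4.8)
The plan is to exploit the characterization of almost Gorenstein local rings via canonical ideals given in Theorem \ref{1.2}, together with the standard fact that the canonical module of the idealization $A = R \ltimes X$ is $\mathrm{K}_A = \Hom_R(X, \K_R) \ltimes \K_R$ whenever $X$ is a faithful maximal Cohen-Macaulay $R$-module. In the case at hand $X = \fkp$, and since $R/\fkp$ is regular of dimension $d-1$, the prime $\fkp$ has height one, so $\fkp$ is indeed a faithful maximal Cohen-Macaulay $R$-module (it is torsion-free of rank one over the Cohen-Macaulay ring $R$). The first step is therefore to record these preliminaries and to fix a canonical ideal $I \cong \K_R$ of $R$ with $I \ne R$; we may choose $f_1 \in I$ a nonzerodivisor, and note that the maximal ideal of $A$ is $\fkn = \fkm \ltimes \fkp$ with $\fkn^2 = \fkm^2 \ltimes \fkm\fkp$, and that a parameter ideal of $A$ is of the form $Q \ltimes \fkp Q$ for a parameter ideal $Q$ of $R$ contained in an appropriate position.

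Next I would set up the canonical ideal of $A$ explicitly. Writing $J = \Hom_R(\fkp, \K_R)$, the module $J \ltimes \K_R$ is the canonical module of $A$, and the point is to realize it (up to the isomorphism allowed in Theorem \ref{1.2}) as an ideal $\calJ$ of $A$. Using that $\fkp$ is an ideal and $I \cong \K_R$, one has $\Hom_R(\fkp, I) \cong (I :_{\mathrm{Q}(R)} \fkp)$ inside the total quotient ring, and after multiplying by a suitable nonzerodivisor we obtain an ideal $L$ of $R$ with $L \cong \Hom_R(\fkp, \K_R)$ and $\fkp L \subseteq I$. Then $\calJ := L \ltimes I \subseteq R \ltimes \fkp = A$ is an ideal of $A$ with $\calJ \cong \mathrm{K}_A$. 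This is the technical heart of the construction and where one must be careful: one needs $\fkp L \subseteq I$ to make $L \ltimes I$ closed under the $A$-module structure, and one needs $\calJ \ne A$, which follows since $\mathrm{K}_A \not\cong A$ as $A$ is not Gorenstein (the idealization of a nonzero module is never Gorenstein unless... — more precisely $A$ is Gorenstein iff $d = 0$ and $X$ is the canonical module, which is not our situation since $d \ge 1$).

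With the canonical ideal $\calJ = L \ltimes I$ of $A$ in hand, I would translate condition (2) of Theorem \ref{1.2} for $A$ into a condition on $R$. A parameter ideal $\mathcal{Q}$ of $A$ with a generator in $\calJ$ has the form $\mathcal{Q} = (g_1, \ldots, g_d)A$ where $g_1 = (f_1, \xi) \in L \ltimes I$ with $f_1 \in L$; since units of $A$ are $(u, *)$ with $u$ a unit of $R$, and $\fkp$ is nilpotent-free only in the $X$-direction, the image of $\mathcal{Q}$ in $R$ is a parameter ideal $Q$ of $R$, and one computes $\fkn(\calJ + \mathcal{Q})$ and $\fkn\mathcal{Q}$ componentwise. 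The equality $\fkn(\calJ + \mathcal{Q}) = \fkn\mathcal{Q}$ in $A = R \ltimes \fkp$ unwinds to the pair of conditions $\fkm(L + Q) = \fkm Q$ (the $R$-component) and $\fkm(I + \fkp Q + \text{something}) = \fkm(\fkp Q)$ (the $\fkp$-component), and the crux is to show that, after choosing $Q$ appropriately (using that $R/\fkm$ is infinite to pick sufficiently general parameters, and invoking the hypothesis that $R/\fkp$ is regular so that $\fkp$ is generated by part of a system of parameters modulo which things behave well), the $R$-component condition $\fkm(L+Q)=\fkm Q$ is equivalent to $R$ being almost Gorenstein via Theorem \ref{1.2} applied to the canonical ideal — note $L$ here plays the role of a canonical ideal of $R$ as well, since $\Hom_R(\fkp,\K_R)$ has rank one — while the $\fkp$-component condition becomes automatic or reduces to the same thing.

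The main obstacle I anticipate is bookkeeping the two-sided interaction in the idealization: verifying that a single parameter ideal $Q$ of $R$ can be chosen to simultaneously satisfy the $R$-direction equation $\fkm(L + Q) = \fkm Q$ and the cross-term equation coming from the $\fkp$-direction, and conversely that any witnessing parameter ideal of $A$ descends to one of $R$. Handling this cleanly will require the regularity of $R/\fkp$ in an essential way — it guarantees $\fkp$ is a "nice" height-one prime (generated by a regular sequence modulo which $R$ is still Cohen-Macaulay of the right dimension) so that $\fkp Q$ and $\fkp \cap Q$ and $\fkm\fkp$ interact predictably — and I would expect to need a lemma, perhaps already available from Section \ref{idealiz}'s preliminary results or from the Ulrich-module discussion in Section \ref{ulmd}, that controls $\fkp L$ and $I/\fkp L$. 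Once that compatibility is established, both implications (1) $\Rightarrow$ (2) and (2) $\Rightarrow$ (1) follow by reading Theorem \ref{1.2} in the two directions, so the proof reduces entirely to the correct construction of $\calJ$ and the correct choice of $Q$.
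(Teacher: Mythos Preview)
Your approach is genuinely different from the paper's, and it has a real gap.

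The paper's proof is a short induction on $d$: the base case $d=1$ is \cite[Theorem 6.5]{GMP}, and for $d>1$ one chooses a non-zerodivisor $f\in\fkm$ which is superficial for the relevant Ulrich cokernel and for which $R/[\fkp+(f)]$ remains regular of dimension $d-2$. Then $A/fA \cong R/(f)\ltimes [\fkp+(f)]/(f)$, and Theorem \ref{3.9} passes the almost Gorenstein property up and down. No explicit canonical ideal of $A$ is ever built.

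Your plan instead attempts a direct application of Theorem \ref{1.2} to $A$ after realizing $\mathrm{K}_A$ as an ideal $\calJ = L\ltimes I$ with $L\cong\Hom_R(\fkp,\K_R)$. The decisive error is the sentence ``$L$ here plays the role of a canonical ideal of $R$ as well, since $\Hom_R(\fkp,\K_R)$ has rank one.'' Having rank one does not make $L$ isomorphic to $\K_R$; indeed $\fkp^{\vee\vee}\cong\fkp$ (as $\fkp$ is maximal Cohen--Macaulay), so $L=\fkp^\vee\cong\K_R$ would force $\fkp\cong R$, which is false. Consequently the $R$-component equation $\fkm(L+Q)=\fkm Q$ that your unwinding produces is \emph{not} the condition in Theorem \ref{1.2} characterizing the almost Gorensteinness of $R$, and the bridge between (1) and (2) collapses. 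If you instead route through Theorem \ref{5.1} with $I=\fkp^\vee$ (so that $I^\vee\cong\fkp$), the resulting parameter-ideal condition is on $\fkp^\vee$, and showing that condition is equivalent to $R$ being almost Gorenstein is precisely the content you are trying to prove---so the argument becomes circular. The inductive/superficial-element route of the paper sidesteps all of this.
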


In Section \ref{pG} we explore a special class of almost Gorenstein local rings, which we call semi-Gorenstein. A structure theorem of minimal free resolutions of semi-Gorenstein local rings shall be given. The semi-Gorenstein property is preserved under localization while the almost Gorenstein property is not, which we will show later; see Section \ref{assgr}.

In Section \ref{aggr} we search for possible definitions of almost Gorenstein graded rings. Let $R = \bigoplus_{n \ge 0}R_n$ be a Cohen-Macaulay graded ring with $k = R_0$ a local ring. Assume that $R$ possesses the graded canonical module $\K_R$. This condition is equivalent to saying that $k$ is a homomorphic image of a Gorenstein local ring (\cite{GW1, GW2}). Let $\fkM$ denote the unique graded maximal ideal of $R$ and let $a = \rma (R)$ be the $a$-invariant of $R$. Hence $a = \operatorname{max} \{n \in \Bbb Z \mid [\rmH_\fkM^d(R)]_n \ne (0)\}$ (\cite[Definition (3.1.4)]{GW1}), where $\{[\rmH_\fkM^d(R)]_n\}_{n \in \Bbb Z}$ denotes the homogeneous components of the $d$-th graded local cohomology module $\rmH_\fkM^d(R)$ of $R$ with respect to $\fkM$.  
With this notation our definition of almost Gorenstein graded ring is stated as follows, which we discuss in Section \ref{aggr}.

\begin{defn}\label{1.3}
We say that $R$ is an almost Gorenstein graded ring, if there exists an exact sequence
$$0 \to R \to \mathrm{K}_R(-a) \to C \to 0$$
of graded $R$-modules with $\mu_R(C) = \e_\fkM^0(C)$. Here $\mathrm{K}_R(-a)$ denotes the graded $R$-module whose underlying $R$-module is the same as that of $\K_R$ and whose grading is given by $[\mathrm{K}_R(-a)]_n = [\mathrm{K}_R]_{n-a}$ for all $n \in \Bbb Z$.
\end{defn}

In Section \ref{assgr} we study almost Gorensteinness in the graded rings associated to filtrations of 
ideals. We shall prove that the almost Gorenstein property of base local rings is inherited from that of the associated graded rings with a certain condition on the Cohen-Macaulay type. In general, local rings of an almost Gorenstein local ring are not necessarily almost Gorenstein, which we will show in this section; see Remark \ref{3}.

In Section \ref{homog} we explore Cohen-Macaulay homogeneous rings $R = k[R_1]$ over an infinite field $k = R_0$. We shall prove the following, which one can directly apply, for instance, to the Stanley-Reisner rings $R=k[\Delta]$ of simplicial complexes $\Delta$ over $k$.

\begin{thm}\label{1.4}
Let $R = k[R_1]$ be a Cohen-Macaulay homogeneous ring over an infinite field $k$ and assume that $R$ is not a Gorenstein ring. Let $d = \dim R \ge 1$ and set $a = \rma (R)$. Then the following conditions are equivalent.
\begin{enumerate}[\rm(1)]
\item
$R$ is an almost Gorenstein graded ring.
\item
The total ring $\rmQ (R)$ of fractions of $R$ is a Gorenstein ring and $a = 1-d$.
\end{enumerate}
\end{thm}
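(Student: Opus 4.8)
The plan is to reduce everything to the one-dimensional graded situation handled in \cite{GMP}, via a generic linear system of parameters, and to combine this with the module-theoretic criterion for Ulrich modules recorded in Section \ref{ulmd}. First I would record the standard facts about homogeneous rings: since $R = k[R_1]$ is Cohen-Macaulay with $k$ infinite, $R$ has a homogeneous system of parameters $f_1, \ldots, f_d$ with each $f_i \in R_1$, and the multiplicity $\e(R)$ equals $\dim_k R/(f_1, \ldots, f_d)$; moreover $\rmr(R) = \dim_k [\K_R]_{a}$ at the top and, because $R$ is standard graded, $\K_R$ is generated in degrees $\ge -a$ with $[\K_R]_{-a}\ne 0$, so $\K_R(-a)$ is generated in nonnegative degrees. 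The key numerical input is that $R$ is Gorenstein on the punctured spectrum (equivalently $\rmQ(R)$ Gorenstein) precisely when $C = \K_R(-a)/R$, if it arises from an embedding $R \hookrightarrow \K_R(-a)$, has finite length after inverting a parameter — i.e. $\dim_R C \le d-1$ with equality of dimension but ``thin'' support — which is exactly the Ulrich condition's geometric side.

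For $(1)\Rightarrow(2)$, assume $R$ is almost Gorenstein graded and not Gorenstein, so there is a graded exact sequence $0 \to R \to \K_R(-a) \to C \to 0$ with $C \ne 0$ Ulrich of dimension $d-1$. As noted after Definition \ref{1.1}, $C$ is a maximally generated Cohen-Macaulay module, so $\m C = Q' C$ for a parameter ideal $Q'$ generated by $d-1$ linear forms; in particular $C$ localized at any minimal prime of its support is a field-module, forcing $R_\fkp$ Gorenstein for every $\fkp \ne \fkM$, hence $\rmQ(R)$ is Gorenstein. For the $a$-invariant, I would compare Hilbert series: the exact sequence gives $\mathrm{H}_{\K_R(-a)}(t) = \mathrm{H}_R(t) + \mathrm{H}_C(t)$, and since $C$ is Ulrich of dimension $d-1$ its Hilbert series has the form $\e(C)\,t^0/(1-t)^{d-1}$ up to the parameter shift; combined with the graded local duality identity $\mathrm{H}_{\K_R}(t) = (-1)^d \mathrm{H}_R(1/t)$ and the fact that the embedding starts in degree $0$, comparing lowest degrees of $\K_R(-a)$ and the leading behavior forces $a = 1-d$. (Concretely: $\K_R(-a)$ is generated in degree $0$ and contains $R$ as a submodule with $C$ generated in degree $0$; the only way the degree count balances for a non-Gorenstein ring is $a+d = 1$.)

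For $(2)\Rightarrow(1)$, suppose $\rmQ(R)$ is Gorenstein and $a = 1-d$. Then $\K_R(-a) = \K_R(d-1)$ is a rank-one Cohen-Macaulay graded module, generated in degree $0$, which is isomorphic in codimension zero to $R$; I would produce a graded injection $R \hookrightarrow \K_R(-a)$ in degree $0$ — this is where I expect the main obstacle, namely showing the generic socle element of $[\K_R(-a)]_0$ (equivalently, a suitably general element of $[\K_R]_{1-d}$) generates a free submodule with the quotient $C$ satisfying $\m C = Q C$ for a linear parameter ideal $Q$. To handle it I would pass modulo a generic linear s.o.p.\ $f_2, \ldots, f_d$ (using that $k$ is infinite and a prime-avoidance/generic-element argument to keep $f_1 \in I$-type control, as in Theorem \ref{1.2}), reducing to $\dim = 1$: then $\overline R = R/(f_2,\ldots,f_d)$ is a one-dimensional Cohen-Macaulay graded ring with $\rmQ(\overline R)$ Gorenstein and $a(\overline R) = 0$, and by \cite[Theorem]{GMP} (the one-dimensional graded case) $\overline R$ is almost Gorenstein graded, so $\overline C := \K_{\overline R}/\overline R$ is killed by $\overline \m$. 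Lifting back, $\m C \subseteq Q C + (\text{stuff vanishing mod }Q)$ gives $\mu_R(C) = \ell_{\overline R}(\overline C) = \e_\fkM^0(C)$, which is exactly Definition \ref{1.3}. The care needed is in checking the specialization is ``faithful'' — i.e.\ that $Q$ is a reduction with $Q \cap R \ni f_1$ mapping into the canonical ideal and that depth does not drop — which follows from the Cohen-Macaulay hypothesis and genericity of the linear forms; this bookkeeping, rather than any deep new idea, is the real content.
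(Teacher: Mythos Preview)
Your argument for $(2)\Rightarrow(1)$ has a genuine gap. You propose to pass to $\overline{R}=R/(f_2,\ldots,f_d)$ and invoke a one-dimensional result, but the hypothesis ``$\rmQ(\overline{R})$ is Gorenstein'' does \emph{not} follow from ``$\rmQ(R)$ is Gorenstein''. Cutting by a regular element changes the set of minimal primes; for $\fkq$ minimal over $(f_2,\ldots,f_d)$ one needs $R_\fkq/(f_2,\ldots,f_d)$ Gorenstein, and nothing in the hypothesis controls $R_\fkq$. (There is also no ``one-dimensional graded case'' in \cite{GMP} to cite; that paper treats the local situation.) The paper avoids this entirely and works directly in dimension $d$: since $a=1-d$ forces $R$ to be level (Proposition~\ref{7.1}), one can choose $\xi\in[\K_R]_{-a}$ by homogeneous prime avoidance so that $\xi$ generates $(\K_R)_\fkp$ for every $\fkp\in\Ass R$ (Lemma~\ref{7.3}); the resulting cokernel $C$ is Cohen--Macaulay of dimension $d-1$ by Lemma~\ref{3.1}(2), and the explicit Hilbert series $[\![\K_R(-a)]\!]-[\![R]\!]=(c-1)/(1-\lambda)^{d-1}$ from Proposition~\ref{7.1} shows that $C/(f_1,\ldots,f_{d-1})C$ is concentrated in degree $0$, whence $\fkM C=(f_1,\ldots,f_{d-1})C$ and $C$ is Ulrich.

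For $(1)\Rightarrow(2)$, your reasoning that $\rmQ(R)$ is Gorenstein is garbled (you speak of minimal primes of $C$ rather than of $R$), though the correct one-line argument is Lemma~\ref{3.1}(1). More seriously, your derivation of $a=1-d$ asserts that ``$\K_R(-a)$ is generated in degree $0$'' and ``$C$ is generated in degree $0$''; this is exactly the statement that $R$ is \emph{level}, which is not given and is the whole point. The paper's argument (Lemma~\ref{7.2}) is a genuine Hilbert-series computation: writing $F(\lambda)=\sum c_i\lambda^i$ and using $C=RC_0$ together with $\fkM C=(f_2,\ldots,f_d)C$ to get $[\![C]\!]=(r-1)/(1-\lambda)^{d-1}$, one obtains $\lambda^{a+d}F(1/\lambda)-F(\lambda)=(r-1)(1-\lambda)$, and comparing the coefficients of $\lambda^0$ and $\lambda^{a+d}$ forces $a+d\le 1$. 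Your sketch does not carry out this comparison and instead assumes its conclusion.
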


In Section \ref{tancon} we study the relation between the almost Gorensteinness of Cohen-Macaulay local rings $(R,\fkm)$ and their tangent cones $\gr_\fkm (R) =\bigoplus_{n \ge 0}\fkm^n/\fkm^{n+1}$. We shall prove, provided $R/\fkm$ is infinite and $\rmv (R) = \e_\fkm^0(R) + \dim R - 1$ (here $\mathrm{v} (R)$ denotes the embedding dimension of $R$), that $R$ is an almost Gorenstein local ring if and only if  $\rmQ (\gr_\fkm(R))$ is a Gorenstein ring, which will eventually show that every two-dimensional rational singularity is an almost Gorenstein local ring (Corollary \ref{8.5}).

In the final section we shall prove that every one-dimensional Cohen-Macaulay complete local ring of finite Cohen-Macaulay representation type is an almost Gorenstein local ring, if it possesses a coefficient field of characteristic $0$.

As is confirmed in Sections \ref{aggr}, \ref{assgr}, \ref{homog}, our definition of almost Gorenstein graded rings works well to analyze divers graded rings. We, however, note here the following. By definition, the ring $R_\fkM$ is an almost Gorenstein local ring, if $R$ is an almost Gorenstein graded ring with unique graded maximal ideal $\fkM$, but as Example \ref{6.5} shows, the converse is not true in general. In fact, for the example, one has $\rma (R) = -2$ and there is no exact sequence $0 \to R \to \mathrm{K}_R(2) \to C \to 0$ of graded $R$-modules such that $\mu_R(C) = \e_\fkM^0(C)$, while there exists an exact sequence $0 \to R \to \mathrm{K}_R(3) \to D \to 0$ such that $\mu_R(D) = \e_\fkM^0(D)$. The example seems to suggest  the existence of alternative and more flexible definitions of almost Gorensteinness for graded rings. We would like to leave the quest to forthcoming researches.

In what follows, unless otherwise specified, let $R$ denote a Noetherian local ring with maximal ideal $\fkm$. For each finitely generated $R$-module $M$, let $\mu_R(M)$ (resp. $\ell_R(M)$) denote the number of elements in a minimal system of generators of $M$ (resp. the length of $M$). We denote by $\e_\fkm^0(M)$ the multiplicity of $M$ with respect to $\fkm$.


\section{Survey on Ulrich modules}\label{ulmd}

Let $R$ be a Noetherian local ring with maximal ideal $\fkm$. The purpose of this section is to summarize some preliminaries on Ulrich modules, which we will use throughout this paper.
We begin with the following.

\begin{defn}\label{2.1}
Let $M~(\ne (0))$ be a finitely generated $R$-module. Then $M$ is said to be an Ulrich $R$-module, if $M$ is a Cohen-Macaulay $R$-module and $\mu_R(M) = \rme_\fkm^0(M)$.
\end{defn}


\begin{prop}\label{2.2}
Let $M$ be a finitely generated $R$-module of dimension $s \ge 0$. Then the following assertions hold true.
\begin{enumerate}[\rm(1)]
\item
Suppose $s = 0$. Then $M$ is an Ulrich $R$-module if and only if $\fkm M = (0)$, that is $M$ is a vector space over the field $R/\fkm$.
\item
Suppose that $M$ is a Cohen-Macaulay $R$-module. If $\fkm M = (f_1, f_2, \ldots, f_s)M$ for some $f_1, f_2, \ldots, f_s \in \fkm$, then $M$ is an Ulrich $R$-module. The converse is also true, if $R/\fkm$ is infinite. $($We actually have $\fkm M = (f_1, f_2, \ldots, f_s)M$ for any elements $f_1, f_2, \ldots, f_s \in \fkm$ whose images in $R/[(0):_RM]$ generate a minimal reduction of the maximal ideal of $R/[(0):_RM]$.$)$  When this is the case, the elements $f_1, f_2, \ldots, f_s$ form a part of a minimal system of generators for $\fkm$.
\item
Let $\varphi : R \to S$ be a flat local homomorphism of Noetherian local rings such that $S/\fkm S$ is a regular local ring. Then $M$ is an Ulrich $R$-module if and only if $S \otimes_RM$ is an Ulrich $S$-module.  
\item
Let $M$ be an Ulrich $R$-module with $s = \dim_RM \ge 1$. Let $f \in \fkm$ and assume that $f$ is  superficial for $M$ with respect to $\fkm$. Then $M/fM$ is an Ulrich $R$-module of dimension $s - 1$.
\item
Let $f \in \fkm$ and assume that $f$ is  $M$-regular. If $M/fM$ is an Ulrich $R$-module, then $M$ is an Ulrich $R$-module and $f \not\in \fkm^2$.
\end{enumerate}
\end{prop}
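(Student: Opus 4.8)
The plan is to dispose of the statements one at a time, relying on standard multiplicity theory and the theory of superficial elements; the reader will notice that several parts feed into one another.

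For (1): when $s=0$ the module $M$ has finite length, so $\e_\fkm^0(M)=\ell_R(M)$ while $\mu_R(M)=\ell_R(M/\fkm M)$. Thus $\mu_R(M)=\e_\fkm^0(M)$ forces $\ell_R(M)=\ell_R(M/\fkm M)$, equivalently $\fkm M=(0)$, and conversely. For (2): passing to $\overline R=R/[(0):_RM]$ does not change $\mu_R(M)$, $\e_\fkm^0(M)$ (since $\e_\fkm^0(M)=\e_{\fkm\overline R}^0(M)$ by associativity of multiplicity on a module that is faithful over $\overline R$), nor the Cohen--Macaulay property, and $\dim\overline R=s$; so we may assume $M$ is a faithful Cohen--Macaulay $\overline R$-module. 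If $\fkm M=(f_1,\dots,f_s)M$ then $(f_1,\dots,f_s)$ maps to an ideal that is a reduction of $\fkm\overline R$ (because its extension to $M$ agrees with $\fkm M$ and $M$ is faithful), hence a parameter ideal of $\overline R$, and then the classical criterion for maximally generated Cohen--Macaulay modules (Ulrich, cf. \cite{BHU}) gives $\mu_R(M)=\ell_R(M/(f_1,\dots,f_s)M)=\e_{(f_1,\dots,f_s)}^0(M)=\e_\fkm^0(M)$. For the converse with $R/\fkm$ infinite, choose $f_1,\dots,f_s\in\fkm$ whose images form a minimal reduction of $\fkm\overline R$; then $(f_1,\dots,f_s)M\subseteq\fkm M$ and a length count using $\mu_R(M)=\e_\fkm^0(M)=\e_{(f_1,\dots,f_s)}^0(M)=\ell_R(M/(f_1,\dots,f_s)M)$ together with $\mu_R(M)=\ell_R(M/\fkm M)$ shows the containment is an equality. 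The final sentence follows because $f_1,\dots,f_s$ minimally generate a reduction of $\fkm\overline R$, so they are part of a minimal generating set of $\fkm\overline R$, hence (lifting) part of one for $\fkm$.

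For (3): flatness gives $\mu_S(S\otimes_RM)=\mu_R(M)$, and the hypothesis that $S/\fkm S$ is regular (so $\varphi$ has Cohen--Macaulay, in fact regular, closed fibre) yields $\dim_S(S\otimes_RM)=\dim_RM$ and $\e_{\fkm S}^0(S\otimes_RM)=\e_\fkm^0(M)$ because $\fkm S$ is generated by a system of parameters for the fibre after going modulo the support, more precisely $\e_\fkm^0(M)=\e_{\fkm S}^0(S\otimes_RM)$ by the standard base-change formula for multiplicities along a flat local map with regular fibre; combined with the fact that $S\otimes_RM$ is Cohen--Macaulay iff $M$ is (flat local base change preserves depth and dimension), this gives the equivalence. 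For (4): a superficial element $f$ for $M$ with $s\ge 1$ is a non-zerodivisor on $M$ (as $M$ is Cohen--Macaulay of positive dimension, $f$ avoids the associated primes), so $\dim_R M/fM=s-1$, $M/fM$ is Cohen--Macaulay, $\e_\fkm^0(M/fM)=\e_\fkm^0(M)$ by the defining property of a superficial element, and $\mu_R(M/fM)\le\mu_R(M)$ while $\mu_R(M/fM)\ge\e_\fkm^0(M/fM)=\e_\fkm^0(M)=\mu_R(M)$ by the general inequality $\mu\ge\e^0$ for Cohen--Macaulay modules (which itself follows from (2) applied to a minimal reduction); hence equality, so $M/fM$ is Ulrich. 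For (5): if $f$ is $M$-regular and $M/fM$ is Ulrich, then $M$ is Cohen--Macaulay of dimension $s$ (dimension goes up by one, depth goes up by one), $\e_\fkm^0(M)=\e_\fkm^0(M/fM)$ provided $f$ is also superficial — but we do not yet know that, so instead argue directly: from the exact sequence $0\to M\xrightarrow{f}M\to M/fM\to 0$ and the snake lemma applied against $R/\fkm$ one gets $\mu_R(M/fM)=\mu_R(M)+\dim_k(0:_{M/\fkm M}\bar f)$, and a Hilbert-function comparison gives $\e_\fkm^0(M/fM)\le\e_\fkm^0(M)$ with equality iff $f\notin\fkm^2$... the cleanest route is: $\mu_R(M)\le\mu_R(M/fM)=\e_\fkm^0(M/fM)\le\e_\fkm^0(M)\le\mu_R(M)$, where the middle inequality is the standard fact that killing an $M$-regular element does not increase multiplicity, forcing all to be equalities; then $M$ is Ulrich, and the equality $\e_\fkm^0(M/fM)=\e_\fkm^0(M)$ (no drop) combined with $\mu_R(M)=\mu_R(M/fM)$ (so $\bar f$ is a non-zerodivisor on $M/\fkm M$ in the appropriate graded sense) forces $f\notin\fkm^2$ by inspecting the leading form of $f$ in $\gr_\fkm(R)$ acting on $\gr_\fkm(M)$.

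The main obstacle is part (5): the subtlety is that we are \emph{not} given that $f$ is superficial, so the equality of multiplicities $\e_\fkm^0(M)=\e_\fkm^0(M/fM)$ and the conclusion $f\notin\fkm^2$ must be extracted from the hypothesis rather than assumed; the key technical input is the behaviour of Hilbert functions under division by a regular element, namely that $\e_\fkm^0(M/fM)\le\e_\fkm^0(M)$ always, with the deficit measured by the initial degree of $f$, so that equality forces $f\in\fkm\setminus\fkm^2$. Everything else is bookkeeping with the inequality $\mu_R(N)\ge\e_\fkm^0(N)$ valid for any Cohen--Macaulay module $N$ over a ring with infinite residue field — and this inequality, together with the characterization of when it is an equality, is exactly what part (2) provides, so the logical order within the proof is (1), (2), then (3), (4), (5).
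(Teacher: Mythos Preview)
Your treatment of parts (1)--(3) is essentially the paper's, and part (4) is fine once you note that $\mu_R(M/fM)=\mu_R(M)$ is an equality (Nakayama) rather than just an inequality.

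The genuine problem is in parts (4) and (5): you have the basic Cohen--Macaulay inequality reversed. For a Cohen--Macaulay $R$-module $N$ (over a local ring with infinite residue field), one has $\mu_R(N)\le \e_\fkm^0(N)$, not $\ge$: a minimal reduction $Q$ of $\fkm$ modulo the annihilator gives $\e_\fkm^0(N)=\ell_R(N/QN)\ge \ell_R(N/\fkm N)=\mu_R(N)$. Likewise, your claim that ``killing an $M$-regular element does not increase multiplicity'' is backwards: for $f\in\fkm$ regular on $M$ one has $\e_\fkm^0(M/fM)\ge \e_\fkm^0(M)$, with strict inequality when $f\in\fkm^2$ (from the exact sequence $M/\fkm^{n-1}M \xrightarrow{\,f\,} M/\fkm^n M \to M/(fM+\fkm^n M)\to 0$ and the extra length contributed by $\fkm^{n-2}M\subseteq (\fkm^nM:_Mf)$). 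So as written your chain in (5) is false at two places. Fortunately the two reversals cancel: the correct chain
\[
\mu_R(M)=\mu_R(M/fM)=\e_\fkm^0(M/fM)\ \ge\ \e_\fkm^0(M)\ \ge\ \mu_R(M)
\]
forces equality throughout, so $M$ is Ulrich and $\e_\fkm^0(M/fM)=\e_\fkm^0(M)$, which by the observation above gives $f\notin\fkm^2$. You still need to reduce to infinite residue field first (for the last inequality), e.g.\ via $R\to R[X]_{\fkm R[X]}$ and part (3).

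This corrected argument is a legitimate alternative to the paper's route for (5). The paper instead reduces to infinite residue field, applies (2) to $M/fM$ to get $\fkm(M/fM)=(f_2,\ldots,f_s)(M/fM)$, lifts to $\fkm M=(f,f_2,\ldots,f_s)M$, and then reads off both conclusions directly from the final sentence of (2). Their approach avoids any multiplicity comparison across $M$ and $M/fM$; yours trades that for a Hilbert-function estimate. Either is fine, but fix the inequality directions before writing it up.
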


\begin{proof}

(1) This follows from the facts that $\mu_R(M) = \ell_R(M/\fkm M)$ and $\rme_\fkm^0(M) = \ell_R(M)$.

(2) Suppose that $\fkm M = (f_1, f_2, \ldots, f_s)M$ for some $f_1, f_2, \ldots, f_s \in \fkm$. Then $f_1, f_2, \ldots, f_s$ is a system of parameters of $M$ and $\fkm^{n+1} M = (f_1, f_2, \ldots, f_s)^{n+1}M$ for all $n \ge 0$. Hence $\ell_R(M/\fkm^{n+1}M) = \ell_R(M/(f_1, f_2, \ldots, f_s)M){\cdot}\binom{n+s}{s}$ and therefore $\rme_\fkm^0(M) = \ell_R(M/(f_1, f_2, \ldots, f_s)M) = \ell_R(M/\fkm M)$, so that   $M$ is an Ulrich $R$-module. Let $\overline{R} = R/[(0):_RM]$  and let $\overline{f_i}$ denote the image of $f_i$ in $\overline{R}$. We then have $\overline{\fkm}M = (\overline{f_1}, \overline{f_2}, \ldots, \overline{f_s})M$, where $\overline{\fkm} = \fkm \overline{R}$. Hence $(\overline{f_1}, \overline{f_2}, \ldots, \overline{f_s})$ is a minimal reduction of $\overline{\fkm}$, because $M$ is a faithful $\overline{R}$-module, so that $f_1, f_2, \ldots, f_s$ form a part of a minimal system of generators for the maximal ideal $\fkm$.

Conversely, suppose that $R/\fkm$ is infinite and that $M$ is an Ulrich $R$-module. Let us choose elements $f_1, f_2, \ldots, f_s \in \fkm$ so that $(\overline{f_1}, \overline{f_2}, \ldots, \overline{f_s})$ is a minimal reduction of $\overline{\fkm}$. Then $\rme_\fkm^0(M) = \rme_{\overline{\fkm}}^0(M) = \rme_{(\overline{f_1}, \overline{f_2}, \ldots, \overline{f_s})}^0(M) = \ell_R(M/(f_1, f_2, \ldots, f_s)M)$. Hence $\fkm M = (f_1, f_2, \ldots, f_s)M$ as $\ell_R(M/\fkm M) = \rme_\fkm^0(M)$.

(3)  Choose a regular system $g_1, g_2, \ldots, g_n \in \fkn$ of parameters  for the regular local ring $S/\fkm S$~(here $n= \dim S/\fkm S$) and set $\overline{S} = S/(g_1, g_2, \ldots, g_n)S$. Then the composite map $\psi : R \to S \to \overline{S}$ is flat (\cite[Lemma 1.23]{HK}) and  
$$
(S\otimes_RM)/(g_1, g_2, \ldots, g_n)(S\otimes_RM) \cong \overline{S}\otimes_RM,
$$
so that passing to the homomorphism $\psi$, we may assume $\fkm S = \fkn$. We then have
$$
\mu_S(S\otimes_RM) = \mu_R(M), \ \ \e^0_{\fkn}(S\otimes_RM) = \e^0_{\fkm}(M).
$$
Hence $M$ is an Ulrich $R$-module if and only if $S\otimes_RM$ is an Ulrich $S$-module.

(4) Since $f$ is superficial for $M$ with respect to $\fkm$ and $s > 0$, $f$ is $M$-regular and $\e_\fkm^0(M/fM) = \e_\fkm^0(M)$. Therefore  $M/fM$ is a Cohen-Macaulay $R$-module, and consequently $M/fM$ is an Ulrich $R$-module, because $\mu_R(M/fM) = \mu_R(M) = \e_\fkm^0(M) = \e_{\fkm}^0(M/fM)$.

(5) We put $R(X) = R[X]_{\fkm R[X]}$ and $S(X) = S[X]_{\fkn S[X]}$, where $X$ is an indeterminate. Then, since $\fkm R[X] = \fkn  S[X] \cap R[X]$, we get a flat local homomorphism $\psi : R(X) \to S(X)$,  extending  $\varphi:R\to S$.
Because $\mu_{R(X)}(R(X) \otimes_RM) = \mu_R(M)$ and $\rme_{\fkm R(X)}^0(R(X) \otimes_RM) = \e_\fkm^0(M)$, $R(X) \otimes_RM$ is an Ulrich $R(X)$-module. For the same reason, $S(X)\otimes_S(S\otimes_RM)$ is an Ulrich $S(X)$-module if and only if $S\otimes_RM$ is an Ulrich $S$-module. Therefore, since $S(X)/\fkm S(X) = (S/\fkm S)(X)$ is a regular local ring, passing to the homomorphism $\psi : R(X) \to S(X)$, without loss of generality we may assume that the residue class field $R/\fkm$ of $R$ is infinite. We now choose elements $f_2, f_3, \ldots, f_s \in \fkm$ so that $\fkm{\cdot}(M/fM) = (f_2, f_3, \ldots, f_s){\cdot}(M/fM)$. Then $\fkm M = (f_1, f_2, \ldots, f_s)M$ with $f_1 = f$. Therefore by assertion (2), $M$ is an Ulrich $R$-module and $f \not\in \fkm^2$.
\end{proof}

\section{Almost Gorenstein local rings}\label{defag}

Let $R$ be a Cohen-Macaulay local ring with maximal ideal $\fkm$ and $d = \dim R \ge 0$, possessing the canonical module $\K_R$. Hence $R$ is a homomorphic image of a Gorenstein ring (\cite{R}). The purpose of this section is to define almost Gorenstein local rings and explore their basic properties.

We begin with the following.

\begin{lem}\label{3.1}
Let $R \overset{\varphi}{\longrightarrow} \K_R \to C \to 0$ be an exact sequence of $R$-modules. Then the following assertions hold true.
\begin{enumerate}[\rm(1)]
\item
If $\dim_RC < d$, then $\varphi$ is injective and the total ring  $\rmQ (R)$ of fractions of  $R$ is a Gorenstein ring.
\item
Suppose that  $\varphi$ is injective. If $C \ne (0)$, then $C$ is a Cohen-Macaulay $R$-module of dimension $d-1$.
\item
If $\varphi$ is injective and $d = 0$, then $\varphi$ is an isomorphism.
\end{enumerate}
\end{lem}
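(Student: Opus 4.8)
The plan is to treat the three assertions in turn, leaning on standard properties of the canonical module $\K_R$ (that $\K_R$ is a maximal Cohen--Macaulay $R$-module, that $\Supp_R\K_R=\Spec R$, and that $(\K_R)_\fkp\cong\K_{R_\fkp}$ for every $\fkp\in\Spec R$).

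\medskip

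For (1), set $L=\Ker\varphi$ and $I=\Im\varphi\subseteq\K_R$. From $0\to L\to R\to I\to0$ and $0\to I\to\K_R\to C\to0$ we want $L=(0)$. First I would localize at a minimal prime $\fkp$ of $R$: since $R$ is Cohen--Macaulay of dimension $d$ and $\dim_RC<d$, we have $C_\fkp=(0)$, so $\varphi_\fkp\colon R_\fkp\to(\K_R)_\fkp=\K_{R_\fkp}$ is surjective; as $R_\fkp$ is Artinian and $\K_{R_\fkp}$ is its injective hull of the residue field, comparing lengths ($\ell_{R_\fkp}(\K_{R_\fkp})=\ell_{R_\fkp}(R_\fkp)$) forces $\varphi_\fkp$ to be an isomorphism. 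Hence $L_\fkp=(0)$ for every minimal prime $\fkp$, so $L$ has no associated prime of dimension $d$; but $L\subseteq R$ and $R$ is Cohen--Macaulay, so $\Ass_R L\subseteq\Ass_R R=\Min R$ consists only of dimension-$d$ primes, whence $L=(0)$ and $\varphi$ is injective. Moreover, the displayed isomorphisms $\varphi_\fkp\colon R_\fkp\xrightarrow{\ \sim\ }\K_{R_\fkp}$ for all minimal $\fkp$ say exactly that $R_\fkp$ is Gorenstein (a zero-dimensional local ring is Gorenstein iff it is isomorphic to its own canonical module), i.e. $\rmQ(R)=\prod_{\fkp\in\Min R}R_\fkp$ is a Gorenstein ring.

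\medskip

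For (2), assume $\varphi$ is injective, so $0\to R\to\K_R\to C\to0$ is exact with $C\ne(0)$. Counting at a minimal prime again, $C_\fkp=(0)$ for all $\fkp\in\Min R$ by the length argument above, so $\dim_RC\le d-1$; I must show equality and Cohen--Macaulayness. Applying the depth lemma (the long exact sequence of $\depth$, or equivalently local cohomology) to the short exact sequence: $R$ has depth $d$ and $\K_R$ has depth $d$, so $\depth_RC\ge d-1$. Combined with $\dim_RC\le d-1$ this gives $\dim_RC=\depth_RC=d-1$, i.e. $C$ is Cohen--Macaulay of dimension $d-1$. (One should note $d\ge1$ here, which is automatic since $C\ne(0)$ and $\dim_R C\le d-1$ forces $d\ge1$.)

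\medskip

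For (3), if $d=0$ and $\varphi$ is injective, then $0\to R\to\K_R\to C\to 0$ with $\ell_R(R)=\ell_R(\K_R)$ (the defining length equality for the canonical module of an Artinian local ring), forcing $\ell_R(C)=0$, i.e. $C=(0)$ and $\varphi$ an isomorphism. I expect the main obstacle to be the bookkeeping in (1): making the passage from "iso after localizing at each minimal prime" to "$\varphi$ injective globally" airtight requires the observation that $\Ass_R(\Ker\varphi)\subseteq\Ass_R R$, and then that $R$ being Cohen--Macaulay (hence unmixed) means all of $\Ass_R R$ lies in $\Min R$, where $\Ker\varphi$ already vanishes. Everything else is a routine application of the depth lemma and length counting.
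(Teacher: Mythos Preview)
Your proof is correct and follows essentially the same approach as the paper's: for (1), both you and the paper use the length equality $\ell_{R_\fkp}(R_\fkp)=\ell_{R_\fkp}(\K_{R_\fkp})$ at primes in $\Ass_R R=\Min R$ to force $L_\fkp=(0)$, then conclude $L=(0)$ via $\Ass_R L\subseteq\Ass_R R$; for (2), both use the same length comparison to get $\dim_RC<d$ and then the depth lemma to get $\depth_RC\ge d-1$. The only cosmetic difference is that the paper phrases (1) as a proof by contradiction (pick $\fkp\in\Ass_RL$ directly), whereas you argue directly by first checking all minimal primes and then invoking $\Ass_RL\subseteq\Min R$.
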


\begin{proof}
(1) Let $L = \operatorname{Ker} \varphi$ and assume that $L \ne (0)$. Choose $\fkp \in \Ass_RL$ and we have the exact sequence $0 \rightarrow L_\fkp \rightarrow R_\fkp \xrightarrow{\varphi_\fkp} (\K_R)_\fkp \rightarrow C_\fkp \rightarrow 0$ of $R_\fkp$-modules. Since $\fkp \in \Ass R$ and $\dim_RC < d$, we get $C_\fkp = (0)$, whence $\varphi_\fkp$ is an epimorphism. Therefore, because  $(\K_R)_\fkp \cong \K_{R_\fkp}$ (\cite[Korollar 6.2]{HK}) and $\ell_{R_\fkp}(\K_{R_\fkp}) = \ell_{R_\fkp}(R_\fkp)$, $\varphi_\fkp$ is necessarily an isomorphism and hence $L_\fkp = (0)$, which is impossible. Thus $L= (0)$ and $\varphi$ is injective. The second assertion is clear, because $R_\fkp \cong (\K_R)_\fkp \cong \K_{R_\fkp}$ for every $\fkp \in \Ass R$.

(2) Let $\fkp \in \Supp_RC$ with $\dim R/\fkp = \dim_RC$. If $\dim_RC =d$, then $\fkp \in \Ass R$ and hence $\ell_{R_\fkp}(R_\fkp) = \ell_{R_\fkp}(\K_{R_\fkp}) = \ell_{R_\fkp}((\K_R)_\fkp)$, so that $C_\fkp = (0)$, because the homomorphism $\varphi_\fkp : R_\fkp \to (\K_R)_\fkp$ is injective, which  is impossible. Hence $\dim_RC < d$, while we get $\depth_RC \ge d-1$, applying the depth lemma to the exact sequence $0 \to R \to \K_R \to C \to 0$. Thus $C$ is a Cohen-Macaulay $R$-module of dimension $d - 1$.

(3) This is clear. 
\end{proof}

\begin{rem}\label{3.2}
Suppose that $d > 0$ and that $\rmQ (R)$ is a Gorenstein ring. Then $R$ contains an ideal $I ~(\ne R)$ such that $I \cong \K_R$ as an $R$-module. When this is the case, $R/I$ is a Gorenstein local ring of dimension $d-1$ (\cite[Satz 6.21]{HK}).
\end{rem}

We are now ready to define almost Gorenstein local rings.

\begin{defn}\label{3.3}
Let $(R,\fkm)$ be a Cohen-Macaulay local ring which possesses the canonical module $\K_R$. Then $R$ is said to be an almost Gorenstein local ring, if there is an exact sequence $0 \to R \to \K_R \to C \to 0$ of $R$-modules such that $\mu_R(C) = \e_\fkm^0(C)$.
\end{defn}

In Definition \ref{3.3}, if $C \ne (0)$, then $C$ is an Ulrich $R$-module of dimension $d-1$ (Definition \ref{2.1} and Lemma \ref{3.1} (2)). Note that every Gorenstein local ring $R$ is almost Gorenstein (take $C = (0)$) and that $R$ is a Gorenstein local ring, if $R$ is an almost Gorenstein local ring of dimension $0$ (Lemma \ref{3.1} (3)).

Almost Gorenstein local rings were defined in 1997 by Barucci and Fr\"oberg \cite{BF} in the case where $R$ is analytically unramified and $\dim R = 1$. Goto, Matsuoka and Phuong \cite{GMP} extended the notion to the case where $R$ is not necessarily analytically unramified but still of dimension one. Our definition \ref{3.3} is a higher-dimensional proposal. In fact we have the following.

\begin{prop}\label{3.4}
Let $(R,\fkm)$ be a one-dimensional Cohen-Macaulay local ring. If $R$ is an almost Gorenstein local ring in the sense of Definition $\ref{3.3}$, then $R$ is an almost Gorenstein local ring in the sense of \cite[Definition 3.1]{GMP}.
The converse also holds, when $R/\fkm$ is infinite.
\end{prop}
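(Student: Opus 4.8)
The plan is to reduce the statement to a comparison between intrinsic embeddings of $\K_R$ and the concrete condition on canonical ideals used in \cite{GMP}. Since every Gorenstein local ring is almost Gorenstein in both senses (take $C=(0)$ in Definition \ref{3.3}, and recall that Gorenstein local rings are almost Gorenstein in the sense of \cite[Definition 3.1]{GMP}), I would assume throughout that $R$ is not Gorenstein.

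First I would prove ``Definition \ref{3.3} $\Rightarrow$ \cite[Definition 3.1]{GMP}'', which requires no hypothesis on $R/\fkm$. Starting from an exact sequence $0\to R\xrightarrow{\varphi}\K_R\to C\to 0$ with $\mu_R(C)=\e_\fkm^0(C)$, note that $C\ne(0)$ --- otherwise $\varphi$ is an isomorphism and $R$ would be Gorenstein. Then by Lemma \ref{3.1} (2) the module $C$ is Cohen--Macaulay of dimension $d-1=0$, hence an Ulrich $R$-module (Definition \ref{2.1}), so Proposition \ref{2.2} (1) yields $\fkm C=(0)$. As $\dim_RC<d$, Lemma \ref{3.1} (1) shows $\rmQ(R)$ is Gorenstein, and Remark \ref{3.2} furnishes an ideal $I\subsetneq R$ with $I\cong\K_R$. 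Composing $\varphi$ with an isomorphism $\K_R\xrightarrow{\sim}I$ produces an exact sequence $0\to R\xrightarrow{\,a\,}I\to C\to 0$, where $a\in I$ is the image of $\varphi(1)$; injectivity of $r\mapsto ra$ forces $\ann_R(a)=(0)$, so $a$ is a non-zerodivisor and hence a parameter of $R$. Therefore $C\cong I/(a)$ and $\fkm I\subseteq(a)$, that is, the fractional ideal $K:=a^{-1}I$ satisfies $R\subseteq K\cong\K_R$ and $\fkm K\subseteq R$ --- precisely the condition of \cite[Definition 3.1]{GMP}.

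For the converse I would assume $R/\fkm$ infinite and $R$ almost Gorenstein in the sense of \cite[Definition 3.1]{GMP}, and extract from that definition a canonical ideal $I\subsetneq R$ together with a non-zerodivisor $a\in I$ with $\fkm I\subseteq(a)$ (equivalently, a fractional canonical ideal $K=a^{-1}I$ with $R\subseteq K$ and $\fkm K\subseteq R$). This extraction is where infiniteness of $R/\fkm$ enters: passing from the formulation of \cite[Definition 3.1]{GMP} --- phrased through a distinguished fractional canonical ideal with $R\subseteq K\subseteq\overline{R}$, resp.\ through a minimal reduction --- to such a clean choice of $a$ needs sufficiently general parameters to be available, and when $R/\fkm$ is too small the implication genuinely fails (see Remark \ref{3.4'} and \cite[Remark 2.10]{GMP}). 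Granting this, the inclusion $R\subseteq K=a^{-1}I$ gives an exact sequence $0\to R\to K\to K/R\to 0$ with $\fkm\cdot(K/R)=(0)$; since $K/R$ is a nonzero finite-length $R$-module annihilated by $\fkm$ we get $\mu_R(K/R)=\ell_R(K/R)=\e_\fkm^0(K/R)$, and composing with $K\cong I\cong\K_R$ turns this into an exact sequence as in Definition \ref{3.3}.

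I expect the forward implication to be essentially formal once Lemma \ref{3.1}, Definition \ref{2.1} and Proposition \ref{2.2} (1) are invoked. The main obstacle is the converse: one must match the abstract embedding of $\K_R$ with the concrete data (fractional canonical ideal and parameter) underlying \cite[Definition 3.1]{GMP}, and identify precisely why the matching can break down over a small residue field --- the obstruction being the possible non-existence of a parameter $a\in I$ realizing the required embedding, which is exactly the phenomenon recorded in Remark \ref{3.4'}.
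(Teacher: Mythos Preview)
Your argument is correct and follows essentially the same route as the paper. One small point: in the forward direction the paper is slightly more explicit---after obtaining $\fkm I\subseteq(f)$ it observes the dichotomy $\fkm I=\fkm(f)$ (so $(f)$ is a reduction of $I$ and \cite[Theorem 3.11]{GMP} applies) versus $\fkm I=(f)$ (forcing $R$ a DVR); your standing non-Gorenstein hypothesis rules out the second case, so your $K=a^{-1}I$ automatically lands in $\overline{R}$, but you should say so rather than asserting that $\fkm K\subseteq R$ is literally \cite[Definition 3.1]{GMP} (it is the characterization of \cite[Theorem 3.11]{GMP}).
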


\begin{proof}
Firstly, assume that $R$ is an almost Gorenstein local ring in the sense of Definition \ref{3.3} and choose an exact sequence $0 \to R \overset{\varphi}{\longrightarrow} I \to C \to 0$ of $R$-modules so that $\mu_R(C) = \e_\fkm^0(C)$, where $I ~(\ne R)$ is an ideal of $R$ such that $I \cong \K_R$ as an $R$-module (Lemma \ref{3.1} (1) and Remark \ref{3.2}). Then, because $\fkm C = (0)$ by Proposition \ref{2.2} (1), we get $\fkm I \subseteq (f)$, where $f = \varphi (1)$. We set  $Q = (f)$. Then, since $\fkm Q \subseteq \fkm I \subseteq Q$, we have either $\fkm Q = \fkm I$ or $\fkm I = Q$. If $\fkm I = \fkm Q$, then $Q$ is a reduction of $I$, so that $R$ is an almost Gorenstein local ring in the sense of \cite[Definition 3.1]{GMP} (see \cite[Theorem 3.11]{GMP} also). If $\fkm I = Q$, then the maximal ideal $\fkm$ of $R$ is invertible, so that $R$ is a discrete valuation ring. Hence in any case, $R$ is an almost Gorenstein local ring in the sense of \cite{GMP}.

Conversely, assume that $R$ is an almost Gorenstein local ring in the sense of \cite{GMP} and that $R/\fkm$ is infinite. Let us choose an $R$-submodule $K$ of $\rmQ (R)$ such that $R \subseteq K \subseteq \overline{R}$  and $K \cong \K_R$ as an $R$-module, where $\overline{R}$ denotes the integral closure of $R$ in $\rmQ (R)$. Then by \cite[Theorem 3.11]{GMP}, we get $\fkm K \subseteq R$, and therefore $R$ is an almost Gorenstein local ring in the sense of Definition \ref{3.3} (use Proposition \ref{2.2} (1)).
\end{proof}

\begin{rem}\label{3.4'}
When the field $R/\fkm$ is finite, $R$ is not necessarily an almost Gorenstein local ring in the sense of Definition \ref{3.3}, even though $R$ is an almost Gorenstein local ring in the sense of \cite{GMP}. The ring $$R = k[[X, Y, Z]]/[(X,Y)\cap (Y, Z) \cap (Z,X)]$$ is a typical example, where $k[[X,Y, Z]]$ is the formal power series ring over $k = \Bbb Z/(2)$ (\cite[Remark 2.10]{GMP}). 
This example also shows that $R$ is not necessarily an almost Gorenstein local ring in the sense of Definition \ref{3.3}, even if it becomes an almost Gorenstein local ring in the sense of Definition \ref{3.3}, after enlarging the residue class field $R/\fkm$ of $R$. 
\end{rem}

We note the following.

\begin{prop}\label{6131104}
Suppose that $R$ is not a Gorenstein ring and consider the following two conditions.
\begin{enumerate}[\rm(1)]
\item
$R$ is an almost Gorenstein local ring.
\item
There exist an exact sequence
$
0 \to R \to \rmK_R \to C \to 0
$
of $R$-modules, a non-zerodivisor $f \in (0):_RC$, and a parameter ideal $\fkq ~(\subsetneq R)$ for $R/(f)$ such that $\m C=\fkq C$.
\end{enumerate}
Then the implication {\rm(2)} $\Rightarrow$ {\rm(1)} holds. If $R/\fkm$ is infinite, the reverse implication {\rm(1)} $\Rightarrow$ {\rm(2)} is also true.
\end{prop}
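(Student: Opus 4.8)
The plan is to prove the two implications separately, with Proposition~\ref{2.2} as the main tool in each. In both directions one first records that $C\ne(0)$: otherwise $R\cong\K_R$ would be Gorenstein, contrary to hypothesis. Hence, $\varphi$ being injective, Lemma~\ref{3.1}(2) shows that $C$ is a Cohen--Macaulay $R$-module with $\dim_RC=d-1$.

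For $(2)\Rightarrow(1)$: since $f$ is a non-zerodivisor and $R$ is Cohen--Macaulay, $R/(f)$ is Cohen--Macaulay of dimension $d-1$, so the parameter ideal $\fkq$ is generated by a system of parameters $g_1,\dots,g_{d-1}$ of $R/(f)$, lifted to elements of $\fkm$. Because $fC=(0)$, the hypothesis $\m C=\fkq C$ reads $\fkm C=(g_1,\dots,g_{d-1})C$, and as $d-1=\dim_RC$ and $C$ is Cohen--Macaulay, Proposition~\ref{2.2}(2) applies to give that $C$ is an Ulrich $R$-module, i.e. $\mu_R(C)=\e_\fkm^0(C)$. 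The given exact sequence then exhibits $R$ as an almost Gorenstein local ring; no assumption on $R/\fkm$ is used here.

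For $(1)\Rightarrow(2)$, assume $R/\fkm$ is infinite and fix an exact sequence $0\to R\to\K_R\to C\to0$ with $\mu_R(C)=\e_\fkm^0(C)$, so $C$ is an Ulrich $R$-module of dimension $d-1$. Since $R$ is Cohen--Macaulay it is unmixed, hence every $\fkp\in\Ass R$ satisfies $\dim R/\fkp=d>d-1=\dim R/[(0):_RC]$; by prime avoidance $(0):_RC$ contains a non-zerodivisor $f$ of $R$, and $f\in\fkm$ because $C\ne(0)$. Put $\bar R=R/(f)$: it is Cohen--Macaulay of dimension $d-1$ with the same infinite residue field, and $C$ is again an Ulrich module over $\bar R$ of dimension $d-1$ (the number of generators and the multiplicity are unchanged since $f$ kills $C$). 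Using that $\bar R/\fkm\bar R$ is infinite, choose $g_1,\dots,g_{d-1}\in\fkm\bar R$ forming a superficial sequence simultaneously for $\bar R$ and for $C$, and set $\fkq=(g_1,\dots,g_{d-1})$ after lifting to $R$. Each $g_i$ is a non-zerodivisor on the successive Cohen--Macaulay quotient $\bar R/(g_1,\dots,g_{i-1})$, so $g_1,\dots,g_{d-1}$ is a system of parameters of $\bar R$, and thus $\fkq~(\subsetneq R)$ is a parameter ideal for $R/(f)$. Iterating Proposition~\ref{2.2}(4) shows $C/(g_1,\dots,g_i)C$ is Ulrich over $\bar R/(g_1,\dots,g_i)$ for every $0\le i\le d-1$; the case $i=d-1$ gives a zero-dimensional Ulrich module, so Proposition~\ref{2.2}(1) yields $\fkm C\subseteq(g_1,\dots,g_{d-1})C$, whence $\fkm C=\fkq C$. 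Together with $f\in(0):_RC$ this is exactly the data in $(2)$.

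The crux is the last step of $(1)\Rightarrow(2)$. Proposition~\ref{2.2}(2) by itself only supplies elements whose images in $R/[(0):_RC]$ generate a minimal reduction of that ring's maximal ideal, and such elements need not form a system of parameters of $R/(f)$ --- the annihilator of $C$ may be a large ideal contained in a minimal prime --- so $\fkq$ need not be a parameter ideal. Building the $g_i$ as a superficial sequence for $\bar R$ \emph{and} $C$ at once forces them to be a system of parameters of $\bar R$ while still making $C/\fkq C$ a zero-dimensional Ulrich module, and this is precisely where infiniteness of $R/\fkm$ enters. The remaining verifications (prime avoidance, the depth lemma, the behaviour of $\mu$ and of multiplicity under $R\to R/(f)$) are routine and already packaged in Section~\ref{ulmd}.
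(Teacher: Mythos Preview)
Your proof is correct. The implication $(2)\Rightarrow(1)$ matches the paper's argument essentially verbatim.

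For $(1)\Rightarrow(2)$ you take a genuinely different route from the paper, and your ``crux'' paragraph correctly identifies a real issue: elements furnished by Proposition~\ref{2.2}(2) (a minimal reduction in $R/[(0):_RC]$) need not lift to a system of parameters of $R/(f)$. Your remedy---building a simultaneous superficial sequence for $\bar R$ and $C$ and iterating Proposition~\ref{2.2}(4)---works, but is more elaborate than necessary. The paper sidesteps the difficulty by reversing the direction: choose $f_2,\ldots,f_d\in\fkm$ whose images in $R/(f)$ generate a minimal reduction of $\fkm/(f)$ (possible since $R/\fkm$ is infinite). These are automatically a system of parameters of $R/(f)$, so $\fkq=(f_2,\ldots,f_d)$ is a parameter ideal there. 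Now the surjection $R/(f)\twoheadrightarrow R/[(0):_RC]$ carries a reduction of $\fkm/(f)$ to a reduction of $\fkm/[(0):_RC]$; since both rings have dimension $d-1$ and the image is generated by $d-1$ elements, it is again a minimal reduction, and Proposition~\ref{2.2}(2) gives $\fkm C=\fkq C$ directly. So the paper pushes \emph{down} along $R/(f)\to R/[(0):_RC]$ rather than trying to lift up, which makes the superficial-sequence machinery unnecessary. Your approach has the virtue of being self-contained via Proposition~\ref{2.2}(4), while the paper's is shorter and uses only the functoriality of reductions under surjections.
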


\begin{proof}
(2) $\Rightarrow$ (1)  We have $C \ne (0)$, so that by Lemma \ref{3.1} (2) $C$ is a Cohen-Macaulay $R$-module of dimension $d-1$. Hence Proposition \ref{2.2} (2) shows $C$ is an Ulrich $R$-module, because $\fkm C = \fkq C$.

(1) $\Rightarrow$ (2)  We take  an exact sequence $0 \to R \to \rmK_R \to C \to 0$ of $R$-modules such that $C$ is an Ulrich $R$-module of dimension $d-1$. Hence $[(0):_RC]$ contains a non-zerodivisor $f$ of $R$. We choose elements $\{f_i\}_{2 \le i \le d}$ of $\fkm$ so that their images in $R/(f)$ generate a minimal reduction of the maximal ideal of $R/(f)$. We then have $\fkm C = \fkq C$ by Proposition \ref{2.2} (2), because the images of $\{f_i\}_{2 \le i \le d}$ in $R/[(0):_RC]$ also generate a minimal reduction of the maximal ideal of $R/[(0):_RC]$. 
\end{proof}

Let us now explore basic properties of almost Gorenstein local rings. We begin with the non-zerodivisor characterization.

\begin{thm}\label{3.9}
Let $f \in \fkm$ and assume that $f$ is $R$-regular.
\begin{enumerate}[\rm(1)]
\item If $R/(f)$ is an almost Gorenstein local ring, then $R$ is an almost Gorenstein local ring. If $R$ is moreover not a Gorenstein ring, then $f\not\in \fkm^2$.
\item Conversely, suppose that $R$ is an almost Gorenstein local ring which is  not a Gorenstein ring. Consider the exact sequence
$$0 \to R \to \rmK_R \to C \to 0$$
of $R$-modules such that  $\mu_R(C) = \e_\fkm^0(C)$. If $f$ is  superficial for $C$ with respect to $\fkm$ and $d \ge 2$, then $R/(f)$ is an almost Gorenstein local ring.
\end{enumerate}
\end{thm}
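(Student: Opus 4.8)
The plan is to transport the defining short exact sequence across the quotient by $f$ in both directions, controlling the cokernel with the Ulrich-module calculus of Section~\ref{ulmd}. Since $f$ is $R$-regular it is a parameter, hence regular on the maximal Cohen--Macaulay module $\K_R$; consequently $R/(f)$ is Cohen--Macaulay and $\K_{R/(f)}\cong\K_R/f\K_R$, a standard fact I will use freely.

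\emph{Proof of (1).} Suppose $R/(f)$ is almost Gorenstein, witnessed by an exact sequence $0\to R/(f)\xrightarrow{\varphi}\K_{R/(f)}\to\overline{C}\to 0$. Identifying $\K_{R/(f)}$ with $\K_R/f\K_R$, choose a preimage in $\K_R$ of $\varphi(\overline 1)$ and let $\psi\colon R\to\K_R$ be the $R$-homomorphism sending $1$ to it, so that $\psi$ reduces modulo $f$ to $\varphi$. Applying the snake lemma to the map from $0\to R\xrightarrow{f}R\to R/(f)\to 0$ to $0\to\K_R\xrightarrow{f}\K_R\to\K_{R/(f)}\to 0$ induced by $(\psi,\psi,\varphi)$, and using that $\varphi$ is injective, I find that multiplication by $f$ on $\Ker\psi$ is surjective, so $\Ker\psi=0$ by Nakayama, while the snake sequence also gives an exact sequence $0\to C\xrightarrow{f}C\to\overline{C}\to 0$ with $C:=\Coker\psi$. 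If $\overline C=0$, then $C=0$ by Nakayama, so $R\cong\K_R$ is Gorenstein and there is nothing to prove. If $\overline C\ne 0$, then $\overline C$ is an Ulrich $R/(f)$-module and, being annihilated by $f$, an Ulrich $R$-module; since $f$ is $C$-regular with $C/fC\cong\overline C$, Proposition \ref{2.2} (5) shows $C$ is an Ulrich $R$-module and $f\notin\fkm^2$, so $0\to R\xrightarrow{\psi}\K_R\to C\to 0$ exhibits $R$ as almost Gorenstein. Finally, if $R$ is not Gorenstein then $C\ne 0$, hence $\overline C=C/fC\ne 0$ by Nakayama, so we are in the second case and $f\notin\fkm^2$.

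\emph{Proof of (2).} As $R$ is not Gorenstein, the module $C$ in the given sequence $0\to R\to\K_R\to C\to 0$ is nonzero, hence an Ulrich $R$-module of dimension $d-1\ge 1$. Since $f$ is superficial for $C$ with respect to $\fkm$ and $\dim_RC\ge 1$, the element $f$ is $C$-regular and, by Proposition \ref{2.2} (4), $C/fC$ is an Ulrich $R$-module of dimension $d-2$. From the free resolution $0\to R\xrightarrow{f}R\to R/(f)\to 0$ and the $C$-regularity of $f$ we get $\Tor_1^R(C,R/(f))=0$, so tensoring $0\to R\to\K_R\to C\to 0$ with $R/(f)$ stays exact and yields $0\to R/(f)\to\K_R/f\K_R\to C/fC\to 0$, that is $0\to R/(f)\to\K_{R/(f)}\to C/fC\to 0$. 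Now $C/fC$ is annihilated by $f$, hence is an $R/(f)$-module, Cohen--Macaulay over $R/(f)$, with $\mu_{R/(f)}(C/fC)=\mu_R(C/fC)$ and $\e_{\fkm/(f)}^0(C/fC)=\e_\fkm^0(C/fC)$; as $C/fC$ is Ulrich over $R$ these two numbers coincide, so $C/fC$ is an Ulrich $R/(f)$-module and the displayed sequence shows $R/(f)$ is almost Gorenstein.

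The routine ingredients---the shape of $\K_{R/(f)}$, the two Nakayama arguments, and the vanishing $\Tor_1^R(C,R/(f))=0$---are standard. The only point that needs genuine care is the change of base ring in the Ulrich condition: in (1) the cokernel $\overline C$ is first produced over $R/(f)$ and must be recognized as Ulrich over $R$ before Proposition \ref{2.2} (5) applies, and in (2) the Ulrich $R$-module $C/fC$ must be recognized as Ulrich over $R/(f)$; in both directions this is legitimate precisely because the relevant module is killed by $f$, so dimension, depth, minimal number of generators and $\fkm$-multiplicity are the same over $R$ and over $R/(f)$. I anticipate no deeper obstacle.
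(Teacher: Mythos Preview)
Your proof is correct and follows essentially the same strategy as the paper: lift the witnessing map modulo $f$ to a map $R\to\K_R$, identify the cokernel modulo $f$, and invoke Proposition~\ref{2.2}(4),(5) for the Ulrich bookkeeping. The only cosmetic difference is in part~(1): you obtain injectivity of $\psi$ via the snake lemma and Nakayama, whereas the paper argues that $\dim_R(C/fC)=d-2$ forces $\dim_RC<d$ and then appeals to Lemma~\ref{3.1}(1); both routes are equally short.
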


\begin{proof} We set $\overline{R} = R/(f)$. Remember that $\rmK_R/f\rmK_R = \rmK_{\overline{R}}$ (\cite[Korollar 6.3]{HK}), because $f$ is $R$-regular.  

(1) We choose an exact sequence
$0 \to \overline{R} \overset{\psi}{\longrightarrow} \K_{\overline{R}}\to D \to 0$
of $\overline{R}$-modules so that $D$ is an Ulrich $\overline{R}$-module of dimension $d -2$. Let $\xi \in \K_R$ such that $\psi (1) = \overline{\xi}$, where $\overline{\xi}$ denotes the image of $\xi$ in $\K_{\overline{R}} = \K_R/f\K_R$. We now consider the exact sequence $$R \overset{\varphi}{\longrightarrow} \K_R \to C \to 0$$
of $R$-modules with $\varphi (1) = \xi$. Then, because $\psi = \overline{R} \otimes_R\varphi$, we get $D = C/fC$, whence  $\dim_RC < d$, because $\dim_RD = d-2$. Consequently, by Lemma \ref{3.1} (1) the homomorphism $\varphi$ is injective, and hence by Lemma \ref{3.1} (2), $C$ is a Cohen-Macaulay $R$-module of dimension $d-1$.  Therefore, $f$ is $C$-regular, so that  by Proposition \ref{2.2} (5), $C$ is an Ulrich $R$-module and $f \not\in \fkm^2$. Hence $R$ is almost Gorenstein. 

(2) The element $f$ is $C$-regular, because $f$ is superficial for $C$ with respect to $\fkm$ and $\dim_RC = d-1 > 0$. Therefore the exact sequence $0 \to R \to \rmK_R \to C \to 0$ gives rise to the exact sequence of $\overline{R}$-modules
$$0 \to \overline{R} \to \rmK_{\overline{R}} \to C/fC \to 0,$$
where $C/fC$ is an Ulrich $\overline{R}$-module by Proposition \ref{2.2} (4). Hence $\overline{R}$ is almost Gorenstein.
\end{proof}

The following is a direct consequence of Theorem \ref{3.9} (1).

\begin{cor}\label{3.10}
Suppose that $d > 0$. If $R/(f)$ is an almost Gorenstein local ring for every non-zerodivisor $f \in \fkm$, then $R$ is a Gorenstein local ring.
\end{cor}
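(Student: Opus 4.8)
The plan is to deduce everything from Theorem \ref{3.9}~(1) together with the zero-dimensional case. First I would reduce to the case $d=1$. Suppose $d>0$ and that $R/(f)$ is almost Gorenstein for every non-zerodivisor $f\in\fkm$. Pick any non-zerodivisor $f\in\fkm$; then $R/(f)$ is almost Gorenstein, and if $\dim R/(f)=d-1>0$, every non-zerodivisor of $\fkm/(f)$ lifts (by prime avoidance, since the associated primes of $R/(f)$ other than $\fkm/(f)$ are finitely many) to a non-zerodivisor of $R$ lying in $\fkm$; so the hypothesis passes to $R/(f)$. Hence by an obvious induction on $d$ it suffices to treat $d=1$. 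Alternatively, and more cleanly, I note that $R$ is Gorenstein iff $R/(f)$ is Gorenstein for one (equivalently every) non-zerodivisor $f\in\fkm$, so an induction that at each step produces a non-Gorenstein quotient will force a contradiction; this is the route I will actually take.

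So assume, for contradiction, that $R$ is not Gorenstein. By Theorem \ref{3.9}~(1), for a non-zerodivisor $f\in\fkm$, if $R/(f)$ is almost Gorenstein and $R$ is not Gorenstein, then $f\notin\fkm^2$. Applying this to \emph{every} non-zerodivisor $f\in\fkm$ — and there is at least one, since $\fkm$ is not contained in the (finitely many) associated primes of $R$, none of which is $\fkm$ because $\depth R=d>0$ — I would conclude that no non-zerodivisor of $R$ lies in $\fkm^2$. But $\fkm^2$ itself contains a non-zerodivisor: again by prime avoidance, since $\fkm^2\not\subseteq\fkp$ for any $\fkp\in\Ass R$ (as $\fkm\not\subseteq\fkp$ and $\fkp$ is prime), $\fkm^2$ is not contained in the union of the associated primes, so it contains an $R$-regular element. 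This is the contradiction, so $R$ must be Gorenstein. The key geometric input is really the ``$f\notin\fkm^2$'' clause of Theorem \ref{3.9}~(1), which traces back to Proposition \ref{2.2}~(5).

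I do not expect any serious obstacle; the only points needing a word of care are: (i) verifying that $\fkm$ and $\fkm^2$ each contain a non-zerodivisor, which is the standard prime-avoidance argument using $\depth R\ge 1$ (so $\fkm\notin\Ass R$, hence $\fkm^2\not\subseteq\fkp$ for $\fkp\in\Ass R$); and (ii) making sure Theorem \ref{3.9}~(1) is being quoted in the right direction — it takes ``$R/(f)$ almost Gorenstein'' as input and yields ``$R$ almost Gorenstein and, if $R$ not Gorenstein, $f\notin\fkm^2$'', which is exactly what is needed. One subtlety worth flagging: the statement does not assume $R/\fkm$ infinite, but Theorem \ref{3.9}~(1) has no such hypothesis either, so the argument goes through verbatim over any residue field.

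Thus the proof is short: assume $R$ not Gorenstein, produce an $R$-regular element $f\in\fkm^2$ by prime avoidance, observe $R/(f)$ is almost Gorenstein by hypothesis, and invoke Theorem \ref{3.9}~(1) to get $f\notin\fkm^2$, a contradiction; therefore $R$ is Gorenstein.
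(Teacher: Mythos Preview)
Your proof is correct and follows exactly the approach the paper intends: the paper merely states that the corollary is a direct consequence of Theorem~\ref{3.9}~(1), and your argument---assume $R$ is not Gorenstein, pick an $R$-regular element $f\in\fkm^2$ by prime avoidance, and contradict the clause $f\notin\fkm^2$---is precisely that direct consequence spelled out. The opening discussion about reducing to $d=1$ is unnecessary (as you yourself note), but the final argument is clean and complete.
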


We are now interested in the question of how the almost Gorenstein property is inherited under flat local homomorphisms. Let us begin with the following. Notice that the converse of the first assertion of Theorem \ref{3.5} is not true in general, unless $R/\fkm$ is infinite (Remark \ref{3.4'}).

\begin{thm}\label{3.5}
Let $(S, \fkn)$ be a Noetherian local ring and let $\varphi : R \to S$ be a flat local homomorphism such that $S/\fkm S$ is a regular local ring. Then $S$ is an almost Gorenstein local ring, if $R$ is an almost Gorenstein local ring. The converse also holds, when $R/\fkm$ is infinite.
\end{thm}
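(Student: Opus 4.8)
The starting observation is that, since $S/\fkm S$ is regular local, its canonical module is free of rank one; hence the base change formula for canonical modules along a flat local homomorphism (\cite{HK}) gives $\K_S\cong S\otimes_R\K_R$, and in particular $S$ is Cohen--Macaulay with canonical module. For the direction ``$R$ almost Gorenstein $\Rightarrow$ $S$ almost Gorenstein'' the plan is simply to apply the exact functor $S\otimes_R(-)$ to an exact sequence $0\to R\to\K_R\to C\to0$ with $\mu_R(C)=\e_\fkm^0(C)$: this yields $0\to S\to\K_S\to S\otimes_RC\to0$, and $S\otimes_RC$ is Ulrich over $S$ (or zero) by Proposition \ref{2.2} (3), so that $\mu_S(S\otimes_RC)=\e_\fkn^0(S\otimes_RC)$. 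This direction is therefore immediate.

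For the converse I would induct on $\dim S$. First the Gorenstein case: if $S$ is Gorenstein then $S\otimes_R\K_R\cong\K_S\cong S$ is free of rank one over $S$, so by faithfully flat descent $\K_R$ is free of rank one and $R$ is Gorenstein, hence almost Gorenstein. So assume $S$ --- and, by the forward implication, also $R$ --- is not Gorenstein; then $d=\dim R\ge1$ and $\dim S\ge1$, and we may fix an exact sequence $0\to S\to\K_S\to C\to0$ with $C$ an Ulrich $S$-module of dimension $\dim S-1$ (Lemma \ref{3.1}).

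Now suppose $\dim S\ge2$ and put $n=\dim(S/\fkm S)$. If $n\ge1$, I would pick $t\in\fkn$ that is $S$-regular, superficial for $C$ with respect to $\fkn$, and not contained in $\fkm S+\fkn^{2}$; this is possible because each condition is governed by finitely many proper subspaces of $\fkn/\fkn^{2}$ (here $\fkm S+\fkn^{2}\subsetneq\fkn$, as $S/\fkm S$ has positive dimension) and $S/\fkn$ is infinite. Then $S/tS$ is flat over $R$ with closed fiber the regular local ring $(S/\fkm S)/(\overline t)$ of dimension $n-1$, $\K_{S/tS}=\K_S/t\K_S$, and $S/tS$ is almost Gorenstein by Theorem \ref{3.9} (2); the inductive hypothesis for $R\to S/tS$ then gives that $R$ is almost Gorenstein. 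If instead $n=0$, i.e.\ $\fkm S=\fkn$, then $\fkn^{k}C=\fkm^{k}C$ for all $k$, so a general $t\in\fkm$ that is $R$-regular and superficial for the $R$-module $C$ is automatically $S$-regular and superficial for $C$ with respect to $\fkn$; Theorem \ref{3.9} (2) again makes $S/tS$ almost Gorenstein, $R/tR\to S/tS$ is flat local with field closed fiber and smaller base dimension, so $R/tR$ is almost Gorenstein by induction, and Theorem \ref{3.9} (1) lifts this to $R$. As for the base cases $\dim S\le1$: $\dim S=0$ forces $S$ Gorenstein, already handled; if $\dim S=1$ and $d=0$ then $R$ is Artinian with $\rmQ(R)=R$, and since $\rmQ(S)$ is Gorenstein by Lemma \ref{3.1} (1) one deduces $\rmQ(R)$ Gorenstein --- hence $R$ Gorenstein --- by localizing $\K_S\cong S\otimes_R\K_R$ at the minimal primes of $S$ (which contract to those of $R$) and descending; finally if $\dim S=1=d$, so $\fkm S=\fkn$, Proposition \ref{3.4} reduces the claim to the one-dimensional flat-descent statement within the theory of \cite{GMP}.

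The step I expect to be the main obstacle is the inductive reduction: one must choose $t$ that is simultaneously $S$-regular, superficial for $C$, and (when $n\ge1$) outside $\fkm S+\fkn^{2}$, so that $S/tS$ remains almost Gorenstein via Theorem \ref{3.9} (2) while still lying flat over $R$ with a regular closed fiber of one smaller dimension; keeping all of these genericity requirements compatible is the delicate part. The one-dimensional base case $\dim S=1=d$ is the other sensitive point, as it rests on the structure theory of \cite{GMP} rather than on the material developed so far, and the auxiliary descent ``$\rmQ(R)$ Gorenstein $\iff$ $\rmQ(S)$ Gorenstein'' needed in the Artinian base case should be verified via flat descent of freeness of the localized canonical module at the minimal primes.
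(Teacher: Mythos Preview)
Your proposal is correct and follows essentially the same route as the paper: tensor up for the forward direction, and for the converse reduce $\dim S$ by a superficial element chosen compatibly with the flat map and the regular fiber (using Theorem~\ref{3.9}), bottoming out in the one-dimensional case via \cite{GMP}. The only cosmetic differences are that the paper frames the converse as a minimal-counterexample argument on $\dim S$ and disposes of the Artinian base $d=0$ immediately upon reaching $n>0$ (by localizing at the minimal prime $\fkm S$), whereas you defer that localization to the base case $\dim S=1$; note also that your early assertion ``then $d\ge 1$'' is not yet justified where you place it---it is exactly the content of your later $\dim S=1$, $d=0$ base-case argument---so you should either move that justification forward or drop the claim until it is needed.
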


\begin{proof} Suppose that $R$ is an almost Gorenstein local ring and consider an exact sequence $0 \to R \to \K_R \to C \to 0$ of $R$-modules such that $\mu_R(C) = \rme_\fkm^0(C)$. If $C = (0)$, then $R$ is a Gorenstein local ring, so that $S$ is a Gorenstein local ring. Suppose $C \ne (0)$. Then $S \otimes_RC$ is an Ulrich $S$-module by Proposition \ref{2.2} (2), since $C$ is an Ulrich $R$-module. Besides, $\K_S \cong S \otimes_R\K_R$ as an $S$-module (\cite[Satz 6.14]{HK}), since $S/\fkm S$ is a Gorenstein local ring. Thus $S$ is almost Gorenstein, thanks to the exact sequence  of $S$-modules
$$
0 \to S \to \K_S \to S \otimes_RC \to 0.
$$

Suppose now that $R/\fkm$ is infinite and $S$ is an almost Gorenstein local ring.  Let $n =\dim S/\fkm S$. We have to  show that $R$ is an almost Gorenstein local ring. Assume the contrary and choose a counterexample $S$ so that $\dim S = n+ d$ is as small as possible. Then $S$ is not a Gorenstein ring, so that $\dim S = n+d > 0$. Choose an exact sequence
$$0 \to S \to \rmK_S \to D \to 0$$
of $S$-modules with $\mu_S(D) = \rme^0_\fkn(D)$. Suppose $n > 0$. If $d > 0$, then we take an element $g \in \fkn$ so that $g$ is superficial for $D$ with respect to $\fkn$ and $\overline{g}$ is a part of a regular system of parameters of $S/\fkm S$, where $\overline{g}$ denotes the image of $g$ in $S/\fkm S$.  Then $g$ is $S$-regular and the composite homomorphism $R \overset{\varphi}{\to} S \to S/gS$ is flat. Therefore the minimality of $n+d$ forces $R$ to be an almost Gorenstein local ring, because $S/gS$ is an almost Gorenstein local ring by Theorem \ref{3.9} (2). Thus $d=0$ and $\fkp = \fkm S$ is a minimal prime ideal of $S$. Hence the induced  flat local homomorphism $R \overset{\varphi}{\to} S \to S_\fkp$ shows that $R$ is a Gorenstein ring, because $S_\fkp$ is a Gorenstein ring (Lemma \ref{3.1} (1)). Consequently $n = 0$ and $\fkn = \fkm S$.

Suppose now that $d \ge 2$. Then because $\fkn = \fkm S$, we may choose an element $f \in \fkm$ so that $f$ is $R$-regular and $\varphi(f)$ is superficial for $D$ with respect to $\fkn$.  Then by Theorem \ref{3.9} (2) $S/fS$ is an almost Gorenstein local ring, while the homomorphism $R/fR {\to} S/fS$ is flat. Consequently, $R/fR$ is an almost Gorenstein local ring, so that  by Theorem \ref{3.9} (1) $R$ is an almost Gorenstein local ring.

Thus $d=1$ and $\fkn = \fkm S$, so that $R$ is an almost Gorenstein local ring by \cite[Proposition 3.3]{GMP}, which is the required contradiction.
\end{proof}

Let $\rmr (R)= \ell_R(\Ext_R^d(R/\fkm, R))$ denote the Cohen-Macaulay type of $R$.




\begin{cor}\label{3.8}
Let $R$ be an almost Gorenstein local ring and choose an exact sequence  $0 \to R \overset{\varphi}{\longrightarrow} \K_R \to C \to 0$
of $R$-modules so that $\mu_R(C) = \e_\fkm^0(C)$. If $\varphi (1) \in \fkm\K_R$, then $R$ is a regular local ring. Therefore,  $\mu_R(C) = \rmr (R) - 1$, if $R$ is not a regular local ring.
\end{cor}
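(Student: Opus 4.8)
The plan is to analyze the exact sequence $0 \to R \overset{\varphi}{\longrightarrow} \K_R \to C \to 0$ under the extra hypothesis $\varphi(1) \in \fkm\K_R$ and deduce that $R$ must in fact be regular. The first step is to reduce to the case where $C \neq (0)$, since if $C = (0)$ then $\varphi$ is an isomorphism, forcing $\varphi(1)$ to be a minimal generator of $\K_R$, hence $\varphi(1) \notin \fkm\K_R$ — contradiction unless $\K_R = (0)$, which cannot happen. So we may assume $C \neq (0)$, and by the remark following Definition \ref{3.3}, $C$ is an Ulrich $R$-module of dimension $d-1$; in particular $d \geq 1$ (the case $d = 0$ being already excluded as above, or by Lemma \ref{3.1}(3)).

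The heart of the argument is to pass to the associated graded ring or, more directly, to count minimal generators. Tensoring $0 \to R \to \K_R \to C \to 0$ with $R/\fkm$ and using that $\varphi(1) \in \fkm\K_R$ means $\varphi \otimes R/\fkm = 0$, we obtain a short exact sequence
$$
0 \to R/\fkm \to \K_R/\fkm\K_R \to C/\fkm C \to 0,
$$
the left injectivity coming from $\Tor_1^R(C, R/\fkm) \to R/\fkm$ together with $\varphi \otimes R/\fkm = 0$; hence $\mu_R(\K_R) = \mu_R(C) + 1$, i.e. $\rmr(R) = \mu_R(C) + 1$. This already gives the last sentence of the statement once we know $R$ is not forced to be regular, but more importantly it tells us that $C$, which has $\mu_R(C) = \rmr(R) - 1$ generators and equals $\K_R/R$ up to the embedding, carries almost all the generators of $\K_R$. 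The key point I would push on: since $\varphi(1) \in \fkm\K_R$, the element $f := \varphi(1)$ lies in $\fkm\K_R$, and one can choose (after possibly enlarging the residue field, which is harmless for the regularity conclusion by Theorem \ref{3.5}, though here we may not even need that) the situation so that $C$ is Ulrich with $\fkm C = (f_2,\dots,f_d)C$. Then I would localize/specialize to dimension one or zero by killing a superficial sequence for $C$: using Theorem \ref{3.9}(2) repeatedly (valid since $d \geq 2$ in the inductive steps and $C$ is Ulrich of dimension $d-1$), we reduce to $d = 1$, where $C$ has finite length, $\fkm C = (0)$, and $\K_R = R + Rf$ with $f \in \fkm\K_R$.

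The main obstacle — and where the real content lies — is the one-dimensional (or zero-dimensional after a further cut) base case: showing that $R$ with a canonical ideal $I \cong \K_R$ satisfying $I = Rf + R$ and $f \in \fkm I$ must be regular (equivalently, a DVR). Here one argues: replacing $\K_R$ by an ideal $I \subseteq R$ with $I \cong \K_R$ (Remark \ref{3.2}), the condition $\varphi(1) \in \fkm\K_R$ translates to $f \in \fkm I$ where $(f) = \varphi(R)$; but $\fkm I \subseteq (f)$ by the Ulrich condition $\fkm C = (0)$, so $\fkm I = (f) \cap \fkm I$ and combined with $f \in \fkm I$ we get $\fkm I = (f) \subseteq \fkm I \subseteq I$, hence $(f) = \fkm I$ is a principal reduction of $I$ with $\fkm I$ principal — this forces $\fkm$ itself to be invertible (since $I$ is a faithful maximal Cohen–Macaulay module of rank one and $\fkm I$ principal implies $\fkm$ principal after localization considerations), so $R$ is a DVR, in particular regular. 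Finally, lifting regularity back up through the superficial sequence: if $R/(f)$ is regular and $f$ is $R$-regular with $f \notin \fkm^2$ (which Theorem \ref{3.9}(1) guarantees unless $R$ is already Gorenstein, and a non-regular Gorenstein ring cannot satisfy $\varphi(1) \in \fkm\K_R$ since then $\K_R \cong R$), then $R$ is regular. This closes the induction. The very last sentence, $\mu_R(C) = \rmr(R) - 1$ when $R$ is not regular, is then immediate from the generator count $\rmr(R) = \mu_R(C) + 1$ established above, which holds whenever $\varphi(1) \in \fkm\K_R$ — but when $R$ is not regular we have just shown $\varphi(1) \notin \fkm\K_R$ is impossible, so in fact one must read the statement as: for any choice of the sequence, either $\varphi(1) \notin \fkm\K_R$ and then $\mu_R(C) = \rmr(R) - 1$ still follows from comparing $\mu_R(\K_R)$ with $\mu_R(R) + \mu_R(C)$ via the snake lemma when $\varphi(1)$ is part of a minimal generating set, or $R$ is regular.
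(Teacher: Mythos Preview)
Your overall architecture---reduce to infinite residue field, induct on $d$ using Theorem~\ref{3.9}(2) with a superficial element, and in the base case $d=1$ realize $\K_R$ as an ideal $I$ and show $\fkm I = (f)$ forces $\fkm$ to be invertible---is exactly the paper's proof. The inductive step and the one-dimensional argument are fine.

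However, your generator-counting paragraph is wrong, and this infects the final paragraph. You assert that tensoring with $R/\fkm$ under the hypothesis $\varphi(1)\in\fkm\K_R$ yields a short exact sequence
\[
0 \to R/\fkm \to \K_R/\fkm\K_R \to C/\fkm C \to 0,
\]
but this is exactly backwards: if $\varphi\otimes R/\fkm = 0$, then the map $R/\fkm \to \K_R/\fkm\K_R$ is the zero map, so $\K_R/\fkm\K_R \cong C/\fkm C$ and $\mu_R(C) = \rmr(R)$, not $\rmr(R)-1$. The short exact sequence you wrote down holds precisely when $\varphi(1)\notin\fkm\K_R$. Fortunately this paragraph plays no role in your regularity argument, so the error is local.

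For the second assertion you have the logic inverted (you wrote ``$\varphi(1)\notin\fkm\K_R$ is impossible'' when $R$ is not regular; you meant $\in$). The clean statement, as in the paper, is: if $R$ is not regular, the first part gives $\varphi(1)\notin\fkm\K_R$, so $\varphi(1)$ is part of a minimal generating set of $\K_R$, and then $\mu_R(C) = \mu_R(\K_R) - 1 = \rmr(R) - 1$ directly. No Tor or snake-lemma gymnastics are needed.
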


\begin{proof}
Enlarging the residue class field of $R$, by Theorem  \ref{3.5} we may assume that $R/\fkm$ is infinite. Suppose $\varphi (1) \in \fkm \K_R$. Then $C \ne (0)$ and therefore $ d > 0$ (Lemma \ref{3.1} (3)). Assume $d = 1$. Then by Lemma \ref{3.1} (1) $\rmQ (R)$ is a Gorenstein ring. Therefore  by Remark \ref{3.2} we get an exact sequence 
$0 \to R \overset{\psi}{\longrightarrow} I \to C \to 0$
of $R$-modules with $\psi (1) \in \fkm I$, where $I ~(\subsetneq R)$ is an ideal of $R$ such that $I \cong \K_R$ as an $R$-module. Let $a= \psi (1)$. Then $\fkm I = (a)$, because $\fkm C = (0)$ and $a \in \fkm I$. Hence $R$ is a discrete valuation ring, because the maximal ideal $\fkm$ of $R$ is invertible.

Let $d > 1$ and assume that our assertion holds true for $d-1$.  Let $f \in \fkm$ be a non-zerodivisor of $R$ such that $f$ is superficial for $C$ with respect to $\fkm$. We set $\overline{R} = R/(f)$ and $\overline{C} = C/fC$. Then by Theorem \ref{3.9} (2) (and its proof) $\overline{R}$ is an almost Gorenstein local ring with the exact sequence
$0 \to \overline{R} \overset{\overline{\varphi}}{\longrightarrow} \K_{\overline{R}} \to \overline{C} \to 0$
of $\overline{R}$-modules, 
where $\overline{\varphi}= \overline{R}\otimes_R\varphi$ and $\rmK_{\overline{R}} = \rmK_R/f\rmK_R$. Therefore, because $\overline{\varphi} (1) \in \fkm \K_{\overline{R}}$, the hypothesis of induction on $d$ shows $\overline{R}$ is regular and hence so is $R$.

The second assertion follows from the fact that $\mu_R(C) = \mu_R(\K_R) -1 = \rmr (R) -1$ (\cite[Korollar 6.11]{HK}), because $\varphi (1) \not\in \fkm \K_R$.
\end{proof}

The following is an direct consequence of Theorem \ref{3.5}. See \cite[Satz 6.14]{HK} for the equality $\rmr (S) = \rmr (R)$.

\begin{cor}\label{3.6}
Suppose that $R$ is an almost Gorenstein local ring. Then  for every $n \ge 1$ the formal power series ring $S=R[[X_1, X_2, \ldots, X_n]]$ is also an almost Gorenstein local ring with $\rmr (S) = \rmr (R)$.
\end{cor}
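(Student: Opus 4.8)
The plan is to exhibit $\varphi\colon R \to S := R[[X_1, X_2, \ldots, X_n]]$ as a flat local homomorphism with regular closed fiber and then quote Theorem~\ref{3.5} directly. First I would record the standard facts about this extension: $\varphi$ is a faithfully flat local homomorphism of Noetherian local rings (for instance, $S$ is the $(X_1, \ldots, X_n)$-adic completion of $R[X_1, \ldots, X_n]$, or, more elementarily, $S$ is $R$-free on the monomials in the $X_i$), the maximal ideal of $S$ is $\fkn = \fkm S + (X_1, \ldots, X_n)S$, and the closed fiber is
$$
S/\fkm S \cong (R/\fkm)[[X_1, X_2, \ldots, X_n]],
$$
which is a formal power series ring over the field $R/\fkm$ and hence a regular local ring of dimension $n$.

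Granting this, Theorem~\ref{3.5} applies without change: since $R$ is an almost Gorenstein local ring and $\varphi\colon R \to S$ is flat local with $S/\fkm S$ regular, $S$ is an almost Gorenstein local ring. Note that only the forward implication of Theorem~\ref{3.5} is used, so no assumption on the cardinality of $R/\fkm$ is needed. Concretely, if $0 \to R \to \K_R \to C \to 0$ realizes the almost Gorenstein property of $R$, then applying $S\otimes_R(-)$ and using $\K_S \cong S\otimes_R\K_R$ (valid because $S/\fkm S$ is Gorenstein) together with the fact that $S\otimes_RC$ is an Ulrich $S$-module (Proposition~\ref{2.2}~(2)) gives the desired sequence $0 \to S \to \K_S \to S\otimes_RC \to 0$.

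For the type equality I would cite \cite[Satz 6.14]{HK}: for a flat local homomorphism with regular (hence Gorenstein) closed fiber one has $\rmr(S) = \rmr(R)\cdot\rmr(S/\fkm S)$, and since $S/\fkm S$ is regular we have $\rmr(S/\fkm S) = 1$, so $\rmr(S) = \rmr(R)$. I do not anticipate any genuine obstacle; the statement is a formal consequence of Theorem~\ref{3.5}, and the only point meriting a line of justification is that $S/\fkm S$ is a regular local ring, which is immediate since a formal power series ring over a field is regular.
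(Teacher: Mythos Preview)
Your proof is correct and matches the paper's approach exactly: the paper simply states that Corollary~\ref{3.6} is a direct consequence of Theorem~\ref{3.5}, with the type equality coming from \cite[Satz 6.14]{HK}. One minor slip: in your illustrative aside you cite Proposition~\ref{2.2}~(2) for the preservation of the Ulrich property under flat base change, but that is Proposition~\ref{2.2}~(3).
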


\begin{prop}\label{11b}
Let $(S,\fkn)$ be a Noetherian local ring and let $\varphi : R \to S$ be a flat local homomorphism such that $S/\fkm S$ is a Gorenstein ring. Assume the following three conditions are satisfied. 
\begin{enumerate}
\item[$(1)$] The field $R/\fkm$ is infinite.
\item[$(2)$] $R$ and $S$ are  almost Gorenstein local rings.
\item[$(3)$] $S$ is not a  Gorenstein ring.
\end{enumerate}
If $\dim R = 1$, then $S/\fkm S$ is a regular local ring.
\end{prop}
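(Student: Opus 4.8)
The plan is to induct on $\nu=\dim S/\fkm S$. Throughout, observe first that since $\varphi$ is flat and local with Gorenstein closed fibre, $S$ is Gorenstein if and only if $R$ is and $\rmr(S)=\rmr(R)=:r$ (\cite[Satz 6.14]{HK}); hence $(3)$ forces $R$ to be non-Gorenstein and $r\ge 2$, and moreover $\K_S\cong S\otimes_R\K_R$. Being one-dimensional, non-Gorenstein and almost Gorenstein, $R$ has $\rmQ(R)$ Gorenstein (Lemma \ref{3.1} (1)), so by Remark \ref{3.2} there is a fractional canonical ideal $K$ with $R\subseteq K\subseteq\overline R$; Proposition \ref{3.4} together with \cite[Theorem 3.11]{GMP} gives $\fkm K\subseteq R$, whence $\fkm\subseteq\fkm K\subseteq R$ and therefore $\fkm K=\fkm$ (if instead $\fkm K=R$, then $1\in\fkm K$ would make $R$ regular, hence Gorenstein, by Corollary \ref{3.8}, a contradiction). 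Thus $K/R\cong(R/\fkm)^{\,r-1}$ is the Ulrich cokernel of the almost Gorenstein sequence of $R$, and applying $S\otimes_R-$ yields an exact sequence $0\to S\to SK\to(S/\fkm S)^{\,r-1}\to 0$ in which $SK=S\otimes_R K$ is, via the natural embedding $\rmQ(R)\hookrightarrow\rmQ(S)$, a fractional ideal of $S$ with $S\subseteq SK\subseteq\overline S$ (the elements of $K$ are integral over $R$, hence over $S$) and $\fkm\cdot SK=S\cdot(\fkm K)=\fkm S$.

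For $\nu=0$ we have $\dim S=1$; now $S$ is itself one-dimensional, non-Gorenstein and almost Gorenstein, so $\rmQ(S)$ is Gorenstein (Lemma \ref{3.1} (1)), $S/\fkn$ is infinite (it contains $R/\fkm$), and by Proposition \ref{3.4} and \cite[Theorem 3.11]{GMP} applied to the fractional canonical ideal $SK$ we obtain $\fkn\,SK\subseteq S$. Hence $\fkn\cdot(SK/S)=0$, i.e.\ $\fkn\cdot(S/\fkm S)^{\,r-1}=0$; as $r-1\ge 1$, the free $S/\fkm S$-module $(S/\fkm S)^{\,r-1}$ is faithful, so the maximal ideal of $S/\fkm S$ vanishes and $S/\fkm S$ is a field, in particular a regular local ring.

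For the inductive step let $\nu\ge 1$, assume the assertion whenever the closed fibre has dimension $\nu-1$, and suppose for contradiction that $S/\fkm S$ is not regular. Fix an almost Gorenstein sequence $0\to S\to\K_S\to C'\to 0$ with $C'$ an Ulrich $S$-module of dimension $\nu$. Since $S$ is Cohen--Macaulay with $\depth S=\dim S\ge 2$, since $\dim S/\fkm S=\nu\ge 1$, and since $S/\fkn$ is infinite, prime avoidance furnishes a sufficiently general element $g\in\fkn$ that is a non-zerodivisor on $S$, is superficial for $C'$ with respect to $\fkn$, and whose image $\bar g$ in $S/\fkm S$ is a non-zerodivisor there (none of the finitely many forbidden primes can equal $\fkn$, by the depth and dimension conditions just recorded). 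Then $\varphi$ induces a flat local homomorphism $R\to S/gS$ (local criterion of flatness, $\bar g$ being $S/\fkm S$-regular) whose closed fibre $(S/\fkm S)/(\bar g)$ is Gorenstein of dimension $\nu-1$ and is not regular, because passing modulo a non-zerodivisor cannot turn a non-regular local ring into a regular one; moreover $S/gS$ is almost Gorenstein by Theorem \ref{3.9} (2) and is not Gorenstein since $g$ is a non-zerodivisor and $S$ is not Gorenstein. The inductive hypothesis applied to $R\to S/gS$ then forces $(S/\fkm S)/(\bar g)$ to be regular, a contradiction. Hence $S/\fkm S$ is regular, completing the induction.

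The one delicate point is the base case $\nu=0$: one must know that the fractional ideal $SK=S\otimes_R K$ produced by base change is genuinely a fractional canonical ideal of $S$ lying between $S$ and $\overline S$, and that it is legitimate to feed it into the criterion of \cite[Theorem 3.11]{GMP} — equivalently, that the membership ``$\fkn K'\subseteq S$'' is insensitive to the choice of a fractional canonical ideal $K'$ between $S$ and $\overline S$ (in practice, that such $K'$ is essentially unique). Granting this, everything else is routine: a generic-element argument in the inductive step, together with the descent of the almost Gorenstein property along $S\to S/gS$ provided by Theorem \ref{3.9} (2).
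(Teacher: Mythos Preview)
Your proof is correct. The inductive reduction to the zero-dimensional fibre case is essentially identical to the paper's (the paper phrases it as ``we may assume $\dim S/\fkm S=0$'' rather than as an explicit induction, but the content is the same: pick $g\in\fkn$ that is $S/\fkm S$-regular and superficial for the Ulrich module, then pass to $S/gS$).

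The genuine difference lies in the base case $\nu=0$. The paper uses the Hilbert-coefficient characterization \cite[Theorem~3.16]{GMP}: for a non-Gorenstein one-dimensional almost Gorenstein ring with canonical ideal $I$, one has $\e_1(I)=\rmr(R)$. Since $IS\cong\K_S$ and $\fkn=\fkm S$, the multiplicativity of $\e_1$ under flat base change gives $\e_1(IS)=\ell_S(S/\fkm S)\cdot\e_1(I)$; combining with $\rmr(R)=\rmr(S)$ forces $\ell_S(S/\fkm S)=1$. Your approach instead invokes the fractional-ideal criterion \cite[Theorem~3.11]{GMP} directly: from $\fkn\cdot SK\subseteq S$ and $SK/S\cong (S/\fkm S)^{r-1}$ you read off $\fkn=\fkm S$ immediately.

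Your route is more elementary in that it avoids the Hilbert-function machinery, but it leans on the fact (which you rightly flag) that the conclusion $\fkn K'\subseteq S$ of \cite[Theorem~3.11]{GMP} is valid for \emph{every} fractional canonical ideal $K'$ with $S\subseteq K'\subseteq\overline S$, not merely for one particular choice. This is indeed how the paper itself uses that theorem (see the proof of Proposition~\ref{3.4}), so your citation is legitimate. The paper's $\e_1$ argument sidesteps this issue entirely, since $\e_1$ is manifestly an invariant of the ideal and behaves cleanly under flat base change.
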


\begin{proof}
When $\dim S/\fkm S > 0$, we pass to the flat local homomorphism $R \to S/gS$, choosing $g \in \fkn$ so that $g$ is $S/\fkm S$-regular and $S/gS$ is an almost Gorenstein local ring. Therefore we may assume that $\dim S/\fkm S = 0$. Choose an ideal $I \subsetneq R$ of $R$ so that $I \cong \rmK_R$ as an $R$-module. Therefore $IS \cong \rmK_S$ as an $S$-module, since $S/\fkm S$ is a Gorenstein local ring. Let $\e_1(I)$ (resp. $\e_1(IS))$ be the first Hilbert coefficient of $R$ (resp. $S$) with respect to $I$ (resp. $IS$). Then by \cite[Theorem 3.16]{GMP} and \cite[Satz 6.14]{HK} we have 
$$\rme_1(I) = \rmr(R) = \rmr (S) = \e_1(IS) = \ell_S(S/\fkm S){\cdot}\e_1(I),$$
because $R$ and $S$ are almost Gorenstein local rings and  both of them  are not Gorenstein rings. Thus $\ell_S(S/\fkm S) = 1$, so that  $S/\fkm S$ is a field.
\end{proof}

Unless $\dim R = 1$, Proposition \ref{11b} does not hold true in general, as  we show in the following.

\begin{ex}\label{badex1}
Let $T$ be an almost Gorenstein local ring with maximal ideal $\fkm_0$, $\dim T = 1$, and $\rmr (T) = 2$. Let $R = T[[X]]$ be the formal power series ring and let $R[Y]$ be the polynomial ring. We set $S =R[Y]/(Y^2 - X)$. Then the following assertions hold true.
\begin{enumerate}[\rm(1)]
\item $R$ and $S$ are two-dimensional almost Gorenstein local rings with $\rmr (R) = \rmr (S) = 2$.
\item $S$ is a finitely generated free $R$-module of rank two but $S/\fkm S$ is not a field, where $\fkm= \fkm_0 R + XR$ denotes the maximal ideal of $R$.
\end{enumerate}
\end{ex}

\begin{proof} We set $k = T/\fkm_0$.
Notice that $S$ is a local ring with maximal ideal $\fkm S + yS$, where $y$ denotes the image of $Y$ in $S$. The $R$-module $S$ is free of rank two and $S/\fkm S =(R/\fkm)[Y]/(Y^2)= k[Y]/(Y^2)$. The $T$-algebra $S$ is flat with $$S/\fkm_0 S = (k[[X]])[Y]/(Y^2 - X),$$ which is a discrete valuation ring. By Corollary  \ref{3.6} $R$ is an almost Gorenstein local ring with $\rmr (R) = 2$. We now consider the exact sequence $$(\sharp) \ \ \ 0 \to T \to \rmK_T \to T/\fkm_0 \to 0$$ of $T$-modules. Then since $\rmK_S \cong S \otimes_T\rmK_T$, tensoring exact sequence $(\sharp)$ by $S$, we get the exact sequence $$0 \to S \to \rmK_S \to S/\fkm_0 S \to 0$$ of $S$-modules. Therefore $S$ is an almost Gorenstein local ring by definition, since  $S/\fkm_0 S$ is a discrete valuation ring, while $\rmr (S) = 2$ by \cite[Satz 6.14]{HK}, since $S/\fkm S$ is a Gorenstein ring.
\end{proof}

Let us note a few basic examples of almost Gorenstein local rings.

\begin{ex}\label{3.11}
Let $U = k[[X_1, X_2, \ldots, X_n,Y_1, Y_2,, \ldots, Y_n]]~(n \ge 2)$ be the formal power series ring over a field $k$ and put $R = U/\rmI_2({\Bbb M})$, where $\rmI_2({\Bbb M})$ denotes the ideal of $U$ generated by $2 \times 2$ minors of the matrix ${\Bbb M} =\left(\begin{smallmatrix}
X_1&X_2& \cdots&X_n\\
Y_1&Y_2& \cdots&Y_n
\end{smallmatrix}\right)$.
Then $R$ is almost Gorenstein with $\dim R = n + 1$ and $\rmr (R) = n - 1$.
\end{ex}

\begin{proof}
It is well-known that $R$ is a Cohen-Macaulay normal local ring with $\dim R = n + 1$ and $\rmr (R) =n - 1$ (\cite{BV}). The sequence $\{X_i - Y_{i-1}\}_{1 \le i \le n}$~(here $Y_0 = Y_n$ for convention) forms a regular sequence in $R$ and 
$$R/(X_i - Y_{i-1} \mid 1 \le i \le n)R \cong  k[[X_1, X_2, \ldots, X_n]]/\rmI_2({\Bbb N}),$$ where ${\Bbb N} = \left(\begin{smallmatrix}
X_1&X_2& \cdots&X_{n-1}&X_n\\
X_2&X_3& \cdots&X_{n}&X_1
\end{smallmatrix}\right)$.
Let $S = k[[X_1, X_2, \ldots, X_n]]/\rmI_2({\Bbb N})$.
Then $S$ is a Cohen-Macaulay local ring of dimension one, such that $\fkn^2 = x_1\fkn$ and $\rmK_S \cong  (x_1, x_2, \ldots, x_{n-1})$, where $\fkn$ is the maximal ideal of $S$ and $x_i$ is the image of $X_i$ in $S$. Hence $S$ is an almost Gorenstein local ring, because $\fkn(x_1, x_2, \ldots, x_{n-1}) \subseteq (x_1)$. Thus $R$ is an almost Gorenstein local ring by Theorem \ref{3.9} (1).
\end{proof}

\begin{ex}\label{3.12}
Let $S=k[[X, Y, Z]]$ be a formal power series ring over a field $k$ and let ${\Bbb M} =\left(
\begin{smallmatrix}
f_{11}&f_{12}&f_{13}\\
f_{21}&f_{22}&f_{23}
\end{smallmatrix}
\right)$ be a matrix such that  $f_{ij} \in kX+kY+kZ$ for each $1 \le i \le 2$ and $1 \le j \le 3$. Assume that $\mathrm{ht}_S\rmI_2({\Bbb M}) = 2$ and set $R = S/\rmI_2({\Bbb M})$. Then $R$ is an almost Gorenstein local ring if and only if $R \not\cong S/(Y,Z)^2$.
\end{ex}

\begin{proof}
Since  $\rmQ (S/(Y,Z)^2)$ is not a Gorenstein ring, the \textit{only if} part follows from Lemma \ref{3.1} (1). Suppose that $R \not\cong S/(Y,Z)^2$. Then, thanks to \cite[Classification Table 6.5]{GS}, without loss of generality we may assume that our matrix ${\Bbb M}$ has the form ${\Bbb M} = \left(\begin{smallmatrix}
g_1&g_2&g_3\\
X&Y&Z
\end{smallmatrix}\right)$, where $g_i \in kX + kY + kZ $ for every $1 \le i \le 3$. Let $d_1 =Yg_3 - Zg_2$, $d_2 = Zg_1 - Xg_3$, and $d_3 = Xg_2 - Yg_1$. Then the $S$-module $R = S/(d_1, d_2, d_3)$ has a minimal free resolution 
$$0 \to S^{2} \overset{{}^t{\Bbb M}}{\longrightarrow} S^{3} \xrightarrow{[
d_1 d_2 d_3]} S \to R \to 0$$
and, taking the $S$-dual, we get the presentation $S^{3}  \overset{{\Bbb M}}{\longrightarrow} S^{2} \overset{\varepsilon}{\longrightarrow} \K_R \to 0$ of the canonical module $\K_R = \Ext_R^2(R,S)$. Hence $\K_R/R\xi \cong S^{2}/L \cong S/(X,Y,Z)$, where $\xi = \varepsilon(\binom{0}{1})$, and $L$ denotes the $S$-submodule of $S^{2}$ generated by $\binom{X}{g_1}$, $\binom{Y}{g_2}$, $\binom{Z}{g_3}$, and $\binom{0}{1}$.
We therefore have an exact sequence
$R \to \K_R \to R/\fkm \to 0$ of $R$-modules, where $\fkm$ is the maximal ideal of $R$. Hence $R$ is almost Gorenstein by Lemma \ref{3.1} (1).
\end{proof}

\begin{ex}\label{3.13}
Let $a, \ell \in \Bbb Z$ such that $a \ge 4$, $\ell \ge 2$ and let
$$R = k[[t^a, t^{a\ell - 1}, \{t^{a\ell + i}\}_{1 \le i \le a - 3}\}]] ~\subseteq k[[t]]
$$
be the semigroup ring of the numerical semigroup $H =\left<a, a\ell - 1, \{a\ell + i\}_{1 \le i \le a - 3}\right>$, where $k[[t]]$ denotes the formal power series ring over a field $k$. Then $R$ is an almost Gorenstein local ring with $\rmr (R) = a-2$. Therefore the formal power series rings $R[[X_1, X_2, \ldots, X_n]]$~($n \ge 1)$ are also almost Gorenstein.

\end{ex}

\begin{proof}
Let $I = (t^{2a\ell - a -1}, \{t^{3a\ell - 2a -i -1}\}_{1 \le i \le a-3})$. Then $I \cong \K_R$ and $\fkm I = \fkm t^{2a\ell - a - 1}$, where $\fkm$ denotes the maximal ideal of $R$. Hence $R$ is an almost Gorenstein local ring (see \cite[Example 2.13]{GhGHV} for details).
\end{proof}

\begin{rem}
The local rings $R_\fkp~(\fkp \in \Spec R \setminus \{\fkm\})$ of an almost Gorenstein local ring $(R, \fkm)$ are not necessarily Gorenstein rings (Example \ref{7.9}). Also, the local rings $R_\fkp$ of an almost Gorenstein local ring $R$ are not necessarily almost Gorenstein (Example \ref{p7.5}). 
\end{rem}

\section{Characterization in terms of existence of certain exact sequences}\label{chex}
In this section we investigate the almost Gorenstein property of local rings in terms of the existence of certain exact sequences.

Throughout this section, let $(R,\m,k)$ be a Cohen-Macaulay local ring of dimension $d$ and Cohen-Macaulay type $r$, admitting the canonical module $\rmK_R$. In what follows, all $R$-modules assumed to be finitely generated. For each sequence $\xx=x_1, x_2, \ldots,x_n$ of elements in $R$ and an $R$-module $M$, let $\KK_\bullet(\xx,M)$ be the Koszul complex of $M$ associated to  $\xx$. Hence
$$
\KK_\bullet(\xx,M)=\KK_\bullet(x_1,R)\otimes_R\cdots\otimes_R\KK_\bullet(x_n,R)\otimes_RM.
$$
For each $z\in M$ let $R\xrightarrow{z}M$ stand for the homomorphism $a\mapsto az$.
Denote by $\LL_R(z,M)$ the complex
$$
\LL_R(z,M)=(\cdots\to\underset{\begin{smallmatrix}\phantom{o}\\2\end{smallmatrix}}{0}\to\underset{\begin{smallmatrix}\phantom{o}\\1\end{smallmatrix}}{R}\xrightarrow{z}\underset{\begin{smallmatrix}\phantom{o}\\0\end{smallmatrix}}{M}\to\underset{\begin{smallmatrix}\phantom{o}\\-1\end{smallmatrix}}{0}\to\cdots).
$$
When there is no danger of confusion, we simply write $\LL(z,M)$ as $\LL_R(z,M)$. Let $\D(R)$ denote the derived category of $R$. Hence for two complexes $X,Y$ of $R$-modules,  one has $\H_i(X)\cong\H_i(Y)$ for all $i\in\Z$, if $X\cong Y$ in $\D(R)$.

Let us  begin with  the following.

\begin{lem}\label{6131107}
Let $\xx=x_1,x_2, \ldots,x_n$ be an $R$-sequence and let $y\in\rmK_R$.
Then 
$$
R/(\xx)\otimes_R^{\bf L}\LL_R(y,\rmK_R)\cong\LL_{R/(\xx)}(\overline{y},\rmK_R/\xx\rmK_R)
$$
in $\D(R)$, where $\overline y$ denotes the image of $y$ in $\rmK_R/\xx\rmK_R$.
\end{lem}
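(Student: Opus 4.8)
The statement is a compatibility assertion between derived tensor product and the two-term complex $\LL$, so the natural approach is to compute both sides using an explicit projective resolution. Since $\xx = x_1, \dots, x_n$ is an $R$-sequence, the Koszul complex $\KK_\bullet(\xx, R)$ is a free resolution of $R/(\xx)$. Hence, by definition of the derived tensor product,
$$
R/(\xx) \otimes_R^{\bf L} \LL_R(y, \rmK_R) \cong \KK_\bullet(\xx, R) \otimes_R \LL_R(y, \rmK_R)
$$
in $\D(R)$. The plan is therefore to identify this total complex of $\KK_\bullet(\xx, R) \otimes_R \LL_R(y, \rmK_R)$ explicitly and then exhibit a quasi-isomorphism from it to $\LL_{R/(\xx)}(\overline{y}, \rmK_R/\xx\rmK_R)$.

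First I would write $\LL_R(y, \rmK_R)$ as the mapping cone (up to a shift) of the chain map $R \xrightarrow{y} \rmK_R$, viewed as a morphism of complexes concentrated in degree $0$. Then $\KK_\bullet(\xx, R) \otimes_R \LL_R(y, \rmK_R)$ is the total complex of $\KK_\bullet(\xx, R) \xrightarrow{y} \KK_\bullet(\xx, \rmK_R)$, i.e. the mapping cone of the map of Koszul complexes $\KK_\bullet(\xx, R) \to \KK_\bullet(\xx, \rmK_R)$ induced by $y$. Now the key input is that $\xx$ is an $R$-sequence and also a $\rmK_R$-sequence: indeed $\rmK_R$ is a maximal Cohen-Macaulay $R$-module (as $R$ is Cohen-Macaulay), so any $R$-regular sequence in $\fkm$ is $\rmK_R$-regular. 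Consequently $\KK_\bullet(\xx, R)$ is acyclic in positive degrees with $\H_0 = R/(\xx)$, and $\KK_\bullet(\xx, \rmK_R)$ is acyclic in positive degrees with $\H_0 = \rmK_R/\xx\rmK_R$. Taking the long exact homology sequence of the mapping cone, all the higher terms vanish and one is left with
$$
\H_i\bigl(\KK_\bullet(\xx,R)\otimes_R\LL_R(y,\rmK_R)\bigr) \cong \H_i\bigl(R/(\xx) \xrightarrow{\overline{y}} \rmK_R/\xx\rmK_R\bigr) = \H_i\bigl(\LL_{R/(\xx)}(\overline{y}, \rmK_R/\xx\rmK_R)\bigr)
$$
for all $i$. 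To upgrade this from an isomorphism on homology to an isomorphism in $\D(R)$, I would instead argue directly: the natural augmentation maps $\KK_\bullet(\xx, R) \to R/(\xx)$ and $\KK_\bullet(\xx, \rmK_R) \to \rmK_R/\xx\rmK_R$ are quasi-isomorphisms, and they fit into a commutative square with the two vertical maps induced by multiplication by $y$; passing to mapping cones (a functorial operation that preserves quasi-isomorphisms on the level of the cone), we obtain a quasi-isomorphism from $\KK_\bullet(\xx,R)\otimes_R\LL_R(y,\rmK_R)$ to $\LL_{R/(\xx)}(\overline{y}, \rmK_R/\xx\rmK_R)$, hence an isomorphism in $\D(R)$.

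The only genuinely delicate point is the bookkeeping of signs and shifts in identifying $\KK_\bullet(\xx, R) \otimes_R \LL_R(y, \rmK_R)$ with the mapping cone of the Koszul-complex map induced by $y$, and in checking that the resulting cone is literally $\LL_{R/(\xx)}(\overline{y}, \rmK_R/\xx\rmK_R)$ rather than a shift of it — here one uses that $\LL_R(z, M)$ is supported in homological degrees $0$ and $1$ precisely so that tensoring with $\KK_\bullet(\xx, R)$ (supported in degrees $0$ through $n$) and then reducing modulo $\xx$ lands things back in degrees $0$ and $1$. I expect this to be routine but the main place where care is needed. Everything else is a formal consequence of $\rmK_R$ being maximal Cohen-Macaulay together with standard properties of Koszul complexes and mapping cones.
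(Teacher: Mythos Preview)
Your proposal is correct and follows essentially the same approach as the paper: both view $\LL_R(y,\rmK_R)$ as the mapping cone of $R\xrightarrow{y}\rmK_R$, use that $\xx$ is regular on both $R$ and $\rmK_R$ so that $R/(\xx)\otimes_R^{\bf L}R\cong R/(\xx)$ and $R/(\xx)\otimes_R^{\bf L}\rmK_R\cong\rmK_R/\xx\rmK_R$, and then identify the result with the cone of $R/(\xx)\xrightarrow{\overline y}\rmK_R/\xx\rmK_R$. The only cosmetic difference is that the paper phrases this via the exact triangle $R\to\rmK_R\to\LL_R(y,\rmK_R)\rightsquigarrow$ and applies the triangle functor $R/(\xx)\otimes_R^{\bf L}-$ directly, whereas you compute the derived tensor product by resolving $R/(\xx)$ via the Koszul complex and then pass to cones of the augmentation quasi-isomorphisms; these are two ways of saying the same thing.
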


\begin{proof}
Since $\LL_R(y,\rmK_R)$ is the mapping cone of the map $R\xrightarrow{y}\rmK_R$, 
we get an exact triangle
$
R \xrightarrow{y} \rmK_R \to \LL_R(y,\rmK_R) \rightsquigarrow
$
in $\D(R)$, which gives rise to, applying the triangle functor $R/(\xx)\otimes_R^{\bf L}-$, an exact triangle
$$
R/(\xx)\otimes_R^{\bf L}R \xrightarrow{R/(\xx)\otimes_R^{\bf L}y} R/(\xx)\otimes_R^{\bf L}\rmK_R \to R/(\xx)\otimes_R^{\bf L}\LL_R(y,\rmK_R) \rightsquigarrow.
$$
Notice that $R/(\xx)\otimes_R^{\bf L}R\cong R/(\xx)$ and that $\Tor_i^R(R/(\xx),\rmK_R)\cong\H_i(\KK_\bullet(\xx,\rmK_R))=(0)$ for all $i>0$, since $\xx$ is also an $\rmK_R$-sequence; hence $R/(\xx)\otimes_R^{\bf L}\rmK_R\cong\rmK_R/\xx\rmK_R$. Observe that $R/(\xx)\otimes_R^{\bf L}\LL_R(y,\rmK_R)$ is isomorphic to the mapping cone of the map $R/(\xx) \xrightarrow{\overline y} \rmK_R/\xx\rmK_R$, which is nothing but $\LL_{R/(\xx)}(\overline{y},\rmK_R/\xx\rmK_R)$.
\end{proof}

We firstly give a characterization of Gorenstein local rings.

\begin{prop}\label{6131722}
The following conditions are equivalent.
\begin{enumerate}[\rm(1)]
\item
$R$ is a Gorenstein ring.
\item
There exist an $R$-sequence $\xx$ and an element $y\in\rmK_R$ such that $\KK_\bullet(\xx,R)\otimes_R\LL(y,\rmK_R)$ is an exact sequence.
\end{enumerate}
\end{prop}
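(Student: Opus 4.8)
The plan is to prove the equivalence of (1) and (2) by relating the condition on $\KK_\bullet(\xx,R)\otimes_R\LL(y,\rmK_R)$ being exact to the vanishing of a suitable canonical module modulo the sequence $\xx$. First, note that tensoring the Koszul complex $\KK_\bullet(\xx,R)$ (which, for an $R$-sequence $\xx$, is a free resolution of $R/(\xx)$) against a complex computes the derived tensor product; hence $\KK_\bullet(\xx,R)\otimes_R\LL(y,\rmK_R)\cong R/(\xx)\otimes_R^{\mathbf L}\LL_R(y,\rmK_R)$ in $\D(R)$. By Lemma \ref{6131107}, this is isomorphic to $\LL_{R/(\xx)}(\overline y,\rmK_R/\xx\rmK_R)$, and since $\xx$ is also a $\rmK_R$-sequence, $\rmK_R/\xx\rmK_R\cong\rmK_{R/(\xx)}$ by \cite[Korollar 6.3]{HK}. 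So, writing $\overline R=R/(\xx)$, the complex in (2) is exact if and only if $\LL_{\overline R}(\overline y,\rmK_{\overline R})$ is exact, i.e. if and only if the map $\overline R\xrightarrow{\overline y}\rmK_{\overline R}$ is an isomorphism. Since $\overline R$ is again Cohen-Macaulay with $\rmK_{\overline R}$ its canonical module, the existence of such an isomorphism is equivalent to $\overline R$ being a Gorenstein ring.

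Granting that reduction, the implication (1) $\Rightarrow$ (2) is then easy: if $R$ is Gorenstein, take $\xx$ to be the empty sequence (so $\KK_\bullet(\xx,R)=R$ concentrated in degree $0$) and let $y\in\rmK_R$ be a generator of the cyclic module $\rmK_R\cong R$; then $\LL(y,\rmK_R)$ is just $R\xrightarrow{\sim}\rmK_R$, which is exact. Conversely, for (2) $\Rightarrow$ (1), given the exact sequence we conclude from the discussion above that $R/(\xx)$ is Gorenstein; but $\xx$ is an $R$-sequence, and Gorensteinness ascends along quotients by regular sequences — that is, if $R/(\xx)$ is Gorenstein and $\xx$ is $R$-regular then $R$ is Gorenstein. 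Hence $R$ is a Gorenstein ring.

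The main technical point — and the one to state carefully rather than the main \emph{obstacle} — is the identification $\KK_\bullet(\xx,R)\otimes_R\LL(y,\rmK_R)\cong R/(\xx)\otimes_R^{\mathbf L}\LL_R(y,\rmK_R)$: one must check that $\KK_\bullet(\xx,R)$ is a complex of free modules quasi-isomorphic to $R/(\xx)$, which holds precisely because $\xx$ is an $R$-sequence, and that a bounded complex of frees computes the left-derived tensor product. After that, everything funnels through Lemma \ref{6131107} and the standard behavior of the canonical module under quotients by regular sequences. The genuinely delicate direction is (2) $\Rightarrow$ (1), where one needs that exactness of the displayed complex forces $\overline y$ to generate $\rmK_{\overline R}$ and that the multiplication map is injective; injectivity is automatic since $\rmK_{\overline R}$ is a faithful module over the Cohen-Macaulay ring $\overline R$ (its support is all of $\Spec\overline R$), so $\overline R\xrightarrow{\overline y}\rmK_{\overline R}$ being surjective with a free source of rank one forces it to be an isomorphism by a length/localization argument at the minimal primes, exactly as in the proof of Lemma \ref{3.1}(1),(3). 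I expect no serious obstacle beyond assembling these ingredients cleanly.
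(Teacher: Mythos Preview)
Your proof is correct and follows essentially the same route as the paper: both arguments use Lemma~\ref{6131107} (together with the identification $\KK_\bullet(\xx,R)\otimes_R-\cong R/(\xx)\otimes_R^{\mathbf L}-$ for a regular sequence $\xx$) to reduce exactness of the displayed complex to the statement that $\overline R\xrightarrow{\overline y}\rmK_{\overline R}$ is an isomorphism, and then invoke the standard fact that $R$ is Gorenstein if and only if $R/(\xx)$ is. One small remark: your closing paragraph about deducing injectivity from surjectivity is unnecessary, since exactness of the two-term complex $\LL_{\overline R}(\overline y,\rmK_{\overline R})$ already means $\rmH_1=\rmH_0=(0)$, i.e.\ the map is bijective outright.
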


\begin{proof}
(1) $\Rightarrow$ (2) Choose $y \in \rmK_R$ so that $\rmK_R =Ry$ and notice that $\LL(y,\rmK_R)\cong\KK_\bullet(1,R)\cong(0)$ in $\D(R)$. Therefore for each $R$-sequence $\xx$ we get
$$
\KK_\bullet(\xx,R)\otimes_R\LL(y,\rmK_R)\cong\KK_\bullet(\xx,R)\otimes_R^{\bf L}\LL(y,\rmK_R)\cong\KK_\bullet(\xx,R)\otimes_R^{\bf L}(0)\cong(0)
$$
in $\D(R)$, where the first isomorphism comes from the fact that $\KK_\bullet(\xx,R)$ is a complex of free $R$-modules. Thus the complex $\KK_\bullet(\xx,R)\otimes_R\LL(y,\rmK_R)$ is exact.

(2) $\Rightarrow$ (1)
By Lemma \ref{6131107}
$$
(0)\cong\KK_\bullet(\xx,R)\otimes_R\LL(y,\rmK_R)\cong R/(\xx)\otimes_R^{\bf L}\LL_R(y,\rmK_R)\cong\LL_{R/(\xx)}(\overline y,\rmK_R/\xx\rmK_R)
$$
in $\D(R)$. Hence the map $R/(\xx)\xrightarrow{\overline y}\rmK_R/\xx\rmK_R$ is an isomorphism, which shows $R/(\xx)\cong\K_{R/(\xx)}$. Therefore $R/(\xx)$ is a Gorenstein ring and hence so is $R$.
\end{proof}

The following theorem \ref{6131106} is the main result of this section, characterizing almost Gorenstein local rings.  For an $R$-module $M$ and a complex $C=(\cdots\to C_2\to C_1\to C_0\to0)$ of $R$-modules, we say that $C$ is acyclic over $M$, if $\H_0(C)\cong M$ and $\H_i(C)=(0)$ for all $i>0$.

\begin{thm}\label{6131106}
Assume that $d = \dim R \ge 1$ and the field $k=R/\fkm$ is infinite. Then the following conditions are equivalent.
\begin{enumerate}[\rm(1)]
\item
$R$ is an almost Gorenstein local ring.
\item
There exist an $R$-sequence $\xx=x_1,x_2, \ldots,x_{d-1}$ and an element $y\in\rmK_R$ such that $\KK_\bullet(\xx,R)\otimes_R\LL(y,\rmK_R)$ is an acyclic complex over $k^{r-1}$.
\item
There exist an $R$-sequence $\xx$ and an element $y\in\rmK_R$ such that $\KK_\bullet(\xx,R)\otimes_R\LL(y,\rmK_R)$ is an acyclic complex over an $R$-module $M$ such that $\fkm M = (0)$. 
\end{enumerate}
\end{thm}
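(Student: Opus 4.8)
The plan is to prove the equivalence of (1), (2), (3) by first reducing everything, via Lemma~\ref{6131107}, to a statement about the complex $\LL_{R/(\xx)}(\overline{y}, \rmK_R/\xx\rmK_R)$, and then analyzing the cokernel and kernel of the map $R/(\xx) \xrightarrow{\overline{y}} \rmK_R/\xx\rmK_R$. The chain of implications I would run is $(1)\Rightarrow(2)\Rightarrow(3)\Rightarrow(1)$, since $(2)\Rightarrow(3)$ is essentially trivial once one checks $\fkm k^{r-1}=(0)$ and that $k^{r-1}$ is annihilated by $\fkm$.

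For $(1)\Rightarrow(2)$: assume $R$ is almost Gorenstein but not Gorenstein (the Gorenstein case being handled by Proposition~\ref{6131722}, with $r-1=0$ and the convention that $k^0=(0)$). Take an exact sequence $0\to R\xrightarrow{\varphi}\rmK_R\to C\to0$ with $C$ an Ulrich $R$-module of dimension $d-1$. By Corollary~\ref{3.8}, $\mu_R(C)=r-1$, and $\varphi(1)\notin\fkm\rmK_R$; write $y=\varphi(1)$. Since $k$ is infinite, we may choose $x_1,\dots,x_{d-1}\in\fkm$ whose images form a system of parameters for $C$ (equivalently a minimal reduction of $\fkm/[(0):_RC]$, pulled back) so that, using Proposition~\ref{2.2}(2), $\fkm C=(\xx)C$ and $\xx$ is a $C$-regular sequence. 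The sequence $\xx$ is also $R$-regular (it's part of a minimal generating set of $\fkm$ extending to a system of parameters; one checks regularity on $R$ directly since $\dim R=d>\dim C$ forces the $x_i$ to be non-zerodivisors after depth counting). Now by Lemma~\ref{6131107},
$$
\KK_\bullet(\xx,R)\otimes_R\LL(y,\rmK_R)\cong R/(\xx)\otimes_R^{\bf L}\LL_R(y,\rmK_R)\cong\LL_{R/(\xx)}(\overline y,\rmK_R/\xx\rmK_R)
$$
in $\D(R)$, so it suffices to show this last two-term complex $R/(\xx)\xrightarrow{\overline y}\rmK_R/\xx\rmK_R$ is injective with cokernel $k^{r-1}$. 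Injectivity: since $\xx$ is $R$-regular and also $C$-regular, tensoring $0\to R\xrightarrow{\varphi}\rmK_R\to C\to0$ with $R/(\xx)$ stays exact (the connecting $\Tor_1^R(R/(\xx),C)$ vanishes), giving $0\to R/(\xx)\xrightarrow{\overline\varphi}\rmK_R/\xx\rmK_R\to C/\xx C\to0$ exact. So the cokernel of $\overline y=\overline\varphi(1)$ is $C/\xx C$. Finally $C/\xx C$ has $\fkm(C/\xx C)=(\fkm C+\xx C)/\xx C = (0)$ because $\fkm C=(\xx)C$, so $C/\xx C$ is a $k$-vector space, of dimension $\mu_R(C)=r-1$, i.e.\ $C/\xx C\cong k^{r-1}$. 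That gives (2).

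For $(3)\Rightarrow(1)$: suppose $\xx$ is an $R$-sequence (of some length $n$) and $y\in\rmK_R$ with $\KK_\bullet(\xx,R)\otimes_R\LL(y,\rmK_R)$ acyclic over $M$, $\fkm M=(0)$. By Lemma~\ref{6131107} this complex is $\LL_{R/(\xx)}(\overline y,\rmK_R/\xx\rmK_R)$, so acyclicity over $M$ says exactly that $R/(\xx)\xrightarrow{\overline y}\rmK_R/\xx\rmK_R$ is injective with cokernel $M$. Writing $\overline R=R/(\xx)$, which is again Cohen-Macaulay of dimension $d-n$ with canonical module $\rmK_{\overline R}=\rmK_R/\xx\rmK_R$ (as $\xx$ is $\rmK_R$-regular, by \cite[Korollar 6.3]{HK} applied iteratively), we obtain an exact sequence $0\to\overline R\to\rmK_{\overline R}\to M\to0$ of $\overline R$-modules with $\fkm M=(0)$, hence $\overline\fkm M=(0)$ where $\overline\fkm$ is the maximal ideal of $\overline R$; by Proposition~\ref{2.2}(1), $M$ is an Ulrich $\overline R$-module (or zero), so $\overline R=R/(\xx)$ is an almost Gorenstein local ring by Definition~\ref{3.3}. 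Then repeated application of Theorem~\ref{3.9}(1) (going up one non-zerodivisor at a time along $\xx$) shows $R$ is an almost Gorenstein local ring. I anticipate the main technical friction to be bookkeeping the two simultaneous regularity conditions (that $\xx$ is regular on both $R$ and $C$, and choosing it so that $\fkm C=(\xx)C$) in the $(1)\Rightarrow(2)$ direction — everything else is a clean translation through Lemma~\ref{6131107} and Proposition~\ref{2.2}.
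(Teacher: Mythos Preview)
Your proof is correct and follows essentially the same route as the paper's: both directions hinge on Lemma~\ref{6131107} to identify $\KK_\bullet(\xx,R)\otimes_R\LL(y,\rmK_R)$ with the two-term complex $\LL_{R/(\xx)}(\overline y,\rmK_{R/(\xx)})$, and then one reads off the almost Gorenstein condition for $R/(\xx)$ and transfers it via Theorem~\ref{3.9}. The only organizational difference is that for $(1)\Rightarrow(2)$ the paper first applies Theorem~\ref{3.9}(2) repeatedly to pass to dimension one and then writes down the exact sequence there, whereas you keep the sequence $0\to R\to\rmK_R\to C\to 0$ in dimension $d$ and tensor it by $R/(\xx)$; these are two sides of the same reduction.

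One point you should tighten: your justification that $\xx$ is $R$-regular (``depth counting'', ``part of a minimal generating set of $\fkm$'') is not a proof as stated. The cleanest fix is to note that since $R/\fkm$ is infinite, when you choose $x_1,\ldots,x_{d-1}$ so that their images in $R/[(0):_RC]$ generate a minimal reduction of the maximal ideal (Proposition~\ref{2.2}(2)), you may simultaneously arrange by prime avoidance that $x_1$ lies outside every $\fkp\in\Ass R$, hence is $R$-regular; then iterate over $R/(x_1)$. This is exactly what the paper's appeal to Theorem~\ref{3.9}(2) accomplishes implicitly, and it is the ``bookkeeping'' you flagged yourself.
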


\begin{proof}
(1) $\Rightarrow$ (2)
By Theorem \ref{3.9} (2) applied repeatedly, we get an $R$-sequence $\xx=x_1,x_2, \ldots,x_{d-1}$ such that $R/(\xx)$ is an almost Gorenstein local ring of dimension one. Choose an exact sequence
$$
0 \to R/(\xx) \xrightarrow{\varphi} \rmK_R/\xx\rmK_R \to k^{r-1} \to 0.
$$
Setting $\overline{y}=\varphi(1)$ with $y\in\rmK_R$, we see that $\LL(\overline y,\rmK_R/\xx\rmK_R)$ is quasi-isomorphic to $k^{r-1}$. By Lemma \ref{6131107} 
$$
\KK_\bullet(\xx,R)\otimes_R\LL(y,\rmK_R)\cong R/(\xx)\otimes_R^{\bf L}\LL(y,\rmK_R)\cong\LL(\overline y,\rmK_R/\xx\rmK_R)\cong k^{r-1}
$$
in $\D(R)$. Hence $\KK_\bullet(\xx,R)\otimes_R\LL(y,\rmK_R)$ is an acyclic complex over $k^{r-1}$.

(2) $\Rightarrow$ (3)
This is obvious.

(3) $\Rightarrow$ (1)
By Lemma \ref{6131107} $$
\LL(\overline y,\rmK_R/\xx\rmK_R)\cong R/(\xx)\otimes_R^{\bf L}\LL(y,\rmK_R)\cong \KK_\bullet(\xx,R)\otimes_R\LL(y,\rmK_R)\cong k^n
$$ for some $n\ge0$. Hence there exists an exact triangle
$
R/(\xx) \xrightarrow{\overline y} \rmK_R/\xx\rmK_R \to k^n \rightsquigarrow
$
in $\D(R)$, which gives the exact sequence
$
0 \to R/(\xx) \xrightarrow{\overline y} \rmK_R/\xx\rmK_R \to k^n \to 0
$
of homology. Thus $R/(\xx)$ is an almost Gorenstein local ring of dimension one, whence by Theorem \ref{3.9} (1) $R$ is an almost Gorenstein local ring.
\end{proof}

Applying Theorem \ref{6131106} to local rings of lower dimension, we readily get the following.

\begin{cor}\label{711824}
Assume that $k=R/\fkm$ is infinite.
\begin{enumerate}[\rm(1)]
\item
Let $d=1$. Then $R$ is an almost Gorenstein local ring if and only if there exist an element $y\in\rmK_R$ and an exact sequence
$$
0 \to R \xrightarrow{y} \rmK_R \to k^{r-1} \to 0.
$$
\item
Let $d=2$.
Then $R$ is an almost Gorenstein local ring if and only if there exist an $R$-regular element $x$, an element $y\in\rmK_R$, and an exact sequence
$$
0 \to R \xrightarrow{\binom{y}{-x}} \rmK_R\oplus R \xrightarrow{(x,y)} \rmK_R \to k^{r-1} \to 0.
$$
\item
Let $d=3$.
Then $R$ is an almost Gorenstein local ring if and only if there exist an $R$-sequence $x_1,x_2$, an element $y\in\rmK_R$, and an exact sequence
$$
0 \to R \xrightarrow{\left(\begin{smallmatrix}y\\x_2\\-x_1\end{smallmatrix}\right)} \rmK_R\oplus R^2 \xrightarrow{\left(\begin{smallmatrix}x_2&-y&0\\-x_1&0&-y\\0&x_1&x_2\end{smallmatrix}\right)} \rmK_R^2\oplus R \xrightarrow{(x_1,x_2,y)} \rmK_R \to k^{r-1} \to 0.
$$
\end{enumerate}
\end{cor}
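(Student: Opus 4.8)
The plan is to derive all three parts directly from the equivalence $(1)\Leftrightarrow(2)$ of Theorem~\ref{6131106}: the only work is to write out the complex $\KK_\bullet(\xx,R)\otimes_R\LL(y,\rmK_R)$ explicitly when $\xx$ has length $d-1$ for $d=1,2,3$, and to observe that ``acyclic over $k^{r-1}$'' is nothing but the exactness of the displayed augmented sequences. Recall that for complexes $A_\bullet,B_\bullet$ of $R$-modules the total tensor product is $(A\otimes_RB)_n=\bigoplus_{i+j=n}A_i\otimes_RB_j$ with differential $\partial(a\otimes b)=\partial^A a\otimes b+(-1)^{|a|}a\otimes\partial^B b$; I would apply this with $A_\bullet=\KK_\bullet(\xx,R)$ and $B_\bullet=\LL(y,\rmK_R)=(0\to R\xrightarrow{y}\rmK_R\to0)$ concentrated in homological degrees $1,0$. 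Since a complex $C=(\cdots\to C_1\to C_0\to0)$ is acyclic over an $R$-module $N$ exactly when the augmented complex $\cdots\to C_1\to C_0\to N\to0$ is exact, each case reduces to a finite bookkeeping computation followed by one application of Theorem~\ref{6131106}.

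For $d=1$ the sequence $\xx$ is empty, so $\KK_\bullet(\xx,R)=R$ and the tensor product is $\LL(y,\rmK_R)$ itself; it is acyclic over $k^{r-1}$ iff $\Ker(R\xrightarrow{y}\rmK_R)=(0)$ and $\Coker(R\xrightarrow{y}\rmK_R)\cong k^{r-1}$, i.e.\ iff the four-term sequence in (1) is exact. For $d=2$, with $\xx=x$ an $R$-regular element, $\KK_\bullet(x,R)=(0\to R\xrightarrow{x}R\to0)$, and the total complex has terms $R$, $\rmK_R\oplus R$, $\rmK_R$ in homological degrees $2,1,0$ with the differentials $\binom{y}{-x}$ and $(x,y)$ produced by the sign rule above (after the evident identifications $R\otimes_RM\cong M$); acyclicity over $k^{r-1}$ is then exactly the exactness of the five-term sequence in (2). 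For $d=3$, taking $\xx=x_1,x_2$ an $R$-sequence and the Koszul complex $0\to R\to R^2\to R\to0$, the same computation gives the complex with terms $R$, $\rmK_R\oplus R^2$, $\rmK_R^2\oplus R$, $\rmK_R$ and the three displayed matrices, and acyclicity over $k^{r-1}$ becomes the exactness of the six-term sequence in (3). In each case the implication $(1)\Rightarrow(2)$ of Theorem~\ref{6131106} furnishes an $R$-sequence $\xx$ of length $d-1$ and an element $y$ for which this complex is acyclic over $k^{r-1}$, hence the displayed exact sequence (with $x=x_1$, resp.\ $x_1,x_2$, the components of $\xx$); conversely, since the ``if'' hypotheses already provide an $R$-regular element $x$ (resp.\ an $R$-sequence $x_1,x_2$), one may form $\KK_\bullet(\xx,R)$ directly, read the displayed exact sequence back as the statement that $\KK_\bullet(\xx,R)\otimes_R\LL(y,\rmK_R)$ is acyclic over $k^{r-1}$, and apply $(2)\Rightarrow(1)$ of Theorem~\ref{6131106}.

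I expect the computations to be routine; the only point that genuinely needs care is the sign and orientation bookkeeping in the Koszul tensor product, in particular matching the middle differential $\left(\begin{smallmatrix}x_2&-y&0\\-x_1&0&-y\\0&x_1&x_2\end{smallmatrix}\right)$ of part (3) to prescribed bases of $\rmK_R\oplus R^2$ and $\rmK_R^2\oplus R$ and to the chosen sign convention for the total complex. I would fix, once and for all, the basis $e_1,e_2$ of $\KK_1(x_1,x_2;R)$ with $\partial e_i=x_i$ and $\partial(e_1\wedge e_2)=x_2e_1-x_1e_2$, together with the induced bases on all tensor factors and a fixed choice of the isomorphisms $R\otimes_RM\cong M$, so that the three matrices in (3) and the sign $\binom{y}{-x}$ in (2) come out consistently; after that, ``acyclic over $k^{r-1}$'' is immediate from the definition, and the corollary follows.
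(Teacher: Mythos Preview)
Your proposal is correct and takes essentially the same approach as the paper, which simply states that the corollary follows by applying Theorem~\ref{6131106} to low dimensions. Your write-up makes explicit exactly what the paper leaves implicit: identifying the terms and differentials of $\KK_\bullet(\xx,R)\otimes_R\LL(y,\rmK_R)$ for $\xx$ of length $0,1,2$ and translating ``acyclic over $k^{r-1}$'' into exactness of the displayed augmented sequences.
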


For an $R$-module $M$ let $\pd_RM$ and $\Gdim_RM$ denote the projective dimension and the G-dimension of $M$, respectively (we refer the reader to \cite{C} for details of G-dimension).

\begin{cor}\label{6160013}
Assume that $R$ is an almost Gorenstein local ring of dimension $d\ge 1$. Then the following assertions hold true.
\begin{enumerate}[\rm(1)]
\item
The exact sequence
$$
0 \to R \to \rmK_R\oplus R^{d-1} \to \rmK_R^{d-1}\oplus R^{\binom{d-1}{2}} \to \cdots \to \rmK_R^{\binom{d-1}{2}}\oplus R^{d-1} \to \rmK_R^{d-1}\oplus R \to \rmK_R \to k^{r-1} \to 0
$$
arising from Theorem $\ref{6131106}~(2)$ is self-dual with respect to $\rmK_R$, that is, after dualizing this exact sequence by $\rmK_R$, one obtains the same exact sequence $($up to isomorphisms$)$.
\item
Suppose that  $R$ is not a Gorenstein ring. Then $R$ is G-regular in the sense of \cite{greg}, that is $\Gdim_RM=\pd_RM$ for every finitely generated $R$-module $M$.
\end{enumerate}
\end{cor}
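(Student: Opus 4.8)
\emph{Part (1).} Put $C=\KK_\bullet(\xx,R)\otimes_R\LL(y,\rmK_R)$ for the complex produced in Theorem~\ref{6131106}~(2); it lives in homological degrees $0,\dots,d$, has $\H_0(C)\cong k^{r-1}$ and $\H_i(C)=(0)$ for $i\ne 0$, and the exact sequence of the statement (arising from Theorem~\ref{6131106}~(2)) is exactly $C$ with this homology module reattached at the bottom. The plan is to prove $\Hom_R(C,\rmK_R)\cong C[d]$ as complexes of $R$-modules, where $[d]$ denotes the $d$-fold shift; this is precisely the asserted self-duality. Three ingredients are needed. First, as $R$ possesses a canonical module the homothety $R\to\Hom_R(\rmK_R,\rmK_R)$ is an isomorphism while $\Hom_R(R,\rmK_R)=\rmK_R$, so a direct inspection of $\LL(y,\rmK_R)=(R\xrightarrow{y}\rmK_R)$ gives a natural isomorphism $\Hom_R(\LL(y,\rmK_R),\rmK_R)\cong\LL(y,\rmK_R)[1]$. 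Second, the Koszul complex $\KK_\bullet(\xx,R)$ on the $R$-sequence $\xx=x_1,\dots,x_{d-1}$ is a bounded complex of finitely generated free modules that is self-dual, $\Hom_R(\KK_\bullet(\xx,R),R)\cong\KK_\bullet(\xx,R)[d-1]$ (up to signs that do not affect the isomorphism type). Third, since the terms of $\KK_\bullet(\xx,R)$ are free, tensor--hom adjunction carries $\Hom_R(-,\rmK_R)$ across the tensor product, so that
$$
\Hom_R(C,\rmK_R)\cong\Hom_R(\KK_\bullet(\xx,R),R)\otimes_R\Hom_R(\LL(y,\rmK_R),\rmK_R)\cong\KK_\bullet(\xx,R)[d-1]\otimes_R\LL(y,\rmK_R)[1]=C[d].
$$
To read off the statement about the exact sequence, note that every term of $C$ is a finite direct sum of copies of $R$ and $\rmK_R$, hence has vanishing higher $\Ext_R(-,\rmK_R)$ and dualizes, term by term, to the term of $C$ in the mirror-image degree; applying $\Hom_R(-,\rmK_R)$ to the modules of the exact sequence and reattaching the cokernel then produces a complex isomorphic to $C[d]$, whose only nonzero homology $k^{r-1}$ sits exactly in the position that makes it the cokernel term of the original sequence again.

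\emph{Part (2).} The plan is to reduce the G-regularity of $R$ to the statement that every totally reflexive $R$-module is free. Since $\Gdim_RM\le\pd_RM$ always, with equality whenever $\pd_RM<\infty$, it suffices to show $\pd_RM<\infty$ whenever $\Gdim_RM<\infty$; and if $g=\Gdim_RM<\infty$ then the syzygy $\Omega^g_RM$ is totally reflexive, while $\pd_RM<\infty$ precisely when $\Omega^g_RM$ is free. Replacing $R$ by $R[X]_{\fkm R[X]}$ --- which is again almost Gorenstein and not Gorenstein by Theorem~\ref{3.5}, and along which $\Gdim$ and $\pd$ are both preserved, so that G-regularity descends by faithful flatness --- I may assume the residue field $R/\fkm$ is infinite. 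Then Theorem~\ref{6131106}~(2) supplies an exact sequence
$$
0\to R\to F_{d-1}\to\cdots\to F_1\to F_0\to k^{r-1}\to 0
$$
in which each $F_i$ is a finite direct sum of copies of $R$ and $\rmK_R$, and $r\ge 2$ because $R$ is not Gorenstein.

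Let $N\ne(0)$ be totally reflexive. By definition $\Ext^j_R(N,R)=(0)$ for $j>0$; by the Auslander--Bridger formula $N$ is maximal Cohen--Macaulay, so $\Ext^j_R(N,\rmK_R)=(0)$ for $j>0$ since $\rmK_R$ is the canonical module; hence $\Ext^j_R(N,F_i)=(0)$ for all $j>0$ and all $i$. Breaking the exact sequence above into short exact sequences and dimension-shifting through the long exact sequences of $\Ext_R(N,-)$ then yields $\Ext^1_R(N,k^{r-1})\cong\Ext^{d+1}_R(N,R)=(0)$, whence $\Ext^1_R(N,k)=(0)$ and $N$ is free. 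Therefore every totally reflexive $R$-module is free, so $\Gdim_RM=\pd_RM$ for every finitely generated $R$-module $M$, i.e. $R$ is G-regular.

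I expect the main obstacle to be in Part (1): one has to handle the shift conventions and the signs in the Koszul self-duality carefully, and check that the abstract isomorphism $\Hom_R(C,\rmK_R)\cong C[d]$ genuinely translates into ``the same exact sequence, up to isomorphism'', including the correct placement of the homology term $k^{r-1}$. In Part (2) the only slightly technical points --- the reduction to an infinite residue field and the standard interplay of $\Gdim$, $\pd$, syzygies and totally reflexive modules --- are routine.
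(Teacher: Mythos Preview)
Your proof is correct and follows essentially the same route as the paper's. For part~(1) the paper argues exactly via $\Hom_R(\KK_\bullet(\xx,R),R)\cong\KK_\bullet(\xx,R)[1-d]$, $\Hom_R(\LL(y,\rmK_R),\rmK_R)\cong\LL(y,\rmK_R)[-1]$, and tensor--hom adjunction (your opposite signs on the shifts reflect only a different convention for $[n]$); for part~(2) the paper likewise takes a high syzygy $N$ of $M$, uses $\Ext_R^{>0}(N,R)=\Ext_R^{>0}(N,\rmK_R)=0$, applies $\Hom_R(N,-)$ to the sequence in~(1), and deduces $\Ext_R^i(N,k^{r-1})=0$ for $i\gg0$, hence $\pd_R N<\infty$. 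Your argument is marginally more careful in two places: you explicitly pass to $R[X]_{\fkm R[X]}$ to secure an infinite residue field before invoking Theorem~\ref{6131106} (the paper tacitly assumes the sequence in~(1) is available), and you extract the sharper conclusion $\Ext_R^1(N,k)=0$ so that $N$ is actually free, not merely of finite projective dimension; both refinements are harmless.
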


\begin{proof}
(1) Let $X[n]$ denote, for a complex $X$ of $R$-modules and $n \in \Bbb Z$, the complex $X$ shifted by $n$ (to the left). Then with the same notation as in Theorem \ref{6131106} (2), $\Bbb K=\KK_\bullet(\xx,R)$ and $U=\LL(y,\rmK_R)$. Therefore $\Hom_R(\Bbb K,R)\cong \Bbb K[1-d]$ and $\Hom_R(U,\rmK_R)\cong U[-1]$, which show 
\begin{align*}
\Hom_R(\Bbb K\otimes_RU,\rmK_R)
&\cong\Hom_R(\Bbb K,\Hom_R(U,\rmK_R))
\cong\Hom_R(\Bbb K,U[-1])\\
&\cong\Hom_R(\Bbb K,R)\otimes_RU[-1]
\cong \Bbb K[1-d]\otimes_RU[-1]
\cong (\Bbb K\otimes_RU)[-d]
\end{align*}
(for the third isomorphism, remember that $\Bbb K$ is a bounded complex of free $R$-modules). Hence we get the assertion, because $\H_0(\Bbb K\otimes_RU)\cong k^{r-1}$ and $\H_i(\Bbb K\otimes_RU)=(0)$ for $i>0$ by Theorem \ref{6131106} (2).

(2) It suffices to show that every $R$-module $M$ of finite G-dimension is of finite projective dimension. Let $N$ be a high syzygy of $M$. Then since $N$ is totally reflexive and maximal Cohen-Macaulay, we have $$\Ext_R^i(N,R)=(0)=\Ext_R^i(N,\rmK_R)$$ for all $i>0$. Apply the functor $\Hom_R(N,-)$ to the exact sequence in assertion (1) and we get $$\Ext_R^i(N,k^{r-1})=(0)$$ for $i\gg0$. Since $r-1> 0$ (as $R$ is not Gorenstein), $N$ has finite projective dimension, and so does $M$.
\end{proof}


\if0

Let us  study the almost Gorenstein property of  local rings $R$ in terms of several conditions on parameter ideals of $R$.

\begin{prop}
Let $Q$ be a parameter ideal of $R$ and consider the following three conditions.
\begin{enumerate}[\rm\quad(a)]
\item
There exists an exact sequence
$
0 \to k^{r-1} \to R/Q \to \E_{R/Q}(k) \to k^{r-1} \to 0
$
of $R/Q$-modules, where $\rmE_{R/Q}(k)$ denotes the injective envelope of the $R/Q$-module $k$. 
\item
There exists an ideal $I$ of $R$ with $Q\subseteq I\subsetneq(Q:_R\m)$ such that $R/I$ is a Gorenstein ring.
\item
There exists an exact sequence
$
0 \to R \to \rmK_R \to C \to 0
$
of $R$-modules such that $\m C=QC$.
\end{enumerate}
We then have the following.
\begin{enumerate}[\rm(1)]
\item
The equivalence between conditions {\rm(a)} and {\rm(b)} holds.
\item
If $R$ is generically Gorenstein, then the implication {\rm(a)} $\Rightarrow$ {\rm(c)} holds.
\item
Suppose that $k$ is infinite.
If $R$ is an almost Gorenstein local ring, then condition {\rm(a)} is satisfied  for some parameter ideal $Q$ of $R$.
\item
Assume that $d\ge1$ and that condition {\rm(c)} is satisfied for a parameter ideal $Q=(x_1,x_2, \ldots,x_d)$ of $R$.
If $x_iC=(0)$ for some $1 \le i \le d$, then $R$ is an almost Gorenstein local ring.
\end{enumerate}
\end{prop}

\begin{proof}
(1) (b) $\Rightarrow$ (a):
There is a natural exact sequence $0\to I/Q\to R/Q\to R/I\to0$.
Since $I$ is contained in $(Q:\m)$, we have $I/Q\cong k^n$ for some $n\ge0$.
Hence we have an exact sequence
$
0 \to k^n \to R/Q \to R/I \to 0.
$
Set $(-)^\vee=\Hom_R(-,\E_R(k))$.
As $R/I$ is Artinian Gorenstein, we have $(R/I)^\vee\cong R/I$.
Applying $(-)^\vee$ to the above exact sequence, we get an exact sequence
$
0 \to R/I \to \E_{R/Q}(k) \to k^n \to 0.
$
Note that $\mu_{R/Q}(\E_{R/Q}(k))=\rmr_{R/Q}(R/Q)=r$.
From the above exact sequence we observe $n\le r\le n+1$.
Since $k^n\cong I/Q\subsetneq(Q:\m)/Q=\soc R/Q\cong k^r$, we see that $n<r$, and hence $n=r-1$.
Splicing the above two exact sequences gives an exact sequence as in (a).

(a) $\Rightarrow$ (b):
The image of the map $R/Q\to \E_{R/Q}(k)$ is isomorphic to $R/I$ for some ideal $I$ with $Q\subseteq I$ and $I/Q\cong k^{r-1}$.
Hence $I/Q$ is annihilated by $\m$, which means $I\subseteq(Q:\m)$.
Since $k^{r-1}\cong I/Q\subseteq(Q:\m)/Q\cong\soc R/Q\cong k^r$, the ideal $I$ is different from $(Q:\m)$.
Taking the Matlis dual of the injection $R/I\to \E_{R/Q}(k)$, we obtain a surjection $R/Q\to \E_{R/I}(k)$.
This implies that $\E_{R/I}(k)$ is a cyclic $R/I$-module, whence $R/I$ is Gorenstein.

(2) Suppose that $R$ is generically Gorenstein.
If $d=0$, then $R$ is Gorenstein, and clearly the statement (c) holds.
So let $d\ge1$.
We may assume that $\rmK_R$ is an ideal of $R$.
As $\E_{R/Q}(k)$ is isomorphic to $\rmK_R/Q\rmK_R$, there is an exact sequence
$
R/Q\xrightarrow{f}\rmK_R/Q\rmK_R\to k^{r-1}\to0.
$
Let $y\in\rmK_R$ be such that $f(\overline1)=\overline y$.
Note that the ideal $Q\rmK_R$ has positive grade, whence so does $(y)+Q\rmK_R$.
Replacing $y$ with $y+z$ for some $z\in Q\rmK_R$ if necessary, we may assume that $y$ is $R$-regular.
Thus the map $R\xrightarrow{y}\rmK_R$ is injective, which induces an exact sequence
$
0 \to R \xrightarrow{y} \rmK_R \to C \to 0.
$
As $R/Q\otimes_R(R\xrightarrow{y}\rmK_R)=f$, we see that $C/QC\cong k^{r-1}$, and therefore $\m C=QC$.

(3) The assertion is obvious when $R$ is Gorenstein.
In view of Lemma \ref{3.1}(3), we can assume $d\ge1$.
Theorem \ref{3.5} and Corollary \ref{711824}(1) imply that there are an ideal $q$ of $R$ generated by an $R$-sequence $x_1,\dots,x_{d-1}$ and an exact sequence
$
0 \to R/q \to \rmK_R/q\rmK_R \to k^{r-1} \to 0.
$
We find an $R/q$-regular element $x_d\in R$, which makes a parameter ideal $Q:=q+(x_d)$ of $R$.
Tensoring $R/(x_d)$ with the above exact sequence, we obtain an exact sequence $0\to k^{r-1}\to R/Q \to \rmK_R/Q\rmK_R \to k^{r-1} \to 0$.

(4) We may assume $i=1$.
Set $x=x_1$ and define an ideal $q=(\overline x_2,\dots,\overline x_d)$ of $R/(x)$.
Then $C$ is an $R/(x)$-module, $q$ is a parameter ideal of $R/(x)$ and $\m C=qC$.
By Proposition \ref{6131104}, the local ring $R$ is almost Gorenstein.
\end{proof}

\fi


Let us consider the Poincar\'{e} and Bass series over almost Gorenstein local rings. First of all let us fix some terminology. Let $X$ (respectively, $Y$) be a homologically right (respectively, left) bounded complex of $R$-modules, possessing finitely generated homology modules.
The {\em Poincar\'{e} series} of $X$ and the {\em Bass series} of $Y$ are defined as the following formal Laurent series with coefficients among non-negative integers:
$$
\PP_X(t)=\sum_{n\in\Z}\dim_k\Tor_n^R(X,k){\cdot}t^n,\ \II^Y(t)=\sum_{n\in\Z}\dim_k\Ext_R^n(k,Y){\cdot}t^n.
$$
We then have the following,  in which the Poincar\'{e} and Bass series of $C=\operatorname{Coker} \varphi$ are described in terms of the Bass series of $R$.

\begin{thm}
Let $(R,\fkm,k)$ be an almost Gorenstein local ring of dimension $d\ge 1$ and assume that $R$ is not a Gorenstein ring. Consider an exact sequence 
$$(\sharp) \ \ \ 0\to R\overset{\varphi}{\to}\rmK_R\to C\to0$$ of $R$-modules such that  $C$ is an Ulrich $R$-module.  We then have the following.
\begin{enumerate}[\rm(1)]
\item $
\RHom_R(C,\rmK_R)\cong C[-1]
$
in  $\calD(R)$.
\item $
\PP_C(t)=t^{1-d}\II^R(t)-1$ and $\II^C(t)=\II^R(t)-t^{d-1}$.
\end{enumerate}
\end{thm}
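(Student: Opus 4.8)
The plan is to deduce both parts from the exact triangle $R\xrightarrow{\varphi}\rmK_R\to C\rightsquigarrow$ in $\calD(R)$ attached to $(\sharp)$ (here $C$ is Cohen--Macaulay of dimension $d-1$ by Lemma \ref{3.1}(2), in particular a torsion $R$-module), using that $\rmK_R$ is a semidualizing $R$-module and that $\RHom_R(k,\rmK_R)\cong k[-d]$.

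For (1), I would apply the contravariant functor $\RHom_R(-,\rmK_R)$ to that triangle, obtaining an exact triangle
$$
\RHom_R(C,\rmK_R)\longrightarrow\RHom_R(\rmK_R,\rmK_R)\longrightarrow\RHom_R(R,\rmK_R)\rightsquigarrow
$$
in $\calD(R)$. Since $R$ is Cohen--Macaulay with canonical module $\rmK_R$, one has $\Hom_R(\rmK_R,\rmK_R)=R$ and $\Ext_R^i(\rmK_R,\rmK_R)=(0)$ for $i>0$ (\cite{HK}), so $\RHom_R(\rmK_R,\rmK_R)\cong R$; also $\RHom_R(R,\rmK_R)=\rmK_R$. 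The one point needing care is that, under these identifications, the middle arrow becomes $\varphi$ itself: it is $\RHom_R(\varphi,\rmK_R)$, which sends $\mathrm{id}_{\rmK_R}\in\Hom_R(\rmK_R,\rmK_R)=R$ to $\mathrm{id}_{\rmK_R}\circ\varphi\in\Hom_R(R,\rmK_R)=\rmK_R$, i.e.\ to $\varphi(1)$, so as a map $R\to\rmK_R$ it is $a\mapsto a\varphi(1)=\varphi(a)$. Hence the triangle reads $\RHom_R(C,\rmK_R)\to R\xrightarrow{\varphi}\rmK_R\rightsquigarrow$, which identifies $\RHom_R(C,\rmK_R)$ with $\mathrm{cone}(\varphi)[-1]$; as $\varphi$ is a monomorphism of modules with cokernel $C$, its cone is quasi-isomorphic to $C$, giving $\RHom_R(C,\rmK_R)\cong C[-1]$. (Alternatively, avoiding derived categories: $C\ne(0)$, so $\rmQ(R)$ is Gorenstein by Lemma \ref{3.1}(1), hence $R$ is generically Gorenstein and $\rmK_R$ may be taken torsion-free, so $\Hom_R(C,\rmK_R)=(0)$; the long exact $\Ext$-sequence of $(\sharp)$ then gives $\Ext_R^1(C,\rmK_R)\cong\operatorname{coker}\varphi=C$ and $\Ext_R^i(C,\rmK_R)\cong\Ext_R^{i-1}(\rmK_R,\rmK_R)=(0)$ for $i\ge2$.)

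For (2), I would combine (1) with the long exact sequence of $\Tor_\bullet^R(-,k)$ applied to $(\sharp)$. Because $\Tor_i^R(R,k)=(0)$ for $i>0$, this yields $\Tor_i^R(\rmK_R,k)\cong\Tor_i^R(C,k)$ for $i\ge2$ together with
$$
0\to\Tor_1^R(\rmK_R,k)\to\Tor_1^R(C,k)\to k\xrightarrow{\overline{\varphi}}\rmK_R/\m\rmK_R\to C/\m C\to0 .
$$
Since $R$ is not Gorenstein it is not regular, so Corollary \ref{3.8} forces $\varphi(1)\notin\m\rmK_R$; thus $\overline{\varphi}$ is injective, $\Tor_1^R(\rmK_R,k)\cong\Tor_1^R(C,k)$, and $\dim_kC/\m C=\mu_R(\rmK_R)-1$, whence $\PP_C(t)=\PP_{\rmK_R}(t)-1$. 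I would then invoke the classical identity $\II^R(t)=t^{d}\,\PP_{\rmK_R}(t)$ relating the Bass series of $R$ to the Poincar\'e series of its canonical module (a consequence of local duality, or directly of $\Ext_R^i(k,\rmK_R)=(0)$ for $i\ne d$ and $\Ext_R^d(k,\rmK_R)=k$; see \cite{HK}), turning the previous equality into $\PP_C(t)=t^{-d}\II^R(t)-1$. Finally, applying $\II^{(-)}$ to the isomorphism of (1): on one hand $\II^{C[-1]}(t)=t\,\II^C(t)$; on the other, using $\RHom_R\bigl(k,\RHom_R(C,\rmK_R)\bigr)\cong\RHom_R\bigl(k\otimes_R^{\bf L}C,\rmK_R\bigr)$ together with $\RHom_R(k,\rmK_R)\cong k[-d]$, the hyper-$\Ext$ spectral sequence degenerates to $\Ext_R^n\bigl(k,\RHom_R(C,\rmK_R)\bigr)\cong\Tor_{n-d}^R(k,C)$, so $\II^{\RHom_R(C,\rmK_R)}(t)=t^{d}\,\PP_C(t)$. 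Comparing the two expressions gives $\II^C(t)=t^{d-1}\PP_C(t)=t^{-1}\II^R(t)-t^{d-1}$.

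I expect the only genuinely delicate steps to be the two one-term identifications: that the connecting morphism in the dualized triangle of (1) is literally $\varphi$ (so its cone is $C$ and not just a module built from $C$), and that the spectral sequence computing $\Ext_R^\bullet(k,\RHom_R(C,\rmK_R))$ degenerates --- both of which rest solely on $\rmK_R$ being the canonical module, i.e.\ on $\RHom_R(\rmK_R,\rmK_R)\cong R$ and $\RHom_R(k,\rmK_R)\cong k[-d]$. Granting these and Corollary \ref{3.8}, the remainder is a formal manipulation of exact triangles and Laurent series.
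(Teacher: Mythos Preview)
Your argument is correct and, for part (1), essentially identical to the paper's: the paper takes the $\rmK_R$-dual of $(\sharp)$ and reads off $C\cong\Ext_R^1(C,\rmK_R)$ from the resulting diagram, while you phrase this as the exact triangle $\RHom_R(C,\rmK_R)\to R\xrightarrow{\varphi}\rmK_R\rightsquigarrow$. The careful check that the middle map is literally $\varphi$ is exactly the content of the paper's commutative square.

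For part (2) you and the paper take dual routes. You compute $\PP_C$ first via the Tor long exact sequence (using Corollary~\ref{3.8} to make $\overline{\varphi}$ injective) and the identity $\II^R(t)=t^d\PP_{\rmK_R}(t)$, and only then pass to $\II^C$ through $\II^C(t)=t^{d-1}\PP_C(t)$. The paper instead establishes $\II^C(t)=t^{d-1}\PP_C(t)$ directly from \cite[(A.7.7)]{C} (your spectral-sequence argument is a proof of that same formula), and then computes $\II^C$ by applying $\Hom_R(k,-)$ to $(\sharp)$ and reading off $\Ext_R^i(k,C)$. Both derivations are valid and rest on the same two ingredients: $\RHom_R(\rmK_R,\rmK_R)\cong R$ and $\II^{\rmK_R}(t)=t^d$.

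One point you should be aware of: your final formulas $\PP_C(t)=t^{-d}\II^R(t)-1$ and $\II^C(t)=t^{-1}\II^R(t)-t^{d-1}$ differ by a factor of $t$ from those printed in the statement. Your formulas are the correct ones. Indeed, the paper's own Ext computation gives $\Ext_R^i(k,C)\cong\Ext_R^{i+1}(k,R)$ for $i\ge d$ and $\Ext_R^{d-1}(k,C)\cong k^{r-1}$, which yields $t\,\II^C(t)=(r-1)t^d+\sum_{i\ge d}\mu_R^{i+1}t^{i+1}=\II^R(t)-t^d$, hence $\II^C(t)=t^{-1}\II^R(t)-t^{d-1}$; the ``Hence $\II^C(t)=\II^R(t)-t^{d-1}$'' in the paper (and the companion formula for $\PP_C$) is a misprint. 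A quick sanity check: the constant term of $\PP_C(t)$ is $\mu_R(C)=r-1$, which matches $t^{-d}\II^R(t)-1$ but not $t^{1-d}\II^R(t)-1$, whose constant term is $-1$.
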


\begin{proof}
(1) Since $C$ is a Cohen-Macaulay $R$-module of dimension $d-1$, $\Ext_R^i(C,\rmK_R)=(0)$ for all $i\ne1$ (\cite[Satz 6.1]{HK}). To see $C \cong \Ext_R^1(C,\rmK_R)$, take the $\rmK_R$-dual of exact sequence $(\sharp)$ and we get the following commutative diagram
$$
\begin{CD}
0 @>>> \Hom_R(\rmK_R,\rmK_R) @>{\Hom_R(\varphi,\rmK_R)}>> \Hom_R(R,\rmK_R) @>>> \Ext_R^1(C,\rmK_R) @>>> 0 \\
@. @A{\cong}AA @A{\cong}AA \\
0 @>>> R @>\varphi>> \rmK_R @>>> C @>>> 0,
\end{CD}
$$
where the vertical isomorphisms  are canonical ones. Hence $C \cong \Ext_R^1(C,\rmK_R)$, so that $\rmr_R(C)=\mu_R(C)=r-1$ by Corollary \ref{3.8}  and \cite[Satz 6.10]{HK}.

(2)   By \cite[(A.7.7)]{C}
$$
t\II^C(t)=\II^{C[-1]}(t)=\II^{\RHom(C,\rmK_R)}(t)=\PP_C(t)\II^\rmK_R(t),
$$
while $$
\II^C(t)=t^{d-1}\PP_C(t),
$$
as $\II^\rmK_R(t)=t^d$. Therefore, since $\rmr_R(C)=\mu_R(C)=r-1$, applying $\Hom_R(k,-)$ to exact sequence $(\sharp)$ and writing the long exact sequence, we get
$$
\Ext_R^i(k,C)\cong
\begin{cases}
(0) & (i\le d-2),\\
k^{r-1} & (i=d-1),\\
\Ext_R^{i+1}(k,R) & (i\ge d).
\end{cases}
$$
Hence $\II^C(t)=\II^R(t)-t^{d-1}$.
\end{proof}

\section{Characterization in terms of canonical ideals}\label{chcan}

Let $(R,\fkm)$ be a Cohen-Macaulay local ring of dimension $d > 0$, which possesses the canonical module $\K_R$. The main result of this section is the following characterization of almost Gorenstein local rings in terms of canonical ideals, which is a natural generalization of \cite[Theorem 3.11]{GMP}.

\begin{thm}\label{4.1}
Suppose that $\rmQ (R)$ is a Gorenstein ring and take an ideal $I ~(\ne R)$ such that $I \cong \K_R$ as an $R$-module. Consider the following two conditions$\mathrm{:}$
\begin{enumerate}[\rm(1)]
\item
$R$ is an almost Gorenstein local ring.
\item
$R$ contains a parameter ideal $Q = (f_1, f_2, \ldots, f_d)$ such that $f_1 \in I$ and $\fkm (I+Q) = \fkm Q$.
\end{enumerate}
Then the implication $(2) \Rightarrow (1)$ holds. If $R/\fkm$ is infinite, the implication $(1) \Rightarrow (2)$ is also true.
\end{thm}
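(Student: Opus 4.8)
The plan is to reduce everything to the one-dimensional statement \cite[Theorem 3.11]{GMP} by a Bertini-type cutting argument, using the non-zerodivisor characterization (Theorem \ref{3.9}) to pass between $R$ and its hypersurface sections. For the implication $(2)\Rightarrow(1)$, I would not need the infinite residue field hypothesis. Assume $Q=(f_1,\ldots,f_d)$ with $f_1\in I$ and $\fkm(I+Q)=\fkm Q$. Since $f_1$ is a non-zerodivisor and $f_1\in I\cong\K_R$, the inclusion $R\xrightarrow{f_1}I$ is an embedding of $R$ into a copy of the canonical module, with cokernel $C=I/(f_1)$. The point is to show $C$ is an Ulrich $R$-module of dimension $d-1$: first $\fkm C = \fkm I/(f_1)$, and the hypothesis $\fkm(I+Q)=\fkm Q$ gives $\fkm I\subseteq \fkm Q + (f_1)$, i.e. $\fkm I\subseteq (f_1,\ldots,f_d)\cap(\text{stuff})$; more precisely $\fkm C$ is generated by the images of $f_2,\ldots,f_d$, so $\fkm C=(f_2,\ldots,f_d)C$ with $f_2,\ldots,f_d$ a system of parameters for $C$ (note $\dim_R C\le d-1$ since $Q$ is a parameter ideal and $I+Q$ is $\fkm$-primary, forcing $C$ to be killed by a power of $\fkm$ modulo $Q$). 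Then Lemma \ref{3.1}(1)–(2) shows $\varphi$ is injective and $C$ is Cohen–Macaulay of dimension $d-1$, and Proposition \ref{2.2}(2) shows $C$ is Ulrich. Hence $0\to R\to I\to C\to 0$ witnesses that $R$ is almost Gorenstein.

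For the implication $(1)\Rightarrow(2)$ under the hypothesis that $R/\fkm$ is infinite, I would induct on $d$. The base case $d=1$ is exactly \cite[Theorem 3.11]{GMP} together with Proposition \ref{3.4}: almost Gorensteinness gives $\fkm I\subseteq(f_1)$ for a suitable $f_1\in I$ generating a reduction, and $Q=(f_1)$ works since $\fkm(I+Q)=\fkm I + \fkm f_1 = \fkm f_1 = \fkm Q$. For $d\ge 2$, take an exact sequence $0\to R\xrightarrow{\varphi} I\to C\to 0$ with $C$ an Ulrich $R$-module of dimension $d-1$ (using Remark \ref{3.2} to realize $\K_R$ as an ideal $I$). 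Choose $f_1\in\fkm$ that is $R$-regular, superficial for $C$ with respect to $\fkm$, and a minimal generator of $\fkm$ (possible since $R/\fkm$ is infinite and $C$ is Ulrich, so by Proposition \ref{2.2}(2) a generic element works); I also want $f_1$ to be part of a minimal reduction of $\fkm$ on $R/(\text{ann})$, and after replacing $\varphi$ suitably I want $\varphi(1)$ to involve $f_1$ — here I would instead first arrange, as in the proof of Theorem \ref{3.9}(2), that $\overline R=R/(f_1)$ is almost Gorenstein with canonical ideal the image $\overline I$ of $I$ and with $f_1\in I$ by choosing the embedding so that $\varphi(1)=f_1\cdot(\text{unit})\in I$. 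Then by induction $\overline R$ has a parameter ideal $\overline Q=(\overline f_2,\ldots,\overline f_d)$ with $\overline f_2\in\overline I$ and $\overline{\fkm}(\overline I+\overline Q)=\overline{\fkm}\,\overline Q$. Lifting $f_2,\ldots,f_d$ to $R$ and setting $Q=(f_1,f_2,\ldots,f_d)$, one has $f_1\in I$ by construction, and reducing mod $f_1$ one recovers $\fkm(I+Q)\equiv \fkm Q$ modulo $(f_1)$; one then upgrades this to an honest equality $\fkm(I+Q)=\fkm Q$ using that $f_1\in I\cap Q$ (so $(f_1)\subseteq Q$ and $(f_1)\subseteq I+Q$) — concretely $\fkm(I+Q)\subseteq \fkm Q + (f_1)$ and $(f_1)\subseteq Q$ gives $\fkm(I+Q)\subseteq \fkm Q + \fkm f_1 \subseteq \fkm Q$, hence equality.

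The main obstacle I anticipate is the bookkeeping in the inductive step: making sure the element $f_1$ can be chosen simultaneously $R$-regular, superficial for $C$, a minimal generator of $\fkm$, and compatible with the embedding (so that $\varphi$ can be arranged with $\varphi(1)\in (f_1)$ while keeping $C/f_1C$ Ulrich over $R/(f_1)$), and then that the lifted parameter ideal $Q$ genuinely satisfies $\fkm(I+Q)=\fkm Q$ in $R$ rather than merely modulo $(f_1)$. The delicate point is that reducing the equation $\fkm(I+Q)=\fkm Q$ modulo $f_1$ loses no information \emph{only} because $f_1\in I$ and $f_1\in Q$ make $(f_1)$ absorb into both sides; I would need to verify this absorption carefully, and also check that $f_1$ really can be taken inside $I$ — this is where Corollary \ref{3.8} is useful, since it guarantees $\varphi(1)\notin\fkm\K_R$ when $R$ is not regular, letting me adjust $\varphi(1)$ within its coset to land on a suitable regular element, and the regular case $R$ being a regular (hence Gorenstein) ring is handled trivially by taking $C=(0)$.
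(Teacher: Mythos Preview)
Your plan for $(2)\Rightarrow(1)$ is fine and in fact slightly more direct than the paper's: the paper reduces modulo $\q=(f_2,\ldots,f_d)$ to dimension one and invokes \cite[Theorem~3.11]{GMP} together with Theorem~\ref{3.9}, whereas you argue directly that $C=I/(f_1)$ is Ulrich. Your argument works, but the step ``$\fkm C=(f_2,\ldots,f_d)C$'' requires the identity $\q\cap I=\q I$ (where $\q=(f_2,\ldots,f_d)$), which holds because $R/I$ is Cohen--Macaulay of dimension $d-1$ and $f_2,\ldots,f_d$ is a system of parameters for it; you should make this explicit.

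Your plan for $(1)\Rightarrow(2)$, however, contains a genuine contradiction. You want $f_1$ to be simultaneously superficial for $C$ (hence $C$-regular, since $\dim_RC=d-1\ge1$) and a unit multiple of $\varphi(1)$. But $\varphi(1)$ \emph{annihilates} $C$: if $\varphi(1)=f_1u$ with $u$ a unit, then $C=I/Rf_1$ and $f_1 C=f_1 I/Rf_1=(0)$ since $f_1 I\subseteq f_1 R$. So $f_1$ cannot be $C$-regular, and no adjustment of the embedding will fix this. Your lifting step is also a non-sequitur: from $\fkm(I+Q)\subseteq\fkm Q+(f_1)$ one cannot conclude $\fkm(I+Q)\subseteq\fkm Q+\fkm f_1$; the containment $(f_1)\subseteq Q$ does not convert $(f_1)$ into $\fkm f_1$.

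The paper resolves this by reversing the roles. One sets $f_1=\varphi(1)\in I$ from the start (so $f_1$ kills $C$ rather than cutting it), and then chooses $f_2,\ldots,f_d\in\fkm$ so that $f_1,\ldots,f_d$ is a system of parameters of $R$, $f_2,\ldots,f_d$ is a system of parameters of $R/I$, and $\fkm C=(f_2,\ldots,f_d)C$. One then reduces modulo $\q=(f_2,\ldots,f_d)$ in a single step to the one-dimensional ring $\overline R=R/\q$, where Corollary~\ref{3.8} and the one-dimensional theory give $\overline\fkm\,\overline I=\overline\fkm\,\overline{f_1}$. The lift back is $\fkm I\subseteq(\fkm f_1+\q)\cap I=\fkm f_1+(\q\cap I)=\fkm f_1+\q I\subseteq\fkm Q$, using $\q\cap I=\q I$ again. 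The essential point is that the element of $I$ and the elements used to cut down the dimension are \emph{different} parameters; trying to make one element serve both purposes is precisely what breaks your induction.
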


\begin{proof}
$(2) \Rightarrow (1)$ Let $\q = (f_2, f_3, \ldots, f_d)$. Then $\q$ is a parameter ideal for the Cohen-Macaulay local ring $R/I$, because $I + Q = I + \q$. We set $\overline{R} = R/\q$, $\overline{\fkm} = \fkm \overline{R}$, and $\overline{I} = I \overline{R}$. Notice that $\overline{I} \cong I/\q I \cong \K_{\overline{R}}$, since $\q \cap I = \q I$. Let $\overline{f_1}$ be the image of $f_1$ in $\overline{R}$.  Then since $\overline{\fkm}{\cdot}\overline{I} = \overline{\fkm}{\cdot}\overline{f_1}$, by \cite[Theorem 3.11]{GMP} $\overline{R}$ is an almost Gorenstein local ring, so that  $R$ is an almost Gorenstein local ring  by Theorem \ref{3.9}.

$(1) \Rightarrow (2)$ Suppose that $R/\fkm$ is infinite. We may assume that $R$ is not a Gorenstein ring (because $I$ is a principal ideal, if $R$ is a Gorenstein ring). We consider the  exact sequence
\begin{equation}
0 \to R \overset{\varphi}{\longrightarrow} I \to C \to 0 \tag{$\sharp$}
\end{equation}
of $R$-modules such that $C$ is an Ulrich $R$-module. Let $f_1= \varphi (1) \in I$. Choose an $R$-regular sequence $f_2, f_3, \ldots, f_d \in \fkm$ so that (1) $f_1, f_2, \ldots, f_d$ is a system of parameters of $R$, (2) $f_2, f_3, \ldots, f_d$ is a system of parameters for the ring $R/I$, and (3) $\fkm C = (f_2, f_3, \ldots, f_d)C$ (this choice is, of course, possible; see Proposition \ref{2.2} (2)). Let $\q = (f_2, f_3, \ldots, f_d)$ and set $\overline{R} = R/\q$, $\overline{\fkm} = \fkm \overline{R}$, and $\overline{I} = I \overline{R}$. Then exact sequence $(\sharp)$ gives rise to  the exact sequence
$$0 \to \overline{R} \overset{\overline{\varphi}}{\longrightarrow} \overline{I} \to \overline{C} \to 0$$
of $\overline{R}$-modules where $\overline{C} = C/\q C$, because $\overline{I} \cong I/\q I$ and $f_2, f_3, \ldots, f_d$ form a $C$-regular sequence. Therefore, since $\overline{I} \cong \K_{\overline{R}}$ and $\overline{\fkm}\overline{C} = (0)$, $\overline{R}$ is an almost Gorenstein local ring. We furthermore have that $\overline{\fkm}{\cdot}\overline{I} = \overline{\fkm}{\cdot}\overline{f_1}$
 (here $\overline{f_1}$ denotes the image of $f_1$ in $\overline{R}$), because $\overline{R}$ is not a discrete valuation ring (see Corollary \ref{3.8}; remember that $\overline{f_1} = \overline{\varphi} (1))$.
Consequently, since $\fkm I \subseteq \fkm f_1 + \q$, we get $$\fkm I \subseteq (\fkm f_1 + \q) \cap I = \fkm f_1 + (\q \cap I) = \fkm f_1 + \q I \subseteq \fkm Q,$$ where $Q = (f_1, f_2, \ldots, f_d)$. Hence $\fkm(I + Q ) = \fkm Q$ as wanted.
\end{proof}

Let $R$ be an almost Gorenstein local ring of dimension $d \ge 2$. Let $I$ be an ideal of $R$ such that $I \cong \K_R$ as an $R$-module. Suppose that $R$ is not a Gorenstein ring but contains a parameter ideal $Q = (f_1, f_2, \ldots, f_d)$  such that $f_1 \in I$ and $\fkm(I+Q) = \fkm Q$. Let $\q = (f_2, f_3, \ldots, f_d)$. We set $\overline{R} = R/\q$, $\overline{\fkm} = \fkm \overline{R}$ and $\overline{I} = I \overline{R}$. Then $\overline{R}$ is an almost Gorenstein local ring with $\overline{I}=\rmK_{\overline{R}}$ and $(\overline{f})$ is a reduction of $\overline{I}$ with $\overline{\fkm}\overline{I} = \overline{\fkm} \overline{f}$, where $\overline{f}$ denotes the image of $f$ in $\overline{R}$ (see Proof of Theorem \ref{4.1}).

We explore what kind of properties the ideal $J = I + Q$ enjoys. To do this, we fix the following notation, which we maintain throughout this section.

\begin{notation}
Let ${\mathcal T} = R[Qt] \subseteq {\mathcal R} = R[It] \subseteq R[t]$ where $t$ is an indeterminate and set $\textstyle\gr_J(R) = {\mathcal R}/J{\mathcal R} ~(= \bigoplus_{n \ge 0}J^n/J^{n+1}$). We  set $${\mathcal S} = {\mathcal S}_Q(J) = J{\mathcal R}/J{\mathcal T}$$
(the Sally module of $J$ with respect to $Q$; \cite{V}). Let $${\mathcal B} = {\mathcal T}/\fkm {\mathcal T}$$
($= (R/\fkm)[T_1, T_2, \ldots, T_d],$ the polynomial ring) and $$\mathrm{red}_Q(J) = \operatorname{min} \{n \ge 0 \mid J^{n+1} = QJ^n\}.$$ We denote by $\{\rme_i(J)\}_{0 \le i \le d}$ the Hilbert coefficients of $R$ with respect to $J$. 
\end{notation}

Let us begin with the following. We set $f = f_1$.

\begin{cor}\label{4.2} The following assertions hold true.
\begin{enumerate}[\rm(1)]
\item
$\mathrm{red}_Q(J) = 2$.
\item
${\mathcal S}_Q(J) \cong {\mathcal B}(-1)$ as a graded ${\mathcal T}$-module. 
\item[$(3)$] $\ell_R(R/J^{n+1}) = \ell_R(R/Q){\cdot}\binom{n+d}{d} - \rmr (R){\cdot}\binom{n+d-1}{d-1} + \binom{n+d - 2}{d-2}$ for all $n \ge 0$. Hence $\rme_1(J) = \rmr (R)$, $\rme_2(J) = 1$, and $\rme_i(J) = 0$ for $3 \le i \le d$.
\item[$(4)$] Let $G = \gr_J(R)$. Then the elements $f_2t, f_3t, \ldots, f_dt ~(\in {\mathcal T}_1)$ form a regular sequence in $G$ but $ft$ is a zero-divisor in $G$. Hence $\depth G = d - 1$ and the graded local cohomology module $\rmH_{\fkM}^d(G)$ of $G$ is not finitely generated, where $\fkM = \fkm G + G_+$.
\end{enumerate}
\end{cor}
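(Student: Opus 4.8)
The plan is to descend along the $R$-regular sequence $f_2,f_3,\ldots,f_d$ to the one-dimensional ring $\overline{R}=R/\q$ and transport the conclusions of \cite[Theorem 3.16]{GMP}. Recall (from the paragraph preceding the statement) that $\overline{R}$ is a one-dimensional almost Gorenstein local ring which is not Gorenstein, that $\overline{I}=I\overline{R}$ is a canonical ideal of $\overline{R}$, and that $(\overline{f})$ is a reduction of $\overline{I}$ with $\overline{\m}\,\overline{I}=\overline{\m}\,\overline{f}$; consequently \cite[Theorem 3.16]{GMP} gives $\mathrm{red}_{(\overline{f})}(\overline{I})=2$, $\ell_R(\overline{I}{}^2/\overline{f}\,\overline{I})=1$, $\mathcal{S}_{(\overline{f})}(\overline{I})\cong k[T_1](-1)$, $\e_1(\overline{I})=\rmr(R)$, and tells us that $\overline{f}t$ is a zero-divisor on $\gr_{\overline{I}}(\overline{R})$. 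The bridge between the two pictures will be the claim, to be established, that $f_2t,f_3t,\ldots,f_dt$ form a regular sequence on $G=\gr_J(R)$; once this is known, $G/(f_2t,\ldots,f_dt)G\cong\gr_{\overline{I}}(\overline{R})$ and all four assertions fall out.

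First I would record the ideal-theoretic input. Since $f_2,\ldots,f_d$ is a regular sequence on the Cohen--Macaulay ring $R/I$ (it is a system of parameters for $R/I$), the Koszul homology $\rmH_1$ vanishes, so $\q\cap I=\q I$. Expanding $J=I+(f)+\q$ and absorbing $fI$, $\q I$, $\q^2$ into $QJ$ yields $J^2=I^2+QJ$. A short diagram chase then gives $\q\cap J^2=\q J$: writing $z\in\q$ as $z=a+fb+e$ with $a\in I^2$, $b\in J$, $e\in\q J$ and re-expressing $b$ through $J=I+(f)+\q$, one finds $z-e\in I\cap\q=\q I\subseteq\q J$. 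Hence the natural surjection $J^2/QJ\twoheadrightarrow\overline{I}{}^2/\overline{f}\,\overline{I}$ has kernel $(QJ+(\q\cap J^2))/QJ=0$, so $\ell_R(J^2/QJ)=1$ and $J^2=QJ+Rg$ for some $g\in I^2$; in particular $J^2\ne QJ$, so $\mathrm{red}_Q(J)\ge2$.

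The heart of the proof is the reverse inequality $J^3=QJ^2$, equivalently $g_ig\in QJ^2$ for generators $g_1,\ldots,g_{r-1}\in I$ of $J/Q$. The strategy is to work modulo $\q$ and exploit $\overline{I}{}^3=\overline{f}\,\overline{I}{}^2=\overline{f}{}^2\overline{I}+\overline{f}\,\overline{g}\,\overline{R}$ to write $g_ig=f^2u_i+fgc_i+w_i$ with $u_i\in I$, $c_i\in R$, and $w_i\in I^3\cap\q$; the first two summands lie in $QJ^2$ because $fu_i\in J^2$ and $g\in J^2$, so everything comes down to the containment $I^3\cap\q\subseteq QJ^2$. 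This last point is exactly where the canonical module structure is used beyond formal manipulation, and I expect it to be the main obstacle --- it is the higher-dimensional counterpart of the delicate estimate in \cite[proof of Theorem 3.16]{GMP}, which I would adapt. Granting $\mathrm{red}_Q(J)=2$, the Valabrega--Valla condition $\q\cap J^n=\q J^{n-1}$ for all $n$ follows from the case $n=2$ by a one-line induction: for $n\ge3$ one has $J^n=f^{n-2}J^2+\q J^{n-1}$, and $\q\cap f^{n-2}J^2=f^{n-2}(\q\cap J^2)=f^{n-2}\q J\subseteq\q J^{n-1}$ since $f$ is regular modulo $\q$. Therefore $f_2t,\ldots,f_dt$ is a $G$-regular sequence, $\depth G\ge d-1$, and $G/(f_2t,\ldots,f_dt)G\cong\gr_{\overline{I}}(\overline{R})$. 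This settles (1), and for (4): the image of $ft=f_1t$ in $\gr_{\overline{I}}(\overline{R})$ is $\overline{f}t$, a zero-divisor by \cite[Theorem 3.16]{GMP}, so $ft$ is a zero-divisor on $G$; moreover $\gr_{\overline{I}}(\overline{R})$ has depth $0$, whence $f_2t,\ldots,f_dt$ is a maximal $G$-regular sequence and $\depth G=d-1$. Finally $\rmH^d_\fkM(G)$ is not finitely generated: the long exact local cohomology sequence of $0\to G\xrightarrow{f_2t}G\to G/f_2tG\to0$ shows $\rmH^d_\fkM(G)\xrightarrow{f_2t}\rmH^d_\fkM(G)$ is surjective, so if $\rmH^d_\fkM(G)$ were finitely generated it would be of finite length and Nakayama's lemma would force $\rmH^d_\fkM(G)=0$, absurd as $\dim G=d$.

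It remains to prove (2) and (3). Using $J^{n+1}=f^{n-2}J^2+\q J^{n-1}$ and the regularity of $f_2t,\ldots,f_dt$ on $G$, one checks $\mathcal{S}_Q(J)/(f_2t,\ldots,f_dt)\mathcal{S}_Q(J)\cong\mathcal{S}_{(\overline{f})}(\overline{I})\cong k[T_1](-1)$. Since $\mathrm{red}_Q(J)=2$ forces $\m$ to annihilate $\mathcal{S}_Q(J)$, this module is a finitely generated graded $\mathcal{B}=(R/\m)[T_1,\ldots,T_d]$-module, and the displayed isomorphism shows it is cyclic, generated in degree one. A nonzero Sally module over a Cohen--Macaulay local ring has dimension $d=\dim\mathcal{B}$ (see \cite{V}), so the surjection $\mathcal{B}(-1)\twoheadrightarrow\mathcal{S}_Q(J)$ has kernel whose support has height zero in the domain $\mathcal{B}$; hence the kernel is $(0)$ and $\mathcal{S}_Q(J)\cong\mathcal{B}(-1)$, which is (2). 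Assertion (3) then follows from (2) by the standard translation between the Sally module and the Hilbert function (equivalently, by multiplying the Hilbert series of $\gr_{\overline{I}}(\overline{R})$ supplied by \cite[Theorem 3.16]{GMP} by $(1-t)^{-(d-1)}$, legitimate because $f_2t,\ldots,f_dt$ is a $G$-regular sequence of degree-one elements) and reading off coefficients; in particular $\e_1(J)=\rmr(R)$, $\e_2(J)=1$, and $\e_i(J)=0$ for $3\le i\le d$.
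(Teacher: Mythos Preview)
Your overall strategy---reduce modulo $\q=(f_2,\ldots,f_d)$ and transport the one-dimensional conclusions of \cite{GMP}---is the paper's as well, and the Valabrega--Valla step, the Sally-module identification, and the Hilbert-function computation are all fine once the two key inputs below are in place. There are, however, two genuine gaps.

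\emph{The step $J^3=QJ^2$.} You yourself flag this as the main obstacle, and your proposed route through $I^3\cap\q\subseteq QJ^2$ is not substantiated. From $\q\cap I=\q I$ one only gets $I^3\cap\q\subseteq\q I\subseteq QJ$, not $QJ^2$; improving this to $\q I^2$ would need $\q\cap I^2=\q I^2$, i.e.\ that $R/I^2$ be Cohen--Macaulay, which is not part of the hypotheses (and the paper's later discussion shows such Cohen--Macaulayness can fail for $R/I^n$). ``Adapting'' the one-dimensional estimate of \cite{GMP} is also not straightforward, since here $I$ is a height-one ideal which need not even admit a principal reduction (cf.\ the paper's Veronese example, where $I$ has analytic spread two). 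The paper bypasses all of this: since $\fkm J=\fkm Q$ one has $Q\subsetneq J\subsetneq K:=Q:_R\fkm$ with $\ell_R(K/J)=1$; by Corso--Polini \cite{CP} one has $K^2=QK$ (as $R$ is Cohen--Macaulay and not regular), and then $J^3=QJ^2$ follows at once from \cite[Proposition~2.6]{GNO}.

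\emph{The claim that $ft$ is a zero-divisor on $G$.} You deduce this from the fact that its image $\overline{f}t$ is a zero-divisor on $G/(f_2t,\ldots,f_dt)G\cong\gr_{\overline I}(\overline R)$. That inference is invalid: a regular element can become a zero-divisor after factoring out a regular sequence (for instance, in $k[x,y,z]/(xy,xz)$ the elements $x+y$ and $x+z$ are each regular and $x+y$ is a regular sequence, yet $x+z$ is a zero-divisor modulo $x+y$). Your argument does correctly give $\depth G=d-1$, but that alone does not place $ft$ in an associated prime of $G$. The paper instead exhibits an explicit witness: writing $g=fy+h$ with $y\in R$ and $h\in\q$ (possible because $g\in J^2\subseteq Q$), one checks $fy=g-h\in(I^2+\q)\cap I=I^2+\q I\subseteq J^2$, while $y\notin J$ (otherwise $h=g-fy\in\q\cap J^2=\q J\subseteq QJ$ would force $g\in QJ$). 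Thus $\overline y\ne0$ in $G_0=R/J$ and $(ft)\cdot\overline y=0$ in $G$.
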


\begin{proof}
Let $K = Q:_R\fkm$. Then $Q \subseteq J \subseteq K$. Notice that $\ell_R(J/Q) = \mu_R(J/Q) = \mu_R(\overline{I}/(\overline{f})) = \rmr (\overline{R}) -1 = \rmr (R) -1$, because $\overline{f} \not\in \overline{\fkm}\overline{I}$ and $\overline{I} = \K_{\overline{R}}$. Therefore $\ell_R(K/J) = 1$ since $\ell_R(J/Q) = \rmr (R)$, so that $K = J + (x)$ for some $x \in K$, while $K^2 = QK$ (\cite{CP}), as $R$ is not a regular local ring. Consequently, $J^3 = QJ^2$ by \cite[Proposition 2.6]{GNO}. Thus $\mathrm{red}_Q(J) = 2$, since $\overline{I}^2 \ne \overline{f}{\cdot}\overline{I}$ (\cite[Theorem 3.7]{GMP}).

Let us show $\ell_R(J^2/QJ) = 1$. We have $\ell_R(\overline{I}^2/\overline{f}{\cdot}\overline{I}) = 1$ by \cite[Theorem 3.16]{GMP}. Choose $g \in I^2$ so that $I^2 \subseteq fI + (g) + \q$. Then $$I^2 = (fI + (g) + \q) \cap I^2 \subseteq fI + (g) + \q I,$$ since $\q \cap I = \q I$. Hence $J^2 = QJ + (g)$, because
$J^2 = QJ + I^2$. Consequently $\ell_R(J^2/QJ) = 1$, since $\fkm J^2 = \fkm Q^2$ (remember that $\fkm J = \fkm Q$). Therefore, thanks to \cite{S2, V}, we have ${\mathcal S}_Q(J) \cong {\mathcal B}(-1)$ as a graded ${\mathcal T}$-module, $\e_1(J) = \e_0(J) - \ell_R(R/J) + 1$,  and $$\textstyle\ell_R(R/J^{n+1}) = \e_0(J){\cdot}\binom{n+d}{d} - \e_1(J){\cdot}\binom{n+d-1}{d-1} + \binom{n+d - 2}{d-2}$$ for all $n \ge 0$. Hence
$$
\e_1(J)
=\e_0(J) - \ell_R(R/J) + 1
=\ell_R(R/Q) - \ell_R(R/J) + 1
=\ell_R(J/Q) +1
=\rmr (R).
$$
Thus assertions (1), (2), and (3) follow.

To see assertion (4), we claim the following, which shows the sequence $f_2t, f_3t, \ldots, f_dt$ is $\gr_J(R)$-regular.

\begin{claim*} $\q \cap J^n = \q J^{n-1}$ for all $n \in \Bbb Z$.
\end{claim*}

\begin{proof}[Proof of Claim]
As $J^2 = QJ + (g) = fJ + (g) + \q J,$ we have $$\q \cap J^2 = \q J + \q \cap [fJ + (g)] \subseteq \q J + (\q \cap I) = \q J.$$
Suppose that $n \ge 3$ and that our assertion holds true for $n-1$. Then 
$$
\q \cap J^n 
= \q \cap Q J^{n-1}
= \q J^{n-1} + (\q \cap f J^{n-1})\\
= \q J^{n-1} + f{\cdot}(\q \cap J^{n-1})
= \q J^{n-1} + f{\cdot}\q J^{n-2}
= \q J^{n-1}.
$$
Hence $\q \cap J^n = \q J^{n-1}$ for all $n \in \Bbb Z$.
\end{proof}

To show that $ft$ is a zero-divisor in $\gr_J(R)$, remember that $g \not\in QJ$, because $J^2 \ne QJ$. Since $J^2 \subseteq Q$, we may write $g = fy + h$ with $y \in R$ and $h \in \q$. Then because $fy = g -h \in I^2 + \q$, we see $$fy \in (I^2 + \q) \cap I = I^2 + \q I \subseteq J^2,$$ while $y \not\in J$. In fact, if $y \in J$, then $$h = g -fy \in \q \cap J^2  = \q J \subseteq QJ,$$ so that $g = fy + h \in QJ$, which is impossible. Thus $ft$ is a zero-divisor in $\gr_J(R)$.
\end{proof}

Let $\rho : \gr_I(R) \overset{\varphi}{\longrightarrow} \gr_J(R) \overset{\psi}{\longrightarrow} \gr_{\overline{I}}(\overline{R})$ be the composite of canonical homomorphisms of associated graded rings and set $\mathcal A = \operatorname{Im} \varphi$. We then have $\gr_J(R) = \mathcal A[\xi_2, \xi_3, \ldots, \xi_d]$, where $\xi_i = \overline{f_it}$ denotes the image of $f_it$ in $\gr_J(R)$. We are now interested in the question of when $\{\xi_i\}_{2 \le i \le d}$ are algebraically independent over $\mathcal A$. Our goal is Theorem \ref{4.6} below.

We begin with the following, which readily follows from the fact that $\operatorname{Ker} \rho = \bigoplus_{n \ge 0}[I^{n+1} + (\q \cap I^n)]/I^{n+1}$.

\begin{lem}\label{4.3}
$\operatorname{Ker} \varphi = \operatorname{Ker} \rho$ if and only if $\q \cap I^n \subseteq J^{n+1}$ for all $n \ge 2$.
\end{lem}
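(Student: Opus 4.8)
The plan is to unravel both kernels explicitly and compare them degree by degree. Recall that $\rho:\gr_I(R)\to\gr_{\overline I}(\overline R)$ factors through $\gr_J(R)$, and the standard description of the kernel of the map on associated graded rings induced by passing modulo the parameter ideal $\q$ gives
$$
\operatorname{Ker}\rho=\bigoplus_{n\ge0}\frac{I^{n+1}+(\q\cap I^n)}{I^{n+1}}.
$$
On the other hand $\varphi:\gr_I(R)\to\gr_J(R)$ is induced by the inclusion $I\subseteq J$, so for each $n$ its degree-$n$ component is the map $I^n/I^{n+1}\to J^n/J^{n+1}$ sending $\overline a\mapsto\overline a$. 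Thus an element $\overline a\in I^n/I^{n+1}$ lies in $\operatorname{Ker}\varphi$ precisely when $a\in I^n\cap J^{n+1}$, i.e.
$$
(\operatorname{Ker}\varphi)_n=\frac{I^n\cap J^{n+1}+I^{n+1}}{I^{n+1}}.
$$
So the equality $\operatorname{Ker}\varphi=\operatorname{Ker}\rho$ in each degree $n$ amounts to
$$
I^n\cap J^{n+1}+I^{n+1}=I^{n+1}+(\q\cap I^n)
$$
as submodules of $I^n$.

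Next I would compare the two sides of this last equality. Since $\q\subseteq J$ and $\q\cap I^n\subseteq I^n$, one has $\q\cap I^n\subseteq I^n\cap J\cap I^n$; but more is true: because $J=I+Q=I+\q$ (note $f_1\in I$), we have $\q J^n\subseteq J^{n+1}$, and an element of $\q\cap I^n$ certainly sits in $J^{n+1}$ once we know $\q\subseteq J^{?}$ — here the point is that $\q\cap I^n\subseteq \q\subseteq J$, hence $\q\cap I^n\subseteq I^n\cap J$, and one needs it inside $I^n\cap J^{n+1}$. This forces us to track how deep powers of $J$ the ideal $\q$ meets $I^n$. Concretely, the inclusion ``$\supseteq$'' of the displayed equality holds for all $n$ iff $\q\cap I^n\subseteq I^{n+1}+J^{n+1}=J^{n+1}$ for all $n$ (using $I^{n+1}\subseteq J^{n+1}$), i.e. iff $\q\cap I^n\subseteq J^{n+1}$ for all $n\ge0$. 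For the reverse inclusion ``$\subseteq$'', I would show it is automatic: given $a\in I^n\cap J^{n+1}$, I want $a\in I^{n+1}+(\q\cap I^n)$. Writing $J^{n+1}=(I+\q)^{n+1}=\sum_{k}I^{n+1-k}\q^k$ and noting $I^{n+1-k}\q^k\subseteq I^{n+1}$ for $k=0$ and $\subseteq\q$ for $k\ge1$, we get $J^{n+1}\subseteq I^{n+1}+\q$, whence $a\in(I^{n+1}+\q)\cap I^n=I^{n+1}+(\q\cap I^n)$ by the modular law (as $I^{n+1}\subseteq I^n$). So ``$\subseteq$'' holds unconditionally, and the content of the lemma reduces to: ``$\supseteq$ for all $n$'' $\iff$ ``$\q\cap I^n\subseteq J^{n+1}$ for all $n$.''

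Finally I would tidy the range of $n$. For $n=0$ the condition $\q\cap R\subseteq J$ is trivially true since $\q\subseteq J$; for $n=1$, $\q\cap I\subseteq J^2$ — but by the Claim in Corollary~\ref{4.2} we have $\q\cap I=\q\cap J^1$... actually here one uses $\q\cap I\subseteq\q\cap J$ and the Claim gives $\q\cap J^n=\q J^{n-1}$, so in particular for small $n$ the membership $\q\cap I^n\subseteq J^{n+1}$ may or may not be vacuous; the cleanest statement, and the one matching the lemma, is to phrase the condition as ``$\q\cap I^n\subseteq J^{n+1}$ for all $n\ge2$,'' absorbing the trivial low-degree cases into the equality of the two graded ideals (which automatically agree in degrees $0$ and $1$, where $\operatorname{Ker}\varphi$ and $\operatorname{Ker}\rho$ both vanish or coincide by the previous analysis together with $\q\cap I\subseteq\q\subseteq J$ and the description of $J$). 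I expect the main obstacle to be precisely this bookkeeping at low degrees: verifying that the degrees $n=0,1$ impose no extra condition, so that the quantifier can honestly be restricted to $n\ge2$, which requires invoking $J=I+\q$ and, for $n=1$, the fact that $\q\cap I=\q I\subseteq J^2$ coming from $\q\cap I^n=\q I^{n-1}$ (the $n=1$ instance reading $\q\cap I=\q$, wait—rather $\q\cap I\subseteq\q\cap J=\q J^0$? one must be careful) — so the delicate point is matching the excerpt's ``$n\ge2$'' with what the degree-wise comparison actually yields, and this is where I would spend the most care.
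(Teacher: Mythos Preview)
Your approach is correct and matches the paper's; indeed the paper gives no argument beyond stating the formula $\operatorname{Ker}\rho=\bigoplus_{n\ge0}[I^{n+1}+(\q\cap I^n)]/I^{n+1}$ and saying the lemma ``readily follows.'' You have essentially written out what ``readily follows'' means.

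Two minor remarks that would clean up your writeup. First, the inclusion $\operatorname{Ker}\varphi\subseteq\operatorname{Ker}\rho$ is automatic simply because $\rho=\psi\circ\varphi$; your direct computation via $J^{n+1}\subseteq I^{n+1}+\q$ and the modular law is correct but unnecessary. Second, your hesitation about the low degrees is unwarranted: for $n=0$ the condition $\q\subseteq J$ is trivial, and for $n=1$ one has $\q\cap I=\q I$ (this is stated in the setup, coming from the fact that $f_2,\ldots,f_d$ is a regular sequence on $R/I$), and $\q I\subseteq J\cdot J=J^2$. Your tentative formula ``$\q\cap I^n=\q I^{n-1}$'' is not correct in general and is not what is being used; only the single instance $\q\cap I=\q I$ is needed, and it settles $n=1$ immediately. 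So the quantifier over $n\ge2$ is exactly right.
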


\begin{lem}\label{4.4}
Let $n \ge 2$.
Then $\q \cap I^n = \q I^n$ if and only if $R/I^n$ is a Cohen-Macaulay ring.
\end{lem}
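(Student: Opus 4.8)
The plan is to translate the ideal condition $\q\cap I^n=\q I^n$ into the vanishing of a single $\Tor$ module and then to read that vanishing as a depth condition on $R/I^n$.

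First I would note that $f_2,f_3,\ldots,f_d$ is an $R$-regular sequence, being part of the system of parameters $f_1,f_2,\ldots,f_d$ of the Cohen-Macaulay ring $R$; hence the Koszul complex $\KK_\bullet(f_2,\ldots,f_d;R)$ is a free resolution of $R/\q$. Tensoring the exact sequence $0\to I^n\to R\to R/I^n\to0$ with $R/\q$ and reading the long exact sequence of $\Tor$ identifies $\Tor_1^R(R/\q,R/I^n)$ with the kernel of the natural map $I^n/\q I^n\to R/\q$, which is precisely $(\q\cap I^n)/\q I^n$ (note $\q I^n\subseteq\q\cap I^n$). Thus $\q\cap I^n=\q I^n$ if and only if $\Tor_1^R(R/\q,R/I^n)=0$, and by the Koszul resolution this is $\H_1\bigl(\KK_\bullet(f_2,\ldots,f_d;R)\otimes_R R/I^n\bigr)=0$.

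Next I would use the standard fact that for elements $x_1,\ldots,x_r$ of $\fkm$ and a nonzero finitely generated $R$-module $N$, vanishing of $\H_1(\KK_\bullet(x_1,\ldots,x_r;R)\otimes_R N)$ forces $x_1,\ldots,x_r$ to be an $N$-regular sequence. One proves this by induction on $r$ using the long exact Koszul homology sequence coming from $\KK_\bullet(x_1,\ldots,x_r;R)\cong\KK_\bullet(x_1,\ldots,x_{r-1};R)\otimes_R\KK_\bullet(x_r;R)$: the vanishing of the top $\H_1$ makes $\H_1$ of the shorter Koszul complex both killed by $x_r$ and finitely generated, hence zero by Nakayama, and makes $x_r$ a nonzerodivisor on $N/(x_1,\ldots,x_{r-1})N$; the inductive hypothesis then applies. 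Taking $N=R/I^n$, this shows $\Tor_1^R(R/\q,R/I^n)=0$ if and only if $f_2,\ldots,f_d$ is an $R/I^n$-regular sequence.

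Finally I would observe that $\dim R/I^n=\dim R/I=d-1$, since $I^n$ and $I$ have the same radical, and that $\q+I^n$ is $\fkm$-primary, because $\q$ is a parameter ideal of the $(d-1)$-dimensional ring $R/I$; hence $f_2,\ldots,f_d$ is a system of parameters for $R/I^n$. A system of parameters of length equal to the dimension is a regular sequence exactly when the module is Cohen-Macaulay, and conversely; so $f_2,\ldots,f_d$ is $R/I^n$-regular if and only if $R/I^n$ is Cohen-Macaulay. Chaining the equivalences proves the lemma. I expect the middle step --- the Koszul-homology criterion for regularity of a sequence --- to be the only point carrying genuine content; the rest is routine bookkeeping with radicals and parameter ideals, and in fact the argument works verbatim for every $n\ge1$.
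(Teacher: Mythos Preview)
Your proof is correct and is essentially the paper's argument in homological clothing. Both reduce the question to whether $f_2,\ldots,f_d$ is a regular sequence on $R/I^n$: you identify $\q\cap I^n=\q I^n$ with the vanishing of $\Tor_1^R(R/\q,R/I^n)=\H_1\bigl(\KK_\bullet(f_2,\ldots,f_d;R/I^n)\bigr)$ and invoke the standard Koszul criterion, while the paper runs a direct descending induction on $i$ to obtain $(f_2,\ldots,f_i)\cap I^n=(f_2,\ldots,f_i)I^n$ for every $i$ and then reads off regularity---which is precisely the Nakayama-based inductive proof of the Koszul criterion that you sketch.
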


\begin{proof}
If $R/I^n$ is a Cohen-Macaulay ring, then $\q \cap I^n = \q I^n$, because $f_2, f_3, \ldots, f_d$ form a regular sequence in $R/I^n$. Conversely, suppose that $\q \cap I^n = \q I^n$. Then the descending induction on $i$ readily yields that $$(f_2, f_2, f_3, \ldots, f_i) \cap I^n = (f_2, f_3, \ldots, f_i)I^n$$ for all $2 \le i \le d$, from which it follows that the sequence $f_2, f_3, \ldots, f_d$ is $R/I^n$-regular. 
\end{proof}

\begin{prop}\label{4.5}
The following assertions hold true.
\begin{enumerate}[\rm(1)]
\item
If $R/I^3$ is a Cohen-Macaulay ring, then $I^3 = fI^2$ and therefore the ideal $I$ has analytic spread one  and $\mathrm{red}_{(f)}(I) = 2$.
\item
If $R/I^2$ is a Cohen-Macaulay ring and $I^3 = fI^2$, then $R/I^n$ is a Cohen-Macaulay ring for all $n \ge 1$
\end{enumerate}
\end{prop}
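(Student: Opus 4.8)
The plan is to reduce everything to the one-dimensional almost Gorenstein ring $\overline{R}=R/\q$ and its canonical ideal $\overline{I}=I\overline{R}$, exploiting the interplay (already developed in Corollary \ref{4.2} and its proof) between the colon ideal $K=Q:_R\fkm$, the equalities $\fkm J=\fkm Q$ and $J^2=QJ+(g)$, and the Artin--Rees type identities $\q\cap J^n=\q J^{n-1}$. The two statements are essentially dual passages between ``$R/I^n$ Cohen--Macaulay'' (equivalently, by Lemma \ref{4.4}, $\q\cap I^n=\q I^n$, i.e. $f_2,\dots,f_d$ is $R/I^n$-regular) and ``$I^{n+1}=fI^n$'' (the reduction statement for $I$ with respect to the principal reduction $(f)$).

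For assertion (1): assume $R/I^3$ is Cohen--Macaulay, so $\q\cap I^3=\q I^3$ by Lemma \ref{4.4}. First I would observe that $\overline{I}^{\,3}=\overline{f}\,\overline{I}^{\,2}$ holds in $\overline{R}$: this is \cite[Theorem 3.16]{GMP} (or its consequences), since $\overline{R}$ is one-dimensional almost Gorenstein, not a DVR, with canonical ideal $\overline{I}$ having principal reduction $(\overline{f})$ and $\mathrm{red}_{(\overline f)}(\overline I)=2$. Lifting this back, $I^3\subseteq fI^2+\q$, hence $I^3=(fI^2+\q)\cap I^3\subseteq fI^2+(\q\cap I^3)=fI^2+\q I^3$; by Nakayama (applied to the finitely generated module $I^3/fI^2$, noting $\q\subseteq\fkm$) we get $I^3=fI^2$. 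Then $(f)$ is a reduction of $I$, so the analytic spread of $I$ is one, and $\mathrm{red}_{(f)}(I)=2$ follows because $I^2\ne fI$ — which in turn follows from $\overline{I}^{\,2}\ne\overline{f}\,\overline{I}$ (\cite[Theorem 3.7]{GMP}) together with $\q\cap I^2=\q I^2$ (Lemma \ref{4.4} applied with $n=2$, using that $R/I^2$ is Cohen--Macaulay, which is forced by $R/I^3$ Cohen--Macaulay once we know $I^3=fI^2$ — or, more directly, by a short colon computation). I would need to double-check the logical order here so as not to assume $R/I^2$ Cohen--Macaulay before it is available; the cleanest route is probably to first establish $I^3=fI^2$, then derive $R/I^2$ Cohen--Macaulay from it as in assertion (2).

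For assertion (2): assume $R/I^2$ is Cohen--Macaulay, i.e. $\q\cap I^2=\q I^2$, and $I^3=fI^2$. Then by induction $I^{n+1}=f^{n-1}I^2$ for all $n\ge 1$, so every power $I^n$ with $n\ge 2$ is, up to multiplication by the nonzerodivisor $f^{n-2}$, equal to $I^2$; hence the $R/\q$-regularity of $f_2,\dots,f_d$ on $I^2$ propagates to all $I^n$, giving $\q\cap I^n=\q I^n$ for all $n\ge 2$, and then $R/I^n$ is Cohen--Macaulay by Lemma \ref{4.4} (the case $n=1$ being the hypothesis that $R/I\cong\overline R/(\q\text{-part})$ is Cohen--Macaulay, which holds since $I\cong\K_R$). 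I expect the main obstacle to be the bookkeeping around the colon identities $\q\cap I^n=\q I^n$ versus $\q\cap J^n=\q J^{n-1}$ and making sure the passage to $\overline R$ is legitimate at each step — in particular that $\overline{I}\cong I/\q I\cong\K_{\overline R}$ requires $\q\cap I=\q I$, which is exactly the $n=1$ instance and is where the hypothesis ``$R/I$ Cohen--Macaulay'' (automatic from $I\cong\K_R$ and $f_2,\dots,f_d$ being a system of parameters for $R/I$) must be invoked cleanly.
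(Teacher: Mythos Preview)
Your proposal is correct and follows essentially the same route as the paper's proof: for (1), pass to $\overline R$ where $\overline{I}^{\,3}=\overline f\,\overline{I}^{\,2}$ holds (the paper cites Corollary \ref{4.2}(1) rather than \cite[Theorem 3.16]{GMP}, but these amount to the same), then use $\q\cap I^3=\q I^3$ and Nakayama; for (2), use that $f$ is a nonzerodivisor on $R/\q$ to propagate $\q\cap I^2=\q I^2$ to $\q\cap I^n=\q I^n$ via $I^n=f^{n-2}I^2$.

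Your only unnecessary detour is the worry about needing $\q\cap I^2=\q I^2$ in order to deduce $I^2\ne fI$ from $\overline{I}^{\,2}\ne\overline f\,\overline I$. You do not need it: the implication $I^2=fI\Rightarrow\overline{I}^{\,2}=\overline f\,\overline I$ is immediate upon reducing modulo $\q$, so the contrapositive gives $I^2\ne fI$ directly. The paper simply invokes $\overline{I}^{\,2}\ne\overline f\,\overline I$ and stops there; you can too, and then the logical-order concern you flagged disappears.
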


\begin{proof}
(1) We have $\q \cap I^3 = \q I^3$ by Lemma \ref{4.4}, while $\overline{I}^3 = \overline{f}{\cdot}\overline{I}^2$ by Corollary \ref{4.2} (1). Therefore $I^3 \subseteq (fI^2 + \q) \cap I^3 = fI^2 + \q I^3$, so that $I^3 = fI^2$ by Nakayama's lemma. Hence $I$ is of analytic spread one and $\mathrm{red}_{(f)}(I) = 2$, because $\overline{I}^2 \ne \overline{f}{\cdot}\overline{I}$ (\cite[Theorem 3.7]{GMP}). 

(2) We show that $\q \cap I^n = \q I^n$ for all $n \in \Bbb Z$. By Lemma \ref{4.4} we may assume that $n \ge 3$ and that our assertion holds true for $n -1$. Then
$$
\q \cap I^n = \q \cap fI^{n-1}= f(\q \cap I^{n-1})= f{\cdot}\q I^{n-1}\subseteq \q I^n.
$$
Hence $\q \cap I^n = \q I^n$ for all $n \in \Bbb Z$, whence $R/I^n$ is a Cohen-Macaulay ring by Lemma \ref{4.4}.
\end{proof}

We are now ready to prove the following.

\begin{thm}\label{4.6}
Suppose that $R/I^2$ is a Cohen-Macaulay ring and $I^3 = fI^2$. Then $\mathcal A$ is a Buchsbaum ring and $\xi_2, \xi_3, \ldots, \xi_d$ are algebraically independent over $\mathcal A$, whence $\gr_J(R) =\mathcal A[\xi_2, \xi_3, \ldots, \xi_d]$ is the polynomial ring.
\end{thm}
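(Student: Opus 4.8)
The plan is to work modulo the regular sequence $f_2,f_3,\dots,f_d$ and exploit the known one-dimensional structure. First I would observe that under the hypotheses $R/I^2$ is Cohen--Macaulay and $I^3=fI^2$, Proposition \ref{4.5}~(2) gives that $R/I^n$ is Cohen--Macaulay for all $n\ge1$, hence by Lemma \ref{4.4} we have $\q\cap I^n=\q I^n$ for every $n$. This is exactly the condition (via Lemma \ref{4.3}, together with the inclusion $\q I^n\subseteq I^{n+1}\subseteq J^{n+1}$) that forces $\operatorname{Ker}\varphi=\operatorname{Ker}\rho$, so that $\mathcal A=\operatorname{Im}\varphi\cong\gr_I(R)/\operatorname{Ker}\rho$, which is the image of $\gr_I(R)$ in $\gr_{\overline I}(\overline R)$. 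Since $f_2,\dots,f_d$ is $R/I^n$-regular for all $n$, passing to $\overline R=R/\q$ commutes with forming $\gr$ in the relevant degrees, and I expect $\mathcal A\cong\gr_{\overline I}(\overline R)$ (or at least that $\mathcal A$ is recovered from the one-dimensional almost Gorenstein ring $\overline R$ with canonical ideal $\overline I$ having reduction $(\overline f)$ of reduction number $2$).

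Next I would establish the Buchsbaum property of $\mathcal A$. Here the point is that $\overline R$ is a one-dimensional almost Gorenstein local ring with $\overline{\fkm}\overline I=\overline{\fkm}\,\overline f$ and $\overline I^2\ne\overline f\,\overline I$ but $\overline I^3=\overline f\,\overline I^2$; the associated graded ring $\gr_{\overline I}(\overline R)$ of such a ring has a completely explicit Hilbert function (by Corollary \ref{4.2}~(3) restricted to $d=1$, or by \cite[Theorem 3.16]{GMP}), namely $\ell(\overline R/\overline I^{n+1})=\e_0\cdot(n+1)-\rmr(\overline R)+1$ stabilizing in the difference, so $\gr_{\overline I}(\overline R)$ has depth $0$ but the local cohomology $\rmH^0$ is concentrated in a single degree of finite length. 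A ring of dimension one whose $\rmH^0_{\mathfrak M}$ has finite length is automatically Buchsbaum; so $\mathcal A$ is Buchsbaum. I would cite the standard characterization of Buchsbaum rings via the finiteness of $\rmH^i_{\mathfrak M}$ in degrees below the dimension.

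Then comes the algebraic-independence statement, which I expect to be the main obstacle. We already know from the Notation-level discussion that $\gr_J(R)=\mathcal A[\xi_2,\dots,\xi_d]$ as an $\mathcal A$-algebra, so it remains to show there are no relations among the $\xi_i$ over $\mathcal A$; equivalently, that the natural surjection of graded $\mathcal A$-algebras $\mathcal A[T_2,\dots,T_d]\twoheadrightarrow\gr_J(R)$, $T_i\mapsto\xi_i$, is an isomorphism. The strategy is a Hilbert-function count: compute the Hilbert series of $\gr_J(R)$ from Corollary \ref{4.2}~(3), which gives $\sum_n\ell(J^n/J^{n+1})t^n=\bigl(\ell(R/Q)-\rmr(R)t+t^2\bigr)/(1-t)^{d}$ after the standard manipulation, and compare with the Hilbert series of $\mathcal A[T_2,\dots,T_d]$, which is $\bigl(\sum_n\ell(\overline I^n/\overline I^{n+1})t^n\bigr)\cdot(1-t)^{-(d-1)}$. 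Using the $d=1$ case of Corollary \ref{4.2}~(3) the numerator of the latter is again $\ell(\overline R/\overline Q)-\rmr(\overline R)t+t^2=\ell(R/Q)-\rmr(R)t+t^2$ (since $\ell(R/Q)=\ell(\overline R/(\overline f))$ and $\rmr(\overline R)=\rmr(R)$), so the two Hilbert series literally coincide. A graded surjection between finitely generated graded algebras over the same Artinian base with equal Hilbert functions in every degree must be an isomorphism; hence the $\xi_i$ are algebraically independent over $\mathcal A$ and $\gr_J(R)=\mathcal A[\xi_2,\dots,\xi_d]$ is a polynomial extension. The delicate point to get right is the identification $\mathcal A\cong\gr_{\overline I}(\overline R)$ as graded rings (not merely an equality of Hilbert functions), for which I would lean on $\q\cap I^n=\q I^n$ for all $n$ to see that $\gr_I(R)/\q\gr_I(R)\cong\gr_{\overline I}(\overline R)$ and that $\operatorname{Ker}\rho=\q\gr_I(R)$, so that $\mathcal A=\gr_I(R)/\operatorname{Ker}\rho\cong\gr_{\overline I}(\overline R)$; then the Buchsbaumness and the polynomiality both transfer cleanly.
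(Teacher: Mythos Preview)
Your identification $\mathcal A\cong\gr_{\overline I}(\overline R)$ via Lemmas~\ref{4.3}, \ref{4.4} and Proposition~\ref{4.5}(2) is exactly the paper's opening move. One slip: the inclusion $\q I^n\subseteq I^{n+1}$ you invoke is false, since $\q\not\subseteq I$ (the $f_i$ for $i\ge2$ form a system of parameters of $R/I$). What Lemma~\ref{4.3} actually needs is $\q\cap I^n\subseteq J^{n+1}$, and this follows directly from $\q I^n\subseteq J{\cdot}J^n=J^{n+1}$.

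Your Buchsbaum argument contains a genuine error: it is \emph{not} true that a one-dimensional Noetherian ring with $\rmH^0_{\fkM}$ of finite length is automatically Buchsbaum (finite length of $\rmH^0_{\fkM}$ is automatic; Buchsbaumness requires $\fkM{\cdot}\rmH^0_{\fkM}=(0)$). The paper does not re-derive this at all: once $\mathcal A\cong\gr_{\overline I}(\overline R)$ is established, it simply invokes \cite[Theorem~3.16]{GMP}, which asserts directly that $\gr_{\overline I}(\overline R)$ is Buchsbaum. You should do the same.

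For algebraic independence your route genuinely differs from the paper's. The paper uses Corollary~\ref{4.2}(4), which says $\xi_2,\dots,\xi_d$ is a $\gr_J(R)$-regular sequence: from the surjection $\Psi:B=\mathcal A[Y_2,\dots,Y_d]\twoheadrightarrow\gr_J(R)$ one gets, after reducing modulo $(Y_2,\dots,Y_d)$ (resp.\ $(\xi_2,\dots,\xi_d)$), the exact sequence $0\to\mathcal K/(Y)\mathcal K\to\mathcal A\to\gr_J(R)/(\xi)\to0$; but $\gr_J(R)/(\xi_2,\dots,\xi_d)=\gr_{\overline I}(\overline R)\cong\mathcal A$, so $\mathcal K/(Y)\mathcal K=(0)$ and graded Nakayama gives $\mathcal K=(0)$. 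Your Hilbert-series comparison is also valid---the essential point, that both series are determined by $\ell_R(R/Q)$ and $\rmr(R)$, which coincide for $R$ and $\overline R$, is correct---though the explicit numerator you wrote, $\ell(R/Q)-rt+t^2$, is off (it should be $(\ell(R/Q)-r+1)+(r-2)t+t^2$). The paper's approach is a bit cleaner: it avoids this computation entirely and uses the regular-sequence fact that was already proved in Corollary~\ref{4.2}.
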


\begin{proof}
We have $\operatorname{Ker} \varphi = \operatorname{Ker} \rho$ by Lemma \ref{4.3}, \ref{4.4}, and Proposition \ref{4.5}, which shows that the composite homomorphism $\mathcal A \overset{\iota}{\hookrightarrow} \gr_J(R) \overset{\psi}{\longrightarrow} \gr_{\overline{I}}(\overline{R})$ is an isomorphism, where $\iota : \mathcal A \to \gr_J(R)$ denotes the embedding.
Hence $\mathcal A$ is a Buchsbaum ring (\cite[Theorem 3.16]{GMP}). Let $k = \mathcal A_0$ and let $\mathcal C = k[X_2, X_3, \ldots, X_d]$ denote the polynomial ring. We regard $\mathcal C$ to be a $\Bbb Z$-graded ring so that $\mathcal C_0 = k$ and $\deg X_i = 1$. Let $B = \mathcal A \otimes_k\mathcal C$. Then $B$ is a $\Bbb Z$-graded ring whose grading is given by $B_n = \sum_{i + j = n}\mathcal A_i \otimes_k\mathcal C_j$ for all $n \in \Bbb Z$. We put $Y_i = 1 \otimes X_i$ and consider  the homomorphism $\Psi : B=\mathcal A[Y_2, Y_3, \ldots, Y_d] \to \gr_J(R)$ of $\mathcal A$-algebras defined by $\Psi (Y_i) = \xi_i$ for all $2 \le i \le d$. Let ${\mathcal K} = \operatorname{Ker} \Psi$. We then have the exact sequence
$ 
0 \to {\mathcal K} \to B \to \gr_J(R) \to 0,
$
which gives rise to the exact sequence
$$
0 \to {\mathcal K}/(Y_2, Y_3, \ldots, Y_d){\mathcal K} \to B/(Y_2, Y_3, \ldots, Y_d) \to \gr_J(R)/(\xi_2, \xi_3, \ldots, \xi_d) \to 0,
$$
since the sequence $\xi_2, \xi_3, \ldots, \xi_d$ is $\gr_J(R)$-regular (Corollary \ref{4.2} (4)). Because $$B/(Y_2, Y_3, \ldots, Y_d) = \mathcal A \cong \gr_{\overline{I}}(\overline{R}) = \gr_J{R}/(\xi_2, \xi_3, \ldots, \xi_d),$$
 we have ${\mathcal K}/(Y_2, Y_3, \ldots, Y_d){\mathcal K}
=(0)$ and hence ${\mathcal K} = (0)$ by graded Nakayama's lemma. Thus  $\Psi : B \to \gr_J(R)$ is an isomorphism of $\mathcal A$-algebras.
\end{proof}

The ring $R/I^n$ is not necessarily a Cohen-Macaulay ring. Let us explore one example.

\begin{ex}
Let $S = k[[s,t]]$ be the formal power series ring over a field $k$ and set $R = k[[s^3, s^2t, st^2, t^3]]$ in $S$. Then $R$ is a Cohen-Macaulay local ring of dimension $2$. Setting $x = s^3$, $y = s^2t$, $z = st^2,$ and $w = t^3$, we have $I = (y,z) \cong \K_R$ as an $R$-module and $\fkm{\cdot}(I + Q) = \fkm{\cdot}Q$, where $Q =(y, x-w)$. Hence $R$ is an almost Gorenstein local ring with $\rmr (R) = 2$. Let $\q = (x-w)$. Then $\q \cap I^2 = \q I^2$, because 
$$
(x-w) \cap I^2 \subseteq (x-w){\cdot}(s^2t^2S \cap R) \subseteq (x-w){\cdot}I^2.
$$
However, $\q \cap I^3 \ne \q I^3$. In fact, if $\q \cap I^3 = \q I^3$, by Proposition \ref{4.5} (1) $I$ is of analytic spread one, which is however impossible, because $(R/\fkm) \otimes_R\gr_I(R) \cong k[y,z]$. Hence $R/I^2$ is a Cohen-Macaulay ring but $R/I^3$ is not a Cohen-Macaulay ring (Lemma \ref{4.4}). We have $\e_1(J) = \rmr (R) = 2$ and $\depth~\gr_J(R) = 1$, where $J = I + Q = (x-w, y, z)$.
\end{ex}

\begin{ques}
Let $T$ be an almost Gorenstein but non-Gorenstein local ring of dimension $1$ and let $K$ be an ideal of $T$ with $K \cong \K_{T}$ as a $T$-module. Let $R = T[[X_2, X_3, \ldots, X_d]]~(d \ge 2)$ be a formal power series ring and set $I = KR$. Then $I \cong \K_R$ as an $R$-module and $R/I^n$ is Cohen-Macaulay for all $n \ge 1$. We suspect that this is the unique  case for $\gr_J(R)$ to be the polynomial ring over $\mathcal A$.
\end{ques}

\section{Almost Gorenstein local rings obtained by idealization}\label{idealiz}

Throughout this section let $(R,\fkm)$ be a Cohen-Macaulay local ring, which possesses the canonical module $\K_R$. For each $R$-module $M$ let $M^\vee = \Hom_R(M,\K_R)$. We study the question of when the idealization $R \ltimes M^\vee$ is an almost Gorenstein local ring.

Let us begin with the following, which is based on \cite[Proposition 6.1]{GMP} and gives an extension of the result to higher dimensional local rings.

\begin{thm}\label{5.1} Let $(R,\fkm)$ be a Cohen-Macaulay local ring of dimension $d \ge 1$, which possesses the canonical module $\K_R$.
Let $I ~(\ne R)$ be an ideal of $R$ and assume that $R/I$ is a Cohen-Macaulay ring of dimension $d-1$. We consider the following two conditions$\mathrm{:}$
\begin{enumerate}[\rm(1)]
\item
$A = R \ltimes I^\vee$ is an almost Gorenstein local ring.
\item
$R$ contains a parameter ideal $Q = (f_1, f_2, \ldots, f_d)$ such that $f_1 \in I$, $\fkm (I+Q) = \fkm Q$, and $(I+Q)^2 = Q(I+Q)$.
\end{enumerate}
Then one has the implication $(2) \Rightarrow (1)$. If $R/\fkm$ is infinite, the reverse implication $(1) \Rightarrow (2)$ is also true. 
\end{thm}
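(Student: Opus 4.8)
The plan is to reduce the statement to the one-dimensional situation handled by Barucci–Fröberg and Goto–Matsuoka–Phuong, exploiting the non-zerodivisor characterization of almost Gorensteinness (Theorem \ref{3.9}) together with the behavior of idealizations under reduction modulo a regular element. The key computational fact to establish first is that for a regular element $f_2 \in \fkm$ which is both $R$-regular and $R/I$-regular, one has
$$
(R \ltimes I^\vee)/f_2(R \ltimes I^\vee) \;\cong\; (R/f_2R) \ltimes (I^\vee/f_2I^\vee)
\;\cong\; (R/f_2R) \ltimes \left((I/f_2I)^\vee\right),
$$
the last isomorphism using $\rmK_R/f_2\rmK_R \cong \rmK_{R/f_2R}$ (\cite[Korollar 6.3]{HK}) and the fact that $f_2$ regular on $R/I$ forces $\Hom_R(I,\rmK_R)\otimes_R R/f_2R \cong \Hom_{R/f_2R}(I/f_2I,\rmK_{R/f_2R})$. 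This lets one induct on $d$: choosing $f_2,\dots,f_d$ a regular sequence on both $R$ and $R/I$, one passes to $\overline R = R/(f_2,\dots,f_d)$, where $\overline I = I\overline R \cong \rmK_{\overline R}$ has dimension one, and the problem becomes \cite[Proposition 6.1]{GMP}.

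For the implication $(2)\Rightarrow(1)$, I would set $\q=(f_2,\dots,f_d)$, $\overline R=R/\q$, $\overline I = I\overline R$, $\overline Q = Q\overline R = (\overline{f_1})$. The hypotheses $\fkm(I+Q)=\fkm Q$ and $(I+Q)^2 = Q(I+Q)$ descend to $\overline{\fkm}\,\overline I = \overline{\fkm}\,\overline{f_1}$ and $\overline I^2 = \overline{f_1}\,\overline I$ (one must check $\q\cap I = \q I$ from Cohen-Macaulayness of $R/I$, as in Lemma \ref{4.4}/the proof of Theorem \ref{4.1}), and $\overline I \cong \rmK_{\overline R}$ since $\q$ is a regular sequence on both $R$ and $R/I$. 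By \cite[Proposition 6.1]{GMP}, $\overline A := \overline R \ltimes \overline I^\vee = \overline R \ltimes (I/\q I)^\vee$ is a one-dimensional almost Gorenstein local ring. Since the image of $\q$ in $A$ is a regular sequence and $A/\q A \cong \overline A$ by the displayed computation, Theorem \ref{3.9}(1) applied $d-1$ times yields that $A$ is almost Gorenstein.

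For $(1)\Rightarrow(2)$ under the assumption that $R/\fkm$ is infinite, I would argue by induction on $d$. When $d=1$ this is exactly \cite[Proposition 6.1]{GMP}. For $d\ge 2$, write $A = R\ltimes I^\vee$; since $A$ is almost Gorenstein and not Gorenstein (handle the Gorenstein case separately, where $I$ is principal), take an exact sequence $0\to A\to \rmK_A\to D\to 0$ with $D$ an Ulrich $A$-module, and choose $f_2\in\fkm$ superficial for $D$ and simultaneously $R$-regular and $R/I$-regular (possible as $R/\fkm$ is infinite and we may avoid the relevant associated primes). By Theorem \ref{3.9}(2), $A/f_2A$ is almost Gorenstein; by the displayed isomorphism, $A/f_2A \cong (R/f_2R)\ltimes (I/f_2I)^\vee$, and $R/f_2R$ is Cohen-Macaulay with $(R/f_2R)/(I/f_2I)\cong R/(I+f_2R)$ Cohen-Macaulay of dimension $d-2$. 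The induction hypothesis produces a parameter ideal of $R/f_2R$ with the required three properties; lifting its generators to $f_1,f_3,\dots,f_d\in R$ and adjoining $f_2$ gives the parameter ideal $Q=(f_1,\dots,f_d)$ of $R$, and the conditions $\fkm(I+Q)=\fkm Q$ and $(I+Q)^2=Q(I+Q)$ are recovered by Nakayama-type lifting using $\q\cap I = \q I$.

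The main obstacle I anticipate is the faithful bookkeeping around the reduction step — specifically, verifying that the three conditions in $(2)$ genuinely lift from $R/f_2R$ back to $R$ rather than merely holding modulo $f_2$. This requires the colon-capturing identities $\q\cap I = \q I$ and $\q\cap I^2 = \q I^2$ (which follow from the Cohen-Macaulayness of $R/I$ and, for the quadratic condition, an argument in the spirit of the Claim in Corollary \ref{4.2}), so the clean statement "$R/I$ Cohen-Macaulay" is doing real work here; a secondary subtlety is ensuring the simultaneous genericity of $f_2$ as superficial element for $D$ and as regular element on both $R$ and $R/I$, which is where infiniteness of $R/\fkm$ is essential.
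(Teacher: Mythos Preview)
Your approach is essentially the paper's: reduce to dimension one modulo a regular sequence $f_2,\ldots,f_d$ on $R/I$, invoke \cite[Proposition 6.1]{GMP}, and lift via $\q\cap I=\q I$. The paper does this reduction in a single step (choosing $f_2,\ldots,f_d$ simultaneously), whereas you phrase it as induction on $d$, but the content is the same.

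Two small points deserve attention. First, to choose $f_2\in\fkm$ superficial for the Ulrich $A$-module $D$ \emph{with respect to $\fkn$}, you need more than prime avoidance: one must observe that $\fkm A$ is a reduction of $\fkn$, which holds because $[(0)\times I^\vee]^2=(0)$ in $A$ forces $\fkn^2=\fkm\fkn$. The paper makes this explicit (phrased as ``$\fkm_SB$ is a reduction of $\fkn_B$''), and it is precisely what guarantees that a general element of $\fkm$ is superficial for $D$ with respect to $\fkn$. Second, your worry about needing $\q\cap I^2=\q I^2$ is unfounded: only $\q\cap I=\q I$ is required, for both conditions. For $(I+Q)^2=Q(I+Q)$, from $\overline{I}^2=\overline{f_1}\,\overline{I}$ one gets $I^2\subseteq f_1I+\q$, and intersecting with $I$ gives $I^2\subseteq f_1I+(\q\cap I)=f_1I+\q I\subseteq Q(I+Q)$ --- exactly as the paper argues.
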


\begin{proof}
$(2) \Rightarrow (1)$
Let $\q = (f_2, f_3, \ldots, f_d)$ and set $\overline{R} = R/\q$, $\overline{\fkm} = \fkm \overline{R}$, and $\overline{I} = I \overline{R}$. Then $\overline{I} \cong I/\q I$, since $f_2, f_3, \ldots, f_d$ is a regular sequence in $R/I$, while $\Hom_{\overline{R}}(\overline{I},\K_{\overline{R}}) \cong I^\vee/\q I^\vee$, because $f_2, f_3, \ldots, f_d$ is an $R$-sequence  and $I$ is a maximal Cohen-Macaulay $R$-module  (\cite[Lemma 6.5]{HK}). Therefore since $\overline{I}^2 = \overline{f_1}{\cdot}\overline{I}$ and $\overline{\fkm}{\cdot}\overline{I} = \overline{\fkm}{\cdot}\overline{f_1}$ (here $\overline{f_1}$ denotes the image of $f_1$ in $\overline{R}$), by \cite[Proposition 6.1]{GMP} the idealization $A/\q A = \overline{R} \ltimes \Hom_{\overline{R}}(\overline{I}, \K_{\overline{R}})$ is an almost Gorenstein local ring. Hence $A = R \ltimes I^\vee$ is an almost Gorenstein local ring by Theorem \ref{3.9}, because $f_2, f_3, \ldots, f_d$ form a regular sequence in $A$.

$(1) \Rightarrow (2)$
Suppose that $R/\fkm$ is infinite and that $A = R \ltimes I^\vee$ is an almost Gorenstein local ring. Choose an exact sequence
\begin{equation}
0 \to A \to \K_A \to C \to 0 \tag{$\sharp$}
\end{equation}
of $A$-modules such that $\mu_A(C) = \e_\fkn^0(C)$, where $\fkn = \fkm \times I^\vee$ is the maximal ideal of $A$. If $C = (0)$, then $A$ is a Gorenstein local ring. Hence $I^\vee \cong \K_R$ (\cite{R}), whence  $I \cong \K_R^\vee = R$ and assertion (2) is certainly true. Assume that $C \ne (0)$. Then  $C$ is an Ulrich $A$-module of dimension $d-1$. We put $\fka = (0):_AC$ and consider $R$ to be a subring of $A$ via the homomorphism $R \to A, ~r \mapsto (r,0)$. Then, since $B = A/\fka$ is a module-finite extension of $S = R/[\fka \cap R]$, $\dim S = \dim B = \dim_AC = d-1$. We set $\fkn_B = \fkn B$ and $\fkm_S = \fkm S$. Then $\fkm_SB$ is a reduction of $\fkn_B$, because $[(0) \times I^\vee]^2 = (0)$ in $A$. We choose a subsystem $f_2, f_3, \ldots, f_d$ of parameters of $R$ so that $f_2, f_3, \ldots f_d$ is a system of parameters for $R/I$ and $(f_2, f_3, \ldots, f_d)B$ is a reduction of $\fkn_B$. Then $\fkn C = (f_2, f_3, \ldots, f_d)C$ by Proposition \ref{2.2} (2). Consequently, since $f_2, f_3, \ldots, f_d$ is a $C$-regular sequence, from exact sequence $(\sharp)$ above we get the exact sequence
$$
0 \to A/\q A \to \K_A/\q \K_A \to C/\q C \to 0
$$
of $A/\q A$-modules, where $\q = (f_2, f_3, \ldots, f_d)$. Hence $A/\q A$ is an almost Gorenstein local ring of dimension one, because $\K_A/\q \K_A \cong \K_{A/\q A}$ and $\fkn (C/\q C) = (0)$. Let $\overline{R} = R/\q$, $\overline{\fkm} = \fkm \overline{R}$, and $\overline{I} = I \overline{R}$. Then since $\q \cap I = \q I$, we get $\overline{I} = [I + \q]/\q \cong I/\q I$ and therefore the ring $$\overline{R} \ltimes \Hom_{\overline{R}}(\overline{I}, \K_{\overline{R}}) = (R/\q) \ltimes (I^\vee/\q I^\vee)  = A/\q A $$
is an almost Gorenstein local ring. Consequently, by \cite[Proposition 6.1]{GMP} we may choose $f_1 \in I$ so that $\overline{\fkm}{\cdot}\overline{I} = \overline{\fkm}{\cdot}\overline{f_1}$ and $\overline{I}^2 = \overline{f_1}{\cdot}\overline{I}$, where $\overline{f_1}$ denotes the image of $f_1$ in $\overline{R}$. Let  $Q = (f_1, f_2, \ldots, f_d)$. We will show that $\fkm (I+Q) = \fkm Q$ and $(I+Q)^2 =  Q(I+Q)$.
Firstly, since $\overline{\fkm}{\cdot}\overline{I} = \overline{\fkm}{\cdot}\overline{f_1}$, we get $\fkm I \subseteq (\fkm f_1 + \q) \cap I = \fkm f_1 + (\q \cap I)$.
Hence $\fkm I \subseteq \fkm Q$, because $\q \cap I = \q I$, so that $\fkm (I+Q) = \fkm Q$. Since $\overline{I}^2 = \overline{f_1}{\cdot}\overline{I}$, we similarly have 
$$I^2 \subseteq (f_1I + \q) \cap I^2 \subseteq f_1I + \q I = QI,
$$ whence $(I+Q)^2 = Q(I+Q)$. Notice that $Q$ is a parameter ideal of $R$, because $\sqrt{Q} = \sqrt{I+Q} = \fkm$, which proves Theorem \ref{5.1}.
\end{proof}

Let us consider the case where $R$ is a Gorenstein ring.
The following result extends \cite[Corollary 6.4]{GMP} to local rings of higher-dimension.

\begin{cor}\label{5.2}
Suppose that $(R,\fkm)$ is a Gorenstein local ring of dimension $d \ge 1$. Let $M$ be a Cohen-Macaulay faithful $R$-module and consider the following two conditions$\mathrm{:}$
\begin{enumerate}[\rm(1)]
\item
$A = R \ltimes M$ is an almost Gorenstein local ring.
\item
$M \cong R$ or $M \cong \fkp$ as an $R$-module for some $\fkp \in \Spec R$ such that $R/\fkp$ is a regular local ring of dimension $d-1$. 
\end{enumerate}
Then the implication $(2) \Rightarrow (1)$ holds. If $R/\fkm$ is infinite, the reverse implication $(1) \Rightarrow (2)$ is also true. 
\end{cor}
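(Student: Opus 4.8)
The plan is to reduce everything to the one-dimensional statement \cite[Corollary 6.4]{GMP} by cutting down modulo a regular sequence, using the non-zerodivisor characterization (Theorem \ref{3.9}) for $(2)\Rightarrow(1)$ and Theorem \ref{5.1} for $(1)\Rightarrow(2)$. Passing to $R[X]_{\fkm R[X]}$ if necessary we may assume $R/\fkm$ is infinite; by Theorem \ref{3.5} this alters neither the almost Gorenstein property of the rings involved nor the hypothesis that $R/\fkp$ is a regular local ring of dimension $d-1$. Note also that a faithful Cohen-Macaulay $R$-module is maximal Cohen-Macaulay, hence torsion-free, over the Cohen-Macaulay ring $R$.

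For $(2)\Rightarrow(1)$, the case $M\cong R$ is immediate: then $M\cong\K_R$ and $A=R\ltimes\K_R$ is Gorenstein. Suppose $M\cong\fkp$. I would choose $f_2,\ldots,f_d\in\fkm$ forming an $R$-regular sequence whose images in $R/\fkp$ form a regular system of parameters; then $\fkp+(f_2,\ldots,f_d)=\fkm$, and $(f_2,\ldots,f_d)$ is a regular sequence on the maximal Cohen-Macaulay $R$-module $A=R\ltimes\fkp$. Setting $\overline R=R/(f_2,\ldots,f_d)$ we get $A/(f_2,\ldots,f_d)A=\overline R\ltimes\bigl(\fkp/(f_2,\ldots,f_d)\fkp\bigr)$, and $\fkp/(f_2,\ldots,f_d)\fkp\cong\fkp\overline R=\fkm\overline R$, the maximal ideal of the one-dimensional Gorenstein local ring $\overline R$ (the isomorphism holds because the $f_i$ form a regular sequence on the regular ring $R/\fkp$). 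By \cite[Corollary 6.4]{GMP} this idealization is almost Gorenstein, whence $A$ is almost Gorenstein by $d-1$ applications of Theorem \ref{3.9}(1).

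For $(1)\Rightarrow(2)$, assume $A=R\ltimes M$ is almost Gorenstein. If $A$ is Gorenstein, then $M\cong\K_R\cong R$; so assume $A$ is not Gorenstein and fix an exact sequence $0\to A\to\K_A\to C\to0$ with $C$ an Ulrich $A$-module of dimension $d-1$. By Lemma \ref{3.1}(1) the ring $\rmQ(A)$ is Gorenstein, so for every minimal prime $\fkq$ of $R$ the ring $R_\fkq\ltimes M_\fkq$ is zero-dimensional Gorenstein, which forces $M_\fkq\cong\K_{R_\fkq}\cong R_\fkq$; thus $M$ has rank one. A generic element $\xi\in M$ then gives an exact sequence $0\to R\xrightarrow{\xi}M\to M/R\xi\to0$ with $M/R\xi$ Cohen-Macaulay of dimension $d-1$; dualizing by $\K_R=R$ realizes $M^\vee$ as an ideal $I\subsetneq R$ with $R/I\cong\Ext_R^1(M/R\xi,R)$ Cohen-Macaulay of dimension $d-1$, and $M=I^\vee$ by reflexivity. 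Theorem \ref{5.1} applied to $I$ then yields a parameter ideal $Q=(f_1,\ldots,f_d)$ with $f_1\in I$, $\fkm(I+Q)=\fkm Q$ and $(I+Q)^2=Q(I+Q)$, where $f_2,\ldots,f_d$ is, as in the proof of Theorem \ref{5.1}, a regular sequence on $R$, on $R/I$ and on $R/(f_1)$.

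Finally I would set $\fkp=(f_1):_R I$, the ideal linked to $I$ by the complete intersection $(f_1)\subseteq I$. Linkage over the Gorenstein ring $R$ gives that $R/\fkp$ is Cohen-Macaulay of dimension $d-1$ and that $M=I^\vee\cong\Hom_R(I,R)\cong\frac{1}{f_1}\fkp\cong\fkp$. To prove $R/\fkp$ regular, put $\q=(f_2,\ldots,f_d)$ and $\overline R=R/\q$; this is a one-dimensional Gorenstein local ring, $A/\q A\cong\overline R\ltimes(M/\q M)$ is almost Gorenstein of dimension one, and $M/\q M$ is non-cyclic (otherwise $M$ would be cyclic and faithful, so $M\cong R$ and $A$ Gorenstein, a contradiction). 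Hence \cite[Corollary 6.4]{GMP} shows $M/\q M$ is isomorphic to the maximal ideal of $\overline R$; a short computation in $\overline R$ — using that $I\overline R$ is, up to the nonzerodivisor $\overline{f_1}$, the endomorphism ring of $\fkm\overline R$, and that linkage is compatible with the base change $R\to\overline R$ — then gives $\fkp\overline R=\overline{f_1}:_{\overline R}(I\overline R)=\fkm\overline R$, so $\fkp+\q=\fkm$. Therefore the maximal ideal of $R/\fkp$ is generated by the $d-1$ images of $f_2,\ldots,f_d$, so $\rmv(R/\fkp)=d-1=\dim R/\fkp$ and $R/\fkp$ is regular of dimension $d-1$; in particular $\fkp$ is prime and $M\cong\fkp$ as required. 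The step demanding the most care is this last one: identifying the linkage of $I\overline R$ with $\fkm\overline R$ and checking that linkage descends along $R\to R/\q$, which parallels the Gorenstein case of \cite[Corollary 6.4]{GMP}.
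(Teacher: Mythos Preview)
Your proof of $(2)\Rightarrow(1)$ is correct and essentially identical to the paper's.

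For $(1)\Rightarrow(2)$, the linkage approach---setting $\fkp=(f_1):_RI$ and reading off $M\cong I^\vee\cong\frac{1}{f_1}\fkp\cong\fkp$ immediately---is a clean alternative to the paper's route. The paper instead exploits the Gorenstein socle directly: since $\fkm(I+Q)=\fkm Q$ forces $Q\subseteq I+Q\subseteq Q:_R\fkm$ and $\ell_R((Q:_R\fkm)/Q)=\rmr(R)=1$, either $I+Q=Q$ (giving $I=(f_1)$, hence $M\cong R$) or $I+Q=Q:_R\fkm$. In the latter case one picks $x\in I\setminus Q$, shows $I=(f_1,x)$, puts $\fkp=(f_1):_Rx$, and deduces $\fkm=\fkp+\fkq$ from $\overline\fkm\cdot(\overline I/(\overline{f_1}))=(0)$ together with the explicit identification $\overline I/(\overline{f_1})\cong \overline R/\fkp\overline R$. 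Only afterward is $I^\vee\cong\fkp$ obtained, by dualizing $0\to R\to I\to R/\fkp\to 0$.

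Your argument has a genuine gap exactly where you flag it. Invoking \cite[Corollary~6.4]{GMP} yields only the \emph{module} isomorphism $M/\fkq M\cong\overline\fkm$, hence $\fkp\overline R\cong\overline\fkm$; but isomorphic ideals in a one-dimensional local ring need not coincide (e.g.\ $\fkm$ and $t\fkm$ in $k[[t^2,t^3]]$), so this alone cannot give the equality $\fkp\overline R=\overline\fkm$. Your phrase ``$I\overline R$ is, up to $\overline{f_1}$, the endomorphism ring of $\overline\fkm$'' presumes the specific identification $\frac{1}{\overline{f_1}}I\overline R=\overline\fkm:\overline\fkm$ in $\rmQ(\overline R)$, which does not follow from a module isomorphism. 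The repair bypasses \cite[Corollary~6.4]{GMP} entirely: from $\fkm(I+Q)=\fkm Q$ one has $\overline\fkm\cdot I\overline R\subseteq(\overline{f_1})$, so $\overline\fkm\subseteq(\overline{f_1}):_{\overline R}I\overline R\subsetneq\overline R$ (strict since $I$ is not principal), whence $(\overline{f_1}):_{\overline R}I\overline R=\overline\fkm$. Your linkage-compatibility claim $\fkp\overline R=(\overline{f_1}):_{\overline R}I\overline R$ is correct---it holds because $I/(f_1)\cong\omega_{R/\fkp}$ is faithful over $R/\fkp$ and remains faithful modulo the $R/\fkp$-regular sequence $\fkq$---and combining the two gives $\fkp+\fkq=\fkm$. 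This is the paper's socle computation in linkage clothing.
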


\begin{proof}
$(2) \Rightarrow (1)$
We may assume $M \cong \fkp$, where $\fkp \in \Spec R$ such that $R/\fkp$ is a regular local ring of dimension $d-1$. We choose a subsystem $f_2, f_3, \ldots, f_d$ of parameters of $R$ so that $\fkm = \fkp +(f_2, f_3, \ldots, f_d)$ and set $\q = (f_2, f_3, \ldots, f_d)$. Then 
$$
\fkp/\q \fkp \cong [\fkp + \q]/\q = \fkm/\q
$$ and hence $A/\q A \cong R/\q \ltimes \fkm/\q$.
Therefore $A/\q A$ is an almost Gorenstein local ring by \cite[Corollary 6.4]{GMP} and hence $A$ is an almost Gorenstein local ring by Theorem \ref{3.9}.

$(1) \Rightarrow (2)$
Suppose that $R/\fkm$ is infinite and let $\fkp \in \Ass R$. Then $A_{\fkp \times M} \cong R_\fkp \ltimes M_\fkp$ and $M_\fkp \ne (0)$. Because  $\rmQ (A)$ is a Gorenstein ring, we get $M_\fkp \cong \K_{R_\fkp} \cong R_\fkp$ (\cite{R}). Hence $\rmQ (R) \otimes_RM \cong \rmQ (R)$ and therefore we have an exact sequence\begin{equation}
0 \to R \to M \to X \to 0  \tag{$\sharp_1$}
\end{equation}
of $R$-modules such that $\rmQ (R) \otimes_RX = (0)$. Notice that  $X$ is a Cohen-Macaulay $R$-module of dimension $d-1$, because $X \ne (0)$ and $\depth_RM=d$. Take the $\K_R$-dual (in fact, $\K_R = R$) of exact sequence $(\sharp_1)$ and   we get the exact sequence
$$
0 \to M^\vee \overset{\varphi}{\longrightarrow} R \to \Ext_R^1(M,\K_R) \to 0
$$
of $R$-modules. Let $I = \varphi (M^\vee)$. Then $M \cong I^\vee$ and  $R/I$ is a Cohen-Macaulay local ring of dimension $d-1$. Consequently, because $A = R \ltimes I^\vee$ is an almost Gorenstein local ring, by Theorem \ref{5.1} $R$ contains a parameter ideal $Q=(f_1, f_2, \ldots, f_d)$ such that $f_1 \in I$, $\fkm (I + Q) = \fkm Q$, and $(I+Q)^2 = Q(I+Q)$. We set $\q = (f_2, f_3, \ldots, f_d)$. Then, since $Q \subseteq I + Q \subseteq Q :_R\fkm$ and $R$ is a Gorenstein local ring, we have either $Q = I + Q$ or $I+Q = Q :_R\fkm$.

If $Q = I+Q$, then 
$$I \subseteq Q \cap I = [(f_1) + \q]\cap I = (f_1) + \q I,$$ since $\q \cap I = \q I$, so that $I = (f_1) \cong R$. Therefore $M \cong I^\vee \cong R$, which is impossible. Hence $I + Q = Q :_R \fkm$ and $I \not\subseteq Q$. Choose $x \in I \setminus Q$. We then have $I + Q = Q + (x)$ and therefore 
$I = [(Q + (x)] \cap I = (f_1, x) + \q I$, so that $I =(f_1, x)$. Notice that $\mu_R(I) = 2$, because $I \not\cong R$. Let $\fkp = (f_1):_Rx$. Then $I/(f_1) \cong R/\fkp$ and hence $\dim R/\fkp < d$. On the other hand, thanks to the depth lemma applied to the exact sequence
\begin{equation}
0 \to R \overset{{\varphi}}{\longrightarrow} I  \to R/\fkp \to 0  \tag{$\sharp_2$}
\end{equation}
of $R$-modules where $\varphi (1) = f_1$, we get $\operatorname{depth} R/\fkp \ge d-1$. Hence $\dim R/\fkp = d-1$. Set  $\overline{R} = R/\q$, $\overline{\fkm} = \fkm \overline{R}$, and $\overline{I} = I \overline{R}$. Then $\overline{\fkm}{\cdot}\overline{I} \subseteq (\overline{f_1}),$ where $\overline{f_1}$ denotes the image of $f_1$ in $\overline{R}$. Since $\overline{I} \cong \overline{R}\otimes_RI$, we see $$\overline{R}\otimes_RR/\fkp \cong \overline{R}\otimes_R[I/(f_1)] \cong \overline{I}/(\overline{f_1})$$ and therefore $\fkm = \fkp + \q$, because $\overline{\fkm}{\cdot}(\overline{I}/(\overline{f_1})) = (0)$. Thus $R/\fkp$ is a regular local ring. Now we take the $\K_R$-dual of exact sequence $(\sharp_2)$ and get the exact sequence
$$0 \to I^\vee \to R \to \Ext_R^1(R/\fkp, \K_R) \to 0$$
of $R$-modules. Because $\Ext_R^1(R/\fkp,\K_R) \cong R/\fkp$, we then have $I^\vee \cong \fkp$. Hence $M \cong I^\vee \cong \fkp$ as $R$-modules, which proves the implication $(1) \Rightarrow (2)$.
\end{proof}

\if0
\begin{rem}
Let $S = k[[X,Y,Z]]$ be the formal power series ring over a field $k$ and let $R = S/(Z^2 - XY)$. Let $x$, $z$ be the images of $X$, $Z$ in $R$, respectively. We set $\fkp = (x,z)$. Then $R/\fkp$ is a discrete valuation ring  and therefore $A = R \ltimes \fkp$ is an almost Gorenstein local ring. It happens, however, that a given Gorenstein local ring $R$ contains no prime ideal $\fkp$ such that $R/\fkp$ is regular with $\dim R/\fkp =  \dim R - 1$.
For example, the residue class ring $$R = \Bbb R[[X_1, X_2, \ldots, X_{d+1}]]/(\sum_{i=1}^{d+1}X_i^2)$$ of the  formal power series ring $\Bbb R[[X_1, X_2, \ldots, X_{d+1}]]$ with $d \ge 2$ contains no such prime ideals (\cite[Example (3.6)]{GHK}).
\end{rem}
\fi

When $R$ contains a prime ideal $\fkp$ such that $R/\fkp$ is a regular local ring of dimension $d-1$, we have the following characterization for $A = R \ltimes \fkp$ to be an almost Gorenstein local ring, which is an extension of \cite[Theorem 6.5]{GMP}.

\begin{thm}\label{5.4} Let $(R,\fkm)$ be a Cohen-Macaulay local ring of dimension $d \ge 1$, which possesses the canonical module $\K_R$.
Suppose that $R/\fkm$ is infinite. Let $\fkp \in \Spec R$ and assume that $R/\fkp$ is a regular local ring of dimension $d-1$. Then the following conditions are equivalent.
\begin{enumerate}[\rm(1)]
\item
$A = R \ltimes \fkp$ is an almost Gorenstein local ring.
\item
$R$ is an almost Gorenstein local ring. 
\end{enumerate}
\end{thm}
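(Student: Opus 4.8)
The plan is to reduce to the one-dimensional case, where the statement is \cite[Theorem 6.5]{GMP} itself (for $d=1$ one has $\fkp=\fkm$), by killing a carefully chosen regular sequence, transporting the almost Gorenstein property back and forth with the non-zerodivisor characterization (Theorem \ref{3.9}), and crossing the idealization barrier in dimension one with \cite[Theorem 6.5]{GMP}.

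First I record the relevant structure. Applying the depth lemma to $0\to\fkp\to R\to R/\fkp\to 0$ gives $\depth_R\fkp\ge\min\{d,(d-1)+1\}=d$, so $\fkp$ is a maximal Cohen--Macaulay $R$-module. Because $R/\fkm$ is infinite and $R/\fkp$ is a regular local ring of dimension $d-1$, one can choose $f_2,\ldots,f_d\in\fkm$ whose images generate the maximal ideal of $R/\fkp$ and which form a regular sequence on $R$, hence on $\fkp$ and on $A$; set $\q=(f_2,\ldots,f_d)$, so that $\fkp+\q=\fkm$. Since $f_2,\ldots,f_d$ is $R/\fkp$-regular we have $\q\cap\fkp=\q\fkp$, whence $\fkp/\q\fkp\cong(\fkp+\q)/\q=\fkm/\q$ and therefore
$$
A/\q A=(R/\q)\ltimes(\fkp/\q\fkp)\ \cong\ \overline R\ltimes\overline\fkm ,
$$
where $\overline R=R/\q$ is a one-dimensional Cohen--Macaulay local ring with maximal ideal $\overline\fkm=\fkm/\q$ and infinite residue field $R/\fkm$. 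By \cite[Theorem 6.5]{GMP}, $\overline R\ltimes\overline\fkm$ is almost Gorenstein if and only if $\overline R$ is. Hence it suffices to prove the two equivalences: $R$ is almost Gorenstein $\iff$ $\overline R$ is almost Gorenstein, and $A$ is almost Gorenstein $\iff$ $A/\q A$ is almost Gorenstein (allowing a separate, compatible choice of $\q$ for each).

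In both equivalences the implication ``$\overline R$ (resp. $A/\q A$) almost Gorenstein $\Rightarrow$ $R$ (resp. $A$) almost Gorenstein'' is Theorem \ref{3.9}(1) applied $d-1$ times along the regular sequence. For the reverse implications: if $R$ is Gorenstein then so is $\overline R$, and if $A$ is Gorenstein then so is $A/\q A$, so those cases are immediate; otherwise, fixing an exact sequence $0\to R\to\K_R\to C\to 0$ (resp. $0\to A\to\K_A\to C_A\to 0$) with Ulrich cokernel, I take the $f_i$ to be in addition sequentially superficial for $C$ (resp. for $C_A$), and Theorem \ref{3.9}(2) applied $d-1$ times gives $\overline R$ (resp. $A/\q A$) almost Gorenstein. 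Concatenating: $R$ almost Gorenstein $\iff$ $\overline R$ almost Gorenstein $\iff$ $\overline R\ltimes\overline\fkm\cong A/\q A$ almost Gorenstein $\iff$ $A$ almost Gorenstein.

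The step that requires care --- and the main obstacle --- is the choice of the $f_i$ for the implication $A$ almost Gorenstein $\Rightarrow$ $A/\q A$ almost Gorenstein: they must be superficial for the $A$-module $C_A$ with respect to the maximal ideal $\fkn=\fkm\times\fkp$ of $A$, and yet they must lie in $\fkm\subseteq R$ (so that the isomorphism $A/\q A\cong\overline R\ltimes\overline\fkm$ is available) and be regular on $A$, while $\fkm A$ is properly contained in $\fkn$. The resolution is that, since the $\fkp$-component of $A$ squares to zero, $\fkn^{2}=\fkm^{2}\times\fkm\fkp=(\fkm A)\cdot\fkn$; thus $\fkm A$ is a reduction of $\fkn$ of reduction number at most one, and therefore a general element of $\fkm$, viewed in $A$, is superficial for $C_A$ with respect to $\fkn$ --- and this survives passage to each successive quotient, which is again the idealization of a Cohen--Macaulay local ring of one smaller dimension by a square-zero ideal. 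Hence the $f_i$ can be chosen to meet all of the requirements at once, and the argument closes. (An alternative is to realize $A=R\ltimes\fkp$ as $R\ltimes I^{\vee}$ with $I$ an ideal isomorphic to $\fkp^{\vee}$ for which $R/I$ is Cohen--Macaulay of dimension $d-1$, and then to play Theorem \ref{5.1} against Theorem \ref{4.1}; but matching the ideal $I$ with a canonical ideal of $R$ makes this route heavier.)
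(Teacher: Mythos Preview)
Your proof is correct and follows essentially the same strategy as the paper's: reduce to dimension one via Theorem \ref{3.9} and invoke \cite[Theorem 6.5]{GMP}, using the isomorphism $A/\q A \cong (R/\q)\ltimes(\fkm/\q)$. The paper organizes this as an induction on $d$, peeling off one element $f$ at a time, while you descend directly to dimension one with the full sequence $f_2,\ldots,f_d$; the content is the same. Your explicit justification that general elements of $\fkm$ are superficial for $C_A$ with respect to $\fkn$ (via $\fkn^2=\fkm A\cdot\fkn$, so $\fkm A$ is a reduction of $\fkn$, and then Proposition \ref{2.2} (2)) is a point the paper leaves implicit, so your write-up is in fact more careful here.
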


\begin{proof}
By \cite[Theorem 6.5]{GMP} we may assume that $d > 1$ and that our assertion holds true for $d-1$.

$(1) \Rightarrow (2)$
Let $0 \to A \to \K_A \to Y \to 0$ be an exact sequence of $A$-modules such that $\mu_A(Y) = \e_\fkn^0(Y)$, where $\fkn = \fkm \times \fkp$ is the maximal ideal of $A$. Let us choose a parameter $f$ of $R$ so that $f$ is superficial for $Y$ with respect to $\fkn$ and $R/[\fkp + (f)]$ is a regular local ring of dimension $d-2$. Then $A/fA$ is an almost Gorenstein local ring (see the proof of Theorem \ref{3.9} (2)) and $$A/fA = R/(f) \ltimes \fkp/f\fkp \cong R/(f) \ltimes [\fkp + (f)]/(f),$$ which shows $R/(f)$ is an almost Gorenstein local ring. Thus $R$ is almost Gorenstein by Theorem \ref{3.9}.

$(2) \Rightarrow (1)$
We consider the exact sequence $0 \to R \to \K_R \to X \to 0$ of $R$-modules with $\mu_R(X) = \e_\fkm^0(X)$ and choose 
a parameter $f$ of $R$ so that $f$ is superficial for $X$ with respect to $\fkm$ and $R/[\fkp + (f)]$ is a regular local ring of  dimension $d-2$. Then because $A/fA \cong R/(f) \ltimes [\fkp + (f)]/(f)$, the ring $A/fA$ is an almost Gorenstein local ring. Hence by Theorem \ref{3.9} $A$ is an almost Gorenstein local ring.
\end{proof}

The following example extends \cite[Example 6.10]{GMP}. 

\begin{ex}\label{5.5}
Let $(R,\fkm)$ be a Gorenstein local ring of dimension $d \ge 1$ and infinite residue class field. Let $\fkp \in \Spec R$ and assume that $R/\fkp$ is a regular local ring of dimension $d-1$. We set $A = R \ltimes \fkp$. Then, thanks to Theorem \ref{5.4},  $A$ is an almost Gorenstein local ring. Therefore because $\fkp \times \fkp \in \operatorname{Spec} A$ with $A/[\fkp\times \fkp] \cong R/\fkp$, setting 
$$
R_n = \begin{cases}
R & (n=0)\\
R_{n-1}\ltimes \fkp_{n-1}&(n > 0)
\end{cases},
\qquad
\fkp_n = \begin{cases}
\fkp & (n=0)\\
\fkp_{n-1}\ltimes \fkp_{n-1}&(n > 0)
\end{cases},
$$
we get an infinite family $\{R_n\}_{n \ge 0}$ of almost Gorenstein local rings. Notice that $R_n$ is not a Gorenstein ring, if $n \ge 2$ (\cite[Lemma 6.6]{GMP}).
\end{ex}



\section{Generalized Gorenstein local rings}\label{pG}
Throughout this section let $(R,\fkm)$ denote a Noetherian local ring of dimension $d \ge0$. We explore  a special class of almost Gorenstein local rings, which we call semi-Gorenstein.

We begin with the definition.

\begin{defn}\label{def7.1}
We say that $R$ is a semi-Gorenstein local ring, if 
$R$ is an almost Gorenstein local ring, that is $R$ is a Cohen-Macaulay local ring having a canonical module $\rmK_R$ equipped with an exact sequence 
$$(\sharp) \ \ \ 
0 \to R \to \rmK_R \to C \to 0
$$
of $R$-modules such that $\mu_R(C) = \e^0_{\fkm}(C)$, where either $C = (0)$, or $C \neq (0)$ and there exist $R$-submodules $\{C_i\}_{1 \le i \le \ell}$ of $C$ such that $C = \oplus_{i=1}^\ell C_i$ and $\mu_R(C_i) = 1$ for all $1 \le i \le \ell$.\end{defn}

Therefore, every Gorenstein local ring is a semi-Gorenstein local ring (take   $C = (0)$) and every one-dimensional almost Gorenstein local ring is semi-Gorenstein, since $\fkm C = (0)$. We notice that in exact sequence $(\sharp)$ of Definition \ref{def7.1}, if $C \ne (0)$, then each $C_i$ is a cyclic Ulrich $R$-module of dimension $d-1$, whence  $C_i \cong R/\fkp_i$ for some $\fkp_i \in \Spec R$ such that $R/\fkp_i$ is a regular local ring of dimension $d-1$.

We note the following. This  is no longer true for the almost Gorenstein property, as we will show in Section \ref{assgr} (see Remark \ref{3}).

\begin{prop}\label{7.0}
Let $R$ be a semi-Gorenstein local ring. Then for every $\fkp \in \Spec R$ the local ring $R_\fkp$ is semi-Gorenstein.
\end{prop}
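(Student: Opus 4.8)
The plan is to localize the defining exact sequence of a semi-Gorenstein ring and check that all the relevant properties are inherited. First I would reduce to the case where $R$ is not Gorenstein, since for Gorenstein rings the claim is classical: if $R$ is Gorenstein then so is every $R_\fkp$, and Gorenstein rings are semi-Gorenstein by taking $C=(0)$. So assume $R$ is a non-Gorenstein semi-Gorenstein local ring with defining exact sequence $0\to R\to \rmK_R\to C\to 0$ where $C=\bigoplus_{i=1}^{\ell}C_i$, each $C_i\cong R/\fkp_i$ with $R/\fkp_i$ a regular local ring of dimension $d-1$ (using the description recorded just before the statement, which follows from Lemma~\ref{3.1}(2) and Proposition~\ref{2.2}(1)). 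Fix $\fkp\in\Spec R$.

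Next I would localize at $\fkp$. Since localization is exact, we get an exact sequence
$$
0 \to R_\fkp \to (\rmK_R)_\fkp \to C_\fkp \to 0
$$
of $R_\fkp$-modules, and $(\rmK_R)_\fkp\cong \rmK_{R_\fkp}$ by \cite[Korollar 6.2]{HK}. Moreover $C_\fkp=\bigoplus_{i=1}^{\ell}(C_i)_\fkp$, and each $(C_i)_\fkp\cong (R/\fkp_i)_\fkp$ is either zero (when $\fkp\not\supseteq\fkp_i$) or isomorphic to $(R/\fkp_i)_\fkp$, which is a localization of the regular local ring $R/\fkp_i$ at a prime, hence again a regular local ring (of dimension $\le d-1$). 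So $C_\fkp$ is a direct sum of cyclic $R_\fkp$-modules, each of the form $R_\fkp/\fkq$ with $R_\fkp/\fkq$ regular. It remains only to verify that this localized sequence witnesses the semi-Gorenstein property, i.e. that $\mu_{R_\fkp}(C_\fkp)=\e^0_{\fkp R_\fkp}(C_\fkp)$, equivalently (by Proposition~\ref{2.2}(1) applied over $R_\fkp$, since each surviving summand has dimension possibly $0$ or positive) that $C_\fkp$ is an Ulrich $R_\fkp$-module or zero.

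The main point to settle is therefore: why is $C_\fkp$ still Ulrich over $R_\fkp$? Here is where I would be careful. If $C_\fkp=(0)$ there is nothing to prove and in fact $R_\fkp$ is Gorenstein. Otherwise, discard the summands $C_i$ with $\fkp\not\supseteq\fkp_i$ and relabel so that $\fkp\supseteq\fkp_i$ for $1\le i\le m$ and $C_\fkp=\bigoplus_{i=1}^m (R/\fkp_i)_\fkp$. Each $(R/\fkp_i)_\fkp$ is a regular local ring, hence a Cohen-Macaulay $R_\fkp$-module with $\mu=1=\e^0$ — indeed for a cyclic module $R_\fkp/\fkq$ with $R_\fkp/\fkq$ regular we have $\fkp R_\fkp\cdot(R_\fkp/\fkq)$ generated by a regular system of parameters of $R_\fkp/\fkq$, so Proposition~\ref{2.2}(2) applies and $R_\fkp/\fkq$ is Ulrich. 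A direct sum of Ulrich modules of the same dimension $s$ is Ulrich of dimension $s$, but in general the $(R/\fkp_i)_\fkp$ may have differing dimensions; however, whenever $\fkp\supseteq\fkp_i$, the module $C_i=R/\fkp_i$ is contained in $C$ which is Cohen-Macaulay of dimension $d-1$, and $\Ass_R C = \{\fkp_1,\dots,\fkp_\ell\}$ forces $\dim R/\fkp_i=d-1$ for every $i$, so in fact all of $\fkp_1,\dots,\fkp_\ell$ are minimal primes of $C$ of the same coheight; consequently each surviving $(R/\fkp_i)_\fkp$ has dimension exactly $\dim C_\fkp$, and the direct sum is Ulrich. (If some $\fkp_i$ localizes to the maximal ideal, $C_\fkp$ has dimension $0$ and is killed by $\fkp R_\fkp$, again Ulrich by Proposition~\ref{2.2}(1).) This gives the defining data for $R_\fkp$, so $R_\fkp$ is semi-Gorenstein, completing the proof. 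The only genuinely delicate step is the uniformity of dimensions of the $\fkp_i$, which I expect to handle by the observation that $C$ being Cohen-Macaulay of pure dimension $d-1$ forces $\Ass_R C=\Min_R C$ with all primes of coheight $d-1$.
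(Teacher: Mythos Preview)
Your proof is correct and follows essentially the same route as the paper: reduce to the non-Gorenstein case, localize the defining exact sequence $0\to R\to\rmK_R\to C\to 0$ with $C=\bigoplus R/\fkp_i$, use $(\rmK_R)_\fkp\cong\rmK_{R_\fkp}$, and observe that the surviving summands $R_\fkp/\fkp_iR_\fkp$ are cyclic Ulrich $R_\fkp$-modules. The paper's argument is terser; your extra care about the uniformity of $\dim (R/\fkp_i)_\fkp$ is justified but can be shortened: once $C_\fkp\ne(0)$, Lemma~\ref{3.1}(2) applied over $R_\fkp$ already forces $C_\fkp$ to be Cohen--Macaulay of dimension $\dim R_\fkp-1$, so every nonzero direct summand automatically has that same dimension, and the Ulrich condition on the sum follows from the Ulrich condition on each regular-local summand.
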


\begin{proof} We may assume that $R$ is not a Gorenstein ring. Choose an exact sequence $$0 \to R \to \rmK_R \to C \to 0$$ of $R$-modules which satisfies the condition in Definition \ref{def7.1}.  Hence $C = \oplus_{i=1}^\ell R/\fkp_i$, where for each $1 \le i \le \ell$, $\fkp_i \in \Spec R$ and $R/\fkp_i$ is a regular local ring of dimension $d-1$. Let $\fkp \in \Spec R$. Then since $\rmK_{R_\fkp}= (\rmK_R)_\fkp$, we get an exact sequence
$$0 \to R_\fkp \to \rmK_{R_\fkp} \to C_\fkp \to 0$$
of $R_\fkp$-modules, where  $C_\fkp = \oplus_{\fkp_i \subseteq \fkp}R_\fkp/\fkp_iR_\fkp$ is a direct sum of finite cyclic Ulrich $R_\fkp$-modules $R_\fkp/\fkp_iR_\fkp$, so that by definition the local ring $R_\fkp$ is semi-Gorenstein.
\end{proof}

Let us define  the following.

\begin{defn}[cf. \cite{BDF}]\label{def7.3}
An almost Gorenstein local ring $R$ is said to be pseudo-Gorenstein, if $\rmr (R) \le 2$. 
\end{defn}

\begin{prop}\label{7.2}
Let $R$ be a pseudo-Gorenstein local ring. Then $R$ is semi-Gorenstein and for every $\fkp \in \Spec R$ the local ring $R_\fkp$ is pseudo-Gorenstein. 
\end{prop}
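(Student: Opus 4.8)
The statement claims that a pseudo-Gorenstein local ring $R$ (an almost Gorenstein local ring with $\rmr(R)\le 2$) is semi-Gorenstein, and that $R_\fkp$ is pseudo-Gorenstein for every $\fkp\in\Spec R$. The strategy is to dispose of the Gorenstein case first and then handle the residual type-$2$ case by exhibiting the required decomposition of the module $C$, after which the localization claim follows from Proposition \ref{7.0} together with a semicontinuity argument on the Cohen--Macaulay type.

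\emph{First step: reduce to the non-Gorenstein case.} If $R$ is Gorenstein, then taking $C=(0)$ shows $R$ is semi-Gorenstein by definition, and each $R_\fkp$ is Gorenstein, hence pseudo-Gorenstein. So assume $R$ is not Gorenstein; since $\rmr(R)\le 2$ we then have $\rmr(R)=2$, and by Corollary \ref{3.8} any exact sequence $0\to R\overset{\varphi}{\to}\rmK_R\to C\to0$ with $\mu_R(C)=\e^0_\fkm(C)$ coming from almost Gorensteinness has $\varphi(1)\notin\fkm\rmK_R$ unless $R$ is regular (in which case $R$ is Gorenstein, a contradiction), and $\mu_R(C)=\rmr(R)-1=1$.

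\emph{Second step: produce the decomposition.} Since $\mu_R(C)=1$ and $C\neq(0)$, the module $C$ is already cyclic, so the trivial decomposition $C=C_1$ with $\ell=1$ and $\mu_R(C_1)=1$ satisfies the requirement in Definition \ref{def7.1}. (Recall from the discussion after Definition \ref{def7.1} that a cyclic Ulrich module of dimension $d-1$ is forced to be $R/\fkp_1$ for a prime $\fkp_1$ with $R/\fkp_1$ regular of dimension $d-1$; this is automatic here.) Hence $R$ is semi-Gorenstein. The main point, such as it is, is simply the arithmetic $\mu_R(C)=\rmr(R)-1\le 1$ delivered by Corollary \ref{3.8}.

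\emph{Third step: localization.} Let $\fkp\in\Spec R$. By what we have just proved $R$ is semi-Gorenstein, so Proposition \ref{7.0} gives that $R_\fkp$ is semi-Gorenstein, in particular an almost Gorenstein local ring. It remains to check $\rmr(R_\fkp)\le 2$. From the exact sequence $0\to R_\fkp\to \rmK_{R_\fkp}\to C_\fkp\to0$ obtained by localizing (using $\rmK_{R_\fkp}=(\rmK_R)_\fkp$), we have $C_\fkp$ either zero or cyclic, so $\mu_{R_\fkp}(C_\fkp)\le 1$; since $\rmr(R_\fkp)=\mu_{R_\fkp}(\rmK_{R_\fkp})=\mu_{R_\fkp}(C_\fkp)+1\le 2$ when $R_\fkp$ is not Gorenstein and $\rmr(R_\fkp)=1$ otherwise, in all cases $\rmr(R_\fkp)\le 2$. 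Therefore $R_\fkp$ is pseudo-Gorenstein. The only mild subtlety is justifying $\mu_{R_\fkp}(\rmK_{R_\fkp})=\mu_{R_\fkp}(C_\fkp)+1$, i.e. that the localized injection $R_\fkp\to\rmK_{R_\fkp}$ still has image inside $\fkp R_\fkp\cdot\rmK_{R_\fkp}$ fails — this is exactly the assertion $\varphi_\fkp(1)\notin\fkp R_\fkp\cdot\rmK_{R_\fkp}$ unless $R_\fkp$ is Gorenstein, which follows by applying Corollary \ref{3.8} to $R_\fkp$ (or directly from $\mu(\rmK)=\mu(C)+1$ when the map is part of a minimal presentation); if $R_\fkp$ happens to be Gorenstein there is nothing to prove. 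I expect no real obstacle here; the content is entirely bookkeeping around Corollary \ref{3.8} and Proposition \ref{7.0}.
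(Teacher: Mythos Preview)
Your argument is correct and follows essentially the same path as the paper: reduce to $\rmr(R)=2$, use Corollary \ref{3.8} to get $\mu_R(C)=1$, hence $R$ is semi-Gorenstein, and then invoke Proposition \ref{7.0} for the localization. The paper compresses the whole localization step into the single sentence ``the second assertion follows from Proposition \ref{7.0}'', leaving the type bound $\rmr(R_\fkp)\le 2$ implicit; you make it explicit, which is fine. One small simplification: in your third step you do not need to invoke Corollary \ref{3.8} for $R_\fkp$ to justify the \emph{equality} $\mu_{R_\fkp}(\rmK_{R_\fkp})=\mu_{R_\fkp}(C_\fkp)+1$. The short exact sequence $0\to R_\fkp\to \rmK_{R_\fkp}\to C_\fkp\to 0$ already gives the inequality $\rmr(R_\fkp)=\mu_{R_\fkp}(\rmK_{R_\fkp})\le 1+\mu_{R_\fkp}(C_\fkp)\le 2$, since $\rmK_{R_\fkp}$ is generated by $\varphi_\fkp(1)$ together with any lift of a generator of $C_\fkp$; no minimality discussion is needed.
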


\begin{proof} We may assume that $\rmr(R) = 2$. Because $R$ is not a regular local  ring, in the exact sequence $0 \to R \overset{\varphi}{\to} \rmK_R \to C \to 0$ of  Definition \ref{3.3} we get $\varphi (1) \not\in \fkm \rmK_R$ by Corollary \ref{3.8}, whence $\mu_R(C) = \mu_R(\rmK_R) -1 = 1$. Therefore $R$ is semi-Gorenstein, and the second assertion follows from Proposition \ref{7.0}.
\end{proof}

We note one example.

\begin{ex} Let $k[[t]]$ be the formal poser series ring over a field $k$. For an integer $a \ge 4$ we  set $$R = k[[t^{a+i} \mid 0 \le i \le a-1\ \text{but}~i \ne a-2]]$$ in $k[[t]]$. Then $\rmK_R = R + Rt^{a-1}$ and $\fkm \rmK_R \subseteq R$. Hence $R$ is a pseudo-Gorenstein local ring with $\rmr (R) = 2$. 
\end{ex}

Whether $C$ is decomposed into a direct sum of cyclic $R$-modules depends on the choice of exact sequences $0 \to R \to \rmK_R \to C \to 0$ with $\mu_R(C) = \e_\fkm^0(C)$, although $R$ is semi-Gorenstein. Let us note one example.

\begin{ex}
Let $S = k[X,Y]$ be the polynomial ring over a field $k$ and consider the Veronesean subring $R=k[X^4,X^3Y,X^2Y^2,XY^3,Y^4]$ of $S$ with order $4$. Then $\rmK_R = (X^3Y, X^2Y^2, XY^3)$ is the graded canonical module of $R$. The exact sequence
$$0 \to R \overset{\varphi}{\to} \rmK_R(1) \to R/(X^3Y, X^2Y^2, XY^3, Y^4) \oplus R/(X^4, X^3Y, X^2Y^2, XY^3) \to 0$$
of graded $R$-modules with $\varphi(1) = X^2Y^2$  shows that the local ring $R_\fkM$ is semi-Gorenstein, where $\fkM = R_+$. However in the exact sequence  
$$0 \to R \overset{\psi}{\to} \rmK_R(1) \to D \to 0$$
with  $\psi (1) = XY^3$, $D_\fkM$ is an Ulrich $R_\fkm$-module of dimension one,  but $D_\fkM$  is indecomposable.  In fact, setting $A =R_\fkM$ and $C = D_\fkM$, suppose $C \cong A/\fkp_1 \oplus A/\fkp_2$ for some regular local rings $A/\fkp_i~(\fkp_i \in \Spec A)$ of dimension one. Let $\fka = XY^3A :_{A}(X^3Y, X^2Y^2, XY^3)A$. Then $\fka  = (0):_{A}C = \fkp_1 \cap \fkp_2$, so that $\fka $ should be a radical ideal of $A$, which is impossible, because $X^6Y^2 \in \fka$ but $X^3Y \not\in \fka$. 
\end{ex}

Let us examine the non-zerodivisor characterization.

\begin{thm}\label{7.3}
Suppose that $R/\fkm$ is infinite. If $R$ is a semi-Gorenstein local ring of dimension $d \ge 2$, then $R/(f)$ is a semi-Gorenstein local ring for a general non-zerodivisor $f \in \fkm \setminus \fkm^2$.
\end{thm}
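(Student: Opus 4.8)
The plan is to pass to the quotient by a general non-zerodivisor $f$ and apply Theorem~\ref{3.9}(2). First observe that if $R$ is a Gorenstein ring the statement is trivial: any non-zerodivisor $f\in\fkm\setminus\fkm^2$ (which exists, as the middle paragraph below shows) makes $R/(f)$ Gorenstein, hence semi-Gorenstein with $C=(0)$. So assume $R$ is not a Gorenstein ring and fix an exact sequence $0\to R\to\rmK_R\to C\to 0$ as in Definition~\ref{def7.1}, with $C=\bigoplus_{i=1}^{\ell}C_i$ and $C_i\cong R/\fkp_i$, where each $R/\fkp_i$ is a regular local ring of dimension $d-1\ge 1$ (see the remark following Definition~\ref{def7.1}). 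Note $C$ is an Ulrich $R$-module of dimension $d-1$.

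Next I isolate the conditions a good $f$ must satisfy and show they are generic. Work inside the $k$-vector space $\fkm/\fkm^2$, where $k=R/\fkm$ is infinite, and list the following $k$-subspaces: (i) $(\fkq+\fkm^2)/\fkm^2$ for each $\fkq\in\Ass R$; (ii) $(\fkp_i+\fkm^2)/\fkm^2$ for $1\le i\le\ell$; (iii) the finitely many proper subspaces of $\fkm/\fkm^2$ whose union is the complement of the locus of leading forms of superficial elements of $C$ with respect to $\fkm$. Each subspace in (i) is proper because $R$ is Cohen--Macaulay of dimension $d>0$, so $\dim R/\fkq=d>0$ and hence the embedding dimension of $R/\fkq$ is positive; each subspace in (ii) is proper for the same reason, since $R/\fkp_i$ is regular local of dimension $d-1\ge1$; and the subspaces in (iii) are proper by the classical theory of superficial elements (the relevant associated primes of $\gr_\fkm(C)$ do not contain the homogeneous maximal ideal, so their degree-one parts are proper). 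Since $k$ is infinite, $\fkm/\fkm^2$ is not the union of these finitely many proper subspaces, so we may pick $f\in\fkm$ whose image $f^{\ast}$ in $\fkm/\fkm^2$ avoids all of them. Then $f^{\ast}\ne0$, so $f\notin\fkm^2$; $f\notin\fkq$ for every $\fkq\in\Ass R$, so $f$ is $R$-regular; $f$ is superficial for $C$ with respect to $\fkm$; and $f\notin\fkp_i+\fkm^2$ for every $i$. A general non-zerodivisor of $\fkm\setminus\fkm^2$ has this property.

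Finally I conclude. By Theorem~\ref{3.9}(2) and its proof, $\overline R:=R/(f)$ is an almost Gorenstein local ring fitting in an exact sequence $0\to\overline R\to\rmK_{\overline R}\to C/fC\to 0$ with $C/fC$ an Ulrich $\overline R$-module, where $\rmK_{\overline R}=\rmK_R/f\rmK_R$. Since $C=\bigoplus_{i=1}^{\ell}C_i$, we get $C/fC=\bigoplus_{i=1}^{\ell}C_i/fC_i$ with $C_i/fC_i\cong (R/\fkp_i)/(f)=R/(\fkp_i+(f))$. The image $\overline f$ of $f$ in $R/\fkp_i$ lies in the maximal ideal $\fkm/\fkp_i$ but not in its square (this is exactly $f\notin\fkp_i+\fkm^2$), and $R/\fkp_i$ is regular local of dimension $d-1$, so $\overline f$ is part of a regular system of parameters; hence $R/(\fkp_i+(f))$ is a regular local ring of dimension $d-2$, in particular a nonzero cyclic module, so $\mu_R(C_i/fC_i)=1$. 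Thus $C/fC$ is a direct sum of $\ell$ cyclic modules each minimally generated by one element, which is exactly the extra condition in Definition~\ref{def7.1}. Therefore $R/(f)$ is a semi-Gorenstein local ring.

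The only delicate point is the simultaneous genericity step: verifying that ``$R$-regular'', ``superficial for $C$'', and ``$\overline f\notin(\fkm/\fkp_i)^2$ for all $i$'' are each controlled by a proper linear subspace of $\fkm/\fkm^2$, so that an infinite residue field lets one satisfy all of them at once. The properness of the subspaces in (i) and (ii) reduces to the elementary fact that a Noetherian local ring of positive dimension has positive embedding dimension, and the statement for superficial elements is classical; the rest is a formal consequence of Theorem~\ref{3.9}(2) and the compatibility of reduction modulo $f$ with finite direct sums.
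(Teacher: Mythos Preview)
Your proof is correct and follows essentially the same approach as the paper: avoid the associated primes of $R$ and the sets $\fkp_i+\fkm^2$, then reduce modulo $f$ and check that each cyclic summand $R/\fkp_i$ passes to a regular local ring $R/(\fkp_i+(f))$ of dimension $d-2$. One small remark: your condition~(iii) on superficiality is redundant. Once $f\notin\fkp_i+\fkm^2$ for all $i$, you already have $f\notin\fkp_i$, so $f$ is $C$-regular; and since each $C_i/fC_i\cong R/(\fkp_i+(f))$ is a regular local ring, it automatically has multiplicity one, hence is Ulrich. Thus the exact sequence $0\to\overline R\to\rmK_{\overline R}\to C/fC\to 0$ with $C/fC$ Ulrich follows directly from $f$ being $R$-regular and $C$-regular, without invoking Theorem~\ref{3.9}(2) or superficiality. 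The paper's proof uses exactly this shortcut, requiring only $f\notin\bigcup_i(\fkm^2+\fkp_i)\cup\bigcup_{\fkp\in\Ass R}\fkp$.
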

\begin{proof}
We may assume $R$ is not a Gorenstein ring. We look at exact sequence $(\sharp) ~0 \to R \to \rmK_R \to C \to 0$ of Definition \ref{def7.1}, 
where $C = \oplus_{i=1}^{r-1}R/\fkp_i$~($r = \rmr (R)$) and each $R/\fkp_i$ is a regular local ring of dimension $d-1$. Then $R/(f)$ is a semi-Gorenstein local ring for every $f \in \fkm$ such that $f \not\in \bigcup_{i=1}^{\ell}[\fkm^2 + \fkp_i] \cup \bigcup_{\fkp \in \Ass R}\fkp.
$
\end{proof}

We now give a characterization of semi-Gorenstein local rings in terms of their minimal free resolutions, which is a broad generalization of \cite[Corollary 4.2]{GMP}.

\begin{thm}\label{p} Let $(S, \fkn)$ be a regular local ring and $\fka \subsetneq S$ an ideal of $S$ with $n = \height_S \fka$. Let $R = S/\fka$.
Then the following conditions are equivalent.
\begin{enumerate}[$(1)$]
\item $R$ is a semi-Gorenstein local ring but not a Gorenstein ring.
\item $R$ is Cohen-Macaulay, $n \ge 2$, $r=\rmr(R) \ge 2$, and $R$ has a minimal $S$-free resolution of the form$\mathrm{:}$
$$
0 \to F_n =S^r \overset{\Bbb M}{\to} F_{n-1} = S^q \to F_{n-2} \to \cdots \to F_1 \to F_0=S \to R \to 0
$$
where $${}^t \Bbb M = 
\begin{pmatrix}
y_{21} y_{22} \cdots y_{2\ell} & y_{31} y_{32} \cdots y_{3\ell} & \cdots & y_{r1} y_{r2} \cdots y_{r\ell} & z_1 z_2 \cdots z_m \\
x_{21} x_{22} \cdots x_{2\ell} & 0 & 0 & 0  & 0 \\
0 & x_{31} x_{32} \cdots x_{3\ell} & 0 & 0  & 0 \\
\vdots & \vdots & \ddots & \vdots & \vdots \\
0 & 0 & 0 & x_{r1} x_{r2} \cdots x_{r\ell} & 0,
\end{pmatrix},
$$
$\ell= n+1$, $q \ge (r-1)\ell$, $m= q-(r-1)\ell$, and $x_{i1}, x_{i2}, \ldots, x_{i\ell}$ is a part of a regular system of parameters of $S$ for every $2 \le i \le r$.
\end{enumerate}
When this is the case, one has the equality
$$
\fka = (z_1, z_2, \ldots, z_m) + \sum_{i=2}^r {\rm I}_2
\left(\begin{smallmatrix}
y_{i1} & y_{i2} & \cdots & y_{i\ell} \\
x_{i1} & y_{i2} & \cdots & x_{i\ell}
\end{smallmatrix}\right),
$$ where $\rmI_2(\Bbb N)$ denotes the ideal of $S$ generated by $2 \times 2$ minors of the submatrix $\Bbb N=\left(\begin{smallmatrix}
y_{i1} & y_{i2} & \cdots & y_{i\ell} \\
x_{i1} & y_{i2} & \cdots & x_{i\ell}
\end{smallmatrix}\right)$ of $\Bbb M$.
\end{thm}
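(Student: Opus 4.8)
The plan is to prove the equivalence $(1)\Leftrightarrow(2)$ by translating the data of a semi-Gorenstein presentation $0\to R\to\rmK_R\to C\to0$ with $C=\bigoplus_{i=2}^r R/\fkp_i$ into a minimal free resolution over $S$, using that $\rmK_R=\Ext_S^n(R,S)$ and that the resolution of $\rmK_R$ is the $S$-dual of that of $R$. I will work throughout with the $S$-dual of a minimal $S$-free resolution $F_\bullet$ of $R$; since $R$ is Cohen--Macaulay of codimension $n$, the complex $\Hom_S(F_\bullet,S)$ shifted is a minimal $S$-free resolution of $\rmK_R$, and its dual in turn recovers $F_\bullet$. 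The key observation is that each cyclic summand $R/\fkp_i$ is a maximal Cohen--Macaulay $R$-module with $R/\fkp_i$ regular of dimension $d-1$, so $\fkp_i/\fka$ is generated by a part $x_{i1},\dots,x_{i\ell}$ of a regular system of parameters of $S$ (with $\ell=n+1$), and the resolution of $R/\fkp_i$ over $S$ is a Koszul complex on those $\ell$ elements.

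First I would establish $(1)\Rightarrow(2)$. Given the exact sequence $(\sharp)$, I lift it to a map of $S$-free resolutions. The resolution of $R$ is $F_\bullet$; the resolution of $\rmK_R$ is $G_\bullet$; the resolution of $C=\bigoplus R/\fkp_i$ is the direct sum of Koszul complexes $\KK_\bullet(x_{i1},\dots,x_{i\ell};S)$. The short exact sequence gives a long exact sequence of resolutions, and because $G_\bullet=\Hom_S(F_\bullet,S)[n]$ with $F_n=S$ (as $R$ has a rank-one presentation), the top of $G_\bullet$ is $S^r$ where $r=\rmr(R)$. The injection $R\hookrightarrow\rmK_R$ at the level of resolutions, spliced with the Koszul resolutions of the $R/\fkp_i$, forces the last map $F_n=S^r\to F_{n-1}=S^q$ to have the block transpose-form displayed: the first column of each $\ell\times 2$ block records the map $S\to\rmK_R$ composed with projection to $R/\fkp_i$ (the entries $y_{ij}$), and the Koszul differential on the $i$-th summand supplies the row $x_{i1},\dots,x_{i\ell}$; the extra $m=q-(r-1)\ell$ columns $z_1,\dots,z_m$ come from the remaining generators of $\fka$ not involved in the splitting. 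Dualizing back, one reads off that $\fka=\rmI_1(\Bbb M^{(0)}\text{-part})+\sum_i\rmI_2$ of the $2\times\ell$ blocks, giving the stated formula for $\fka$; here I use that $\Ext_S^n(R,S)$ is computed from ${}^t\Bbb M$ and that the ideal of the image equals the sum indicated.

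For $(2)\Rightarrow(1)$, I start from a resolution $F_\bullet$ with last map $\Bbb M$ of the prescribed shape and run the argument in reverse. Dualizing $F_\bullet$ produces an $S$-free resolution of $\rmK_R$ whose first map is $\Bbb M$; splitting off, one sees a surjection $\rmK_R\twoheadrightarrow\bigoplus_{i=2}^r S/(x_{i1},\dots,x_{i\ell})$ whose kernel is cyclic, i.e.\ an exact sequence $0\to R\to\rmK_R\to\bigoplus_{i=2}^r R/\fkp_i\to0$ with $\fkp_i=(x_{i1},\dots,x_{i\ell})R$. Since each $x_{i1},\dots,x_{i\ell}$ is part of a regular system of parameters of $S$ and $\ell=n+1=\height_S\fkp_i$, the ring $R/\fkp_i=S/(x_{i1},\dots,x_{i\ell})$ is regular of dimension $d-1$, hence $R/\fkp_i$ is a cyclic Ulrich $R$-module of dimension $d-1$ (Proposition \ref{2.2}(1) after passing to dimension zero, or directly). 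Therefore $C:=\bigoplus R/\fkp_i$ satisfies $\mu_R(C)=\e_\fkm^0(C)$ and is a direct sum of cyclic modules, so $R$ is semi-Gorenstein; it is not Gorenstein because $r\ge2$ means $C\ne0$.

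The main obstacle I anticipate is the bookkeeping in the forward direction: showing that the presentation matrix of $\rmK_R$ can be put \emph{exactly} in the block form displayed, with the Koszul rows $x_{i1},\dots,x_{i\ell}$ appearing cleanly and disjointly across the $r-1$ blocks. This requires choosing compatible bases for $F_{n-1}=S^q$ and $F_n=S^r$ so that the lift of $R\to\rmK_R\to C$ respects the direct-sum decomposition of $C$ and the Koszul resolutions of its summands; one must argue that the columns not hit by any $x_{i\bullet}$-row (the $z_j$'s) can be taken to have zero entries in all but the first coordinate, which uses minimality of the resolution together with the fact that the $x_{i\bullet}$ form part of a regular system of parameters (so no further cancellation is possible). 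The counting $q\ge(r-1)\ell$, $m=q-(r-1)\ell$ then follows from comparing ranks. Everything else—regularity of $R/\fkp_i$, the Ulrich property, the identification $\rmK_R=\Ext_S^n(R,S)$, self-duality of the resolution—is standard and can be cited from \cite{HK} and the earlier sections.
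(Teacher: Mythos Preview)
Your direction $(2)\Rightarrow(1)$ is essentially the paper's: from the presentation $S^q\xrightarrow{{}^t\Bbb M}S^r\xrightarrow{\varepsilon}\rmK_R\to0$ one sets $f_1=\varepsilon(\mathbf{e}_1)$ and reads off $\rmK_R/Rf_1\cong\bigoplus_{i=2}^rS/(x_{i1},\ldots,x_{i\ell})$ directly from the shape of ${}^t\Bbb M$; Lemma~\ref{3.1}(1) then makes $R\to\rmK_R,\ 1\mapsto f_1$ injective.

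The gap is in $(1)\Rightarrow(2)$. There is no ``long exact sequence of resolutions,'' and the mapping-cone substitute does not do what you want. Lifting $R\hookrightarrow\rmK_R$ to $\phi\colon F_\bullet\to G_\bullet$ and taking $\operatorname{Cone}(\phi)$ does give a resolution of $C$, but $\phi_0\colon S\to S^r$ hits a unit (since $\varphi(1)\notin\fkm\rmK_R$ by Corollary~\ref{3.8}); after cancelling you obtain a presentation $S^q\to S^{r-1}\to C\to0$ whose matrix is rows $2,\ldots,r$ of ${}^t\Bbb M$. Comparing with the minimal presentation $\bigoplus_iS^\ell\to S^{r-1}$ only tells you that the $i$-th row of ${}^t\Bbb M$ has entries in $\fkp_i$ and that these entries generate $\fkp_i$; it does \emph{not} give the block-diagonal pattern (zeros outside the $i$-th block) or isolate the $z_k$-columns. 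Your remark that the $y_{ij}$ ``record the map $S\to\rmK_R$ composed with projection to $R/\fkp_i$'' is also off: that composite is zero. The paper bypasses all of this by working only with the first syzygy of $\rmK_R$. One chooses generators $f_1=\varphi(1),f_2,\ldots,f_r$ of $\rmK_R$ lifting the standard basis of $\bigoplus_iS/\fkp_i$; then for any syzygy $\sum a_if_i=0$ one has $a_i\in\fkp_i$ for $i\ge2$, and writing $a_i=\sum_jc_{ij}x_{ij}$ and subtracting the obvious relations $\mathbf{a}_{ij}=y_{ij}\mathbf{e}_1+x_{ij}\mathbf{e}_i$ reduces to a multiple of $\mathbf{e}_1$. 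This shows $\Ker(S^r\to\rmK_R)$ is generated by the $\mathbf{a}_{ij}$ together with elements $z_k\mathbf{e}_1$, and minimality of the $\mathbf{a}_{ij}$ follows because the $x_{ij}$ sit in $\fkn\setminus\fkn^2$. That is the entire content of the block form; the ``bookkeeping obstacle'' you flag is the proof, not a detail to be filled in later.

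The formula for $\fka$ also needs more than ``dualizing back.'' One uses $\fka=(0):_S\rmK_R=\{a:af_1=0\}$, so $a\in\fka$ iff $a\mathbf{e}_1$ lies in the syzygy module just described; expanding $a\mathbf{e}_1=\sum c_{ij}\mathbf{a}_{ij}+\sum d_kz_k\mathbf{e}_1$ forces $\sum_jc_{ij}x_{ij}=0$ for each $i$, and a one-line Koszul computation (Lemma~\ref{p7.8}) then gives $\sum_jc_{ij}y_{ij}\in\rmI_2\left(\begin{smallmatrix}y_{i\bullet}\\x_{i\bullet}\end{smallmatrix}\right)$, yielding the determinantal description.
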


\begin{proof} 
$(1) \Rightarrow (2)$
Choose an exact sequence 
$$
0 \to R \overset{\varphi}{\to} \rmK_R \to C \to 0
$$
of $R$-modules
so that $C = \oplus_{i=2}^r S/\fkp_i$, where each $S/\fkp_i~(\fkp_i \in \Spec S)$ is a regular local ring of dimension $d-1$. Let $\fkp_i = (x_{ij} \mid 1 \le j \le \ell)$ with   a part $\{x_{ij}\}_{1 \le j \le \ell}$ of a regular system of parameters of $S$, where $\ell = n + 1~(=\height_S\fkp_i)$. We set $f_1 = \varphi(1) \in \rmK_R$ and consider the $S$-isomorphism 
$$
\rmK_R/Sf_1 \overset{\psi} \longrightarrow S/\fkp_2 \oplus S/\fkp_3 \oplus \cdots \oplus S/\fkp_r,
$$
choosing elements $\{f_i \in \rmK_R\}_{2 \le i \le r}$ so that 
$$\psi(\overline{f_i}) =(0,\ldots, 0, \overset{\underset{\vee}{i}}{1}, 0 \ldots, 0) \in S/\fkp_2 \oplus S/\fkp_3 \oplus \cdots \oplus S/\fkp_r,$$ where $\overline{f_i}$ denotes the image of $f_i$ in $\rmK_R/Sf_1$. Hence $\{f_i\}_{1 \le i \le r}$ is a minimal system of generators of the $S$-module $\rmK_R$. Let $\{\mathbf{e}_i\}_{1\le i \le r}$ denote the standard basis of $S^r$ and let $\varepsilon : S^r \to \rmK_R$ be the homomorphism defined by $\varepsilon(\mathbf{e}_i) = f_i$ for each $1 \le i \le r$. We now look at  the exact sequence
$$
0 \to L \to S^r \overset{\varepsilon}{\longrightarrow} \rmK_R \to 0.
$$
Then because $x_{ij}\overline{f_i} = 0$ in $\rmK_R/Sf_1$, we get  $y_{ij}f_1 + x_{ij}f_i= 0$ in $\rmK_R$ for some $y_{ij} \in \fkn$. Set 
$ \mathbf{a}_{ij} = y_{ij}\mathbf{e}_{1} + x_{ij}\mathbf{e}_i \in L$ for each $2 \le i \le r$ and $1 \le j \le \ell$. Then $\{ \mathbf{a}_{ij} \}_{2\le i \le r, 1 \le j \le \ell}$ is a part of a minimal basis of $L$, because $\{x_{ij}\}_{1 \le j \le \ell}$ is a part of a regular system of parameters of $S$ (use the fact  that $L \subseteq \fkn S^r$). Hence $q \ge (r-1) \ell$.

Let 
$\mathbf{a} \in L$ and write $\mathbf{a} = \sum_{i=1}^ra_i\mathbf{e}_i$ with $a_i \in S$. Then $a_i \in \fkp_i =(x_{ij} \mid 1 \le j \le \ell)$ for every $2 \le i \le r$, because $\sum_{i=2}^r a_i\overline{f_i} = 0$ in $\rmK_R/Sf_1$.  Therefore, writing $a_i = \sum_{j=1}^\ell c_{ij}x_{ij}$ with $c_{ij} \in S$, we get
$\mathbf{a}-\sum_{i=2}^rc_{ij}\mathbf{a}_{ij} = c\mathbf{e}_1$ for some $c \in S$, which shows $L$ is minimally generated by $\{\mathbf{a}_{ij}\}_{2 \le i \le r, 1 \le j \le \ell}$ together with some elements $\{z_k\mathbf{e}_1\}_{1 \le k \le m}$~($z_k \in S$). Thus the $S$-module $\rmK_R=\Ext_S^n(R,S)$ possesses a minimal free resolution
$$ 
\cdots \to S^q \overset{{\Bbb M}}{\longrightarrow} S^r \overset{\varepsilon}{\to} \rmK_R \to 0
$$ 
with $q = m + (r-1)\ell$, 
in which the matrix ${\Bbb M}$ has the required form. Since $R \cong \Ext_S^n(\rmK_R,S)$ (\cite[Satz 6.1]{HK}), the minimal $S$-free resolution of $R$ is obtained, by taking the $S$-dual, from the minimal free resolution of $\rmK_R$, so that  assertion (2) follows.

$(2) \Rightarrow (1)$
We look at the presentation 
$$
S^q \overset{{}^t \Bbb M}{\longrightarrow} S^r \overset{\varepsilon}{\to} \rmK_R \to 0
$$
of $\rmK_R=\Ext_S^n(R,S)$. Let $\{\mathbf{e}_i\}_{1 \le i \le r}$ be the standard basis of $S^r$ and set $f_1 = \varepsilon(\mathbf{e}_1)$. 
Then 
$$
\rmK_R/Rf_1 \cong S^r/[\operatorname{Im}{}^t \Bbb M + S\mathbf{e}_1] \cong \bigoplus_{i=2}^r S/(x_{ij} \mid 1\le j \le \ell),
$$
where each $S/(x_{ij} \mid 1\le j \le \ell)$ is a regular local ring of dimension $d-1$, so that $R$ is a semi-Gorenstein local ring, because the sequence
$$0 \to R \overset{\varphi}{\to} \rmK_R \to \bigoplus_{i=2}^r S/(x_{ij} \mid 1\le j \le \ell) \to 0$$
is exact by Lemma \ref{3.1} (1), where $\varphi(1) = f_1$.
This completes the proof of equivalence $(1) \Leftrightarrow (2)$.

To prove the last equality in Theorem \ref{p}, we need a preliminary step.
Let $S$ be a Noetherian local ring. Let $\xx = x_1, x_2, \ldots, x_{\ell}$ be a regular sequence in $S$ and $\yy = y_1, y_2, \ldots, y_{\ell}$ a sequence of elements in $S$. We denote by $\Bbb K = \Bbb K_\bullet (\xx,S)$ the Koszul complex of $S$ associated to $x_1, x_2, \ldots, x_{\ell}$ and by $\Bbb L = \Bbb K_\bullet(\yy,S)$ the Koszul complex of $S$ associated to $\yy=y_1, y_2, \ldots, y_{\ell}$. We consider the diagram.
\[
\xymatrix{
	\Bbb K_2 \ar[r]^{\partial_2^{\Bbb K}} 
 & \Bbb K_1=\Bbb L_1 \ar[r]^{\partial_1^{\Bbb K}} \ar[d]^{\partial_1^{\Bbb L}} & K_0(\xx;S) = S \\
                 &   L_0(\yy;S) = S & 
}
\]
 and set $L = \partial_1^\Bbb L(\operatorname{Ker}~\partial_1^\Bbb K)$. Then because $\operatorname{Ker}~\partial_1^\Bbb K =\operatorname{Im}~\partial_2^\Bbb K$, we get the following.

\begin{lem}\label{p7.8}
$L = (x_{\alpha}y_{\beta}-x_{\beta}y_{\alpha} \mid 1 \le \alpha < \beta \le \ell )=  {\rm I}_2
\left(\begin{smallmatrix}
y_1 & y_2 & \cdots & y_{\ell}\\
x_{1} & y_{2} & \cdots & x_{\ell}
\end{smallmatrix}\right)$.
\end{lem}

 Let us now check the last assertion in Theorem \ref{p}. We maintain the notation in the proof of implication (1) $\Rightarrow$ (2). Let $a \in S$. Then $a \in \fka$ if and only if $af_1=0$, because $\fka = (0):_S\rmK_R$. The latter condition is equivalent to saying that 
$$
a\mathbf{e_1} = \sum_{2\le i \le r,~1 \le j \le \ell}c_{ij}(y_{ij}\mathbf{e_i}+x_{ij}\mathbf{e_i}) + \sum_{k=1}^m d_k(z_k\mathbf{e_1})
$$
for some $c_{ij},~ d_k \in S$, that is
$$a = \sum_{i=2}^r \left( \sum_{j=1}^{\ell}c_{ij}y_{ij} \right) + \sum_{k=1}^m d_k z_k \ \ \text{and}\ \ \sum_{j=1}^{\ell}c_{ij}x_{ij} = 0\ \ \text{for~all}\ \ 2 \le i \le r.$$
If $\sum_{j=1}^{\ell}c_{ij}x_{ij} = 0$, then by Lemma \ref{p7.8} we get 
$$
\sum_{j=1}^{\ell}c_{ij}y_{ij} \in {\rm I}_2
\left(\begin{smallmatrix}
y_{i1} & y_{i2} & \cdots & y_{i\ell} \\
x_{i1} & y_{i2} & \cdots & x_{i\ell}
\end{smallmatrix}\right),
$$
because $x_{i1}, x_{i2}, \ldots, x_{i\ell}$ is an $S$-sequence. Hence $$
\fka \subseteq (z_1, z_2, \ldots, z_m) + \sum_{i=2}^r {\rm I}_2
\left(\begin{smallmatrix}
y_{i1} & y_{i2} & \cdots & y_{i\ell} \\
x_{i1} & y_{i2} & \cdots & x_{i\ell}
\end{smallmatrix}\right).
$$
To see the reverse inclusion, notice that $z_k\in \fka$ for every $1 \le k \le m$, because $z_k f_1=0$. Let $2 \le i \le r$ and $1 \le \alpha < \beta \le \ell$. Then since
$$(x_{i\alpha} y_{i\beta}-x_{i\beta} y_{i\alpha})\mathbf{e}_1 = x_{i\alpha}(y_{i\beta}\mathbf{e}_1+x_{i\beta}\mathbf{e}_i)-x_{i\beta}(y_{i\alpha}\mathbf{e}_1+x_{i\alpha}\mathbf{e}_i) \in \Ker \varepsilon,$$
we get $(x_{i\alpha} y_{i\beta} - x_{i\beta} x_{i\alpha})f_1=0$, so that $x_{i\alpha} y_{i\beta} - x_{i\beta} y_{i\alpha} \in \fka$. Thus $(z_1, z_2, \ldots, z_m) + \sum_{i=2}^r {\rm I}_2
\left(\begin{smallmatrix}
y_{i1} & y_{i2} & \cdots & y_{i\ell} \\
x_{i1} & y_{i2} & \cdots & x_{i\ell}
\end{smallmatrix}\right) \subseteq \fka$, which completes the proof of Theorem \ref{p}.
\end{proof}

\begin{cor}\label{p7.10}
With the notation of Theorem $\ref{p}$ suppose that assertion $(1)$ holds true. We then have the following.
\begin{enumerate}
\item[$(1)$] If $n = 2$, then $r = 2$ and $q = 3$, so that ${}^t\Bbb M = \left(\begin{smallmatrix}
y_{21} & y_{22} & y_{23} \\
x_{21} & x_{22} & x_{23} 
\end{smallmatrix}\right)$.
\item[$(2)$] Suppose that $\fka \subseteq \fkn^2$. If $R$ has maximal embedding dimension, then $r = n$ and $q= n^2 -1$, so that $m = 0$.
\end{enumerate}
\end{cor}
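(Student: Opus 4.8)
The plan is to prove the two assertions separately: Part (1) will be a quick consequence of Hilbert--Burch, while for Part (2) I will specialize to an Artinian quotient and invoke the classical resolution of the square of the maximal ideal of a regular local ring. For Part (1), since $n = \height_S\fka = 2$ and $R$ is Cohen--Macaulay, the Hilbert--Burch theorem gives that the minimal $S$-free resolution of $R$ has the shape $0 \to S^{t} \to S^{t+1} \to S \to R \to 0$; matching it with the resolution of Theorem \ref{p}~(2) forces $r = t$ and $q = t+1$, that is $q = r+1$. On the other hand $m = q-(r-1)\ell \ge 0$ gives $q \ge (r-1)\ell = 3(r-1)$, so $r+1 \ge 3r-3$, i.e. $r \le 2$; together with $r \ge 2$ this yields $r = 2$, hence $q = 3$ and $m = 0$. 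Plugging $r = 2$, $\ell = 3$, $m = 0$ into the displayed shape of ${}^t\Bbb M$ leaves exactly $\left(\begin{smallmatrix} y_{21} & y_{22} & y_{23} \\ x_{21} & x_{22} & x_{23}\end{smallmatrix}\right)$.

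For Part (2), after enlarging the residue field if necessary (which changes neither the hypotheses nor $r,q,m$ nor the relevant Betti numbers), I would pick $d := \dim R$ sufficiently general elements $f_1,\dots,f_d \in \fkn\setminus\fkn^2$; they form part of a minimal generating system of $\fkn$, hence an $S$-regular sequence, and being superficial they also form an $R$-regular sequence. Set $\bar S = S/(f_1,\dots,f_d)$, a regular local ring of dimension $n$, and $\bar R = R/(f_1,\dots,f_d) = \bar S/\bar\fka$; note $\bar\fka \subseteq \bar\fkn^2$ since $\fka \subseteq \fkn^2$. Because each $f_j$ is a non-zerodivisor on both $S$ and $R$, tensoring the minimal $S$-free resolution of $R$ with $\bar S$ produces the minimal $\bar S$-free resolution of $\bar R$; in particular $\beta_i^{S}(R) = \beta_i^{\bar S}(\bar R)$ for all $i$, so $r = \beta_n^{\bar S}(\bar R)$ and $q = \beta_{n-1}^{\bar S}(\bar R)$. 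Now $\bar R$ is Artinian with $\e_{\bar\fkm}^0(\bar R) = \e_\fkm^0(R)$ and embedding dimension $\dim\bar S = n$; since $R$ has maximal embedding dimension and $\fka\subseteq\fkn^2$ one computes $\e_\fkm^0(R) = n+1$, so $\ell_{\bar S}(\bar R) = n+1 = 1 + \dim_k(\bar\fkm/\bar\fkm^2)$, which forces $\bar\fkm^2 = (0)$, i.e. $\bar R = \bar S/\bar\fkn^2$.

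It then remains to read off the Betti numbers of $\bar S/\bar\fkn^2$ over the regular local ring $\bar S$ of dimension $n$, which is classical: the ideal $\bar\fkn^2$ has a $2$-linear minimal free resolution, so $\beta_{i}^{\bar S}(\bar S/\bar\fkn^2) = \beta_{i,i+1}$ for $i \ge 1$, and, since the $h$-vector of $\bar S/\bar\fkn^2$ is $(1,n)$, the identity $1 + \sum_{i=1}^{n}(-1)^i\beta_i t^{i+1} = (1+nt)(1-t)^n$ gives $\beta_i = i\binom{n+1}{i+1}$. Hence $r = \beta_n = n$ and $q = \beta_{n-1} = (n-1)(n+1) = n^2-1$, and therefore $m = q - (r-1)\ell = (n^2-1) - (n-1)(n+1) = 0$, as asserted. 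Alternatively one may quote the explicit minimal free resolution of the square of the maximal ideal of a regular local ring directly.

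The main obstacle is the descent in Part (2): one must check carefully that a general element of $\fkn\setminus\fkn^2$ is a non-zerodivisor on $S$ and on $R$ and that passing to such a hyperplane section leaves the Betti numbers over the ambient regular ring unchanged, so that the numerical invariants $r$ and $q$ genuinely survive the specialization, and one must control the multiplicity and embedding dimension along the way so as to land on $\bar S/\bar\fkn^2$. Once the Artinian case is reached, all the numbers are forced.
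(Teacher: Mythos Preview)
Your proof is correct. Part~(1) follows exactly the paper's line: use $q=r+1$ (Hilbert--Burch in codimension two) together with $q\ge(r-1)\ell=3(r-1)$ to force $r=2$.

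For Part~(2) your route differs from the paper's. The paper simply invokes Sally's theorem \cite{S1} on Cohen--Macaulay local rings of maximal embedding dimension to read off $r=v-d=n$ and $q=(e-2)\binom{e}{e-1}=(n-1)(n+1)=n^2-1$ directly, without any reduction. You instead reduce modulo a general regular sequence of length $d$ to the Artinian ring $\bar S/\bar{\fkn}^2$, observe that the Betti numbers over $S$ and over $\bar S$ agree, and then compute the Betti numbers of $\bar S/\bar{\fkn}^2$ from its linear resolution. This is essentially a reproof of the relevant part of Sally's theorem, so your argument is more self-contained but longer; the paper's is a two-line citation. Both land on the same numbers $r=n$, $q=n^2-1$, $m=0$.
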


\begin{proof}
(1) Since $n = 2$, we get $q = r+1 \ge (r-1)\ell$. Hence $2 \ge (r-1)(\ell -1) = 2(r-1)$, as $\ell = 3$. Hence $r = 2$ and $q = 3$.

(2) We set $v = \ell_R(\fkm/\fkm^2)~(=\dim S)$, $e = \rme_\fkm^0(R)$, and $d = \dim R ~(= v-n)$. Since $v = e + d - 1$, we then have $r = v - d=n$, while $q = (e-2){\cdot}\binom{e}{e-1}$ by \cite{S1}. Hence $q = n^2-1=(r-1)\ell$, so that $m = 0$.
\end{proof}

One cannot expect $m = 0$ in general, although assertion (1)  in Theorem \ref{p} holds true. Let us note one example.

\begin{ex} Let $V = k[[t]]$ be the formal power series ring over a field $k$ and set $R = k[[t^5,t^6,t^7,t^9]]$. Let $S=k[[X,Y,Z,W]]$ be the formal power series ring and let $\varphi : S \to R$ be the $k$-algebra map defined by $\varphi (X) = t^5, \varphi(Y)=t^6, \varphi(Z) = t^7$, and $\varphi(W) = t^9$. Then $R$ has a minimal $S$-free resolution
of the form
$$0 \to S^2 \overset{\Bbb M}{\to} S^6 \to S^5 \to S \to R \to 0,$$
where ${}^t\Bbb M = 
\left(\begin{smallmatrix}
W & X^2 & XY & YZ & Y^2 - XZ & Z^2 - XW \\
X & Y & Z & W & 0 & 0\\
\end{smallmatrix}\right)
$. Hence $R$ is semi-Gorenstein with $\rmr (R) = 2$ and $$\Ker \varphi = (Y^2 - XZ, Z^2 - XW) +\rmI_2
\left(\begin{smallmatrix}
W & X^2 & XY & YZ \\
X & Y & Z & W \\
\end{smallmatrix}\right).$$
\end{ex}

\if0

7.8

\begin{ex}
Let $S = k[[X,Y]]$ be the formal power series ring over a field $k$ and consider the Veronesean subring $R=k[[X^5,X^4Y,X^3Y^2,X^2Y^3,XY^4, Y^5]]$ of $S$ with order $5$. Then $R$ is an almost Gorenstein local ring but not a semi-Gorenstein local ring. 
\end{ex}

\fi

\section{Almost Gorenstein graded rings}\label{aggr}

We now explore graded rings. In this section let $R=\bigoplus_{n\ge 0}R_n$ be a Noetherian graded ring such that $k=R_0$ is a local ring. Let $d=\operatorname{dim} R$ and let $\fkM$ be the unique graded maximal ideal of $R$. Assume that $R$ is a Cohen-Macaulay ring, admitting the graded canonical module $\K_R$. The latter condition is equivalent to saying that $k=R_0$ is a homomorphic image of a Gorenstein ring (\cite{GW1, GW2}). We put $a=\rma (R)$. Hence 
$
a=\operatorname{max} \{n \in \Z \mid [\rmH^d_\fkM({R})]_n \ne (0) \} 
=-\operatorname{min} \{ n \in \Z \mid [\K_R]_n \ne (0) \}.
$

\begin{defn}\label{6.1} We say ${R}$ is an almost Gorenstein graded ring, if there exists an exact sequence 
$0\rightarrow{R}\rightarrow \K_{R}(-a)\rightarrow C\rightarrow 0$
of graded ${R}-$modules such that $\mu_{R}(C)=\e^0_\fkM(C)$. Here $\K_R(-a)$ denotes the graded $R$-module whose underlying $R$-module is the same as that of $\K_R$ and the grading is given by $[\K_R(-a)]_n = [\K_R]_{n - a}$ for all $n \in \Bbb Z$. 
\end{defn}

The ring ${R}$ is an almost Gorenstein graded ring, if $R$ is a Gorenstein ring. As $(\K_R)_\fkM = \K_{R_\fkM}$, the ring $R_\fkM$ is an almost Gorenstein local ring, once $R$ is an almost Gorenstein graded ring.


The condition stated in Definition \ref{6.1} is rather strong, as we show in the following. @Firstly we note:


\begin{thm}[\cite{GI}]\label{6.2} Suppose that $A$ is a Gorenstein local ring and let $I~(\ne A)$ be an ideal of $A$. If $\gr_I (A) =\bigoplus_{n \ge 0}I^n/I^{n+1}$ is an almost Gorenstein graded ring, then $\gr_I(A)$ is a Gorenstein ring.
\end{thm}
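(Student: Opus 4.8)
The plan is to prove the theorem by induction on $d=\dim A$ (which equals $\dim\gr_I(A)$), cutting the higher-dimensional cases down to $d\le1$ by generic hyperplane sections and then analysing $d=1$ by hand; the hypothesis that $A$ be Gorenstein will be used — but used essentially — only in the one-dimensional base case. Write $G=\gr_I(A)$, set $k=A/\fkn$, and take a defining exact sequence $0\to G\to\K_G(-a)\to C\to0$ of graded $G$-modules with $\mu_G(C)=\e_\fkM^0(C)$. If $C=(0)$ then $G\cong\K_G(-a)$ and $G$ is Gorenstein, so assume $C\ne(0)$; then by (the graded form of) Lemma~\ref{3.1} the module $C$ is a graded Ulrich $G$-module with $\dim_GC=d-1$, so $d\ge1$, and moreover $\mathrm{Q}(G)$ is Gorenstein, i.e.\ $G$ is generically Gorenstein. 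Passing from $A$ to $A[X]_{\fkn A[X]}$ — which is again Gorenstein and replaces $G$ by the associated graded ring of $IA[X]_{\fkn A[X]}$, a ring that is almost Gorenstein graded exactly when $G$ is — I may assume the residue field $k$ is infinite. Finally I may assume $\operatorname{grade}_AI\ge1$; when $\operatorname{grade}_AI=0$ the ideal $I$ lies in an associated, hence minimal, prime of the unmixed ring $A$, and that situation is treated separately and more easily.

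Inductive step ($d\ge2$). Since $k$ is infinite and $G$ is Cohen--Macaulay of positive depth, a general element $x\in I\setminus I^2$ has the following three properties simultaneously: $x$ is $A$-regular; its leading form $x^{\ast}\in I/I^2=G_1$ is $G$-regular — equivalently, by the Valabrega--Valla criterion, $(x)\cap I^n=xI^{n-1}$ for all $n$, whence $G/(x^{\ast})\cong\gr_{I/(x)}(A/(x))$ — and $x^{\ast}$ is superficial for $C$ with respect to $\fkM$. For such an $x$ the ring $A/(x)$ is Gorenstein of dimension $d-1$, and by the graded counterpart of Theorem~\ref{3.9}(2) the ring $G/(x^{\ast})\cong\gr_{I/(x)}(A/(x))$ is almost Gorenstein graded; by the inductive hypothesis it is Gorenstein, and since $x^{\ast}$ is a homogeneous nonzerodivisor on the Cohen--Macaulay ring $G$, so is $G$.

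Base cases. For $d=0$ the ring $G$ is Artinian and almost Gorenstein, hence Gorenstein by Lemma~\ref{3.1}(3). The substantive case is $d=1$. Here $C$ is a $0$-dimensional graded Ulrich module, so $\fkM C=(0)$ and, by the graded form of Corollary~\ref{3.8} (as $G$ is not regular), $C\cong\bigoplus_{i=1}^{r-1}k(-n_i)$ with $r=\rmr(G)$; one must show $r=1$. First, compare the degree-$0$ strands of $0\to G\to\K_G(-a)\to C\to0$: by the structure theorem for the graded canonical module of an associated graded ring (realised through the extended Rees algebra $A[It,t^{-1}]$), and because $A$ is Gorenstein so that this description is given by a filtration of $A$ by \emph{ideals}, the lowest nonzero component $[\K_G]_{-a}$ of $\K_G$ is a cyclic $A/I$-module; the injection $[G]_0=A/I\hookrightarrow[\K_G]_{-a}$ then forces it to be an isomorphism, so $[C]_0=(0)$ and each $n_i\ge1$. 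Next, dualising the sequence into $\K_G$ (the commutative-diagram argument used in Section~\ref{chex}) gives $\Ext_G^1(C,\K_G)\cong C(a)$; since the lowest degree of the graded canonical module of a Cohen--Macaulay graded module $M$ is $-\rma(M)$, comparing lowest degrees yields $\max_in_i+\min_in_i=a$ and determines the Hilbert polynomial of $C$. Feeding this into $H_C(t)=H_{\K_G(-a)}(t)-H_G(t)$ and invoking the finer shape of the canonical filtration of a Gorenstein ring is then expected to force $r=1$, hence $C=(0)$ and $G$ Gorenstein.

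The main obstacle is precisely this last point in the one-dimensional case: one has to pin down the graded canonical module $\K_{\gr_I(A)}$ (equivalently, the canonical module of the extended Rees algebra) and exploit the special form it takes when $A$ is Gorenstein, in order to derive a contradiction from the coexistence of $[C]_0=(0)$ with the self-duality $\Ext_G^1(C,\K_G)\cong C(a)$. The reductions, the hyperplane-section induction, and the zero-grade case are comparatively routine; the arithmetic of the canonical filtration is where the real work — and the genuine use of the Gorenstein hypothesis — is concentrated.
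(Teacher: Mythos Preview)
The paper does not prove Theorem~\ref{6.2}; it only records the statement and cites \cite{GI}. So there is no in-paper argument to compare against, and your proposal must stand on its own.

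By your own admission it does not: the one-dimensional case ends with ``is then \emph{expected} to force $r=1$,'' and you name this as the place ``where the real work \ldots\ is concentrated.'' The constraints you extract --- $n_i\ge 1$ together with the self-duality symmetry $\{n_i\}=\{a-n_i\}$ --- do not by themselves produce a contradiction, so the argument is genuinely unfinished.

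Here is how to close the gap; once you see it, the induction on $d$ becomes unnecessary and the argument works in every dimension at once. Since $A$ is Gorenstein, $\K_A=A$, so the canonical $I$-filtration $\omega=\{\omega_n\}$ of \cite{GI} is a filtration of $A$ by ideals, with $\omega_n=A$ for $n\le -a-1$ and $[\K_G(-a)]_n=\omega_{n-a-1}/\omega_{n-a}$. Take the defining sequence $0\to G\xrightarrow{\varphi}\K_G(-a)\to C\to 0$. If $\varphi(1)\in\fkM\K_G(-a)$ then $G_\fkM$ is regular by Corollary~\ref{3.8}, hence $G$ is Gorenstein. Otherwise $\varphi(1)$ generates the cyclic $G_0$-module $[\K_G(-a)]_0=A/\omega_{-a}$, and after scaling by a unit of $G_0$ we may take $\varphi(1)=\overline{1}$. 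Then in each degree $n$ the map $\varphi$ is the natural one $I^n/I^{n+1}\to\omega_{n-a-1}/\omega_{n-a}$ induced by the inclusion $I^n\subseteq\omega_{n-a-1}$, and its injectivity reads $I^n\cap\omega_{n-a}=I^{n+1}$. Degree $0$ gives $I=\omega_{-a}$; assuming $I^n=\omega_{n-a-1}$, the inclusion $\omega_{n-a}\subseteq\omega_{n-a-1}=I^n$ turns the injectivity condition into $\omega_{n-a}=I^{n+1}$. Hence $I^n=\omega_{n-a-1}$ for all $n\ge0$, so $\K_G(-a)=G$, $\varphi$ is the identity, $C=(0)$, and $G$ is Gorenstein.

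Two smaller corrections. Your step ``$[G]_0\hookrightarrow[\K_G(-a)]_0$ with cyclic target, hence an isomorphism'' is false as written (think of $\Bbb Z/2\hookrightarrow\Bbb Z/4$); the correct justification is precisely the one via Corollary~\ref{3.8} above. And your inductive step tacitly needs a $G$-regular element in $G_1$ that is also superficial for $C$; when $I$ is not $\fkm$-primary (so $G_0$ is not Artinian) neither Proposition~\ref{10} nor your ``grade $0$'' aside supplies this. The direct argument sidesteps both issues.
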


We secondly explore the almost Gorenstein property in the Rees algebras of parameter ideals. Let $(A,\m)$ be a Gorenstein local ring of dimension $d\ge 3$ and $Q=(a_1,a_2,\ldots,a_d)$  a parameter ideal of $A$. We set $\mathcal R={\mathcal R}(Q)=A[Qt] \subseteq A[t]$, where $t$ is an indeterminate. We then have the following.

\begin{thm}\label{6.3}
The Rees algebra $\mathcal R = {\mathcal R}(Q)$ of $Q$ is an almost Gorenstein graded ring if and only if $Q=\m$ $($and hence $A$ is a regular local ring$)$.
\end{thm}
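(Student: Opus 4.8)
The plan is to make the almost Gorenstein condition on $\mathcal{R}=\mathcal{R}(Q)$ completely explicit through the canonical module, and then to see that the single cokernel it produces is an Ulrich module exactly when $\ell_A(A/Q)=1$, i.e. when $Q=\fkm$.

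First I would record the structure of $\mathcal{R}(Q)$. Since $Q=(a_1,\dots,a_d)$ is a parameter ideal it is generated by a regular sequence, so $\mathcal{R}=A[Qt]$ is Cohen--Macaulay of dimension $d+1$, $\gr_Q(A)=(A/Q)[T_1,\dots,T_d]$ is a polynomial ring over the Artinian Gorenstein ring $A/Q$, and the extended Rees algebra $A[Qt,t^{-1}]\cong A[u,T_1,\dots,T_d]/(uT_i-a_i\mid 1\le i\le d)$ is a complete intersection, hence Gorenstein with $a$-invariant $1-d$; from this (or directly) one gets $\rma(\mathcal{R})=-1$. Writing $\mathcal{R}=S/\rmI_2(\Bbb M)$ with $S=A[T_1,\dots,T_d]$ and $\Bbb M=\left(\begin{smallmatrix}a_1&\cdots&a_d\\T_1&\cdots&T_d\end{smallmatrix}\right)$, the ideal $\rmI_2(\Bbb M)$ has the expected grade $d-1$, so the Eagon--Northcott complex is an $S$-free resolution of $\mathcal{R}$; dualizing its tail presents $\rmK_{\mathcal{R}}=\Ext_S^{d-1}(\mathcal{R},\rmK_S)$ by $d-1$ homogeneous generators $g_1,\dots,g_{d-1}$ in degrees $1,2,\dots,d-1$, with $[\rmK_{\mathcal{R}}]_1=Ag_1$ free of rank one and all relations in degrees $\ge 2$. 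In particular $\rmr(\mathcal{R})=d-1\ge 2$ (this is where $d\ge 3$ enters), so $\mathcal{R}$ is never a Gorenstein ring.

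Next I would reduce to a single module. As $\mathcal{R}$ is not Gorenstein and $\rma(\mathcal{R})=-1$, "$\mathcal{R}$ is almost Gorenstein graded" means precisely that there is an exact sequence $0\to\mathcal{R}\xrightarrow{\varphi}\rmK_{\mathcal{R}}(1)\to C\to 0$ of graded modules with $C$ Ulrich. Localizing at $\fkM$, Corollary \ref{3.8} forces $\varphi(1)\notin\fkM\rmK_{\mathcal{R}}(1)$ (otherwise $\mathcal{R}_\fkM$ would be regular, hence Gorenstein); since $[\rmK_{\mathcal{R}}(1)]_0=[\rmK_{\mathcal{R}}]_1=Ag_1$ and $[\fkM\rmK_{\mathcal{R}}(1)]_0=\fkm g_1$ (because $[\rmK_{\mathcal{R}}]_0=0$), this means $\varphi(1)$ is a unit times $g_1$. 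Hence $\mathcal{R}$ is almost Gorenstein graded if and only if the fixed module $C:=\rmK_{\mathcal{R}}(1)/\mathcal{R}g_1$ is an Ulrich $\mathcal{R}$-module. Now I compute $C$: reducing the Eagon--Northcott relations modulo $\mathcal{R}g_1$ shows that $a_1,\dots,a_d$ annihilate $C$, so $C$ is a graded module over $\mathcal{R}/Q\mathcal{R}=\gr_Q(A)=(A/Q)[T_1,\dots,T_d]$, and in fact $C\cong\bigoplus_{i=2}^{d-1}\gr_Q(A)(1-i)$. Consequently $\mu_{\mathcal{R}}(C)=d-2$ while $\e_\fkM^0(C)=(d-2)\cdot\e_\fkM^0(\gr_Q(A))=(d-2)\cdot\ell_A(A/Q)$, the last equality because the multiplicity of $(A/Q)[T_1,\dots,T_d]$ with respect to its graded maximal ideal equals $\ell_A(A/Q)$. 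Therefore $C$ is Ulrich if and only if $\ell_A(A/Q)=1$, i.e. if and only if $Q=\fkm$ (and then $A$ is automatically regular, $\fkm$ being generated by $d$ elements). This settles both implications at once; in the "if" direction the sequence $0\to\mathcal{R}\to\rmK_{\mathcal{R}}(1)\to C\to 0$ is the required witness.

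The main obstacle is the identification $C\cong\bigoplus_{i=2}^{d-1}\gr_Q(A)(1-i)$, which requires unwinding the dual of the tail of the Eagon--Northcott resolution of $\mathcal{R}=S/\rmI_2(\Bbb M)$ and checking that, after killing $\mathcal{R}g_1$, the only relations imposed on $g_2,\dots,g_{d-1}$ are the $a_jg_i\equiv 0$. For $d=3$ this is transparent: $\rmK_{\mathcal{R}}=(\mathcal{R}(-1)\oplus\mathcal{R}(-2))/\langle\binom{a_j}{T_j}\mid 1\le j\le 3\rangle$, killing $\mathcal{R}g_1$ turns each relation into $a_j\overline{g_2}=0$, so $C\cong\gr_Q(A)(-1)$. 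For general $d$ one either carries out this matrix computation, or more economically proves only that $Q\mathcal{R}$ kills $C$ and that $C$ is a maximal Cohen--Macaulay $\gr_Q(A)$-module of rank $d-2$, which already yields $\e_\fkM^0(C)=(d-2)\ell_A(A/Q)$ — all that the argument actually uses.
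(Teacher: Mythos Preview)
Your overall architecture matches the paper's: reduce to the single cokernel $C=\rmK_{\mathcal R}(1)/\mathcal R g_1$ using $\rma(\mathcal R)=-1$ and Corollary~\ref{3.8}, then test whether $C$ is Ulrich by comparing $\mu_{\mathcal R}(C)=d-2$ with $\e_{\fkM}^0(C)$. The numerical answer $\e_{\fkM}^0(C)=(d-2)\ell_A(A/Q)$ is also correct. The problem is your justification of that multiplicity.

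The claim that $a_1,\dots,a_d$ annihilate $C$ --- and hence that $C$ is a $\gr_Q(A)$-module, let alone $\bigoplus_{i=2}^{d-1}\gr_Q(A)(1-i)$ --- is false for $d\ge 4$. Computing directly from $\rmK_{\mathcal R}=\sum_{n=1}^{d-2}At^n+\mathcal R t^{d-1}$ and $\psi(1)=t$, one finds
\[
C_m \cong
\begin{cases}
A/Q^m & (0\le m\le d-2),\\
Q^{m-d+2}/Q^m & (m\ge d-1),
\end{cases}
\]
so $C_2=A/Q^2$ is not killed by $Q$ once $d\ge 4$. In fact the paper checks that $(0):_{\mathcal R}C=Q^{d-2}\mathcal R$, not $Q\mathcal R$. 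Your case $d=3$ works precisely because $Q^{d-2}=Q$ there; for $d=4$ already, $C_2\cong A/Q^2$ is indecomposable with annihilator $Q^2$, whereas your proposed summand in degree $2$ is $(Q/Q^2)\oplus(A/Q)$ with annihilator $Q$. The ``more economical'' fallback you mention still assumes $Q\mathcal R$ kills $C$, so it fails for the same reason.

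The paper handles the multiplicity without any such decomposition: it takes $f=\overline{t^{d-1}}\in C$, observes $\mathcal R f\cong(\mathcal R/Q^{d-2}\mathcal R)(2-d)$ with $D=C/\mathcal R f$ of finite length, and then reads $\e_{\fkM}^0(C)=\e_{\fkM}^0(\mathcal R/Q^{d-2}\mathcal R)$ off the Hilbert function $\ell_A(Q^n/Q^{n+d-2})$, obtaining $(d-2)\ell_A(A/Q)$. If you want to repair your argument, that is the missing step: replace the direct-sum claim by this short exact sequence (or any other honest computation of $\e_{\fkM}^0(C)$) for $d\ge 4$.
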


To prove Theorem \ref{6.3}, we need some preliminary steps. Let $B=A[X_1,X_2,\ldots,X_d]$ be the polynomial ring and let $\varphi: B\rightarrow \mathcal R$ be the homomorphism of $A$-algebras defined by $\varphi(X_i)=a_it$ for all $1\le i\le d$. Then $\varphi$ preserves grading and $\Ker\varphi=I_2\left(\begin{smallmatrix}X_1& X_2&\ldots&X_d\\a_1& a_2&\ldots& a_d\end{smallmatrix}\right)$ is a perfect ideal of $B$, since $\dim{\mathcal R}=d+1$. Therefore $\mathcal R$ is a Cohen-Macaulay ring. Because $\fkM=\sqrt{(X_1, \{X_{i+1}-a_i\}_{1\le i\le d-1},-a_d)R}$ and $$\mathcal R/(X_1,\{X_{i+1}-a_i\}_{1\le i\le d-1},-a_d)\mathcal R \cong A/[(a_1)+(a_2,a_3,\ldots,a_d)^2],$$
we get $\rmr(\mathcal R)=\rmr(A/[(a_1)+(a_2,a_3,\ldots,a_d)^2])=d-1 \ge 2.$ Hence $\mathcal R$ is not a Gorenstein ring.

Let $G=\text{\rm gr}_Q (A)$ and choose the canonical $Q$-filtration $\omega =\{\omega_n\}_{n\in\Z}$ of $A$ which satisfies the following conditions (\cite[HTZ]{GI}).
\begin{enumerate}
\item[$(\rmi)$] $\omega_n=A$, if $n<d$ and $\omega_n=Q^{n-d}\omega_d$, if $n\ge d$. 
\item[$(\mathrm{ii})$] $[{\mathcal R}(\omega)]_+ \cong \K_{\mathcal R}$ and $\gr_\omega(A)(-1)\cong K_G$ as graded ${\mathcal R}-$modules, where ${\mathcal R}(\omega) = \sum_{n\ge 0}\omega_nt^n$ and $\gr_{\omega} (A) =\bigoplus_{n \ge 0}\omega_n/\omega_{n+1}$.
\end{enumerate}
On the other hand, since $G=(A/Q)[T_1,T_2,\ldots,T_d]$ is the polynomial ring, we get $K_G\cong G(-d).$ Therefore $\omega_{d-1}/\omega_d\cong A/Q$ by condition (ii) and hence $\omega_d=Q$, because $\omega_{d-1}=A$ by condition (i). Consequently $\omega_n=Q^{n-d+1}$ for all $n\ge d$. Therefore $\K_R=\sum_{n=1}^{d-2}At^n+{\mathcal R} t^{d-1}$, so that we get the exact sequence $0 \rightarrow \mathcal R \xrightarrow{\psi} \K_{\mathcal R}(1) \rightarrow C \rightarrow 0$ of graded $\mathcal R$-modules, where $\psi(1)= t$. Hence  $\a(\mathcal R)=-1$, because $C_n=(0)$, if $n\le 0$.

Let $f=\overline{t^{d-1}}$ denote the image of $t^{d-1}$ in $C$ and put $D=C/\mathcal R f$. Then it is standard to check that $(0):_{{\mathcal R}} C=(0):_{{\mathcal R}} f=Q^{d-2}{\mathcal R}$.
Hence $D=\sum_{n=0}^{d-3}D_n$ is a finitely graded ${\mathcal R}$-module and $\ell_A(D)<\infty$.

Let $\overline{\mathcal R}={\mathcal R}/Q^{d-2}{\mathcal R}$ and look at the exact sequence
\begin{equation}
0\rightarrow\overline{\mathcal R}(2-d)\xrightarrow{\varphi} C\rightarrow D\rightarrow 0\tag{$\sharp$}
\end{equation}
of graded ${\mathcal R}$-modules, where $\varphi(1)=f$. Then since $\overline{\mathcal R}_0=A/Q^{d-2}$ is an Artinian local ring, the ideal $\fkM \overline{\mathcal R}$ of $\overline{\mathcal R}$ contains $[\overline{\mathcal R}]_+ = (a_1t,a_2t,\ldots,a_dt) \overline{\mathcal R}$ as a reduction. Therefore thanks to exact sequence ($\sharp$), we get $\e_\fkM^0(C)=\e^0_{\fkM\overline{\mathcal R}}(C)=\e^0_{[\overline{\mathcal R}]_+}(C)=\e^0_{[\overline{\mathcal R}]_+}(\overline{\mathcal R})$, because $\ell_A(D)<\infty$ but $\dim_{\overline{\mathcal R}}C=\dim\overline{\mathcal R}=d$. In order to compute $\e^0_{[\overline{\mathcal R}]_+}(\overline{\mathcal R})$, it suffices to see the Hilbert function $\ell_A([\overline{\mathcal R}]_n)$. Since 
$$
\textstyle
\ell_A([\overline{\mathcal R}]_n)
=\ell_A(A/Q^{n+d-2})-\ell_A(A/Q^n)
=\ell_A(A/Q){\cdot}\left[\binom{n+2d-3}{d}-\binom{n+d-1}{d}\right]
$$
for all $n\ge 0$, we readily get $\e^0_\fkM(C)=\e^0_{[\overline{\mathcal R}]_+}(\overline{\mathcal R})=\ell_A(A/Q)(d-2)$.
Summarizing the above observations, we get the following, because $\mu_{\mathcal R}(C)=\rmr (R) -1 = d-2$.

\begin{lem}\label{6.4} $\mu_{\mathcal R}(C)=\e^0_\fkM(C)$ if and only if $\ell_A(A/Q)=1$, i.e., $Q=\m$.
\end{lem}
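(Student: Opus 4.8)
The plan is to read Lemma \ref{6.4} off from the two numerical facts already accumulated in the discussion above, so no fresh computation is required. First I would recall why $\mu_{\mathcal R}(C)=d-2$: from the exact sequence $0\to\mathcal R\xrightarrow{\psi}\K_{\mathcal R}(1)\to C\to0$ of graded $\mathcal R$-modules with $\psi(1)=t$, together with the fact that $\mathcal R$ is Cohen-Macaulay but not Gorenstein (indeed $\rmr(\mathcal R)=d-1\ge2$, and a fortiori $\mathcal R$ is not regular), the graded counterpart of Corollary \ref{3.8} forces $\psi(1)\notin\fkM\K_{\mathcal R}$, whence $\mu_{\mathcal R}(C)=\mu_{\mathcal R}(\K_{\mathcal R})-1=\rmr(\mathcal R)-1=d-2$.

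Next I would invoke the Hilbert-function computation carried out above. Passing through the exact sequence $(\sharp)$ one has $\e^0_\fkM(C)=\e^0_{[\overline{\mathcal R}]_+}(\overline{\mathcal R})$, since $\ell_A(D)<\infty$ while $\dim_{\overline{\mathcal R}}C=\dim\overline{\mathcal R}=d$; and the explicit formula $\ell_A([\overline{\mathcal R}]_n)=\ell_A(A/Q)\bigl[\binom{n+2d-3}{d}-\binom{n+d-1}{d}\bigr]$ then yields $\e^0_\fkM(C)=\ell_A(A/Q)(d-2)$.

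With these two identities in hand the lemma is immediate: the equality $\mu_{\mathcal R}(C)=\e^0_\fkM(C)$ amounts to $d-2=\ell_A(A/Q)(d-2)$, and since the standing hypothesis $d\ge3$ makes $d-2$ a positive integer we may cancel it to conclude $\ell_A(A/Q)=1$, equivalently $Q=\fkm$ (and then $A$ is regular, being a local ring whose maximal ideal is generated by a system of parameters); the converse direction is the trivial substitution $\ell_A(A/\fkm)=1$. The only step deserving any attention is precisely this cancellation, which is where the assumption $d\ge3$ is genuinely used — it is the sole obstacle, and a mild one, since for $d=2$ the displayed identity would hold vacuously.
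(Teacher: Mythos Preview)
Your approach is exactly the paper's: the lemma is recorded as an immediate consequence of the two facts $\mu_{\mathcal R}(C)=d-2$ and $\e^0_\fkM(C)=\ell_A(A/Q)(d-2)$, and one cancels $d-2>0$.

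One small caveat: your justification of $\mu_{\mathcal R}(C)=d-2$ via Corollary~\ref{3.8} is circular here, since that corollary assumes the exact sequence already satisfies $\mu_{\mathcal R}(C)=\e^0_\fkM(C)$, which is precisely what the lemma is characterizing. The fix is trivial and is what the paper tacitly uses: one checks directly that $t\notin\fkM\K_{\mathcal R}$, because $[\K_{\mathcal R}]_0=(0)$ forces $[\fkM\K_{\mathcal R}]_1=\fkm\cdot[\K_{\mathcal R}]_1=\fkm t$, and $1\notin\fkm$. With that, $\mu_{\mathcal R}(C)=\mu_{\mathcal R}(\K_{\mathcal R})-1=\rmr(\mathcal R)-1=d-2$ as you wrote, and the rest of your argument goes through unchanged.
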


We are now ready to prove Theorem \ref{6.3}.

\begin{proof}[Proof of Theorem $\ref{6.3}$]
If $Q=\m$, then $\mu_{\mathcal R}(C)=\e^0_\fkM(C)$ by Lemma \ref{6.4}.
Let us show the \textit{only if} part.
Since ${\mathcal R}$ is an almost Gorenstein graded ring, we get an exact sequence $0 \rightarrow R \xrightarrow{\rho} \K_R(1)\rightarrow X\rightarrow 0$ of graded ${\mathcal R}$-modules such that $\mu_{\mathcal R}(X)=\e^0_\fkM(X)$. Let $\xi=\rho(1)\in [\K_{\mathcal R}]_1=At$ and remember that $\xi\notin \fkM \K_{\mathcal R}$ (Corollary 3.9). We then have $\xi=\varepsilon t$ for some $\varepsilon \in \rmU (A)$ and therefore $X \cong C =(\K_{\mathcal R}/{\mathcal R} t)(1)$ as a graded ${\mathcal R}$-module. Hence $Q=\m$ by Lemma \ref{6.4}, because $\mu_R(X)=\e^0_\fkM(X)$. 
\end{proof}

We thirdly explore the almost Gorenstein property in the polynomial extensions.

\begin{thm}\label{13}
Let $(R,\fkm)$ be a Noetherian local ring with infinite residue class field. Let $S = R[X_1, X_2, \ldots, X_n]$ be the polynomial ring and consider $S$ to be a $\Bbb Z$-graded ring such that $S_0 = R$ and $\operatorname{deg}X_i = 1$ for every $1 \le i \le n$. Then the following conditions are equivalent.
\begin{enumerate}
\item[$(1)$] $R$ is an almost Gorenstein local ring.
\item[$(2)$] $S$ is an almost Gorenstein graded ring.
\end{enumerate} 
\end{thm}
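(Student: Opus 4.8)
The plan is to reduce Theorem~\ref{13} to the local case via a flat base change, and then apply Theorem~\ref{3.5}. First I would set up the relevant graded data: $S=R[X_1,\ldots,X_n]$ is a polynomial extension of $R$, so it is Cohen-Macaulay with $\dim S = d+n$ (where $d=\dim R$), and its graded canonical module is $\K_S \cong \K_R\otimes_R S(-n)$; consequently $\rma(S) = \rma(R)-n$, but since $R$ is local the $a$-invariant of $R$ as a "graded ring" concentrated in degree $0$ is $0$, so in fact $\rma(S)=-n$ and the twist $\K_S(-a) = \K_S(n)$ has its socle degrees aligned with the constants. The graded maximal ideal is $\fkM = \fkm S + S_+$, and $S_\fkM$ is a local ring with $S_\fkM/\fkm S_\fkM = k(X_1,\ldots,X_n)_{\mathrm{loc}}$, a regular local ring; so the map $R\to S_\fkM$ is flat local with regular closed fibre.

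The implication $(2)\Rightarrow(1)$ is essentially immediate: if $S$ is an almost Gorenstein graded ring, then by the remark following Definition~\ref{6.1} the localization $S_\fkM$ is an almost Gorenstein local ring, and since $R\to S_\fkM$ is a flat local homomorphism with regular closed fibre and $R/\fkm$ is infinite, Theorem~\ref{3.5} (the converse direction, which needs the infinite residue field hypothesis) gives that $R$ is an almost Gorenstein local ring. For $(1)\Rightarrow(2)$, I would start from an exact sequence $0\to R\to \K_R\to C\to0$ of $R$-modules with $\mu_R(C)=\e^0_\fkm(C)$, regard $R$, $\K_R$, $C$ as graded $S$-modules concentrated in degree $0$ after applying $-\otimes_R S$, i.e. tensor up to get an exact sequence of graded $S$-modules
$$
0 \to S \to \K_R\otimes_R S \to C\otimes_R S \to 0.
$$
The point is that $\K_R\otimes_R S = \K_S(n) = \K_S(-a)$ as graded $S$-modules, so this is an exact sequence $0\to S\to \K_S(-a)\to C\otimes_R S\to 0$ of the required shape; it then remains to check $\mu_S(C\otimes_R S)=\e^0_\fkM(C\otimes_R S)$. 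This is where I would invoke the Ulrich-module machinery: $C$ is an Ulrich $R$-module (or zero), and by Proposition~\ref{2.2}(3) applied to the flat local map $R\to S_\fkM$ with regular closed fibre, $(C\otimes_R S)_\fkM = S_\fkM\otimes_R C$ is an Ulrich $S_\fkM$-module; since $\mu$ and $\e^0$ of a finitely generated graded $S$-module are computed after localizing at $\fkM$, this yields $\mu_S(C\otimes_R S)=\e^0_\fkM(C\otimes_R S)$, so $S$ is an almost Gorenstein graded ring.

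The main obstacle, and the step deserving the most care, is the graded identification $\K_R\otimes_R S \cong \K_S(-a)$ together with the bookkeeping of the $a$-invariant: one must be careful about the grading conventions (the polynomial variables sit in degree $1$, $R$ sits in degree $0$, and $\K_R$ as an $R_0$-module is "concentrated in degree $0$" only after one fixes that $R$ is trivially graded), and one must verify that $\rma(S)=-n$ so that the twist by $-a$ in Definition~\ref{6.1} is exactly the twist by $n$ coming from $\K_S\cong \K_R\otimes_R S(-n)$. A secondary technical point is confirming that the comparison of $\mu$ and $\e^0_\fkM$ for a graded module with their local analogues at $\fkM$ is legitimate — this is standard for $*$local graded rings but should be stated, and the hypothesis that $R/\fkm$, hence $k(X_1,\ldots,X_n)$, is infinite is used to be able to invoke Theorem~\ref{3.5} in the $(2)\Rightarrow(1)$ direction. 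Once these grading/localization compatibilities are in place, both implications follow formally from Theorem~\ref{3.5} and Proposition~\ref{2.2}(3).
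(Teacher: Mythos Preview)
Your proposal is correct and follows essentially the same route as the paper: for $(2)\Rightarrow(1)$ both localize at $\fkM$ and apply Theorem~\ref{3.5}, and for $(1)\Rightarrow(2)$ both tensor the defining exact sequence by $S$, identify $\K_R\otimes_R S\cong\K_S(n)=\K_S(-a)$, and verify that $C\otimes_R S$ is Ulrich. The only cosmetic difference is that you verify the Ulrich property of $C\otimes_R S$ by localizing at $\fkM$ and invoking Proposition~\ref{2.2}(3), whereas the paper argues directly with Proposition~\ref{2.2}(2) by choosing $f_1,\ldots,f_{d-1}\in\fkm$ with $\fkm C=(f_1,\ldots,f_{d-1})C$ and observing $\fkM(C\otimes_R S)=(f_1,\ldots,f_{d-1},X_1,\ldots,X_n)(C\otimes_R S)$.
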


\begin{proof} We put  $\fkM = \fkm S + S_+$.

(2) $\Rightarrow$ (1) This follows from Theorem \ref{3.5}, because $S_\fkM$ is an almost Gorenstein local ring and the fiber ring $S_\fkM/\fkm S_\fkM$ is a regular local ring.

(1) $\Rightarrow$ (2) We may assume that $R$ is not Gorenstein. Hence $d = \dim R > 0$. Choose an exact sequence
$$(\sharp_1)\ \ \ 0 \to R \to \rmK_R \to C \to 0$$
of $R$-modules so that $\mu_R(C) = \rme^0_\fkm(C)$ and consider $R$ to be a $\Bbb Z$-graded ring trivially.  Then, tensoring sequence $(\sharp_1)$ by $S$,  we get
the exact sequence
$$(\sharp_2)\ \ \ 0 \to S \to S \otimes_R\rmK_R \to S \otimes_RC \to 0$$
of graded $S$-modules. Let $D = S \otimes_RC$. Then $D$ is a Cohen-Macaulay graded $S$-module of $\dim D = \dim_RC + n = \dim S - 1$. We choose elements $f_1, f_2, \ldots, f_{d-1} \in \fkm$ so that $\fkm C = \fkq C$, where $\fkq = (f_1, f_2, \ldots, f_{d-1})$. Then $$\fkM D = (\fkm S) D + S_+D=(\fkq S) D + S_+D,$$ so that $\mu_S(D) = \rme_\fkM^0(D)$. Therefore, exact sequence $(\sharp_2)$ shows $S$ to be an almost Gorenstein graded ring, because $\rma (S) = -n$ and $S\otimes_R\rmK_R = \rmK_S(n)$.
\end{proof}

\begin{cor}\label{14}
Let $(R, \m)$ be an Artinian local ring and assume that the residue class field $R/\m$ of $R$ is infinite. If the polynomial ring $R[X_1, X_2, \ldots, X_n]$ is an almost Gorenstein graded ring for some $n \ge 1$, then $R$ is a Gorenstein ring.
\end{cor}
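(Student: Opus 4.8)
The plan is to deduce this immediately from Theorem \ref{13} together with the observation, recorded just after Definition \ref{3.3}, that an almost Gorenstein local ring of dimension $0$ is Gorenstein. Since $R[X_1, X_2, \ldots, X_n]$ is assumed to be an almost Gorenstein graded ring, Theorem \ref{13} (the implication $(2) \Rightarrow (1)$) applies — note that its hypothesis is exactly that $R$ is Noetherian local with infinite residue class field, which holds here — and yields that $R$ itself is an almost Gorenstein local ring.

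Next I would invoke the fact that $R$ is Artinian, i.e. $\dim R = 0$. By Lemma \ref{3.1} (3), in any exact sequence $0 \to R \overset{\varphi}{\to} \rmK_R \to C \to 0$ witnessing the almost Gorenstein property, the injection $\varphi$ must be an isomorphism when $d = 0$, so $C = (0)$ and $R \cong \rmK_R$. Hence $R$ is a Gorenstein ring, which is the assertion.

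I do not anticipate a genuine obstacle here: the content of the corollary is entirely carried by Theorem \ref{13}, and the only additional input is the trivial dimension-zero reduction. The one point worth stating carefully in the write-up is that Theorem \ref{13} is being used in the direction that does not require $R$ to be non-Gorenstein, so no case distinction is needed; one simply quotes the equivalence $(1) \Leftrightarrow (2)$ of Theorem \ref{13} and then Lemma \ref{3.1} (3).
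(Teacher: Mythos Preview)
Your proof is correct and takes essentially the same approach as the paper, which states the corollary without proof as an immediate consequence of Theorem~\ref{13} together with the fact (noted after Definition~\ref{3.3}, via Lemma~\ref{3.1}(3)) that a zero-dimensional almost Gorenstein local ring is Gorenstein.
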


The last assertion of the following result is due to S.-i. Iai \cite[Theorem 1.1]{I}.

\begin{cor}\label{15}
Let $(R,\fkm)$ be a Noetherian local ring with $d = \dim R >0$ and infinite residue class field. Assume that $R$ is a homomorphic image of a Gorenstein local ring. We choose a system  $a_1, a_2, \ldots, a_d$ of parameters of $R$. Let $1 \le r \le d$ be an integer and set $\q = (a_1, a_2, \ldots, a_r)$. If $\operatorname{gr}_\q(R)$ is an almost Gorenstein graded ring, then $R$ is an almost Gorenstein local ring. In particular, $R$ is a Gorenstein local ring, if $r = d$ and $\operatorname{gr}_\q(R)$ is an almost Gorenstein graded ring.
\end{cor}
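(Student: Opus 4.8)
The plan is to reduce the statement to Theorems \ref{13} and \ref{3.9}. The key point is that, under the hypotheses, $G=\gr_\q(R)$ is forced to be the polynomial ring $(R/\q)[X_1,X_2,\ldots,X_r]$ over $R/\q$. Granting this, Theorem \ref{13} applied to the local ring $R/\q$ — which is Cohen--Macaulay, has infinite residue field, and is a homomorphic image of a Gorenstein local ring — shows that $R/\q$ is an almost Gorenstein local ring, because $G=(R/\q)[X_1,\ldots,X_r]$ is an almost Gorenstein graded ring. Then, since $a_1,\ldots,a_r$ will be seen to be an $R$-regular sequence, repeated application of Theorem \ref{3.9}~(1) — climbing from $R/(a_1,\ldots,a_r)$ up through $R/(a_1,\ldots,a_{r-1}),\ldots,R/(a_1)$ and finally to $R$ — yields that $R$ is an almost Gorenstein local ring.

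So the heart of the argument is establishing $G\cong (R/\q)[X_1,\ldots,X_r]$. First, $G$ is Cohen--Macaulay, this being part of the very notion of an almost Gorenstein graded ring (Definition \ref{6.1}), and $\dim G=d$. Next I would check that $a_1,\ldots,a_r$ is a minimal system of generators of $\q$: since $a_1,\ldots,a_r$ extends to the system of parameters $a_1,\ldots,a_d$ of $R$, one has $\dim R/\q=d-r$, so $\mu_R(\q)\ge d-\dim R/\q=r$ by Krull's height theorem, forcing $\mu_R(\q)=r$; in particular $a_i\notin\fkm\q\supseteq\q^2$ for each $i$, so the initial form $a_i^{\ast}\in\q/\q^2=G_1$ has degree exactly $1$. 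Since $G$ is generated over $G_0=R/\q$ by $a_1^{\ast},\ldots,a_r^{\ast}$, one has $G/(a_1^{\ast},\ldots,a_r^{\ast})G\cong R/\q$, which has dimension $d-r=\dim G-r$; hence $a_1^{\ast},\ldots,a_r^{\ast}$ is a subsystem of parameters of the Cohen--Macaulay graded ring $G$, and therefore a $G$-regular sequence. By the Valabrega--Valla criterion this forces $a_1,\ldots,a_r$ to be an $R$-regular sequence, and then the standard description of the associated graded ring of an ideal generated by a regular sequence gives $G\cong (R/\q)[X_1,\ldots,X_r]$ as graded rings (in particular $R/\q$ is Cohen--Macaulay). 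For the last assertion, when $r=d$ the ring $R/\q$ is an almost Gorenstein local ring of dimension $0$, hence Gorenstein (see the remark following Definition \ref{3.3}, or Corollary \ref{14}); since $a_1,\ldots,a_d$ is then an $R$-regular sequence, $R$ is Cohen--Macaulay with $\rmr (R)=\rmr (R/\q)=1$, and an almost Gorenstein local ring of Cohen--Macaulay type $1$ is Gorenstein (immediate from Corollary \ref{3.8}), so $R$ is a Gorenstein local ring.

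The main obstacle is precisely this middle step: deducing from the mere Cohen--Macaulayness of $\gr_\q(R)$ that $a_1,\ldots,a_r$ is an $R$-regular sequence and that $\gr_\q(R)$ is the polynomial ring over $R/\q$. Everything after that — the application of Theorem \ref{13} to the polynomial extension and the non-zerodivisor lifting of Theorem \ref{3.9}~(1) — is routine bookkeeping.
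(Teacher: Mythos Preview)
Your proof is correct and follows essentially the same route as the paper: reduce to showing $\gr_\q(R)\cong(R/\q)[X_1,\ldots,X_r]$, then apply Theorem~\ref{13} and Theorem~\ref{3.9}~(1). The only difference is in the middle step---you establish $R$-regularity of $a_1,\ldots,a_r$ by showing $a_1^*,\ldots,a_r^*$ is a $G$-regular sequence and invoking Valabrega--Valla, whereas the paper simply cites the standard fact that Cohen--Macaulayness of $\gr_\q(R)$ forces $R$ itself to be Cohen--Macaulay, after which any subset of a system of parameters is automatically a regular sequence.
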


\begin{proof} The ring $R$ is  Cohen-Macaulay, because the associated graded ring $\operatorname{gr}_\q(R)$ of $\q$ is Cohen-Macaulay. Hence   $a_1, a_2, \ldots, a_r$ forms an $R$-regular sequence, so that $\operatorname{gr}_\q(R) = (R/\q)[X_1, X_2, \ldots,X_r]$ is the polynomial ring. Therefore  by Theorem \ref{13}, $R/\q$ is an almost Gorenstein local ring, whence  $R$ is almost Gorenstein. Remember that if $r = d$, then $R$ is Gorenstein by Corollary \ref{14}, because $\dim R/\q = 0$. \end{proof}

Unfortunately, even though $\R_\fkM$ is an almost Gorenstein local ring, $\R$ is not necessarily an almost Gorenstein graded ring. We explore one example.

\begin{ex}\label{6.5} Let $U=k[s,t]$ be the polynomial ring over a field $k$ and look at the  Cohen-Macaulay graded subring $\R=k[s,s^3t,s^3t^2,s^3t^3]$ of $U$. Then $\R_\fkM$ is almost Gorenstein. In fact, let $S=k[X,Y,Z,W]$ be the  weighted polynomial ring such that $\deg X=1$, $\deg Y=4$, $\deg Z=5$, and $\deg W=6$. Let $\psi : S \rightarrow \R$ be the $k$-algebra map defined by $\psi (X)=s$, $\psi(Y)=s^3t$, $\psi(Z)=s^3t^2$, and $\psi(W)=s^3t^3$. Then $\operatorname{Ker}\psi=I_2
\left(
\begin{smallmatrix}
X^3 & Y& Z\\
Y& Z& W
\end{smallmatrix}
\right)$
and the graded $S$-module $\R$ has a graded minimal free resolution
$$
0 \rightarrow S(-13)\oplus S(-14)
\xrightarrow{\left(\begin{smallmatrix}X^3 & Y\\Y&Z\\Z&W\end{smallmatrix}\right)}
  S(-10)
  \oplus 
  S(-9)
  \oplus
  S(-8)
\xrightarrow{(\Delta_1\ \Delta_2\ \Delta_3)}
S
\xrightarrow{\psi}
\R
\rightarrow 0,$$
where $\Delta_1=Z^2-YW$, $\Delta_2=X^3W-YZ,$ and $\Delta_3=Y^2-X^3Z$.
Therefore, because $\K_S \cong S(-16)$, taking the $\K_S$-dual of the resolution, we get the presentation
\begin{equation}
  S(-6)
  \oplus
  S(-7)
  \oplus
  S(-8)
\xrightarrow{\left(\begin{smallmatrix}X^3 & Y & Z\\Y & Z & W\end{smallmatrix}\right)} 
  S(-3)\oplus S(-2)
\xrightarrow{\varepsilon} K_\R\rightarrow 0\tag{$\sharp$}
\end{equation}
of the graded canonical module $K_\R$ of $\R$. Hence $\a(\R)=-2$. Let $\xi=\varepsilon(\binom{1}{0})\in[K_\R]_3$ and we have  the exact sequence
$0\rightarrow\R\xrightarrow{\varphi} K_\R(3)\rightarrow S/(Y,Z,W)(1)\rightarrow 0$
of graded $\R$-modules, where $\varphi(1)=\xi$. Hence $\R_\fkM$ is a semi-Gorenstein local ring. On the other hand, thanks to presentation $(\sharp)$ of $\K_\R$, we know $[\K_\R]_2=k\eta\ne (0)$, where $\eta=\varepsilon(\binom{0}{1})$. Hence if $\R$ is an almost Gorenstein graded ring, we must have $\mu_\R(\K_\R/\R\eta)=\e^0_\fkM(\K_\R/\R\eta)$, which is impossible, because $\K_\R/\R \eta \cong [S/(X^3,Y,Z)](-3)$.
\end{ex}

This example \ref{6.5} seems to suggest a correct definition of almost Gorenstein graded rings might be the following: \textit{there exists an exact sequence $0\rightarrow\R\rightarrow \K_\R(n)\rightarrow C\rightarrow 0$ of graded $\R$-modules for some $n\in\Z$ such that $\mu_\R(C)=\e^0_\fkM(C)$.} We would like to leave further investigations to readers as an open problem.



\section{Almost Gorenstein associated graded rings}\label{assgr}

The purpose of this section is to explore how the almost Gorenstein property of base local rings is inherited from that of the associated graded rings. Our goal  is the following.

\begin{thm}\label{1}
Let $(R, \m)$ be a Cohen-Macaulay local ring with infinite residue class field, possessing the canonical module $\rmK_R$. Let $I$ be an $\m$-primary ideal of $R$ and let $\operatorname{gr}_I(R)= \bigoplus_{n \ge 0}I^n/I^{n+1}$ be the associated graded ring of $I$. If $\operatorname{gr}_I(R)$ is an almost Gorenstein graded ring with $\rmr(\operatorname{gr}_I(R))) = \rmr(R)$, then $R$ is an almost Gorenstein local ring.
\end{thm}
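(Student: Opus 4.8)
The plan is to pass from the graded ring $G = \operatorname{gr}_I(R)$ to a filtration on $R$ and then reflect the almost Gorenstein exact sequence downstairs. First I would set up the canonical filtration machinery for $G$: since $R$ possesses $\rmK_R$ and $I$ is $\fkm$-primary, the Rees algebra $\mathcal R = \mathcal R(I) = \bigoplus_{n\ge 0}I^nt^n$ and the extended Rees algebra are Cohen-Macaulay-adjacent objects admitting graded canonical modules, and there is a canonical $I$-filtration $\omega = \{\omega_n\}_{n \in \mathbb Z}$ of $R$ (in the style of the filtration used in the proof of Theorem~\ref{6.3}, following \cite{GI}) with $\operatorname{gr}_\omega(R)(-1) \cong \rmK_G$ and $[\mathcal R(\omega)]_+ \cong \rmK_{\mathcal R}$ as graded modules. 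The hypothesis $\rmr(G) = \rmr(R)$ is what forces this filtration to be closely tied to the $I$-adic one, and it should translate into the statement that $\omega_n$ is, up to a shift, comparable to $I^{n+c}$ for an appropriate constant $c$; in particular $\omega_d$ (with $d = \dim R$) can be identified with an ideal $\Omega \subseteq R$ that is a canonical module of $R$ and whose associated graded module with respect to $I$ recovers $\rmK_G(-a)$ where $a = \rma(G)$.

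Next I would use the assumption that $G$ is an almost Gorenstein graded ring: there is an exact sequence $0 \to G \to \rmK_G(-a) \to \mathfrak C \to 0$ of graded $G$-modules with $\mu_G(\mathfrak C) = \e^0_{\mathfrak M}(\mathfrak C)$, so $\mathfrak C$ is an Ulrich $G$-module of dimension $d-1$ (when $G$ is not Gorenstein; the Gorenstein case is handled separately and easily, since then $G$ Gorenstein implies $R$ Gorenstein). The key step is to lift this sequence. Using the identification of $\rmK_G(-a)$ with $\operatorname{gr}_\omega(R) = \operatorname{gr}_I(\Omega)$ and the fact that $G = \operatorname{gr}_I(R)$, the inclusion $G \hookrightarrow \rmK_G(-a)$ is the associated graded map of an inclusion $\iota\colon R \hookrightarrow \Omega$ of $R$-modules (after normalizing the grading so the generator sits in the right degree — this is where $\rmr(G)=\rmr(R)$ is used again, to guarantee $\mu_R(\Omega/R) = \rmr(R) - 1 = \mu_G(\mathfrak C)$, via Corollary~\ref{3.8} applied to both rings). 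Setting $C = \Omega/R = \operatorname{Coker}\iota$, one gets an exact sequence $0 \to R \to \rmK_R \to C \to 0$ (since $\Omega \cong \rmK_R$), and $\operatorname{gr}_I(C) = \mathfrak C$ as a $G$-module, or at least $\operatorname{gr}_I(C)$ surjects appropriately onto $\mathfrak C$.

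Finally I would deduce that $C$ is an Ulrich $R$-module from the fact that its associated graded module $\mathfrak C$ is an Ulrich $G$-module. Concretely: $\mu_R(C) = \mu_G(\mathfrak C)$ always, and $\e^0_{\fkm}(C) \le \e^0_{IG,\,\text{init}}(\mathfrak C)$-type inequalities combined with $\dim C = d-1$ and the Cohen-Macaulayness of $\mathfrak C$ (hence of $C$, by the standard fact that if $\operatorname{gr}_I(C)$ is Cohen-Macaulay then so is $C$) give $\e^0_{\fkm}(C) = \e^0_{\fkm}(\mathfrak C_{\text{as }G\text{-mod}})$ via multiplicity formulas relating $\e(I; C)$ to the Hilbert polynomial of $\operatorname{gr}_I(C)$. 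Then $\mu_R(C) = \mu_G(\mathfrak C) = \e^0_{\mathfrak M}(\mathfrak C) \ge \e^0_{\fkm}(C) \ge \mu_R(C)$, forcing equality, so $R$ is almost Gorenstein by Definition~\ref{3.3}. The main obstacle I anticipate is the bookkeeping in the first two steps: correctly identifying the canonical filtration $\omega$, proving that the hypothesis $\rmr(G) = \rmr(R)$ pins down $\omega_d$ as a genuine canonical ideal of $R$ with the right initial form, and verifying that the graded embedding $G \hookrightarrow \rmK_G(-a)$ actually lifts to an $R$-module embedding $R \hookrightarrow \rmK_R$ whose cokernel has the prescribed associated graded module — this "lifting along $\operatorname{gr}_I$" is delicate because associated graded functors are not exact, and one must check that no lower-order terms obstruct the construction, likely by an argument on leading forms together with the Cohen-Macaulayness of $\mathfrak C$ to control the relevant Tor or length discrepancies.
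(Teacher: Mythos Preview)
Your proposal attempts a direct argument in dimension $d$, but the paper proceeds by induction on $d$, and I do not see how your final step can be made to work without that reduction.

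The paper's proof runs as follows. The base case $d=1$ is Theorem~\ref{6}: one takes the canonical $I$-filtration $\omega=\{\omega_n\}$ of $K=\rmK_R$, writes the graded embedding $G\hookrightarrow G(\omega)(-c)\cong\rmK_G(-a)$ as multiplication by the initial form of some $f\in K$, and then proves the length identity $\ell_R(K/Rf)=\ell_G(C)$ (Lemma~\ref{4}) together with the chain $\rmr(R)-1\le\rmr(G)-1\le\mu_G(C)\le\ell_G(C)$ (Proposition~\ref{5}). The hypothesis $\rmr(G)=\rmr(R)$ collapses this chain, forcing $\ell_R(K/Rf)=\mu_R(K/Rf)$, i.e.\ $\fkm\cdot(K/Rf)=0$. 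For $d>1$ one applies Proposition~\ref{10} to cut $G$ by a general $a^*\in G_1$ so that $G/a^*G=\operatorname{gr}_{I/(a)}(R/(a))$ is again almost Gorenstein graded; since $a^*$ is $G$-regular and $a$ is $R$-regular, both Cohen--Macaulay types are preserved, the induction hypothesis gives $R/(a)$ almost Gorenstein, and Theorem~\ref{3.9}(1) lifts to $R$.

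Your third step is the real gap. You claim $\e^0_\fkM(\mathfrak C)\ge\e^0_\fkm(C)$, but these multiplicities live in different worlds: $\fkM=(\fkm/I)G+G_+$ is the graded maximal ideal of $G$, whereas $\e^0_\fkm(C)$ is computed via the $\fkm$-adic filtration on $C$, not the $I$-adic one. For general $\fkm$-primary $I$ there is no inequality of the shape you need, and the Ulrich property of $\mathfrak C$ over $G$ (namely $\fkM\mathfrak C=(g_2,\ldots,g_d)\mathfrak C$ for some $g_i\in G_1$) does not descend to $\fkm C=(f_2,\ldots,f_d)C$ for elements of $\fkm$. In dimension $1$ this issue evaporates because ``Ulrich'' becomes ``annihilated by the maximal ideal'', a length condition, and that is exactly why the paper reduces to that case. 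Your lifting step (step 2) is also more delicate than you indicate---the paper never lifts the full exact sequence in dimension $>1$, precisely because the inductive route makes it unnecessary. Finally, your use of $\rmr(G)=\rmr(R)$ is misplaced: it does not serve to ``pin down $\omega_d$'' but rather to force equality in the chain of Proposition~\ref{5} in the one-dimensional base case.
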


Theorem \ref{1} is reduced, by induction on $\dim R$, to the case where $\dim R = 1$. Let us start from the key result of dimension one. Our setting is the following.

\begin{setting}\label{2} {\rm Let $R$ be a Cohen-Macaulay local ring of dimension one.  We consider a filtration ${\mathcal F}=\{ I_n \}_{n \in \Bbb Z}$ of ideals of $R$. Therefore $\{I_n \}_{n \in \Bbb Z}$ is a family of 
ideals of $R$ which satisfies the following three conditions: (1) $I_n = R$ for all  $n \le 0$ but $I_1\ne R$, (2) $I_n \supseteq I_{n+1}$ for all $n \in \Bbb Z$, and (3) $I_m I_n \subseteq I_{m+n}$ for all $m, n \in \Bbb Z$. Let $t$ be an indeterminate and we set
$$
\mathcal R = \mathcal R(\mathcal F) = \sum_{n \ge 0}I_nt^n \subseteq R[t],
$$
$$
\mathcal R' = \mathcal R'(\mathcal F) = \mathcal R[t^{-1}]= \sum_{n \in \Bbb Z}I_nt^n \subseteq R[t, t^{-1}], \ \ \text{and}\\
$$
$$
G = G(\mathcal F) = {\mathcal R}'(\mathcal F)/t^{-1}{\mathcal R}'(\mathcal F).
$$
We call them respectively the Rees algebra, the extended Rees algebra, and the associated graded ring of ${\mathcal  F}$. We assume the following three conditions are satisfied:

\begin{enumerate}
\item $R$ is a homomorphic image of a Gorenstein ring,
\item $\mathcal R$ is a Noetherian ring, and
\item $G$ is a Cohen-Macaulay ring.
\end{enumerate}
}
\end{setting}

Let $\rma (G)= \max \{n \in \Bbb Z \mid [\rmH_\fkM^1(G)]_n \ne (0)\}
$
(\cite{GW1}), where $\{[\rmH_\fkM^1({G})]_n\}_{n \in \Bbb Z}$ stands for the homogeneous components of the first graded local cohomology module $\rmH_\fkM^1({G})$ of ${G}$ with respect to the graded maximal ideal $\fkM = \fkm G + G_+ $ of $G$. We set  $c= \rma (G)+1$ and $K=\rmK_R$. Then by  \cite[Theorem 1.1]{GI} we have a unique family $\omega =\{\omega_n \}_{n \in \Bbb Z}$ of $R$-submodules of $K$ satisfying the following four conditions:

\begin{enumerate}
\item[$(\mathrm i)$] $\omega_n \supseteq \omega_{n+1}$ for all $n \in \Bbb Z$,
\item[$(\mathrm{ii})$]  $\omega_n = K$ for all $n \le -c$, 
\item[$(\mathrm{iii})$]  $I_m \omega_n \subseteq \omega_{m+n}$ for all $m, n \in \Bbb Z$, and
\item[$(\mathrm{iv})$]  $\rmK_{\mathcal R'} \cong {\mathcal R'}(\omega)$ and $\rmK_G \cong G(\omega)(-1)$ as  graded $\mathcal R'$-modules,
\end{enumerate}
where  $\mathcal R' (\omega) = \sum_{n \in \Bbb Z}{\omega}_nt^n \subset K[t,t^{-1}]$ and $G(\omega) = \mathcal R'(\omega)/t^{-1}\mathcal R'(\omega)$, and $\rmK_{\mathcal R'}$ and $\rmK_G$ denote respectively the graded canonical modules of $\mathcal R'$ and $G$. Notice that $[G(\omega)]_n = (0)$ if $n < -c$ (see condition (ii)).

With this notation we have the following.

\begin{lem}\label{3}
There exist integers $d > 0$ and $k \ge 0$ such that $\omega_{dn-c} = I_d^{n-k}\omega_{dk-c}$ for all $n \ge k$.
\end{lem}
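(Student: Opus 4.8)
The plan is to descend to a Veronese subring on which the filtration $\{I_{dn}\}_n$ becomes $I_d$-adic, and then read off the stability from finite generation of the canonical module $\mathcal R'(\omega)\cong\rmK_{\mathcal R'}$.

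First I would invoke a theorem of Rees: since the Rees algebra $\mathcal R=\mathcal R(\mathcal F)$ is Noetherian, there is an integer $d>0$ with $I_{dn}=I_d^{\,n}$ for all $n\ge 0$; replacing $d$ by a suitable multiple I may also assume that $\mathcal R'(\mathcal F)$ is module-finite over its $d$-th Veronese subring $B:=\mathcal R'(\mathcal F)^{(d)}=\sum_{n\in\Z}I_{dn}t^{dn}$. Writing $s=t^d$, this gives $B=\sum_{n\in\Z}I_d^{\,n}s^n$ (with $I_d^{\,n}:=R$ for $n\le 0$), so $B$ is a Noetherian $\Z$-graded ring with $B_0=R$, $B_1=I_ds$, $B_{-1}=Rs^{-1}$, $B_m=B_1^{\,m}$ for $m\ge 0$; in particular $B_m=B_1B_{m-1}$ for every $m\ge 1$.

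Next I would use condition (iv): $\mathcal R'(\omega)=\sum_{n\in\Z}\omega_nt^n$ is isomorphic to $\rmK_{\mathcal R'}$, hence is a finitely generated graded $\mathcal R'(\mathcal F)$-module, and therefore a finitely generated $B$-module. Since the exponents $dn-c$ ($n\in\Z$) form a single residue class modulo $d$, the subgroup $L:=\sum_{n\in\Z}\omega_{dn-c}t^{dn-c}$ is a graded $B$-submodule of $\mathcal R'(\omega)$, so $L$ is a finitely generated module over the Noetherian ring $B$; by (ii) and (iii) it satisfies $L_n=Kt^{dn-c}$ for $n\le 0$ and $B_1L_{n-1}\subseteq L_n$ for all $n$ (because $I_d\omega_{d(n-1)-c}\subseteq\omega_{dn-c}$). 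Now choose homogeneous $B$-generators of $L$, with $n$-degrees $b_1,\dots,b_p\le k$ for some integer $k\ge 0$; for $n>k$ every shift $n-b_i$ is $\ge 1$, so $B_{n-b_i}=B_1B_{n-1-b_i}$ gives $L_n=B_1L_{n-1}$, i.e.\ $\omega_{dn-c}=I_d\,\omega_{d(n-1)-c}$. An immediate induction then yields $\omega_{dn-c}=I_d^{\,n-k}\,\omega_{dk-c}$ for all $n\ge k$, as desired.

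The main obstacle, and the only step that is not formal, is the reduction via Rees's theorem to the $I_d$-adic situation on the $d$-th Veronese, together with the standard but slightly technical fact that after enlarging $d$ one may take $\mathcal R'(\mathcal F)$ module-finite over $\mathcal R'(\mathcal F)^{(d)}$ — this is precisely what makes $\mathcal R'(\omega)$ a finitely generated $B$-module. Everything afterwards is the routine "finitely generated graded modules are generated in bounded degree" bookkeeping; the only care needed is carrying the shift $-c$ correctly through the Veronese and ensuring $k\ge 0$ so that the induction starts cleanly.
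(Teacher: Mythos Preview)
Your proof is correct and follows essentially the same route as the paper: pass to a Veronese of high enough order so that the induced filtration becomes $I_d$-adic, then use finite generation of the canonical-filtration module to bound the degrees of generators and read off the stabilization $\omega_{dn-c}=I_d^{\,n-k}\omega_{dk-c}$. The only cosmetic difference is that the paper works with $L=\mathcal R(\omega)(-c)$ as a graded $\mathcal R$-module and takes its $d$-th Veronese over $\mathcal R^{(d)}$, whereas you extract the residue-class component of $\mathcal R'(\omega)$ over $B=\mathcal R'^{(d)}$; the bookkeeping is the same, and your extra remark about module-finiteness of $\mathcal R'$ over $B$ (while true) is not strictly needed, since the residue-class component you call $L$ is already a Veronese of a finitely generated graded module.
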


\begin{proof} Let  $L = \mathcal R(\omega)(-c)$, where $\mathcal R (\omega)= \sum_{n \ge 0}\omega_nt^n \subseteq K[t]$. Then $L$ is a finitely generated graded $\mathcal R$-module such that $L_n = (0)$ for $n < 0$. We  choose an integer $d \gg 0$ so that the Veronesean subring $\mathcal R^{(d)} = \sum_{n \ge 0}\mathcal R_{dn}$ of $\mathcal R$ with order $d$ is standard, whence $\mathcal R^{(d)} = R[\mathcal R_d]$. Then, because $L^{(d)} = \sum_{n \ge 0}L_{dn}$ is a finitely generated graded $\mathcal R^{(d)}$-module, we may choose a homogeneous system $\{f_i\}_{1 \le i \le \ell}$ of generators of $L^{(d)}$ so that for each $1 \le i \le \ell$ 
$$f_i \in [L^{(d)}]_{k_i} = [\mathcal R (\omega)]_{dk_i-c}$$ with $k_i \ge \frac{c}{d}$. Setting $k = \max\{k_i \mid 1 \le i \le \ell \}$, for all $n \ge k$ we get $$\omega_{dn-c} \subseteq \sum_{i=1}^\ell I_{d(n-k_i)}\omega_{dk_i - c} \subseteq I_d^{n-k}\omega_{dk-c},$$
as asserted.
\end{proof}

Let us fix an element $f \in K$ and let $\xi= \overline{ft^{-c}} \in G(\omega)(-c)$ denote the image of $ft^{-c} \in \mathcal R'(\omega)$ in $G(\omega)$. Assume $(0):_G \xi = (0)$ and consider  the following short exact sequence 
$$(E) \ \ \ 
0 \to G \overset{\psi}{\to} G(\omega)(-c) \to C \to 0,
$$
of graded $G$-modules, 
where $\psi(1)= \xi$. Then $C_\fkp=(0)$ for all $\fkp \in Ass G$, because  $[G(\omega)]_\fkp \cong [\rmK_G]_\fkp \cong \rmK_{G_\fkp}$ as $G_\fkp$-modules by condition (iv) above and $\ell_{G_\fkp}(G_{\fkp}) = \ell_{G_\fkp}(\rmK_{G_\fkp})$ (\cite[Korollar 6.4]{HK}). Therefore  $\ell_R(C)=\ell_G(C) < \infty$ since $\dim G = 1$, so that $C$ is finitely graded. We now consider  the  exact sequence
$$ 
{\mathcal R'} \overset{\varphi}{\to} {\mathcal R'}(\omega)(-c) \to D \to 0
$$
 of graded $\mathcal R'$-modules defined by $\varphi(1) = ft^{-c}$. Then  $C \cong D/uD$ as a $G$-module, where $u = t^{-1}$. Notice that $\dim \mathcal R'/\fkp = 2$ for all $\fkp \in \Ass \mathcal R'$, because $\mathcal R'$ is a Cohen-Macaulay ring of dimension $2$. We then have $D_\fkp = (0)$ for all $\fkp \in \Ass \mathcal R'$, since $\dim_{\mathcal R'}D \le 1$. Hence the homomorphism $\varphi$ is injective, because $\mathcal R'(\omega) \cong \rmK_{\mathcal R'}$ by condition (iv)  and $\ell_{\mathcal R'_\fkp}([\mathcal R'(\omega)]_\fkp) = \ell_{\mathcal R'\fkp}([\rmK_{\mathcal R'}]_\fkp)= \ell_{\mathcal R'\fkp}(\rmK_{\mathcal R'_\fkp}) = \ell_{\mathcal R'_\fkp}(\mathcal R'_\fkp)$ for all $\fkp \in \Ass \mathcal R'$. The snake lemma shows $u$ acts on $D$ as a non-zerodivisor, since $u$ acts on $\mathcal R'(\omega)$ as a non-zerodivisor.

Let us  suppose that $C \neq (0)$ and set $S=\{ n \in \Bbb Z \mid C_n \neq (0)\}$. We write $S=\{ n_1 < n_2 < \cdots < n_{\ell} \}$, where $\ell = \sharp S > 0$. We then have the following.
\medskip

\begin{lem}\label{4}
$D_n = (0)$ if $n > n_{\ell}$ and $D_n \cong K/Rf$ if $n \le 0$. Consequently, $\ell_R(K/Rf) = \ell_R(C)$.
\end{lem}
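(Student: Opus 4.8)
The plan is to read off each graded component $D_n$ from the given presentation and then treat the two ranges of degrees separately, exploiting that $u=t^{-1}$ is a nonzerodivisor on $D$ and the stability of $\omega$ supplied by Lemma \ref{3}. Since $\varphi$ is injective, the degree-$n$ part of $0\to\mathcal R'\xrightarrow{\varphi}\mathcal R'(\omega)(-c)\to D\to0$ is $0\to I_nt^n\to\omega_{n-c}t^{n-c}\to D_n\to0$, with the left-hand map sending $at^n$ to $aft^{n-c}$; hence $D_n\cong\omega_{n-c}/I_nf$ for every $n\in\Z$, the inclusion $I_nf\subseteq\omega_{n-c}$ coming from $I_n\omega_{-c}\subseteq\omega_{n-c}$ and $f\in K=\omega_{-c}$. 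For $n\le0$ we have $I_n=R$ and $n-c\le-c$, so $\omega_{n-c}=K$ and $D_n\cong K/Rf$; in particular $D_0=K/Rf$, which settles the second claim. Multiplication by $u$ carries $\omega_{m-c}t^{m-c}$ into $\omega_{m-1-c}t^{m-1-c}$, so it induces maps $D_m\to D_{m-1}$, and since $u$ is a nonzerodivisor on $D$ the composite $u^n\colon D_n\to D_0=K/Rf$ is injective with image $\widetilde D_n:=(\omega_{n-c}+Rf)/Rf$; thus for $n\ge0$ the $\widetilde D_n$ form a descending chain of submodules of $K/Rf$ with $\widetilde D_0=K/Rf$.

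The crux is the vanishing $D_n=0$ for $n>n_\ell$. From $C\cong D/uD$ we get $C_n\cong D_n/uD_{n+1}$, so $C_n=0$ for $n>n_\ell$ means $uD_{n+1}=D_n$, i.e. $u\colon D_{n+1}\to D_n$ is surjective; applying $u^n$ this gives $u^{n+1}(D_{n+1})=u^n(D_n)$, that is $\widetilde D_{n+1}=\widetilde D_n$, for every $n>n_\ell$. Hence the descending chain $\{\widetilde D_n\}_{n\ge0}$ is constant from $n=n_\ell+1$ on, and its stable value equals $\bigcap_{n\ge0}\widetilde D_n$. It remains to see this intersection is zero. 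Iterating $I_1\omega_j\subseteq\omega_{j+1}$ starting from $\omega_{-c}=K$ gives $I_1^mK\subseteq\omega_{m-c}$ for all $m\ge0$, while Lemma \ref{3} together with $I_d\subseteq I_1$ gives $\omega_{m-c}\subseteq\omega_{d\lfloor m/d\rfloor-c}\subseteq I_d^{\lfloor m/d\rfloor-k}K\subseteq I_1^{\lfloor m/d\rfloor-k}K$ for $m\gg0$. Therefore the filtrations $\{\omega_{m-c}\}_m$ and $\{I_1^mK\}_m$ of $K$ are cofinal, so their images in $K/Rf$ have the same intersection, namely $\bigcap_m I_1^m(K/Rf)=0$ by Krull's intersection theorem (recall $I_1\subseteq\fkm$). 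Thus $\widetilde D_{n_\ell+1}=0$, and since $u^{n_\ell+1}\colon D_{n_\ell+1}\to D_0$ is injective we get $D_{n_\ell+1}=0$; as $D_n$ injects into $D_{n_\ell+1}$ for $n>n_\ell+1$, we conclude $D_n=0$ for all $n>n_\ell$.

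For the last assertion, $C_n\cong\widetilde D_n/\widetilde D_{n+1}$ is of finite length for each $n$ and vanishes unless $0\le n\le n_\ell$, so telescoping along $K/Rf=\widetilde D_0\supseteq\widetilde D_1\supseteq\cdots\supseteq\widetilde D_{n_\ell+1}=0$ yields $\ell_R(K/Rf)=\sum_n\ell_R(C_n)=\ell_R(C)$, in particular $\ell_R(K/Rf)<\infty$. I expect the second paragraph to be the real obstacle: the degreewise formula for $D_n$ and the identification $D_n\cong K/Rf$ for $n\le0$ are immediate, but deducing $D_n=0$ for $n>n_\ell$ requires welding the purely degreewise input $C_n=0$ to a genuinely global finiteness input — the Artin--Rees-type estimate extracted from Lemma \ref{3} — in order to force the stabilized submodule $\widetilde D_{n_\ell+1}$ to vanish.
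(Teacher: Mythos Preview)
Your proof is correct and uses the same ingredients as the paper: the identification $C_n\cong D_n/uD_{n+1}$, the fact that $u$ is a nonzerodivisor on $D$, Lemma~\ref{3}, and Krull's intersection theorem. The packaging differs slightly: the paper argues directly that $\omega_{n-c}=I_nf$ for $n>n_\ell$ by iterating $\omega_{n-c}=I_nf+\omega_{n+1-c}$ and then invoking Lemma~\ref{3} (with an implicit Artin--Rees/Krull step), whereas you embed every $D_n$ into $D_0=K/Rf$ via $u^n$, show the images $\widetilde D_n$ stabilize for $n>n_\ell$, and kill the stable value by the cofinality argument; for the length equality the paper does a case-by-case computation of $\ell_R(D_n)$ along the gaps of $S$, while your telescoping $\ell_R(K/Rf)=\sum_n\ell_R(\widetilde D_n/\widetilde D_{n+1})=\sum_n\ell_R(C_n)$ is a bit cleaner but amounts to the same thing.
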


\begin{proof}
Let $n > n_{\ell}$. Then  $C_n = (0)$. By exact sequence $(E)$ above, we get $I_n/I_{n+1} \cong \omega_{n-c}/\omega_{n+1-c}$, whence $\omega_{n-c} = I_n f + \omega_{n+1-c}$. Therefore $\omega_n-c \subseteq I_n f + \omega_q$ for all $q \in \Bbb Z$. By Lemma \ref{3} we may choose integers $d\gg 0$ and $k \ge 0$ so that $$\omega_{n-c} \subseteq  I_nf  + \omega_{dm-c}\subseteq  I_n f + I_d^{m-k}f$$ for all $m \ge k$. Consequently, $\omega_{n-c}=I_n f$. Hence $D_n = (0)$ for all $n \ge n_{\ell}$.  If $n \le 0$, then $D_n \cong [\mathcal R'(\omega)(-c)]_n/\mathcal R'_nf \cong  K/Rf$ (see condition (ii) above). 
To see the last assertion, notice that because  $S=\{ n_1 < n_2 < \cdots < n_{\ell} \}$, $D_n =u^{n-n_1}D_{n_1} \cong D_{n_1}$ if $n \le n_1$ and  $D_n = u^{n_{i+1}-n}D_{n_{i+1}}\cong D_{n_{i+1}}$ if $1 \le i < \ell$ and  $n_{i} < n \le n_{i+1}.$ Therefore since $D_n = (0)$ for $n > n_\ell$, we get   
$$
\ell_R(K/Rf) =  \ell_R(D_0)=\ell_R(D_{n_1})=\sum_{i=1}^\ell \ell_R(C_{n_i})= \ell_R(C).
$$
\end{proof}

Exact sequence $(E)$ above now shows the following estimations. Remember that $\rmr(R) \le \rmr(G)$, because $\rmK_G[t]=K[t,t^{-1}]$ so that $\mu_R(K) \le \mu_G(\rmK_G)$.

\begin{prop}\label{5}
$\rmr(R)-1 \le \rmr(G)-1 \le \mu_G(C) \le \ell_G(C) = \ell_R(C) = \ell_R(K/Rf)$.
\end{prop}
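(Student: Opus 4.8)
The plan is to read off all four inequalities and the two equalities directly from the short exact sequence $(E)$ together with Lemma \ref{4}, so this is essentially a bookkeeping argument rather than a deep one. First I would recall that $\rmr(R)\le\rmr(G)$, which was already noted just before the statement: since $\rmK_G[t]=K[t,t^{-1}]$ we have $\mu_R(K)\le\mu_G(\rmK_G)$, i.e. $\rmr(R)\le\rmr(G)$; this gives the leftmost inequality $\rmr(R)-1\le\rmr(G)-1$.

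Next I would extract $\rmr(G)-1\le\mu_G(C)$ from exact sequence $(E)$. The sequence $0\to G\xrightarrow{\psi}G(\omega)(-c)\to C\to 0$ has $\psi(1)=\xi$, and since $\xi\notin\fkM\cdot G(\omega)(-c)$ — because $G$ is not Gorenstein here, so we are in the situation where $\xi$ can be (and is) chosen as a minimal generator, cf. the analogue of Corollary \ref{3.8} in the graded setting — the minimal number of generators drops by exactly one: $\mu_G(C)=\mu_G(G(\omega)(-c))-1=\mu_G(\rmK_G)-1=\rmr(G)-1$. Actually all that is needed is the inequality $\mu_G(C)\ge\mu_G(\rmK_G)-1=\rmr(G)-1$, which follows from right-exactness of $-\otimes_G G/\fkM$ applied to $(E)$ without any hypothesis on $\xi$, so I would present it that way to be safe. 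The inequality $\mu_G(C)\le\ell_G(C)$ is trivial (the number of generators of a module of finite length is at most its length). The equality $\ell_G(C)=\ell_R(C)$ holds because $G$ is a finitely generated $R$-algebra with $G_0=R$... more precisely because $C$ has finite length as a graded $G$-module and each graded piece $C_n$ is an $R$-module of finite length, so $\ell_G(C)=\sum_n\ell_R(C_n)=\ell_R(C)$; this is where one uses that $C$ is finitely graded, established earlier in the section. Finally $\ell_R(C)=\ell_R(K/Rf)$ is exactly the last assertion of Lemma \ref{4}.

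The only step that requires a moment's care is the inequality $\rmr(G)-1\le\mu_G(C)$: one must be sure that $\mu_G(C)$ is not strictly smaller than $\mu_G(\rmK_G)-1$, which could happen a priori if $\psi(1)$ were contained in $\fkM\cdot G(\omega)(-c)$ and the cokernel lost two or more generators. But tensoring $(E)$ with $k=G/\fkM$ gives the exact sequence $G\otimes k\to G(\omega)(-c)\otimes k\to C\otimes k\to 0$, whence $\mu_G(C)\ge\mu_G(G(\omega)(-c))-\mu_G(G)=\rmr(G)-1$ unconditionally. So there is no real obstacle; I expect the referee's only concern to be making explicit that "finitely graded of finite length" legitimately lets us identify $\ell_G(C)$ with $\ell_R(C)$, and that $\mu_G(\rmK_G)=\rmr(G)$ and $\mu_R(K)=\rmr(R)$ by the standard description of Cohen--Macaulay type via the canonical module (\cite[Satz 6.10, Korollar 6.11]{HK}).

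\begin{proof}
We may clearly assume $C\ne(0)$. By \cite[Korollar 6.11]{HK} we have $\mu_R(K)=\rmr(R)$ and $\mu_G(\rmK_G)=\rmr(G)$, while $\rmK_G[t]=K[t,t^{-1}]$ as noted above forces $\mu_R(K)\le\mu_G(\rmK_G)$; hence $\rmr(R)-1\le\rmr(G)-1$. Tensoring exact sequence $(E)$ with $k=G/\fkM$ yields an exact sequence $G\otimes_Gk\to [G(\omega)(-c)]\otimes_Gk\to C\otimes_Gk\to0$, so
$$
\mu_G(C)\ \ge\ \mu_G(G(\omega)(-c))-\mu_G(G)\ =\ \mu_G(\rmK_G)-1\ =\ \rmr(G)-1.
$$
Since $C$ has finite length as a graded $G$-module, $\mu_G(C)\le\ell_G(C)$, and writing $C=\bigoplus_nC_n$ as a graded module with each $C_n$ of finite length over $R$ gives $\ell_G(C)=\sum_n\ell_R(C_n)=\ell_R(C)$. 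Finally $\ell_R(C)=\ell_R(K/Rf)$ by Lemma \ref{4}. Combining these, we obtain
$$
\rmr(R)-1\ \le\ \rmr(G)-1\ \le\ \mu_G(C)\ \le\ \ell_G(C)\ =\ \ell_R(C)\ =\ \ell_R(K/Rf),
$$
as claimed.
\end{proof}
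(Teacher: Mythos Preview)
Your proof is correct and follows the same approach as the paper, which does not give a formal proof but simply states that the estimations follow from exact sequence $(E)$ together with the remark $\rmr(R)\le\rmr(G)$ and Lemma~\ref{4}. You have accurately filled in the details the paper leaves implicit; in particular, the equality $\ell_G(C)=\ell_R(C)$ is already recorded in the text preceding Lemma~\ref{4}, so you could also simply cite that line.
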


We are now back to a general situation of Setting \ref{2}.  

\begin{thm}\label{6} 
Let $G$ be as in Setting $\ref{2}$ and assume that $G$ is an almost Gorenstein graded ring with $\rmr(G) = \rmr(R)$. Then  $R$ is an almost Gorenstein local ring.
\end{thm}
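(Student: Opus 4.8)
The plan is to exploit the filtration data of Setting~\ref{2} together with the structural analysis carried out in Lemmas~\ref{3}--\ref{4} and Proposition~\ref{5}. Since $G=G(\mathcal F)$ is almost Gorenstein, by Definition~\ref{6.1} there is an exact sequence $0\to G\to \rmK_G(-\rma(G))\to \mathcal C\to 0$ of graded $G$-modules with $\mu_G(\mathcal C)=\e^0_\fkM(\mathcal C)$. Recalling $\rmK_G\cong G(\omega)(-1)$ and $c=\rma(G)+1$, this sequence has the form $(E)$ above, namely $0\to G\xrightarrow{\psi} G(\omega)(-c)\to \mathcal C\to 0$ with $\psi(1)=\xi=\overline{ft^{-c}}$ for a suitable $f\in K=\rmK_R$; here the hypothesis $\xi\notin\fkM\rmK_G$ (which holds unless $G$ is regular, by Corollary~\ref{3.8}) guarantees $f$ is a genuine generator and $(0):_G\xi=(0)$, so the preliminary analysis applies. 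If $G$ is Gorenstein then $\rmr(R)\le\rmr(G)=1$ forces $\rmQ(R)$ Gorenstein and $K=Rf$ by Lemma~\ref{4} (as $\ell_R(K/Rf)=\ell_R(\mathcal C)=0$), whence $R$ is Gorenstein; so I may assume $G$ is not Gorenstein, i.e.\ $\rmr:=\rmr(R)=\rmr(G)\ge 2$.

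The heart of the argument is to squeeze the chain of inequalities in Proposition~\ref{5}. We have $\rmr(R)-1\le \rmr(G)-1\le \mu_G(\mathcal C)\le \ell_G(\mathcal C)=\ell_R(K/Rf)$. On the other hand the almost Gorenstein hypothesis on $G$ says $\mathcal C$ is an Ulrich $G$-module of dimension $\dim G-1=0$, so $\fkM\mathcal C=(0)$ and hence $\mu_G(\mathcal C)=\ell_G(\mathcal C)$; combined with $\rmr(G)=\rmr(R)$ this already collapses everything to $\rmr(R)-1=\mu_G(\mathcal C)=\ell_G(\mathcal C)=\ell_R(K/Rf)$. Thus $\ell_R(K/Rf)=\rmr(R)-1=\mu_R(K)-1$, which means $f\notin\fkm K$ and $K/Rf$ has finite length equal to $\rmr(R)-1$. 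Now I consider the exact sequence $0\to R\xrightarrow{\cdot f} K\to K/Rf\to 0$ of $R$-modules: the map is injective because $f$ is $R$-regular (it is $\rmK_R$-regular since $f\notin\fkm\rmK_R$ and $\rmK_R$ is maximal Cohen-Macaulay of dimension one over the Cohen-Macaulay ring $R$, and a generator of a faithful module of positive depth... more precisely $f$ avoids the associated primes of $\rmK_R$, which are the minimal primes of $R$, because $\rmQ(R)$ is Gorenstein so $\rmK_R$ is generically free of rank one and $ft^{-c}$ was shown to be a non-zerodivisor on $\mathcal R'(\omega)$). Setting $C=K/Rf$, this is an exact sequence $0\to R\to \rmK_R\to C\to 0$ with $\dim_R C=0$ and $\fkm C=(0)$: indeed $C$ has finite length, and $\fkm C=(0)$ follows because $\mathcal C=C$ (as graded modules, using that $\mathcal C_{n_i}$'s reassemble to $C$ by Lemma~\ref{4}) is already annihilated by $\fkm G\supseteq\fkm$. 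Therefore $\mu_R(C)=\ell_R(C)=\e^0_\fkm(C)$ (as $\dim_RC=0$, by Proposition~\ref{2.2}(1)), so by Definition~\ref{3.3} the ring $R$ is an almost Gorenstein local ring, completing the one-dimensional case.

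For the general case I reduce to $\dim R=1$ by induction on $d=\dim R$. If $d\ge 2$, since $R/\fkm$ is infinite I can pick $a\in I\setminus I^2$ that is $R$-regular and superficial for $I$, so that its leading form $a^*\in G_1$ is a nonzerodivisor on $G=\operatorname{gr}_I(R)$ and $\operatorname{gr}_I(R)/(a^*)\cong \operatorname{gr}_{I'}(R/(a))$ where $I'=I(R/(a))$. Then $G/(a^*)$ is again Cohen-Macaulay and, being a hyperplane section of an almost Gorenstein graded ring by a superficial-type element, it is almost Gorenstein by Theorem~\ref{3.9}(2) applied in the graded setting (the grading is preserved since $a^*$ is homogeneous of degree one); moreover $\rmr(G/(a^*))=\rmr(G)=\rmr(R)=\rmr(R/(a))$. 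So the inductive hypothesis gives that $R/(a)$ is almost Gorenstein, and then Theorem~\ref{3.9}(1) lifts this to $R$. The base case $d=1$ is exactly Theorem~\ref{6} as just established.

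The main obstacle I expect is twofold. First, in the reduction step one must verify carefully that a superficial element for $I$ can be chosen so that its initial form is simultaneously $G$-regular \emph{and} superficial for the $\fkM$-adic structure on the relevant $G$-module appearing in the defining sequence of "almost Gorenstein graded"; this is the graded analogue of the hypotheses in Theorem~\ref{3.9}(2), and one needs to know the leading form survives in the module $\mathcal C$, i.e.\ that passing mod $a^*$ keeps $\mathcal C/a^*\mathcal C$ an Ulrich module of the right dimension. Second, and more delicate, is establishing $\fkm C=(0)$ for $C=K/Rf$ in dimension one: a priori $C$ is only an $R$-module of finite length, and identifying it with the $G$-module $\mathcal C$ (hence annihilated by $\fkm G$) requires the comparison in Lemma~\ref{4} between the homogeneous pieces $D_n$, the filtration pieces $\omega_n$, and $C$ — one must be sure the isomorphisms $D_n\cong K/Rf$ and $D_{n_i}\cong C_{n_i}$ are compatible enough to transport the $G$-module annihilator back to $R$. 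Once these two points are pinned down, the chain of (in)equalities does all the real work.
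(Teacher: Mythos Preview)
Your argument has the right overall shape, but there are two genuine problems.

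First, a structural confusion: Setting~\ref{2} explicitly assumes $\dim R=1$, so the entire ``general case'' induction on $d$ is superfluous here (that reduction belongs to Theorem~\ref{1}, not to the present theorem, and in any case Setting~\ref{2} allows an arbitrary filtration $\mathcal F$, not just the $I$-adic one, so talking about $a\in I\setminus I^2$ does not make sense).

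Second, and this is the real gap you yourself flag as an obstacle: the identification ``$\mathcal C=C$ as graded modules'' is not correct. The module $\mathcal C$ is a graded $G$-module (in particular annihilated by $I_1$ as an $R$-module), whereas $C=K/Rf$ is an $R$-module with no a priori reason to be killed by $I_1$; Lemma~\ref{4} only compares lengths, not module structures. So you cannot transport the annihilator $\fkM$ of $\mathcal C$ back to an $\fkm$-annihilator on $K/Rf$ that way. The fix is much simpler than the route you propose and is purely numerical: you have already obtained $\ell_R(K/Rf)=\rmr(R)-1=\mu_R(K)-1$. Since $K/Rf$ is a quotient of $K$ by a cyclic submodule, $\mu_R(K/Rf)\ge \mu_R(K)-1$, while trivially $\mu_R(K/Rf)\le \ell_R(K/Rf)$. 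Hence $\mu_R(K/Rf)=\ell_R(K/Rf)$, so $\fkm\cdot(K/Rf)=(0)$ by Proposition~\ref{2.2}(1). This is exactly how the paper proceeds. (Your justification of injectivity of $R\xrightarrow{f}K$ is also tangled; just invoke Lemma~\ref{3.1}(1), since $\dim_R(K/Rf)=0<1=d$.)

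With these corrections your one-dimensional argument coincides with the paper's. Note also that the paper treats the possibility $\rmr(R)-1<\ell_G(\mathcal C)$ separately (deriving a contradiction via Corollary~\ref{3.8}), whereas you ruled it out up front by invoking Corollary~\ref{3.8} to get $\mu_G(\mathcal C)=\rmr(G)-1$; that shortcut is fine once $G$ is assumed non-Gorenstein.
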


\begin{proof} We may assume  that $G$ is not a Gorenstein ring. We choose an exact sequence
$$
0 \to G \overset{\psi}{\to} G(\omega)(-c) \to C \to 0
$$
of graded $G$-modules so that $C \ne (0)$ and $\fkM C = (0)$. Then  $\mu_G(C) = \ell_G(C)$. We set $\xi = \psi(1)$ and write $\xi = \overline{ft^{-c}}$ with $f \in K$. Hence $(0):_G \xi = (0)$. We now look at the estimations stated in Proposition \ref{5}. If $\rmr (R) - 1 =\ell_G(C)$, then $\ell_R(K/Rf) = \mu_R(K/Rf)$ because $\rmr(R) - 1 = \mu_R(K) - 1 \le \mu_R(K/Rf) \le \ell_R(K/Rf) = \ell_G(C)$, so that $\fkm{\cdot}(K/Rf) = (0)$. Consequently, we get the exact sequence
$$0 \to R \overset{\varphi}{\to} K \to K/Rf \to 0$$
of $R$-modules with $\varphi (1) = f$, 
whence $R$ is an almost Gorenstein local ring. If
$\rmr(R)-1 < \ell_G(C)$, then $\psi(1) \in \fkM{\cdot}[G(\omega)(-c)]$ because $\rmr(G) -1<\mu_G(C)$, so  that $G_\fkM$ is a discrete valuation ring. This is impossible, since $G$ is not a Gorenstein ring.
\end{proof}

The converse of Theorem \ref{6} is also true when $G$ satisfies some additional conditions. To see this, we need the following. 
Recall that our graded ring $G$ is said to be level, if $\K_G = G{\cdot}[\K_G]_{-a}$, where $a = \a(G)$.  Let $\widehat{R}$ denote the $\fkm$-adic completion of $R$.

\begin{lem}\label{7}
Suppose that $\rmQ(\widehat{R})$ is a Gorenstein ring and the field $R/\fkm$ is infinite. Let us choose a canonical ideal $K$ of $R$ so that $R \subseteq K \subseteq \overline{R}$. Let $a \in \fkm$ be a regular element of $R$ such that $I = a K \subsetneq R$. We then have the following.
\begin{enumerate}
\item[$(1)$]  Suppose that $G$ is an integral domain.  Then there is an element  $f \in K \setminus \omega_{1-c}$ so that $af \in I$ generates a minimal reduction of $I$. Hence $(0):_G\xi = (0)$, where $\xi = \overline{ft^{-c}} \in G(\omega)(-c)$.
\item[$(2)$] Suppose that $\rmQ(G)$ is a Gorenstein ring and $G$ is a level ring. Then there is an element $f \in K$ such that $af \in I$ generates a reduction of $I$ and $G_\fkp{\cdot}\frac{\xi}{1} = [G(\omega)(-c)]_\fkp \cong G_\fkp$ for all $\fkp \in \Ass G$, where $\xi = \overline{ft^{-c}} \in G(\omega)(-c)$. Hence $(0):_G\xi = (0)$.
\end{enumerate}
\end{lem}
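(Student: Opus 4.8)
The plan is to produce in each case an element $f\in K$ whose associated graded image $\xi=\overline{ft^{-c}}\in G(\omega)(-c)$ is a non-zerodivisor on $G$, and the route is via the standard correspondence between "minimal reductions'' in $R$ and "nonzero leading forms'' in $G$. Since $R/\fkm$ is infinite, a sufficiently general element of $K$ will work; the subtlety is only to verify genericity statements against the finitely many associated primes of $G$ and to control the filtration degrees so that $f\notin\omega_{1-c}$ (i.e.\ so that $ft^{-c}$ has $\mathcal R'$-degree exactly $-c$ and does not accidentally fall into a deeper layer). First I would recall that, because $a\in\fkm$ is $R$-regular and $K$ is a canonical ideal with $R\subseteq K\subseteq\overline R$, the module $I=aK$ is $\fkm$-primary, $\ell_R(R/I)=\ell_R(K/R)+\ell_R(R/aR)$ is finite, and a minimal (equivalently, since $\dim R=1$, a principal) reduction of $I$ is generated by any $af$ with $f\in K$ general; such $af$ satisfies $af\cdot I^n=I^{n+1}$ for $n\gg0$, which is precisely the statement that the leading form of $af$ (in degree $1$ of $G$ if $f\notin\omega_{1-c}$, under the translated grading by $-c$) is a homogeneous non-zerodivisor of $G$.

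For part (1), where $G$ is a domain: here every nonzero homogeneous element of $G$ is a non-zerodivisor, so the only real task is to exhibit $f\in K\setminus\omega_{1-c}$. Because $G$ is a one-dimensional Cohen--Macaulay domain, $G$ is not a field (else $R$ would be a field, contradicting $I\subsetneq R$), hence $\omega_{-c}\neq\omega_{1-c}$ in view of condition (iv) relating $G(\omega)$ to $\K_G$ and the fact that $[\K_G]_{-a}\neq(0)$; thus $K=\omega_{-c}\supsetneq\omega_{1-c}$ and I may pick $f\in K\setminus\omega_{1-c}$. Replacing $f$ by a general $k$-linear perturbation using the infinite residue field, I may simultaneously arrange that $af$ generates a minimal reduction of $I$ (a general element of $I=aK$ does this, and generality is preserved after subtracting an element of $a\omega_{1-c}=aI_1K\subseteq$ deeper layer). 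Then $\xi=\overline{ft^{-c}}$ is a nonzero homogeneous element of the domain $G$, so $(0):_G\xi=(0)$, as claimed.

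For part (2), where $\rmQ(G)$ is Gorenstein and $G$ is level ($\K_G=G\cdot[\K_G]_{-a}$): now $G$ need not be a domain, so I must ensure that the chosen $f$ gives a $\xi$ that is a unit locally at each $\fkp\in\Ass G$, equivalently that the composite $G\xrightarrow{\psi}G(\omega)(-c)$ becomes an isomorphism after localizing at every minimal prime. Because $\rmQ(G)$ is Gorenstein, $[G(\omega)(-c)]_\fkp\cong\K_{G_\fkp}\cong G_\fkp$ for each $\fkp\in\Ass G$, and levelness says $G(\omega)(-c)$ is generated in degree $0$, i.e.\ $G(\omega)(-c)=G\cdot[G(\omega)(-c)]_0=G\cdot\overline{[K/\,?\,]}$; so a single general degree-$0$ element $\xi$ simultaneously generates $[G(\omega)(-c)]_\fkp$ for all the finitely many $\fkp\in\Ass G$ (prime avoidance over the infinite field, applied to the generators of the degree-zero piece). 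Lifting such $\xi$ to $f\in K$ with $f$ representing $\overline{ft^{-c}}$ and perturbing $f$ by a general element so that $af$ additionally generates a reduction of $I$ (again a generic condition on $I=aK$, compatible with the finitely many avoidance conditions), I get $G_\fkp\cdot\frac{\xi}{1}=[G(\omega)(-c)]_\fkp\cong G_\fkp$ for all $\fkp\in\Ass G$, hence $\xi$ is a non-zerodivisor and $(0):_G\xi=(0)$.

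The main obstacle I anticipate is bookkeeping the two flavours of genericity at once in part (2): one needs $af$ to be a reduction of $I$ (a condition at the "top'' of the filtration, governing the Rees algebra side) while simultaneously $\xi$ must be a local generator at each associated prime of $G$ (a condition on the degree-zero piece of $G(\omega)(-c)$, governing the associated-graded side), and one must check these two generic loci are not disjoint. The fix is that both are nonempty Zariski-open (equivalently, complements of finite unions of proper $k$-subspaces, since $R/\fkm$ is infinite) conditions on the \emph{same} finite-dimensional $k$-space of leading coefficients of $f$ modulo deeper layers, so their intersection is nonempty; making this precise—identifying the relevant finite-dimensional space and showing each condition is open and nonempty there—is the only genuinely delicate point, and it is exactly where levelness of $G$ and the Gorensteinness of $\rmQ(G)$ are used.
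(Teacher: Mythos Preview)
Your plan is correct and follows essentially the same approach as the paper. For (1), the paper argues in the reverse order: since $a\omega_{1-c}\subsetneq aK=I$, it writes $I=(x_1,\ldots,x_n)$ with each $x_i$ already generating a minimal reduction of $I$ (possible as $R/\fkm$ is infinite), and then at least one $x_i$ lies outside $a\omega_{1-c}$, giving $f=x_i/a\in K\setminus\omega_{1-c}$; this sidesteps your perturbation step entirely. For (2) the paper does exactly what you describe: levelness gives $M=G{\cdot}M_0$, Gorensteinness of $\rmQ(G)$ gives $M_\fkp\cong G_\fkp$, and a single prime-avoidance choice of $\xi\in M_0$ outside $\bigcup_{\fkp\in\Ass G}(\fkp M_\fkp\cap M)$ simultaneously handles both the reduction condition and the local-generator condition.
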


\begin{proof}
(1) Let $L = \omega_{1-c}$. Then $aL \subsetneq aK =I$, since $L \subsetneq K$. We  write $I = (x_1, x_2, \ldots, x_n)$ such that each $x_i$ generates a minimal reduction of $I$.  Choose $f= x_i$ so that $x_i \not\in L$, which is the  required one.

(2) Let $M = G(\omega)(-c)$. Then since $M = G{\cdot}M_0$ and $M_\fkp \ne (0)$, $M_0 \not\subseteq \fkp M_\fkp \cap M$ for any $\fkp \in \Ass G$. Choose an element $f \in K$ so that $af$ generates a reduction of $I$ and $\xi =\overline{ft^{-c}} \not\in \fkp M_\fkp \cap M$ for any $\fkp \in \Ass G$. Then $G_\fkp {\cdot}\frac{\xi}{1}= M_\fkp$ for all $\fkp \in \Ass G$, because $M_\fkp \cong G_\fkp$. 
\end{proof}

\begin{thm}\label{8}
Suppose that $R$ is an almost Gorenstein local ring and the field $R/\fkm$ is infinite. Assume that one of the following conditions is satisfied${\mathrm :}$ 
\begin{enumerate}
\item[$(1)$] $G$ is an integral domain;
\item[$(2)$] $\rmQ(G)$ is a Gorenstein ring and $G$ is a level ring.
\end{enumerate}
Then $G$ is an almost Gorenstein graded ring with $\rmr(G) = \rmr(R)$.
\end{thm}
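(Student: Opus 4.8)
The plan is to exhibit the single exact sequence required by Definition \ref{6.1} and then to force, via the numerical estimates already assembled, that its cokernel is annihilated by $\fkM$. We work in the situation of Setting \ref{2}, so $\dim R=1$, $G=G(\mathcal F)$ is Cohen--Macaulay, $c=\rma(G)+1$, $K=\rmK_R$, and $\omega=\{\omega_n\}_{n\in\Z}$ is the canonical filtration of $K$ with $\rmK_G\cong G(\omega)(-1)$; in particular $G(\omega)(-c)\cong\rmK_G(-\rma(G))$. First I would note that in both cases the standing hypotheses make $\rmQ(\widehat R)$ a Gorenstein ring (in case (1), $G$ being a domain forces the $\mathcal F$-filtration to be separated with $\widehat R$ a domain; in case (2), $\rmQ(G)$ Gorenstein together with $G$ Cohen--Macaulay forces $\rmQ(\widehat R)$ Gorenstein), so Lemma \ref{7} is available. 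Choose $K$ with $R\subseteq K\subseteq\overline R$ and an $R$-regular $a\in\fkm$ with $I=aK\subsetneq R$, and apply Lemma \ref{7}: we obtain $f\in K$ such that $af$ generates a minimal reduction of $I=aK$ and such that $(0):_G\xi=(0)$, where $\xi=\overline{ft^{-c}}\in G(\omega)(-c)$ (in case (1) this is immediate from $f\notin\omega_{1-c}$ and $G$ a domain; in case (2) one uses that $G$, being Cohen--Macaulay, has no embedded primes together with $G_\fkp\cdot\xi=[G(\omega)(-c)]_\fkp$ for all $\fkp\in\Ass G$). Hence there is an exact sequence of graded $G$-modules
$$
(E)\qquad 0\to G\xrightarrow{\ \psi\ }G(\omega)(-c)\to C\to0,\qquad \psi(1)=\xi,
$$
and, as recorded in the discussion after Setting \ref{2}, $\dim_GC=0$, so $C$ has finite length with $\ell_G(C)=\ell_R(C)=\ell_R(K/Rf)$, the last equality being Lemma \ref{4}.

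The heart of the argument is the computation $\ell_R(K/Rf)=\rmr(R)-1$. Since $a$ is a non-zerodivisor and $af$ generates a reduction of $I=aK$, one has $K^{n+1}=fK^n$ for $n\gg0$; as $(0)\ne K^{n+1}$ is a finitely generated $R$-module, Nakayama's lemma gives $f\notin\fkm K$, whence $f$ is part of a minimal system of generators of $K$ and $\mu_R(K/Rf)\le\mu_R(K)-1=\rmr(R)-1$ by \cite[Korollar 6.11]{HK}. On the other hand, $I=aK$ is a canonical ideal of the almost Gorenstein ring $R$ and $(af)$ is a minimal reduction of it, so \cite[Theorem 3.11]{GMP} (the one-dimensional case of Theorem \ref{4.1}) yields $\fkm I=\fkm\cdot af$; cancelling the non-zerodivisor $a$ gives $\fkm K=\fkm f\subseteq Rf$, so $\fkm\cdot(K/Rf)=(0)$ and therefore $\ell_R(K/Rf)=\mu_R(K/Rf)\le\rmr(R)-1$. (If $R$ happens to be Gorenstein, then $\rmr(R)=1$, $K$ is a principal canonical ideal, the reduction forces $K=Rf$, and $\ell_R(K/Rf)=0=\rmr(R)-1$; so the estimate holds in all cases.)

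Now feed this into Proposition \ref{5}:
$$
\rmr(R)-1\ \le\ \rmr(G)-1\ \le\ \mu_G(C)\ \le\ \ell_G(C)=\ell_R(K/Rf)\ \le\ \rmr(R)-1.
$$
Every inequality is therefore an equality. In particular $\rmr(G)=\rmr(R)$, and $\mu_G(C)=\ell_G(C)$; since $C$ has finite length this forces $\fkM C=(0)$, i.e. $\mu_G(C)=\e^0_\fkM(C)$ (trivially so if $C=(0)$). Because $G(\omega)(-c)\cong\rmK_G(-\rma(G))$, the sequence $(E)$ is exactly of the type demanded by Definition \ref{6.1}. Hence $G$ is an almost Gorenstein graded ring with $\rmr(G)=\rmr(R)$.

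I expect the main obstacle to be the first step: namely, extracting from Lemma \ref{7} (in both cases) one element $f\in K$ that simultaneously generates a minimal reduction of $I$ --- so that $f\notin\fkm K$ and \cite[Theorem 3.11]{GMP} is applicable to give $\fkm K\subseteq Rf$ --- and makes $\xi$ a non-zerodivisor on $G$, so that $(E)$ is a genuine short exact sequence with $C$ of finite length; and, logically prior to this, the verification that the hypotheses of the theorem really do guarantee that $\rmQ(\widehat R)$ is Gorenstein, so that Lemma \ref{7} may be invoked at all.
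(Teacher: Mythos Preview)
Your proposal is correct and follows essentially the same route as the paper's proof: invoke Lemma~\ref{7} to obtain $f$ with $(0):_G\xi=(0)$, compute $\ell_R(K/Rf)=\rmr(R)-1$, and collapse the chain of Proposition~\ref{5}. You in fact supply a step the paper leaves implicit, namely that $\fkm K=\fkm f$ (via \cite[Theorem~3.11]{GMP}), which is what turns $\mu_R(K/Rf)=\rmr(R)-1$ into $\ell_R(K/Rf)=\rmr(R)-1$ as needed to close the chain.

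One simplification: your justification of $\rmQ(\widehat R)$ being Gorenstein via properties of $G$ is unnecessary and, in case~(1), not obviously valid as stated. The clean reason, used by the paper, is that $R$ almost Gorenstein implies $\widehat R$ almost Gorenstein (Theorem~\ref{3.5}), and then Lemma~\ref{3.1}(1) gives $\rmQ(\widehat R)$ Gorenstein directly.
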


\begin{proof}
The ring $\rmQ(\widehat{R})$ is Gorenstein, since $R$ is an almost Gorenstein local ring. Let $K$ be a canonical ideal of $R$ such that $R \subseteq K \subseteq \overline{R}$. We choose an element $f \in K$ and $a \in \fkm$ as in Lemma \ref{7}. Then $\mu_R(K/Rf) = \rmr(R)-1$, since $f$ is a part of a minimal system of generators of $K$ (recall that $af$ generates a minimal reduction of $I = aK$). Therefore by Proposition \ref{5}, $\rmr(G) = \rmr(R)$ and $\fkM{\cdot}C = (0)$, whence $G$ is an almost Gorenstein graded ring. 
\end{proof}

Suppose that $(R, \fkm)$ is a complete local domain of dimension one and let $V = \overline{R}$. Hence $V$ is a discrete valuation ring. Let $\fkn$ be the maximal ideal of $V$ and set $I_n = \fkn^n \cap R$ for each $n \in \Bbb Z$. Then $\mathcal F = \{I_n\}_{n \in \Bbb Z}$ is a filtration of ideals of $R$. We have $I_1 = \fkm$ and $G(\mathcal F) ~(\subseteq \gr_\fkn(V) = \bigoplus_{n \ge 0}\fkn^n/\fkn^{n+1})$ is an integral domain. The ring $\mathcal R(\mathcal F)$ is Noetherian, since  $\fkn^n \subseteq R$ for all $n \gg 0$. Therefore, applying Theorems \ref{6} and  \ref{8} to this setting, we readily get the following, where the implication $(2) \Rightarrow (1)$ is not true in general, unless $\rmr(G) = \rmr(R)$ (see Example \ref{p7.5}).

\begin{cor}[cf. {\cite[Proposition 29]{BF}}] Let $R$ and $\mathcal F$ be as above and consider the following conditions.
\begin{enumerate}
\item[$(1)$] $R$ is an almost Gorenstein local ring;
\item[$(2)$] $G$ is an almost Gorenstein graded ring and $\rmr(G) = \rmr(R)$.
\end{enumerate}
Then the implication $(2) \Rightarrow (1)$ holds true.
If the field $R/\fkm$ is infinite, the converse is also true. 

\end{cor}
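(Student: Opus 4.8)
The plan is to observe that the filtration $\mathcal F = \{I_n\}_{n \in \Bbb Z}$ falls under the hypotheses of Setting \ref{2}, after which the two implications drop out of Theorems \ref{6} and \ref{8} with no further work. First I would verify the three standing assumptions of Setting \ref{2}. Condition $(1)$, that $R$ is a homomorphic image of a Gorenstein ring, is automatic since $R$ is a complete local ring, by Cohen's structure theorem. Condition $(2)$, that $\mathcal R(\mathcal F)$ is Noetherian, holds because the conductor $\fkc = R :_R V$ is a nonzero ideal of the discrete valuation ring $V$ (recall $V = \overline R$ is module-finite over the complete domain $R$), hence $\fkc = \fkn^c$ for some $c \ge 0$; therefore $I_n = \fkn^n \cap R = \fkn^n$ for all $n \ge c$, and $\mathcal R(\mathcal F)$ agrees with the Noetherian Rees algebra $\bigoplus_{n \ge 0}\fkn^n t^n$ of the DVR $V$ in all but finitely many degrees, so it is Noetherian. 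Condition $(3)$, that $G = G(\mathcal F)$ is Cohen-Macaulay, follows because $G = \mathcal R'(\mathcal F)/t^{-1}\mathcal R'(\mathcal F)$ is a Noetherian graded ring embedded in $\gr_\fkn(V) \cong (V/\fkn)[t]$, hence a one-dimensional Noetherian domain, and a one-dimensional Noetherian domain is Cohen-Macaulay. Finally $I_1 = \fkn \cap R = \fkm \ne R$, so $\mathcal F$ meets every requirement of Setting \ref{2}.

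Granting this, the implication $(2) \Rightarrow (1)$ is nothing but Theorem \ref{6}: if $G$ is an almost Gorenstein graded ring with $\rmr(G) = \rmr(R)$, then $R$ is an almost Gorenstein local ring. No additional argument is needed here.

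For the converse, I would add the hypothesis that $R/\fkm$ is infinite together with the assumption that $R$ is an almost Gorenstein local ring, and invoke Theorem \ref{8}. Since $G$ is an integral domain by the verification above, condition $(1)$ of Theorem \ref{8} is in force, and Theorem \ref{8} delivers precisely that $G$ is an almost Gorenstein graded ring with $\rmr(G) = \rmr(R)$, which is condition $(2)$. I do not expect any genuine obstacle: all of the substantive work has been carried out in Theorems \ref{6} and \ref{8}, and the only new points are the routine checks that $\mathcal R(\mathcal F)$ is Noetherian (through the conductor) and that $G$ is Cohen-Macaulay (through its embedding in $\gr_\fkn(V)$). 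The one item worth flagging is that Theorem \ref{8} requires $G$ to be a domain, which is exactly why one records the embedding $G \hookrightarrow \gr_\fkn(V)$ explicitly.
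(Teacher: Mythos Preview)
Your proposal is correct and follows essentially the same approach as the paper: the paper's proof is the paragraph immediately preceding the corollary, which verifies that $I_1=\fkm$, that $G(\mathcal F)\subseteq \gr_\fkn(V)$ is an integral domain, and that $\mathcal R(\mathcal F)$ is Noetherian (since $\fkn^n\subseteq R$ for $n\gg 0$), and then simply invokes Theorems~\ref{6} and~\ref{8}. You spell out the verification of Setting~\ref{2} more explicitly (in particular, noting that $R$ is a quotient of a Gorenstein ring by Cohen's theorem and that $G$ is Cohen--Macaulay because it is a one-dimensional Noetherian domain), but the substance is identical.
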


\if0

\begin{ex}
Let $k$ be afield with $\operatorname{ch} k \ne 2$ and let $R = k[[t^4, t^6+t^7, t^{10}]] \subseteq V$, where $V = k[[t]]$ denotes the formal power series ring over $k$. Then $V = \overline{R}$. Let $H$ be the value semigroup of $R$ and set $\mathcal F = \{(tV)^n\cap R\}_{n \in \Bbb Z}$. Then the following assertions hold true.
\begin{enumerate}
\item[$(1)$] $H = \left<4,6,11,13\right>$.
\item[$(2)$] $G(\mathcal F) = k[t^4,t^6,t^{11}, t^{13}] ~(\subseteq k[t])$ is an almost Gorenstein graded ring with $\rmr(G(\mathcal F)) = 3$.
\item[$(3)$] $R$ is not an almost Gorenstein local ring and $\rmr(R) = 2$. 
\end{enumerate} 
\end{ex}

\fi

To prove Theorem \ref{1}, we need one more result.

\begin{prop}\label{10}
Let $G = G_0[G_1]$ be a Noetherian standard graded ring. Assume that $G_0$ is an Artinian local ring with infinite residue class field. If $G$ is an almost Gorenstein graded ring with $\dim G \ge 2$, then $G/(x)$ is an almost Gorenstein graded ring for some non-zerodivisor $x \in G_1$. 
\end{prop}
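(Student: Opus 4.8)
The plan is to imitate the non\-zerodivisor characterization Theorem \ref{3.9}(2), cutting $G$ down by one dimension with a single linear form while bookkeeping the $\Bbb Z$\-grading. Write $a=\rma(G)$ and $\fkM=\fkm G+G_+$, where $\fkm$ is the (nilpotent) maximal ideal of the Artinian local ring $G_0$. If $G$ is a Gorenstein ring, then $\rmK_G(-a)\cong G$, and I would simply choose $x\in G_1$ lying outside every associated prime of $G$: this is possible because $G$ is Cohen\-Macaulay with $\dim G\ge 2$, so no $\fkp\in\Ass G$ contains $G_1$ (otherwise $\dim G/\fkp=0$), and the residue field of $G_0$ is infinite, so graded prime avoidance yields a linear form avoiding the finitely many $\fkp\cap G_1$. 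Such an $x$ is $G$\-regular, $G/xG$ is a standard graded Gorenstein ring of positive dimension, hence an almost Gorenstein graded ring. So from now on assume $G$ is not Gorenstein.

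Fix an exact sequence $0\to G\xrightarrow{\varphi}\rmK_G(-a)\to C\to 0$ of graded $G$\-modules with $\mu_G(C)=\e^0_\fkM(C)$. By Lemma \ref{3.1}(2), $C$ is a nonzero Cohen\-Macaulay graded $G$\-module of dimension $d-1\ge 1$, hence an Ulrich module. Using that $G_0$ has infinite residue field, I would choose $x\in G_1$ that is at the same time (i) $G$\-regular, i.e.\ outside every associated prime of $G$, and (ii) superficial for $C$ with respect to $\fkM$; a general element of the $G_0$\-module $G_1$ satisfies both. Then $x$, being a $G$\-regular element of positive degree, is also regular on the maximal Cohen\-Macaulay module $\rmK_G$; and, being superficial for the $(d-1)$\-dimensional module $C$, it is $C$\-regular, while $C/xC$ is an Ulrich $G/xG$\-module of dimension $d-2$ by Proposition \ref{2.2}(4) (when $d=2$ this is just $\fkM(C/xC)=(0)$, still the Ulrich condition). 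Tensoring the sequence above with $G/xG$ and using $\Tor_1^G(G/xG,C)=(0)$, which holds because $x$ is $C$\-regular, I obtain an exact sequence of graded $G/xG$\-modules
$$0\to G/xG\to (\rmK_G(-a))/x(\rmK_G(-a))\to C/xC\to 0.$$

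It then remains to identify the middle term with $\rmK_{G/xG}(-\rma(G/xG))$. Since $x$ is a homogeneous $G$\-regular element of degree $1$ and $G$ is Cohen\-Macaulay, the sequence $0\to G(-1)\xrightarrow{x} G\to G/xG\to 0$ together with graded local duality gives $\rma(G/xG)=a+1$ and a graded isomorphism $\rmK_{G/xG}\cong(\rmK_G/x\rmK_G)(1)$ (the ungraded statement being \cite[Korollar 6.3]{HK}, the grading being the standard refinement in \cite{GW1}); hence
$$(\rmK_G(-a))/x(\rmK_G(-a))\cong(\rmK_G/x\rmK_G)(-a)\cong\rmK_{G/xG}(-1)(-a)=\rmK_{G/xG}(-a-1)=\rmK_{G/xG}(-\rma(G/xG)).$$
Plugging this back in, the displayed sequence becomes $0\to G/xG\to\rmK_{G/xG}(-\rma(G/xG))\to C/xC\to 0$ with $C/xC$ an Ulrich $G/xG$\-module, so $\mu_{G/xG}(C/xC)=\e^0_{\fkM/xG}(C/xC)$ and $G/xG$ is an almost Gorenstein graded ring by Definition \ref{6.1} (note $G/xG$ is again a standard graded, Cohen\-Macaulay quotient over $G_0$, so it admits a graded canonical module).

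The main obstacle I anticipate is the two pieces of care hidden above: first, nailing down the grading shift, i.e.\ that $\rma(G/xG)=\rma(G)+1$ and that the canonical\-module twist is exactly $(1)$ so that it cancels against the $(-a)$\-shift in Definition \ref{6.1}; and second, the simultaneous choice of one element $x\in G_1$ that is both $G$\-regular and superficial for $C$ with respect to $\fkM$. Both are standard over an infinite residue field, but they must be stated and combined carefully; everything else is a faithful transcription of the local argument of Theorem \ref{3.9}(2) into the graded category.
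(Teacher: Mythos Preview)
Your proposal is correct and follows essentially the same approach as the paper's proof: choose $x\in G_1$ that is $G$-regular and superficial for $C$ with respect to $\fkM$, reduce the defining exact sequence modulo $x$, and identify the resulting middle term as $\rmK_{G/xG}(-\rma(G/xG))$ via the shift $\rma(G/xG)=a+1$. The only detail the paper makes slightly more explicit is that $G_+=(G_1)$ is a reduction of $\fkM$ (since $G_0$ is Artinian), which is why a superficial element for $C$ with respect to $\fkM$ may be found in $G_1$; you absorb this into ``a general element of the $G_0$-module $G_1$ satisfies both,'' which is fine.
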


\begin{proof} 
We may assume that $G$ is not a Gorenstein ring. Let $\fkm$ be the maximal ideal of $G_0$ and set $\fkM = \fkm G + G_+$. We consider the sequence
$$
0 \to G \to \rmK_G(-a) \to C \to 0
$$
of graded $G$-modules
such that $\mu_G(C) = \e^0_{\mathfrak{M}}(C)$, where $a = \rma(G)$ is the $\rma$-invariant of $G$. Then because the field $G_0/\mathfrak{m}$ is infinite and the ideal $G_{+}=(G_1)$ of $G$ is a reduction of $\mathfrak{M}$, we may choose an element $x \in G_1$ so that $x$ is $G$-regular and superficial for $C$ with respect to $\mathfrak{M}$. We set $\overline{G} = G/(x)$ and remember that  $x$ is $C$-regular, as $\dim_GC = \dim G - 1 > 0$. We then have the exact sequence
$$
0 \to G/(x) \to ({\rmK_G}/x{\rmK_G})(-a) \to C/xC \to 0
$$
of graded $\overline{G}$-modules. We now notice that $\rma (\overline{G}) = a+1$ and that 
$$
({\rmK_G}/x{\rmK_G})(-a) \cong \rmK_{\overline{G}}(-(a + 1))
$$
as a graded $\overline{G}$-module, while we see 
$$
\e^0_{{\fkM}/(x)}(C/xC) = \e^0_{\fkM}(C) = \mu_G(C) = \mu_G(C/xC),
$$ since $x$ is superficial for $C$ with respect to $\fkM$. 
Thus $\overline{G}$ is an almost Gorenstein graded ring.
\end{proof}

We are now ready to prove Theorem \ref{1}.

\begin{proof}[Proof of Theorem $\ref{1}$]
We set $d = \dim R$ and $G = \operatorname{gr}_I(R)$.
We may assume that $G$ is not a Gorenstein ring. Hence $d = \dim G \ge 1$. By Theorem \ref{6} we may also assume that $d > 1$ and that our assertion holds true for $d-1$. Let us consider an exact sequence
$$
0 \to G \to \rmK_G(-a) \to C \to 0
$$
of graded $G$-modules  with $\mu_G(C) = \e^0_{\fkM}(C)$, where $\fkM = \fkm G+G_+$ and $a= \rma(G)$. We choose  an element $a \in I$ so that the initial form $a^* = a + I^2 \in G_1=I/I^2$ of $a$ is $G$-regular and $G/a^*G = \operatorname{gr}_{I/(a)}(R/(a))$ is an almost Gorenstein graded ring (this choice is possible; see  Proposition \ref {10}). Then the hypothesis on $d$ shows $R/(a)$ is an almost Gorenstein local ring. Therefore $R$ is an almost Gorenstein local ring, because $a$ is $R$-regular. 
\end{proof}

In general, the local rings $R_\fkp$~$(\fkp \in \Spec R)$ of an almost Gorenstein local ring $R$ are not necessarily almost Gorenstein, as we will show by Example \ref{p7.5}. To do this, we assume that $R$ is a Cohen-Macaulay local ring of dimension $d \ge 0$, possessing the canonical module $\rmK_R$.  Let $\mathcal{F}=\{ I_n\}_{n \in \Bbb Z}$ be a filtration of 
ideals of $R$ such that $I_0 = R$ but $I_1 \neq R$. Smilarly as Setting \ref{2}, we consider the $R$-algebras
$${\mathcal R} = \sum_{n \ge 0}I_nt^n \subseteq R[t],\ \ {\mathcal R}' = \sum_{n \in \Bbb Z}I_nt^n \subseteq R[t,t^{-1}], \ \ \text{and}\ \ G = {\mathcal R}'/t^{-1}{\mathcal R}'$$  associated to $\mathcal{F}$, where $t$ is an indeterminate. Notice that $\mathcal{R}' = \mathcal{R}[t^{-1}]$ and that $G = \oplus_{n \ge 0}I_n/I_{n+1}$. Let $\fkN$ denote the graded maximal ideal of ${\mathcal R}'$. We then have the following.

\begin{thm}\label{p7.4} Suppose that $R/\fkm$ is infinite and that ${\mathcal R}$  is a Noetherian ring. If $G_{\fkN}$ is a pseudo-Gorenstein local ring, then $R$ is pseudo-Gorenstein.
\end{thm}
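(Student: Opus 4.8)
The plan is to separate the two requirements hidden in ``pseudo-Gorenstein'': the bound $\rmr(R)\le 2$, which is cheap, and the statement that $R$ is almost Gorenstein, which is the substance and will be reduced to the one-dimensional case already settled in Theorem \ref{6}. For the type, note first that $G_\fkN$, being almost Gorenstein, is Cohen--Macaulay, so $G$ is Cohen--Macaulay and hence so is $(\mathcal R')_\fkN$, since $t^{-1}\in\fkN$ is a nonzerodivisor with $(\mathcal R')_\fkN/(t^{-1})=G_\fkN$. Localizing the canonical module of $\mathcal R'$ at the powers of $t$ gives $\rmK_{\mathcal R'}[1/t]=\rmK_{R[t,t^{-1}]}$, which is a shift of $\rmK_R\otimes_RR[t,t^{-1}]$; comparing minimal numbers of generators yields
$$
\rmr(R)=\mu_R(\rmK_R)\le\mu_{\mathcal R'}(\rmK_{\mathcal R'})=\rmr\big((\mathcal R')_\fkN\big)=\rmr(G_\fkN)\le 2,
$$
the third equality because passing modulo the nonzerodivisor $t^{-1}$ preserves the Cohen--Macaulay type, and the last inequality because $G_\fkN$ is pseudo-Gorenstein. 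Thus $\rmr(R)\le 2$; if $\rmr(R)=1$ then $R$ is Cohen--Macaulay of type one, hence Gorenstein, hence pseudo-Gorenstein, and we are done. So from now on $\rmr(R)=2$, which forces $\rmr(G_\fkN)=2=\rmr(R)$.

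It remains to show $R$ is almost Gorenstein, and I would argue by induction on $d=\dim R$. Since an almost Gorenstein local ring is generically Gorenstein (Lemma \ref{3.1}), $\rmQ(G)$ is Gorenstein, and so are $\rmQ(\mathcal R')$ and $\rmQ(R)$. If $d=1$, then $(R,\mathcal F)$ is exactly the situation of Setting \ref{2}: condition (1) holds since $R$ has a canonical module, (2) is the hypothesis that $\mathcal R$ is Noetherian, and (3) holds because $G$ is Cohen--Macaulay. In dimension one an Ulrich module is a vector space over $k=R/\fkm$, so the almost Gorenstein property of $G_\fkN$ upgrades to a graded exact sequence $0\to G\to\rmK_G(-\rma(G))\to C\to 0$ with $\fkM C=(0)$; that is, $G$ is an almost Gorenstein graded ring, and since $\rmr(G)=\rmr(R)$, Theorem \ref{6} gives that $R$ is almost Gorenstein. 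If $d\ge 2$, then, using that $R/\fkm$ is infinite and $\mathcal R$ is Noetherian, I would choose $a\in I_1$ which is $R$-regular and whose initial form $a^\ast\in G_1$ is a $G$-regular element superficial for the Ulrich module occurring inside $G$; then $G/a^\ast G$ is the associated graded ring of the induced filtration $\mathcal F'=\{(I_n+aI_{n-1})/(a)\}_{n\in\Z}$ on $R/(a)$, and a superficiality argument in the spirit of Proposition \ref{10}, together with the fact that the pseudo-Gorenstein property localizes (Proposition \ref{7.2}), shows that the localization of $G/a^\ast G$ at its graded maximal ideal is again pseudo-Gorenstein. By the inductive hypothesis $R/(a)$ is pseudo-Gorenstein, whence $R$ is almost Gorenstein by the non-zerodivisor characterization Theorem \ref{3.9}(1); together with $\rmr(R)\le 2$ this closes the induction.

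The main obstacle is the descent when $d\ge 2$: one must manufacture a single $a\in I_1$ that is simultaneously $R$-regular, has a $G$-regular degree-one initial form, and is superficial for the relevant Ulrich module, and then verify that reduction by $a^\ast$ keeps the type equal to $2$ and preserves pseudo-Gorensteinness at the graded maximal ideal. The subtlety is that $G=\bigoplus_nI_n/I_{n+1}$ need not be generated in degree one, so Proposition \ref{10} does not apply verbatim; the way around this is to carry out the homological bookkeeping on $\mathcal R'$, where $t^{-1}\in\fkN$ is a manifest regular element with $\mathcal R'/t^{-1}\mathcal R'=G$, and then transfer back through the canonical-module identifications. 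The structural fact that legitimizes the dimension lowering at every stage is that $G_\fkN$ almost Gorenstein forces $\rmQ(G)$ to be Gorenstein --- via $G$ Cohen--Macaulay and $\Ass G$ consisting of (graded) minimal primes --- which is precisely what keeps the cokernels one produces of dimension $\dim R-1$ rather than $\dim R$; in dimension one this is exactly the content of Proposition \ref{5} and Theorem \ref{6}, so once the reduction to $d=1$ is secured the argument is complete.
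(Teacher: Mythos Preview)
Your type bound $\rmr(R)\le 2$ is correct, but the substance of the proof---showing $R$ is almost Gorenstein---has a genuine gap in both the base case and the inductive step.

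In the $d=1$ case you assert that the almost Gorenstein property of $G_\fkN$ ``upgrades to a graded exact sequence'' $0\to G\to\rmK_G(-\rma(G))\to C\to 0$ with $\fkM C=(0)$. That is exactly the statement that $G$ is an almost Gorenstein \emph{graded} ring, and it does not follow formally from $G_\fkN$ being almost Gorenstein \emph{locally}. Example~\ref{6.5} in the paper shows the local and graded notions can diverge; while that example is two-dimensional, you give no argument for why dimension one is special. Concretely, knowing $G_\fkN$ is almost Gorenstein produces some $\xi\in(\rmK_G)_\fkN$ with Ulrich cokernel, but you have not shown $\xi$ can be taken homogeneous of degree $-\rma(G)$, which is what Theorem~\ref{6} requires. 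For $d\ge 2$ you yourself identify the obstacle: $G$ need not be generated in degree one, so a $G$-regular element of $G_1$ may fail to exist, and the suggested detour through $\mathcal R'$ remains a sketch rather than a proof.

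The paper's argument avoids both problems and uses no induction. Since $G_\fkN=\mathcal R'_\fkN/(t^{-1})$ with $t^{-1}$ a nonzerodivisor, Theorem~\ref{3.9}(1) lifts pseudo-Gorensteinness from $G_\fkN$ to $\mathcal R'_\fkN$. A pseudo-Gorenstein local ring is semi-Gorenstein (Proposition~\ref{7.2}), and semi-Gorensteinness passes to all localizations (Proposition~\ref{7.0}); applying this with the prime $P=\fkm R[t,t^{-1}]\cap\mathcal R'\subseteq\fkN$ gives that $(\mathcal R'_\fkN)_{P\mathcal R'_\fkN}=\mathcal R'_P=R[t,t^{-1}]_{\fkm R[t,t^{-1}]}$ is almost Gorenstein of type at most $2$. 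Finally, the flat local map $R\to R[t,t^{-1}]_{\fkm R[t,t^{-1}]}$ has regular fibre, so Theorem~\ref{3.5} descends the almost Gorenstein property to $R$. The key structural point you were missing is precisely that pseudo-Gorenstein $\Rightarrow$ semi-Gorenstein, which is what makes localization (and hence the passage from $\mathcal R'_\fkN$ down to $R$) painless.
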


\begin{proof} By Theorem \ref{3.9} (1) ${\mathcal R'}_{\fkN}$ is an almost Gorenstein ring  with $\rmr(G_{\fkN}) = \rmr({\mathcal R'}_{\fkN}) \le 2$, as $G_{\fkN} = {\mathcal R'}_{\fkN}/t^{-1}{\mathcal R'}_{\fkN}$ and $t^{-1}$ is $\mathcal{R}'$-regular. Let $\fkp = \fkm {\cdot}R[t, t^{-1}]$ and set $P = \fkp \cap {\mathcal R'}$. Then  $P \subseteq \fkN$, so that by Proposition  \ref{7.0} $R[t,t^{-1}]_{\fkp}$ is an almost  Gorenstein ring, because $R[t,t^{-1}]_{\fkp} = {\mathcal R'}_P = ({\mathcal R'}_{\fkN})_{P{\mathcal R'}_{\fkN}}$. Thus by Theorem \ref{3.5} $R$ is an almost Gorenstein ring with $\rmr(R)\le 2$, because  $R/\fkm$ is infinite and the composite homomorphism $R \to R[t,t^{-1}] \to R[t,t^{-1}]_\fkp$ is flat.
\end{proof}

\begin{rem}\label{3}
The following example \ref{p7.5} is given by V. Barucci, D. E. Dobbs, and M. Fontana \cite[Example II.1.19]{BDF}. Because $R[t,t^{-1}]_{\fkp} = {\mathcal R'}_P = ({\mathcal R'}_{\fkN})_{P{\mathcal R'}_{\fkN}}$ with the notation of the proof of Theorem \ref{p7.4}, Theorem \ref{3.5} and Example \ref{p7.5} show that $({\mathcal R'}_{\fkN})_{P{\mathcal R'}_{\fkN}}$ is \textit{not} an almost Gorenstein local ring (here we assume the field $k$ is infinite). Hence, in general, local rings $R_\fkp~(\fkp \in \Spec R)$ of an almost Gorenstein local ring $R$ are not necessarily almost Gorenstein. Notice that $\mathcal{R}'_\fkN$ is \textit{not} a semi-Gorenstein local ring (remember that the local rings of a semi-Gorenstein local ring are semi-Gorenstein; see Proposition  \ref{7.0}). Therefore the example also shows that a local ring $R$ is \textit{not necessarily} semi-Gorenstein, even if $R/(f)$ is a semi-Gorenstein ring for some non-zerodivisor $f$ of $R$.

\begin{ex}\label{p7.5}
Let $k$ be a field with $\operatorname{ch} k \ne 2$ and let $R = k[[x^4, x^6+x^7, x^{10}]] \subseteq V$, where $V = k[[x]]$ denotes the formal power series ring over $k$. Then $V = \overline{R}$. Let $v$ denote the discrete valuation of $V$ and set $H=\{v(a) \mid 0 \ne a \in R\}$, the value semigroup of $R$. We consider the filtration  $\mathcal{F}= \{(xV)^n\cap R\}_{n \in \Bbb Z}$ of ideals of $R$ and  set $G=\mathcal{R}'/t^{-1}\mathcal{R}'$, where $\mathcal{R}'=\mathcal{R}'(\mathcal{F})$ is the extended Rees algebra of $\mathcal{F}$. We then have:
\begin{enumerate}
\item[$(1)$] $H = \left<4,6,11,13\right>$.
\item[$(2)$] $G \cong k[x^4,x^6,x^{11}, x^{13}] ~(\subseteq k[x])$ as a graded $k$-algebra and $G_\fkN$ is an almost Gorenstein local ring with $\rmr(G_\fkN) = 3$, where $\fkN$ is the graded maximal ideal of $\mathcal{R}'$.
\item[$(3)$] $R$ is not an almost Gorenstein local ring and $\rmr(R) = 2$. 
\end{enumerate} 
\end{ex}

\end{rem}

\if0

We readily get the following, since $\rmr (R) \le \rmr(\operatorname{gr}_I(R))$.

\begin{cor}\label{11a}
Let $(R, \m)$ be a Noetherian local ring with infinite residue class field. Suppose that $R$ is a homomorphic image of a Gorenstein local ring. Let  $I$ be an $\m$-primary ideal of $R$ and assume that $\operatorname{gr}_I(R)= \bigoplus_{n \ge 0}I^n/I^{n+1}$ is an almost Gorenstein graded ring.
Then $R$ is an almost Gorenstein local ring, if $\rmr(\operatorname{gr}_I(R))) = 2$
\end{cor}

\fi

\section{Almost Gorenstein homogeneous rings}\label{homog}

In this section let $R = k[R_1]$ be a Cohen-Macaulay homogeneous ring over an infinite field $k=R_0$. We assume $d=\dim R>0$. Let $\fkM=R_+$ and $a = \a(R)$. For each finitely generated graded $R$-module $X$, let 
$
$[\![X]\!]$=\sum_{n=0}^{\infty}\dim_kX_n{\cdot}\lambda^n \ \in \ \Z[\lambda]$
be the Hilbert series of $X$, where $X_n$~($n\in\Z$) denotes the homogeneous component of $X$ with degree $n$. Then as it is well-known, writing $[\![R]\!]
 = \frac{F(\lambda)}{(1-\lambda)^d}$ with $F(\lambda)\in\Z[\lambda]$, we have $\deg F(\lambda) = a+d \geq 0$ and $[\![\K_R(-a)]\!]= \frac{F(\frac{1}{\lambda}){\cdot}\lambda^{a+d}}{(1-\lambda)^d}$.
Let $f_1,f_2,\ldots,f_d$ be a linear system of parameters of $R$. Then, because $\a(R/(f_1,f_2,\ldots,f_d)) = a+d$, we have $a=1-d$ if and only if $\fkM^2 = (f_1,f_2,\ldots,f_d)\fkM$ and $\fkM \ne (f_1,f_2,\ldots,f_d)$. Conversely, we get the following. Remember that the graded ring $R$ is said to be level, if $\K_R = R{\cdot}[\K_R]_{-a}$. 

\begin{prop}\label{7.1} Suppose that $a=1-d$. Then $R$ is a level ring with 
$[\![R[\!]= \frac{1+c\lambda}{(1-\lambda)^d}$ and 
$[\![\K_R(-a)[\!] = \frac{c+\lambda}{(1-\lambda)^d},$
where $c=\dim_k R_1 -d$.
\end{prop}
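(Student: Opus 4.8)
The plan is to exploit the hypothesis $a=1-d$ to pin down the shape of the Hilbert series of $R$ and then transfer it to $\rmK_R(-a)$ via the standard duality formula, while separately establishing levelness. First I would pass to a linear system of parameters $f_1,f_2,\ldots,f_d$ of $R$; this exists since $k$ is infinite and $R$ is Cohen-Macaulay, and it forms an $R$-regular sequence. Set $\overline R=R/(f_1,f_2,\ldots,f_d)$, an Artinian graded $k$-algebra with $\overline R_0=k$, $\overline R_1=k^c$ where $c=\dim_k R_1-d$, and $\rma(\overline R)=a+d=1$. The key point is that $\rma(\overline R)=1$ forces $\overline R_n=(0)$ for $n\ge 2$, so $\overline R=k\oplus k^c$ as a graded vector space and $[\![\overline R]\!]=1+c\lambda$. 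Since $f_1,\ldots,f_d$ is a regular sequence of degree-one elements, $[\![R]\!]=[\![\overline R]\!]/(1-\lambda)^d=(1+c\lambda)/(1-\lambda)^d$, which gives $F(\lambda)=1+c\lambda$ in the notation of the excerpt.

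Next I would compute $[\![\rmK_R(-a)]\!]$ directly from the formula recalled just before the proposition: $[\![\rmK_R(-a)]\!]=F(\tfrac1\lambda)\cdot\lambda^{a+d}/(1-\lambda)^d$. With $a+d=1$ and $F(\lambda)=1+c\lambda$ we get $F(\tfrac1\lambda)=1+c/\lambda=(\lambda+c)/\lambda$, hence $F(\tfrac1\lambda)\cdot\lambda=c+\lambda$, so $[\![\rmK_R(-a)]\!]=(c+\lambda)/(1-\lambda)^d$, as claimed. (One should note $c\ge 1$: indeed $c=0$ would force $R$ to be a polynomial ring and $a=-d\ne 1-d$, contradicting the hypothesis — so the stated series are genuine with nonnegative coefficients.)

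For levelness, I would argue that $\rmK_R(-a)$ is generated in degree $0$, equivalently $\rmK_R$ is generated in degree $-a=d-1$, equivalently $[\rmK_R]_{-a}$ generates $\rmK_R$ over $R$. By graded local duality, $[\rmK_R]_n\cong \operatorname{Hom}_k([\rmH_\fkM^d(R)]_{-n},k)$, and the associated graded module structure matches; alternatively, and more cleanly, I would use the regular sequence $f_1,\ldots,f_d$ again: $\rmK_R/(f_1,\ldots,f_d)\rmK_R\cong \rmK_{\overline R}$, and $\rmK_{\overline R}$ is the Matlis dual of $\overline R$, concentrated in degrees $-1$ and $0$ (since $\overline R$ lives in degrees $0$ and $1$). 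An Artinian graded algebra whose socle degree equals $\rma(\overline R)=1$ has $\rmK_{\overline R}$ generated in its bottom degree precisely when $\overline R$ is level; but here $[\rmK_{\overline R}]_{-1}\cong (\overline R_1)^\vee=k^c$ and $[\rmK_{\overline R}]_{0}\cong k$, and the module structure map $\overline R_1\otimes[\rmK_{\overline R}]_{-1}\to[\rmK_{\overline R}]_0$ is the perfect pairing coming from duality, hence surjective, so $\rmK_{\overline R}$ is generated in degree $-1$. Lifting generators of $[\rmK_{\overline R}]_{-1}$ to $[\rmK_R]_{d-1}$ and applying graded Nakayama over $R$ (legitimate since $f_1,\ldots,f_d\in\fkM$), we conclude $\rmK_R=R\cdot[\rmK_R]_{d-1}$, i.e.\ $R$ is level.

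\textbf{Main obstacle.} The numerology (the two Hilbert series) is essentially forced once one knows $\overline R$ is concentrated in degrees $0,1$, so the only real content is the levelness claim, and there the subtlety is making rigorous that the pairing $\overline R_1\times[\rmK_{\overline R}]_{-1}\to[\rmK_{\overline R}]_0$ induced by the $\overline R$-module structure on $\rmK_{\overline R}=\operatorname{Hom}_k(\overline R,k)$ is nondegenerate — this is just unwinding the definition of the dual module structure, but it is the step that genuinely uses $\rma(\overline R)=1$ rather than a weaker bound, and it is where one must be careful that ``generated in the bottom degree'' is equivalent to levelness of $R$ via the flat-descent identity $\rmK_R/(f_1,\ldots,f_d)\rmK_R\cong\rmK_{\overline R}$.
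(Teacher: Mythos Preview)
Your proposal is correct. The paper does not give an explicit proof of this proposition; the sentence immediately preceding it (``$a=1-d$ if and only if $\fkM^2=(f_1,\ldots,f_d)\fkM$ and $\fkM\ne(f_1,\ldots,f_d)$'') signals the intended argument, and your reduction to $\overline R=R/(f_1,\ldots,f_d)$ being concentrated in degrees $0$ and $1$ is exactly that. Your Hilbert series computations are the standard ones already recalled in the paper's setup.

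For levelness, your argument via the surjectivity of $\overline R_1\otimes[\rmK_{\overline R}]_{-1}\to[\rmK_{\overline R}]_0$ is correct, but slightly more roundabout than necessary. A quicker route in the spirit of the paper: since $\overline R$ has maximal ideal $\overline R_1$ with $\overline R_1^2=(0)$ and $\overline R_1\ne(0)$, the socle of $\overline R$ is exactly $\overline R_1$, hence concentrated in degree $1=\rma(\overline R)$; this is the definition of $\overline R$ being level, and levelness passes from $\overline R$ to $R$ by the same Nakayama lift you describe. This avoids the explicit pairing computation, though of course the two arguments are dual to one another.
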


The following lemma shows that the converse of Proposition \ref{7.1}  is also true, if $R$ is an almost Gorenstein graded ring.

\begin{lem}\label{7.2}
Suppose that $R$ is an almost Gorenstein graded ring and assume that $R$ is not a Gorenstein ring. If $R$ is a level ring, then $a=1-d$.
\end{lem}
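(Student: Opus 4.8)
The plan is to work with the exact sequence defining the almost Gorenstein property, namely
$$
0 \to R \to \K_R(-a) \to C \to 0,
$$
of graded $R$-modules with $\mu_R(C) = \e_\fkM^0(C)$, and to extract a numerical contradiction from the assumption $a \ne 1-d$ together with levelness. Since $R$ is not Gorenstein, $C \ne (0)$, so by Lemma \ref{3.1} (2) $C$ is a Cohen-Macaulay graded $R$-module of dimension $d-1$, hence an Ulrich module (Definition \ref{2.1}). First I would cut down to dimension zero: choose a linear system of parameters $f_1, f_2, \ldots, f_d$ of $R$ which is also a $C$-superficial (indeed $C$-regular, as $\dim C = d-1$) sequence after discarding one element — more precisely, take $f_1, \ldots, f_{d-1}$ forming a $C$-regular sequence so that, setting $\overline{R} = R/(f_1, \ldots, f_{d-1})$, the short exact sequence descends to
$$
0 \to \overline{R} \to \K_{\overline{R}}(-a) \to \overline{C} \to 0
$$
with $\overline{C} = C/(f_1,\ldots,f_{d-1})C$ an Ulrich $\overline{R}$-module of dimension $0$, i.e.\ $\fkM \overline{C} = (0)$ by Proposition \ref{2.2} (1). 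Here I use that $\K_R/(f_1,\ldots,f_{d-1})\K_R \cong \K_{\overline{R}}$ (this is the graded analogue of \cite[Korollar 6.3]{HK}) and that $\a(\overline{R}) = a + (d-1)$, so that the twist becomes $\K_{\overline{R}}(-a) = \K_{\overline{R}}(-(\a(\overline{R}) - (d-1))) = \K_{\overline{R}}(d-1-\a(\overline{R}))$.

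The second step is a Hilbert-series computation in the one-dimensional (or zero-dimensional, after one more reduction) case. Writing $h(\lambda) = [\![\overline{R}]\!](1-\lambda)^{\dim \overline{R}}$ and using the functional equation relating the Hilbert series of $\overline{R}$ and of $\K_{\overline{R}}$, together with the levelness of $\overline{R}$ (which is inherited from $R$ by a general linear reduction, or can be arranged directly), I would compare the degrees of the socle generators of $\K_{\overline{R}}$ against $R$. The point is that levelness forces $\K_{\overline{R}}$ to be generated entirely in the single degree $-\a(\overline{R})$; the exact sequence $0 \to \overline{R} \to \K_{\overline{R}}(d-1-\a) \to \overline{C} \to 0$ then pins down where the image of $1 \in \overline{R}$ sits. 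By Corollary \ref{3.8} the generator $\varphi(1)$ is not in $\fkM \K_{\overline{R}}(-a)$ (else $R$ would be regular, handled separately since a regular ring has $a = -d \ne 1-d$ only if $d \ge 1$, but then it is Gorenstein — so this case is excluded), hence $\varphi(1)$ lies in degree $0$ of $\K_{\overline{R}}(d-1-\a)$, i.e.\ in degree $\a - (d-1)$ of $\K_{\overline{R}}$. But levelness says the minimal generators of $\K_{\overline{R}}$ all live in degree $-\a$. For $\varphi(1)$ to be a minimal generator (which it must be, as $\mu_{\overline{R}}(\K_{\overline{R}}) = \mu_{\overline{R}}(\overline{C}) + 1 = r$ and the sequence is part of a minimal presentation), we need $\a - (d-1) = -\a$, i.e.\ $2\a = d-1$... which is not quite $\a = 1-d$; I would need to recheck the twist bookkeeping. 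The correct statement should come out to $-\a = $ (bottom degree of $\K_{\overline{R}}$) $= $ (degree where $\overline{R}$-image sits) and comparing with $\overline{C}$ being concentrated in one degree forces the Hilbert polynomial identity that yields $a = 1-d$.

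The main obstacle I anticipate is precisely the twist/degree bookkeeping in the reduction to low dimension, since the shift $-a$ in Definition \ref{6.1} interacts nontrivially with the change of $a$-invariant under a linear system of parameters; getting the signs right is the crux. An alternative, cleaner route avoiding repeated reductions: work directly with Hilbert series in arbitrary dimension. Write $[\![R]\!] = \frac{F(\lambda)}{(1-\lambda)^d}$, so $[\![\K_R(-a)]\!] = \frac{F(1/\lambda)\lambda^{a+d}}{(1-\lambda)^d}$ as recalled in the text before Proposition \ref{7.1}. Levelness of $R$ says $\K_R$ is generated in degree $-a$, so $F(1/\lambda)\lambda^{a+d}$, the numerator, has nonzero constant term but — more importantly — the minimal number of generators $r = \mu_R(\K_R)$ equals the value forced by levelness, and $[\![C]\!] = [\![\K_R(-a)]\!] - [\![R]\!] = \frac{F(1/\lambda)\lambda^{a+d} - F(\lambda)}{(1-\lambda)^d}$. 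Since $C$ is Ulrich of dimension $d-1$, its Hilbert series is $\frac{\mu_R(C)\lambda^t}{(1-\lambda)^{d-1}} = \frac{(r-1)\lambda^t}{(1-\lambda)^{d-1}}$ for the appropriate single starting degree $t$ (an Ulrich module over a homogeneous ring has Hilbert series with numerator a single monomial times its minimal number of generators). Hence
$$
F(1/\lambda)\lambda^{a+d} - F(\lambda) = (r-1)\lambda^t(1-\lambda).
$$
Evaluating and comparing degrees and leading/trailing coefficients of this polynomial identity — using $\deg F = a+d$ and $F(0) \ne 0$, $F(1) = \e_\fkM^0(R) \ne 0$ — should force $a + d = 1$, i.e.\ $a = 1-d$. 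The hard part here is verifying that $C$'s Hilbert series numerator really is a single monomial, which uses that the embedding $R \hookrightarrow \K_R(-a)$ is \emph{linear} in the sense that $\varphi(1)$ has degree $0$, so $C = \K_R(-a)/R$ starts in degree... and Ulrichness plus being a quotient by a principal submodule generated in degree $0$ gives the monomial numerator. I would carry out the polynomial-identity comparison as the main computation and treat the reduction/regular-ring edge cases as quick lemmas.
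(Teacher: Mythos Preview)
Your second approach---the direct Hilbert-series comparison---is exactly what the paper does, and once one missing observation is supplied the argument goes through verbatim. The first approach (reducing modulo a linear system of parameters) is unnecessary; the paper never leaves arbitrary dimension.

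The gap is precisely the step you flag as ``the hard part'': why the numerator of $[\![C]\!]$ is a single monomial. This is \emph{not} a consequence of Ulrichness. An Ulrich module generated in several degrees would have a numerator with several terms. The point is that this is where levelness is used, and it is used in a single line: since $R$ is level, $\K_R$ is generated in degree $-a$, so $\K_R(-a)$ is generated in degree $0$; hence its quotient $C$ is generated in degree $0$, i.e.\ $C = R{\cdot}C_0$. Combined with $\fkM C = (f_2,\ldots,f_d)C$ for linear forms $f_i\in R_1$ (Proposition~\ref{2.2}(2), the minimal reduction of the homogeneous maximal ideal being linear), one gets $C/(f_2,\ldots,f_d)C = C_0 \cong k^{r-1}$ and thus
\[
[\![C]\!] \;=\; \frac{r-1}{(1-\lambda)^{d-1}},
\]
so your $t$ is $0$. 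Your attempt to extract $t$ from the position of $\varphi(1)$ is backwards: it is not the image of $1$ that fixes the degree of the generators of $C$, it is the generating degree of $\K_R(-a)$, and that is exactly the content of ``level''. You state the definition of level correctly but never apply it to $C$.

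With $t=0$ in hand, the identity becomes
\[
F(1/\lambda)\,\lambda^{a+d} - F(\lambda) \;=\; (r-1)(1-\lambda),
\]
and the coefficient comparison is immediate: writing $F(\lambda)=\sum_{i=0}^{a+d} c_i\lambda^i$, if $a+d\ge 2$ then the coefficient of $\lambda^{a+d}$ gives $c_0=c_{a+d}$ while the constant term gives $c_{a+d}=c_0+(r-1)$, contradicting $r>1$. Since $a+d\ge 0$ always and $a+d=0$ forces $R$ Gorenstein, one concludes $a+d=1$. This is the paper's proof.
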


\begin{proof}
Suppose that $R$ is a level ring and take an exact sequence 
$0 \to R \to \K_R(-a) \to C \to 0$
of graded $R$-modules so that $\mu_R(C) = \e^0_\fkM(C)$. Then $C\ne(0)$ and hence $C$ is a Cohen-Macaulay $R$-module of dimension $d-1$ (Lemma \ref{3.1} (2)). We have $C = R C_0$ and $\mu_R(C) = \mathrm{r}(R) -1$ (Corollary \ref{3.8}).  Remember that $\fkM C=(f_2,f_3,\ldots,f_d)C$ for some $f_2,f_3,\ldots,f_d \in R_1$ (see Proposition \ref{2.2} (2)) and we have 
$[\![C]\!]= \frac{r-1}{(1-\lambda)^{d-1}},$
where $r=\mathrm{r}(R)$. Consequently, 
$[\![\K_R(-a)]\!] - [\![R]\!] = \frac{r-1}{(1-\lambda)^{d-1}},$
so that 
$F(\frac{1}{\lambda}){\cdot}\lambda^{a+d} - F(\lambda) = (r-1)(1-\lambda).$
Let us write $F(\lambda) = \sum_{i=0}^{a+d}c_i\lambda^i$ with $c_i\in\Z$. Then the equality 
$\sum_{i=0}^{a+d}c_{a+d-i}\lambda^i = \sum_{i=0}^{a+d}c_i\lambda^i + (r-1)(1-\lambda)$
forces that if $a+d\geq 2$, then $c_0\lambda^{a+d} = c_{a+d}\lambda^{a+d}$ and $c_{a+d} = c_0 +(r-1)$, which is impossible, since $c_0 = 1$ and $r>1$. Therefore we have $a+d=1$, because $0\leq a+d$ and $R$ is not a Gorenstein ring.
\end{proof}

The following is a key in our argument.

\begin{lem}\label{7.3}
Suppose that $R$ is a level ring and $\mathrm{Q}(R)$ is a Gorenstein ring. Then there exists an exact sequence 
$0 \to R \to \K_R(-a) \to C \to 0$
of graded $R$-modules with $\dim_R C <d$.
\end{lem}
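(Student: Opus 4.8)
The plan is to build the sequence directly by choosing a suitable homogeneous element of $\K_R$ in degree $-a$. Since $R$ is a level ring, $\K_R=R{\cdot}[\K_R]_{-a}$, and $[\K_R]_{-a}$ is a nonzero finite-dimensional $k$-vector space (nonzero because $-a=\min\{n\mid[\K_R]_n\ne(0)\}$); moreover $[\K_R(-a)]_0=[\K_R]_{-a}$, so a degree-preserving graded homomorphism $R\to\K_R(-a)$ is exactly the same datum as an element $\xi\in[\K_R]_{-a}$, via $1\mapsto\xi$. I would look for $\xi$ for which this map is injective with cokernel of dimension $<d$.

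First I would record the picture at a minimal prime $\fkp$. As $R$ is Cohen--Macaulay, $\Ass R=\Min R$ and every $\fkp\in\Min R$ has $\dim R/\fkp=d$; also $(\K_R)_\fkp\cong\K_{R_\fkp}$, and since $\rmQ(R)$ is Gorenstein the Artinian local ring $R_\fkp$ is Gorenstein, so $(\K_R)_\fkp\cong R_\fkp$ is free of rank one. Hence an element of $(\K_R)_\fkp$ generates it if and only if it is a non-zerodivisor on $R_\fkp$. From $\K_R=R{\cdot}[\K_R]_{-a}$ one gets $(\K_R)_\fkp=R_\fkp{\cdot}\Im[\K_R]_{-a}$, so by Nakayama the image of $[\K_R]_{-a}$ in the one-dimensional $\kappa(\fkp)$-space $(\K_R)_\fkp/\fkp R_\fkp(\K_R)_\fkp$ is nonzero; thus the $k$-linear map $[\K_R]_{-a}\to(\K_R)_\fkp/\fkp R_\fkp(\K_R)_\fkp$ has a proper kernel $V_\fkp\subsetneq[\K_R]_{-a}$, and every $\xi\notin V_\fkp$ generates $(\K_R)_\fkp$.

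Since $\Min R$ is finite and $k$ is infinite, the finite-dimensional $k$-space $[\K_R]_{-a}$ is not the union of the proper subspaces $V_\fkp$, $\fkp\in\Min R$; I would pick $\xi$ outside all of them. For this $\xi$ the map $\varphi\colon R\to\K_R(-a)$, $1\mapsto\xi$, is injective: the associated primes of $(0):_R\xi$ lie in $\Ass R=\Min R$, and after localizing at each such $\fkp$ the element $\xi/1$ generates the free rank-one module $(\K_R)_\fkp$, hence is a non-zerodivisor on $R_\fkp$, so $(0):_R\xi=(0)$. Put $C=\Coker\varphi$, giving the exact sequence $0\to R\to\K_R(-a)\to C\to0$. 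Then $C_\fkp=(\K_R)_\fkp/R_\fkp(\xi/1)=(0)$ for every $\fkp\in\Min R$, so $\Ann_RC$ is contained in no $\fkp\in\Ass R$; by prime avoidance $\Ann_RC$ contains an $R$-regular element $f$, whence $\dim_RC\le\dim R/(f)=d-1<d$, as required.

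The only genuinely non-formal point is the simultaneous choice of a single homogeneous $\xi\in[\K_R]_{-a}$ that generates all the localizations $(\K_R)_\fkp$ at minimal primes at once, and this is exactly where the hypotheses enter: levelness forces each $V_\fkp$ to be a proper subspace of $[\K_R]_{-a}$, the Gorensteinness of $\rmQ(R)$ makes each $(\K_R)_\fkp$ free of rank one so that ``generates'' coincides with ``non-zerodivisor'', and the infiniteness of $k$ lets us dodge the finitely many bad subspaces. Everything else is routine bookkeeping with associated primes over a Cohen--Macaulay graded ring.
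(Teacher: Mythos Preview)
Your argument is correct and follows essentially the same route as the paper: both pick $\xi\in[\K_R]_{-a}$ outside finitely many proper $k$-subspaces (one for each $\fkp\in\Ass R=\Min R$) so that $\xi$ generates the rank-one free module $(\K_R)_\fkp$ at every associated prime, whence $C_\fkp=(0)$ and $\dim_RC<d$. The only cosmetic difference is that the paper deduces injectivity of $\varphi$ from $\dim_RC<d$ via its earlier Lemma~\ref{3.1}(1), whereas you verify $(0):_R\xi=(0)$ directly.
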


\begin{proof}
Let $V = [\K_R]_{-a}$. Then $\K_R=R V$. For each $\p \in \Ass R$, let $L(\p)$ be the kernel of the composite of two canonical homomorphism $h(\p): \K_R \to (\K_R)_{\p} \to (\K_R)_{\p}/\p(\K_R)_{\p}$. Then $V \not\subseteq L(\p)$, because $\K_R = R V$ and $(\K_R)_{\p} \cong R_{\p}$. Let us choose $\xi \in V \setminus \bigcup_{\p\in\Ass R}L(\p)$ and let $\varphi: R \to \K_R(-a)$ be the homomorphism of graded $R$-modules defined by $\varphi(1)=\xi$. Look now at the exact sequence 
$R \overset{\varphi}{\to} \K_R(-a) \to C \to 0$
of graded $R$-modules. Then, because $(\K_R)_{\p}=R_{\p}\frac{\xi}{1}$ for all $\p \in \Ass R$ (remember that $(\K_R)_{\p} \cong R_{\p}$), we have $C_{\p} = (0)$ for all $\p \in \Ass R$. Hence $\dim_R C < d$ and therefore $\varphi$ is injective (see Lemma \ref{3.1} (1)).
\end{proof}

We now come to the main result of this section.

\begin{thm}\label{7.4}
Suppose that $\mathrm{Q}(R)$ is a Gorenstein ring. If $a=1-d$, then $R$ is an almost Gorenstein graded ring.
\end{thm}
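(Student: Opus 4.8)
The plan is to assemble the statement from Proposition \ref{7.1} and Lemma \ref{7.3}, reducing everything to a one-line Hilbert-series computation. Since $a=1-d$, Proposition \ref{7.1} tells us that $R$ is a level ring and that
\[
[\![R]\!]=\frac{1+c\lambda}{(1-\lambda)^d},\qquad [\![\K_R(-a)]\!]=\frac{c+\lambda}{(1-\lambda)^d},
\]
where $c=\dim_kR_1-d$. Here $c\ge 1$: indeed $\deg F(\lambda)=a+d=1$ forces the numerator $1+c\lambda$ of $[\![R]\!]$ to be genuinely linear, so $c\ne 0$. Because $R$ is now known to be level and $\mathrm{Q}(R)$ is Gorenstein by hypothesis, Lemma \ref{7.3} supplies an exact sequence
\[
0\to R\xrightarrow{\varphi}\K_R(-a)\to C\to 0
\]
of graded $R$-modules with $\dim_RC<d$; the map $\varphi$ is injective, so by Lemma \ref{3.1}(2), if $C\ne(0)$ then $C$ is a Cohen--Macaulay $R$-module of dimension $d-1$.

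Next I would extract the Hilbert series of $C$ from this exact sequence:
\[
[\![C]\!]=[\![\K_R(-a)]\!]-[\![R]\!]=\frac{(c+\lambda)-(1+c\lambda)}{(1-\lambda)^d}=\frac{(c-1)(1-\lambda)}{(1-\lambda)^d}=\frac{c-1}{(1-\lambda)^{d-1}}.
\]
If $c=1$ this means $C=(0)$, so $R\cong\K_R(-a)$ is a Gorenstein ring and there is nothing more to prove. If $c\ge 2$, the series shows that $\dim_kC_n=(c-1)\binom{n+d-2}{d-2}$ for all $n\ge0$; in particular $C_n=(0)$ for $n<0$, and $C$ is generated in degree $0$ because it is a quotient of $\K_R(-a)$, which is generated in degree $0$ by the levelness provided by Proposition \ref{7.1}.

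With these facts in hand the two numerical invariants in Definition \ref{6.1} are immediate. Since $C$ is generated in degree $0$ and has no components in negative degrees, graded Nakayama's lemma gives $\mu_R(C)=\dim_k(C/\fkM C)=\dim_kC_0=c-1$. On the other hand $\dim_kC_n=(c-1)\binom{n+d-2}{d-2}$ is a polynomial in $n$ of degree $d-2$ with leading term $(c-1)\tfrac{n^{d-2}}{(d-2)!}$, so $\e^0_\fkM(C)=c-1$ as well (for $d=1$ one simply reads off $\e^0_\fkM(C)=\ell_R(C)=c-1$). Hence $\mu_R(C)=\e^0_\fkM(C)$, and the exact sequence $0\to R\to\K_R(-a)\to C\to 0$ exhibits $R$ as an almost Gorenstein graded ring.

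Since the substantive work has already been done in Proposition \ref{7.1} and Lemma \ref{7.3}, I do not anticipate a genuine obstacle; the only point requiring a moment's care is verifying that $C$ is generated in degree $0$ — so that $\mu_R(C)$ actually equals $\dim_kC_0$ rather than merely being bounded by it — and this is exactly where the levelness supplied by Proposition \ref{7.1} enters.
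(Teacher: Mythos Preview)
Your proof is correct and follows essentially the same approach as the paper: invoke Proposition \ref{7.1} and Lemma \ref{7.3}, compute $[\![C]\!]=\frac{c-1}{(1-\lambda)^{d-1}}$, and conclude. The only cosmetic difference is in the last step: the paper chooses a linear system of parameters $f_1,\ldots,f_{d-1}\in R_1$ for $C$, observes that $[\![C/(f_1,\ldots,f_{d-1})C]\!]=c-1$ is concentrated in degree $0$, and hence $\fkM C=(f_1,\ldots,f_{d-1})C$ directly, whereas you read off $\mu_R(C)$ and $\e^0_\fkM(C)$ separately from the Hilbert series; both arguments are equivalent.
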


\begin{proof}
We may assume that $R$ is not a Gorenstein ring. Thanks to Lemma \ref{7.3}, we can choose an exact sequence 
$0 \to R \to \K_R(-a) \to C \to 0$
of graded $R$-modules so that $C$ is a Cohen-Macaulay $R$-module of dimension $d-1$ (see Lemma \ref{3.1} (2) also). 
Then Proposition \ref{7.1} implies $[\![C]\!]=[\![\K_R(-a)]\!]- [\![R]\!] = \frac{c-1}{(1-\lambda)^{d-1}}$, where $c=\dim_k R_1 - d$.
Let $f_1,f_2,\ldots,f_{d-1}~(\in R_1)$ be a linear system of parameters for $C$. 
Then because $[\![C/(f_2,\ldots,f_{d})C]\!] = (1-\lambda)^{d-1}[\![C]\!]= c-1$, we get 
$C/(f_1,f_2,\ldots,f_{d-1})C = [C/(f_1,f_2,\ldots,f_{d-1})C]_0,$
which shows that $\fkM C = (f_1,f_2,\ldots,f_{d-1})C$. Thus $\mu_R C = \e^0_{\fkM}(C)$ and hence $R$ is an almost Gorenstein graded ring.
\end{proof}

\begin{ex}\label{7.5}
Let $S = k[X_1, X_2, \ldots , X_n, Y_1, Y_2, \ldots, Y_n]$ ($n\geq 2$) be the polynomial ring over an infinite field $k$ and let $R = S/\rmI_2
\left(\begin{smallmatrix}   
X_1 & X_2 & \ldots &X_n \\
Y_1 & Y_2 & \ldots &Y_n
\end{smallmatrix}\right)$. Then $R$ is a Cohen-Macaulay normal ring with $\dim R = n+1$. We have $\a(R) = 1-\dim R$, because $\fkM^2 = (X_1, \{ X_{i+1}-Y_i \}_{1\leq i\leq n-1}, Y_n)\fkM$. Hence by Theorem \ref{7.4}, $R$ is an almost Gorenstein graded ring. 
\end{ex}

We explore almost Gorenstein Veronesean subrings.

\begin{cor}\label{7.6}
Suppose that $d=2$, R is reduced, and $\a(R)<0$. Then the Veronesean subrings $R^{(n)} = k[R_n]$ of $R$ are almost Gorenstein graded rings for all $n\geq 1$.
\end{cor}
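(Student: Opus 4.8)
**The plan is to deduce Corollary \ref{7.6} from Theorem \ref{7.4} by checking its two hypotheses for the Veronesean subring $R^{(n)}$.** Recall that Theorem \ref{7.4} says: if $\mathrm{Q}(S)$ is a Gorenstein ring and $\a(S) = 1 - \dim S$, then the Cohen-Macaulay homogeneous ring $S = k[S_1]$ is almost Gorenstein graded. So I must verify, for $S = R^{(n)} = k[R_n]$, that (a) $R^{(n)}$ is a Cohen-Macaulay homogeneous ring of dimension $2$ over the infinite field $k$; (b) $\mathrm{Q}(R^{(n)})$ is Gorenstein; and (c) $\a(R^{(n)}) = 1 - 2 = -1$.

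\textbf{Step 1: Cohen-Macaulayness and homogeneity.} Since $R$ is Cohen-Macaulay of dimension $d = 2$, it is a standard fact (Goto--Watanabe) that every Veronesean subring $R^{(n)}$, regraded so that $[R^{(n)}]_m = R_{nm}$, is again a Cohen-Macaulay graded ring of the same dimension $2$, and is generated in degree one, i.e. $R^{(n)} = k[R_n]$ is homogeneous with $R^{(n)}_0 = k$. So hypothesis (a) is automatic.

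\textbf{Step 2: The total quotient ring is Gorenstein.} Since $R$ is reduced and Cohen-Macaulay, $\mathrm{Q}(R)$ is a finite product of fields, hence Gorenstein (equivalently, $R$ is generically Gorenstein). I would argue that $R^{(n)}$ is also reduced — it is a subring of the reduced ring $R$ — and of dimension $2$, so $\mathrm{Q}(R^{(n)})$ is again a finite product of fields, hence Gorenstein. (One can also see this via $\mathrm{Ass}\,R^{(n)}$ corresponding to the orbit structure of $\mathrm{Ass}\,R$ under the $\mathbb{Z}/n$-grading, but reducedness is the quick route.)

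\textbf{Step 3: The $a$-invariant.} This is the crux. I must show $\a(R^{(n)}) = -1$. There is a well-known formula relating the local cohomology of a Veronesean subring to that of $R$: $[\rmH^d_{\fkM^{(n)}}(R^{(n)})]_m = [\rmH^d_{\fkM}(R)]_{nm}$ for all $m \in \Bbb Z$ (with $d = \dim R = \dim R^{(n)} = 2$). Hence $\a(R^{(n)}) = \max\{m \mid [\rmH^2_\fkM(R)]_{nm} \neq (0)\}$. Now $\a(R) < 0$ by hypothesis, so $[\rmH^2_\fkM(R)]_j = (0)$ for all $j \geq 0$; thus $[\rmH^2_{\fkM^{(n)}}(R^{(n)})]_m = (0)$ for all $m \geq 0$, giving $\a(R^{(n)}) \leq -1$. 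For the reverse inequality I need $[\rmH^2_\fkM(R)]_{-n} \neq (0)$, i.e. $-n \leq \a(R)$, equivalently $\a(R) \geq -n$; this must follow from Cohen-Macaulayness of $R^{(n)}$, since for a Cohen-Macaulay graded ring of dimension $e$ generated in degree one with $R^{(n)}_0 = k$ one always has $\a(R^{(n)}) \geq -e$. With $e = 2$ this only gives $\a(R^{(n)}) \geq -2$, so I would sharpen it: the level/Gorenstein-in-codimension-zero structure together with $R^{(n)} = k[R^{(n)}_1]$ forces $\a(R^{(n)})=-1$ rather than $-2$, because $\a(R^{(n)}) = -2 = 1 - \dim R^{(n)} - 1$ would make $R^{(n)}$ "too negative" for a standard graded domain-like ring — concretely, $\a = -e$ for a standard graded Cohen-Macaulay ring of dimension $e$ with $R_0 = k$ happens only in degenerate cases (e.g. when $R$ is a polynomial ring), which is excluded here since $\dim_k R_1 > d$ when $R$ is not a polynomial ring. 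So in fact $\a(R^{(n)}) = -1$.

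\textbf{The main obstacle} I anticipate is pinning down exactly why $\a(R^{(n)}) = -1$ and not merely $\a(R^{(n)}) \in \{-1, -2\}$ — the upper bound $\a(R^{(n)}) \le -1$ from $\a(R) < 0$ is easy, but ruling out $\a(R^{(n)}) = -2$ requires the observation that $a+d \ge 0$ for any Cohen-Macaulay homogeneous ring with the bottom coefficient of $F(\lambda)$ equal to $1$, and that $a + d = 0$ forces $R^{(n)}$ to be a polynomial ring in $2$ variables, which is impossible once $n \geq 1$ and $R$ itself has more than two generators in degree $n$ (which holds since $\dim_k R_n > 2$ for $n$ large, and one checks the small cases directly). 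Once $\a(R^{(n)}) = -1 = 1 - \dim R^{(n)}$ is established, Theorem \ref{7.4} applies verbatim and yields that $R^{(n)}$ is an almost Gorenstein graded ring for every $n \geq 1$.
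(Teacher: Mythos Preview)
Your approach is correct and matches the paper's: verify that $R^{(n)}$ is Cohen--Macaulay, homogeneous, reduced (so $\rmQ(R^{(n)})$ is Gorenstein), of dimension $2$, with $\a(R^{(n)})<0$, and then invoke Theorem~\ref{7.4}. The paper streamlines this by observing that $R^{(n)}$ satisfies the \emph{same} hypotheses as $R$, so one may reduce to the case $n=1$.

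The only awkward spot is your Step~3, where you try to force $\a(R^{(n)})=-1$ exactly. This is not always true: if $R=k[X,Y]$ and $n=1$ then $\a(R^{(1)})=-2$. Rather than excluding the polynomial-ring case, simply absorb it. From $\a(R^{(n)})<0$ and the general bound $a+d\ge 0$ you get $\a(R^{(n)})\in\{-1,-2\}$. If $\a(R^{(n)})=-2=-d$, then (as you yourself note) $R^{(n)}$ is a polynomial ring, hence Gorenstein, hence almost Gorenstein---done. If $\a(R^{(n)})=-1=1-d$, Theorem~\ref{7.4} applies directly. This is exactly the paper's argument (the parenthetical ``remember that $R$ is the polynomial ring, if $\a(R)=-\dim R$''), and it avoids the case-checking you sketch at the end.
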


\begin{proof}
Let $S=R^{(n)}$. Then $S=k[S_1]$ is a Cohen-Macaulay reduced ring and $\operatorname{dim} S =2$. Since $\H^2_{\fkN}(S) = [\H^2_{\fkM}(R)]^{(n)}$ (here $\fkN=S_+$), we get $\a(S)<0$. Hence it suffices to show that $R$ is an almost Gorenstein graded ring, which readily follows from Theorem \ref{7.5}, because $\mathrm{Q}(R)$ is a Gorenstein ring and $\a(R) \leq 1-\dim R$ (remember that $R$ is the polynomial ring, if $\a(R) = -\dim R$).
\end{proof}

Let us explore a few concrete examples.

\begin{ex}\label{7.7}
Let $R=k[X, Y, Z]/(Z^2-XY)$, where $k[X, Y, Z]$ is the polynomial ring over an infinite field $k$. Then $R^{(n)}$ is an almost Gorenstein graded ring for all $n\geq 1$
\end{ex}

\begin{proof}
The assertion follows from Corollary \ref{7.6}, since $R$ is normal with $\operatorname{dim} R = 2$ and $\rma (R) = -1$.
\end{proof}

\begin{ex}\label{7.8}
Let $R=k[X_1, X_2,\ldots, X_d]$ ($d\geq 1$) be the polynomial ring over an infinite field $k$. Let $n\geq 1$ be an integer and look at the Veronesean subring $R^{(n)} = k[R_n]$ of $R$. Then the following hold.
\begin{enumerate}[(1)]
\item
$R^{(n)}$ is an almost Gorenstein graded ring, if $d\leq 2$.
\item
Suppose that $d\geq 3$. Then $R^{(n)}$ is an almost Gorenstein graded ring if and only if either $n \mid d$, or $d=3$ and $n=2$.
\end{enumerate}  
\end{ex}

\begin{proof}
Assertion (1) follows from Corollary \ref{7.6}. Suppose that $d\geq 3$ and consider assertion (2). The ring $R^{(n)}$ is a Gorenstein ring if and only if $n \mid d$ (\cite{Mat}). Assume that $n \nmid d$ and put $S=R^{(n)}$. If $d=3$ and $n=2$, then $S = k[X_1^2, X_2^2, X_3^2, X_1X_2, X_2X_3, X_3X_1]$ with $[S_+]^2 = (X_1^2, X_2^2, X_3^2)S_+$. Hence $S$ is an almost Gorenstein graded ring by Theorem \ref{7.4}. Conversely, suppose that $S$ is an almost Gorenstein graded ring. Let $L=\{(\alpha_1,\alpha_2,\ldots,\alpha_d) \mid 0 \leq \alpha_i \in \Z\}$. We put $|\alpha| = \sum_{i=1}^{d} \alpha_i$ and  $X^\alpha = \prod_{i=1}^dX_i^{\alpha_i}$ for each $\alpha = (\alpha_1,\alpha_2,\ldots ,\alpha_d)\in L$. Let $s= \operatorname{min} \{s\in\Z \mid s n\geq q\}$ where $q=(n-1)(d-1)$ and put $\fka = (X_1^n, X_2^n,\ldots ,X_d^n)$. We then have
$$\fka:S_+ = \fka + (X^{\alpha}X^{\beta} \mid \alpha,~\beta\in L \text{ such that } |\alpha|=q,~|\beta| = sn-q),$$
which shows the homogeneous Cohen-Macaulay ring $S$ is a level ring with $\a(S) = 1-d$. Therefore, because $\H^d_{S_+}(S) \cong [\H^d_{R_+}(R)]^{(n)}$ and because $[\H^d_{R_+}(R)]_i\ne(0)$ if and only if $i\leq -d$, we have $-n(d-1)<-d<-n(d-2)$, which forces $n=2$ and $d=3$, because $n\geq 2$ (remember that $n \nmid d$).
\end{proof} 

\begin{ex}\label{7.9}
Let $n \geq 1$ be an integer and let $\Delta$ be a simplicial complex with vertex set $[n] = \{1,2,\ldots , n\}$. Let $R = k[\Delta]$ denote the Stanley-Reisner ring of $\Delta$ over an infinite field $k$. Then $\e_{R_+}^0(R)$ is equal to the number of facets of $\Delta$. If $R$ is Cohen-Macaulay and $n = \e_{R_+}^0(R) + \operatorname{dim} R -1$, then $R$ is an almost Gorenstein graded ring. For example, look at the simplicial complex 
$\Delta: $
\begin{center}
\includegraphics[width=5cm,clip]{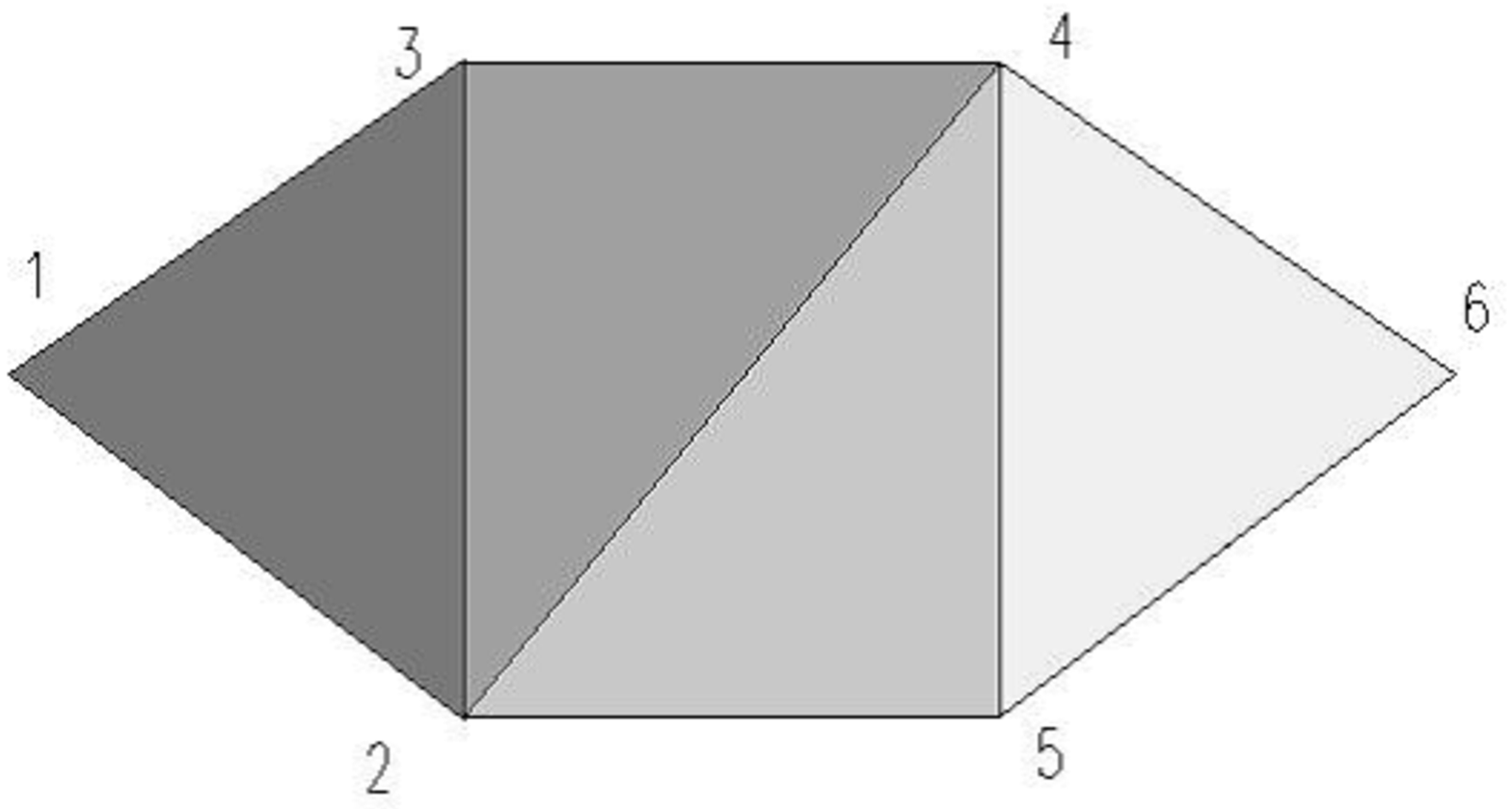}
\end{center}
with $n = 6$. Then $R = k[\Delta]$ is an almost Gorenstein graded ring of dimension $3$. Note that $R_P$ is not a Gorenstein local ring for $P = (X_1, X_3, X_4, X_5, X_6)R$.
\end{ex}


\section{Two-dimensional rational singularities are almost Gorenstein}\label{tancon}

Throughout this section let $(R,\m)$ denote a Cohen-Macaulay local ring of dimension $d \geq 0$, admitting  the canonical module $\rmK_R$. We assume that $R/\m$ is infinite. Let $\rmv(R)=\mu_R(\m)$ and $\e(R)=\e_{\m}^0(R)$. We set $G=\gr_{\m}(R)=\bigoplus{\m}^n/{\m}^{n+1}$ be the associated graded ring of $\m$ and put $\fkM=G_+$.
The purpose of this section is to study the question of when $G$ is an almost Gorenstein graded ring, in connection with the almost Gorensteinness of the base local ring $R$. Remember that, thanks to \cite{S1}, $\rmv (R) = \rme (R) + d - 1$ if and only if $\fkm^2 = Q\fkm$ for some (and hence any) minimal reduction $Q$ of $\fkm$. When this is the case, $G$ is a Cohen-Macaulay ring and $\rma (G) = 1 - d$, provided $R$ is not a regular local ring.

Our result is stated as follows.

\begin{thm}\label{8.1}
The following assertions hold true.
\begin{enumerate}[\rm(1)]
\item
Suppose that $R$ is an almost Gorenstein local ring with $\rmv(R)=\e(R)+d-1$.
Then $G$ is an almost Gorenstein level graded ring.
\item
Suppose that $G$ is an almost Gorenstein level graded ring. Then $R$ is an almost Gorenstein local ring.
\end{enumerate}
\end{thm}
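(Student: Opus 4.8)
The strategy for both implications is to pass to artinian quotients by a linear (in the graded case) or superficial (in the local case) system of parameters and reduce to the one-dimensional case, where the relation between $R$ and $G = \gr_{\m}(R)$ is already governed by the results of Section~\ref{assgr}. The crucial structural input is the hypothesis $\rmv(R) = \e(R) + d - 1$, which by \cite{S1} is equivalent to $\fkm^2 = Q\fkm$ for a (equivalently, any) minimal reduction $Q = (f_1, \dots, f_d)$ of $\fkm$; in particular $G$ is Cohen--Macaulay, the initial forms $f_i^\ast$ form a linear system of parameters of $G$, and $\rma(G) = 1 - d$ whenever $R$ is not regular. Note also that $G$ being \emph{level} with $\rma(G) = 1 - d$ fits precisely the hypotheses under which the correspondence of Theorems~\ref{6} and~\ref{8} is sharp, since a minimal reduction of $\fkm$ yields a reduction of $G_+$ generated by linear forms.

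For assertion (1), I would first dispose of the Gorenstein case (then $G$ is Gorenstein by a standard argument, hence trivially almost Gorenstein and level), so assume $R$ is not regular. Choosing a minimal reduction $Q = (f_1, \dots, f_d)$ with $\fkm^2 = Q\fkm$, the $f_i^\ast$ form a regular sequence of linear forms in $G$ and $G/(f_1^\ast, \dots, f_d^\ast)G \cong R/Q$, which has socle in a single degree; this shows $G$ is level with $\rma(G) = 1-d$. To see $G$ is almost Gorenstein, take an exact sequence $0 \to R \to \rmK_R \to C \to 0$ with $C$ Ulrich (Definition~\ref{3.3}, Lemma~\ref{3.1}), and iterate Theorem~\ref{3.9}(2) to cut down by a superficial sequence $x_1, \dots, x_{d-1}$ (chosen inside $\fkm \setminus \fkm^2$ and superficial for both $R$ and $C$, which is possible since $R/\fkm$ is infinite) so that $\overline{R} = R/(x_1, \dots, x_{d-1})$ is one-dimensional almost Gorenstein, still of minimal multiplicity; here I must check that $\gr_{\overline{\fkm}}(\overline R) \cong G/(x_1^\ast, \dots, x_{d-1}^\ast)G$ and that the latter is again level of the expected $\rma$-invariant, which holds because the $x_i$ are superficial and part of a minimal generating set of $\fkm$. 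Then the one-dimensional case of Theorem~\ref{8} (condition (2): $\rmQ(\gr(\overline R))$ Gorenstein and $\gr(\overline R)$ level --- or more directly the structure of almost Gorenstein rings of minimal multiplicity and dimension one) gives that $\gr_{\overline\fkm}(\overline R)$ is almost Gorenstein graded with $\rmr = \rmr(\overline R) = \rmr(R)$; finally lift back up the sequence of linear non-zerodivisors using the converse direction of Proposition~\ref{10} (its proof shows that if $\overline G = G/(x^\ast)$ is almost Gorenstein graded with $x^\ast \in G_1$ a non-zerodivisor, and the relevant $\rma$- and level-bookkeeping matches, then $G$ is almost Gorenstein graded).

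For assertion (2), run the reduction in the graded category: since $G = G_0[G_1]$ is standard graded Cohen--Macaulay, $G_0 = R/\fkm$ infinite, and $\dim G = d$, apply Proposition~\ref{10} repeatedly to find linear non-zerodivisors $x_1^\ast, \dots, x_{d-1}^\ast \in G_1$ (lifting to $x_1, \dots, x_{d-1} \in \fkm$) such that $\overline G = G/(x_1^\ast, \dots, x_{d-1}^\ast)G$ is a one-dimensional almost Gorenstein graded ring which is still level (levelness descends along regular sequences of linear forms). Because the $x_i^\ast$ are $G$-regular linear forms, the $x_i$ form an $R$-regular sequence and $\overline G \cong \gr_{\overline\fkm}(\overline R)$ where $\overline R = R/(x_1, \dots, x_{d-1})$, a one-dimensional Cohen--Macaulay local ring. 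Now invoke Theorem~\ref{6}: with $\mathcal F$ the $\overline\fkm$-adic filtration, $\overline G = \gr_{\overline\fkm}(\overline R)$ is almost Gorenstein graded; the matching of Cohen--Macaulay types $\rmr(\overline G) = \rmr(\overline R)$ is forced by levelness together with $\rma(\overline G) = 0$ (equivalently $\rma(G) = 1-d$), since a level one-dimensional graded ring has its type realised in the single top degree of $\rmK$, which coincides with $\mu(\rmK_{\overline R})$. Hence Theorem~\ref{6} yields that $\overline R$ is an almost Gorenstein local ring, and then Theorem~\ref{3.9}(1), applied $d-1$ times to the regular sequence $x_1, \dots, x_{d-1}$, shows $R$ is an almost Gorenstein local ring.

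\textbf{Main obstacle.} The routine points (superficiality, regularity of initial forms) are standard, and the one-dimensional engine is Theorem~\ref{8}/Theorem~\ref{6}. The genuine difficulty is the bookkeeping that propagates the \emph{level} property and the precise $\rma$-invariant $1-d$ (and hence the equality $\rmr(G) = \rmr(R)$) through the reduction by linear forms in both directions --- in particular verifying that the converse half of Proposition~\ref{10} really does lift almost Gorensteinness of $G/(x^\ast)$ back to $G$ under exactly the hypotheses available here (this is where the condition $\rmv(R) = \e(R) + d - 1$, via $\fkm^2 = Q\fkm$, is doing the essential work, since it is what pins $\rma(G)$ down and keeps $G$ level after each cut). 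I would isolate this as a lemma: \emph{if $G$ is a standard graded Cohen--Macaulay level ring over an artinian local $G_0$ with infinite residue field, $x \in G_1$ a non-zerodivisor, and $G/xG$ is almost Gorenstein graded, then $G$ is almost Gorenstein graded}, proved by reversing the exact-sequence manipulation in the proof of Proposition~\ref{10} and using that $\K_{G/xG} \cong (\K_G/x\K_G)(-1)$ together with the Ulrich property lifting along the regular element $x$ (Proposition~\ref{2.2}(5)).
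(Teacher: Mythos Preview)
Your overall architecture---induction on $d$, reduce to the one-dimensional case, and lift back via a graded analogue of Theorem~\ref{3.9}(1)---matches the paper. The inductive/lifting machinery you isolate (the ``converse of Proposition~\ref{10}'') is exactly what the paper invokes, citing ``similarly as in Proof of Theorem~\ref{3.9}.'' The difficulty is entirely in the one-dimensional base case, and here your plan has a genuine gap in part (1) and an underspecified step in part (2).

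\textbf{Part (1), $d=1$.} You propose to invoke Theorem~\ref{8} under condition (2): $\rmQ(G)$ Gorenstein and $G$ level. Levelness follows from $\rma(G)=0$, but you give no argument for $\rmQ(G)$ being Gorenstein, and there is none available at this point: the equivalence ``$R$ almost Gorenstein $\Leftrightarrow$ $\rmQ(G)$ Gorenstein'' under the minimal-multiplicity hypothesis is precisely Corollary~\ref{8.3}, which is \emph{deduced from} Theorem~\ref{8.1}. So invoking Theorem~\ref{8}(2) here is circular. Your hedge (``or more directly the structure of almost Gorenstein rings of minimal multiplicity'') gestures at the right escape but does not supply it. The paper's actual argument is a hands-on construction: realize $\rmK_R$ as a fractional ideal $K$ with $R\subseteq K\subseteq\overline R$, use almost Gorensteinness to get $\fkm K=\fkm$ (hence $\fkm^nK=\fkm^n$ for all $n\ge1$), place the $\fkm$-adic filtration on $K$, and then check \emph{by hand} that $\gr_\fkm(K)\cong\rmK_G$ as a graded $G$-module (Claim~\ref{8.2}: one verifies depth $>0$ and $\rmr_G(\gr_\fkm(K))=1$ using $\fkm^2=f\fkm$). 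The exact sequence $0\to G\to\gr_\fkm(K)\to\gr_\fkm(K/R)\to0$ then exhibits $G$ as almost Gorenstein graded. This direct identification of $\rmK_G$ is the substantive idea your plan is missing.

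\textbf{Part (2), $d=1$.} Your route via Theorem~\ref{6} requires $\rmr(\overline G)=\rmr(\overline R)$, and your one-line justification (``the single top degree of $\rmK$ coincides with $\mu(\rmK_{\overline R})$'') is not a proof. It can be made to work: levelness with $\rma(G)=0$ gives $\mu_G(\rmK_G)=\dim_k[\rmK_G]_0=\dim_k(\rmK_R/\omega_0)$, and since $\fkm\rmK_R\subseteq\omega_0$ one gets $\rmr(G)\le\rmr(R)$; combined with the general inequality $\rmr(R)\le\rmr(G)$ this yields equality. But you should spell this out. The paper instead bypasses Theorem~\ref{6} and argues directly from the canonical filtration: levelness forces $C=C_0$, whence $\omega_{1}=\fkm f$, and since $\fkm\rmK_R\subseteq\omega_{1}$ one gets $\fkm\rmK_R=\fkm f$, so $\fkm{\cdot}(\rmK_R/Rf)=0$.
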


\begin{proof}
(1) If $R$ is a Gorenstein ring, then $\e(R) \leq 2$ and $R$ is an abstract hypersurface, since  $\m^2=Q\m$ for some minimal reduction $Q$ of $\m$.
Therefore $G$ is also a hypersurface and hence a Gorenstein ring.

Assume that $R$ is not a Gorenstein local ring.
Hence $d > 0$ and $\rma(G)=1-d$.
We show that $G$ is an almost Gorenstein graded ring by induction on $d$.
First we consider the case $d=1$.
Let $\overline{R}$ denote the integral closure of $R$ in $\rmQ(R)$.
Choose an $R$-submodule $K$ of $\overline{R}$ so that $R \subseteq K \subseteq \overline{R}$ and $K \cong \rmK_R$ as an $R$-module (this choice is possible; see \cite[Corollary 2.8]{GMP}).
We have $\m K \subseteq R$ by \cite[Theorem 3.11]{GMP} as $R$ is an almost Gorenstein local ring.
Hence $\m K = \m$ (see Corollary \ref{3.8}), and $\m^n K=\m^n$ for all $n \geq 1$.
Let $C=K/R$ and consider the $\m$-adic filtrations of $R$, $K$, and $C$.
We then have the exact sequence
\begin{equation}
0 \to G \to \gr_{\m}(K) \to \gr_{\m}(C) \to 0 \tag{$\sharp$}
\end{equation}
of graded $G$-modules induced from the canonical exact sequence
$ 0 \to R \to K \to C \to 0$
of filtered $R$-modules.
Notice that $\gr_{\m}(C)=[\gr_{\m}(C)]_0$, since $\m C=(0)$. Thanks to exact sequence $(\sharp)$ above, the following claim yields  that $G$ is an almost Gorenstein graded ring.

\begin{claim}\label{8.2}
$\gr_{\m}(K) \cong \rmK_G$ as a graded $G$-module.
\end{claim}

\begin{proof}[Proof of Claim {\rm \ref{8.2}}]
Since $\rma(G)=0$ and $G$ is a graded submodule of $\gr_{\m}(K)$, it suffices to show that $\depth_G \gr_{\m}(K) > 0$ and $\rmr_G(\gr_{\m}(K))=1$.
Choose $a \in \m$ so that $\m^2=a\m$ and let $f=\overline{a} ~(\in \m/\m^2)$ denotes the image of $a$ in $G$.
Let $x \in \m^n K$ $(n \geq 0)$ and assume that $ax \in \m^{n+2} K=\m^{n+2}$.
Then, since $\m^{n+2}=a\m^{n+1}$, we readily get $x \in \m^{n+1}=\m^{n+1} K$.
Thus $f$ is $\gr_{\m}(K)$-regular, $[\gr_{\m}(K) / f \gr_{\m}(K)]_0 = K/\m$, $[\gr_{\m}(K) / f \gr_{\m}(K)]_1 = \m/aK$, and $\gr_{\m}(K) / f \gr_{\m}(K) = K/\m \oplus \m/aK$.
Let $x \in K \setminus \m$ and assume that $\fkM \overline{x} \subseteq f \gr_{\m}(K)$, where $\overline{x} ~(\in K/\m)$ denotes the image of $x$ in $\gr_{\m}(K)$.
Then, since $\m x \subseteq aK$ and $\rmr_R(K)=1$ (remember that $K \cong \K_R$ as an $R$-module), we get $aK:_K \m=aK+Rx$.
Therefore $\m (K/aK)=(0)$; otherwise $\m (K/aK) \supseteq [(0):_{K/aK}\m]$ and hence   $x \in \m K=\m$.
Consequently, because $K/aK=\rmE_{R/(a)}(R/\m)$ (the injective envelope of the $R/(a)$-module  $R/\fkm$; see \cite[Korollar 6.4]{HK}) is a faithful $R/(a)$-module, we have $\m=(a)$.
This is, however, impossible, because $R$ is not a discrete valuation ring.
Let $x \in \m \setminus aK$ and assume that $\fkM \overline{x} \subseteq f \gr_{\m}(K)$, where $\overline{x} ~(\in \m K/\m^2 K)$ denotes the image of $x$ in $\gr_{\m}(K)$.
Then $\m x \subseteq aK$ and hence $aK:_K \fkm=aK+Rx$, which proves $\ell_{R/\m}((0):_{{\gr_{\m}(K)}/f \gr_{\m}(K)}\fkM)=1$.
Thus $\rmr_G(\gr_{\m}(K))=1$, so that $\gr_{\m}(K) \cong \rmK_G$ as a graded $G$-module.
\end{proof}

Assume now that $d>1$ and that our assertion holds true for $d-1$.
Let $0 \to R \to \rmK_R \to C \to 0$ be an exact sequence of $R$-modules such that $\mu_R(C)=\e_{\m}^0(C)$.
Choose $a \in \m$ so that $a$ is a part of a minimal reduction of $\m$ and $a$ is superficial for $C$ with respect to $\m$.
Let $f=\overline{a} ~(\in \m/\m^2)$ denote the image of $a$ in $G=\gr_{\m}(R)$.
We then have $G/fG=\gr_{\m}(R/(a))$ and $\rmv(R/(a))=\e(R/(a))+d-2$.
By the hypothesis of induction, $G/fG$ is an almost Gorenstein graded ring, because $R/(a)$ is an almost Gorenstein local ring (see Proof of Theorem \ref{3.5}).
Choose an exact sequence
$ 0 \to G/fG \to \rmK_{G/fG}(d-2) \to X \to 0 $
of graded $G/fG$-modules so that $\mu_{G/fG}(X)=\e_{[G/fG]_+}^0(X)$.
Recall that $\rmK_{G/fG}(d-2) \cong \K_{G/f \rmK_G}(d-1)$ as a graded $G$-module and we get an exact sequence 
$ 0 \to G \to \rmK_G(d-1) \to Y \to 0 $
of graded $G$-modules, similarly as in Proof of Theorem \ref{3.9}, such that $\mu_G(Y)=\e_\fkM^0(Y)$.
Hence $G$ is an almost Gorenstein graded ring.

(2)
We may assume $G$ is not a Gorenstein ring.
Hence $d>0$ and $a = 1-d$ (Lemma \ref{7.2}).
Suppose that $d=1$ and choose an exact sequence
\begin{equation}
0 \to G \to \rmK_G(-a) \to C \to 0 \tag{$\sharp$}
\end{equation}
of graded $G$-modules so that $\fkM C=(0)$, where $a=\rma(G)$.

We now take the canonical $\m$-filtration $\omega=\{\omega_n\}_{n \in \Bbb Z}$ of $\rmK_R$ so that $\omega_n=\rmK_R$ if $n \leq -a$ and $\rmK_G=\gr_{\omega}(\rmK_R)$ (see, e.g.,  \cite{GI} for the existence of canonical filtrations).
Let $f \in \rmK_R=\omega_{-a}$ such that $\varphi(1)=\overline{f}$, where $\overline{f} ~(\in \rmK_R/\omega_{-a+1})$ denotes the image of $f$ in $\rmK_G(-a)$.
Let $\alpha$ be the $R$-linear map $R \to \rmK_R$ defined by $\alpha(1)=f$ and consider $R$ to be filtered with filtration ${\Bbb F}=\{\m^{n+a}\}_{n \in \Z}$.
Then the homomorphism $\alpha:R \to \rmK_R$ preserves filtrations and $G(a) \cong \gr_{{\Bbb F}}(R)$ as an graded $G$-module.
Consequently, exact sequence $(\sharp)$ turns into
$ 0 \to \gr_{{\Bbb F}}(R)(-a) \to \gr_{\omega}(\rmK_R)(-a) \to C \to 0 $
with  $\varphi(1)=\overline{f}$.
We now notice that $C=C_0$, because $G$ is a level ring.
Hence $\omega_n=\m^{n+a}f+\omega_{n+1}$ for $n > -a$.
Therefore
$ \omega_{-a+1} \subseteq \m f+\omega_{\ell} $
for all $\ell \in \Z$ and hence $\omega_{-a+1}=\m f$.
Thus $\omega_{-a+1}=\m f=\m \rmK_R$, because $\m \rmK_R=\m \omega_{-a} \subseteq \omega_{-a+1}$.
Consequently, in the exact sequence
$ R \overset{\alpha}{\to} \rmK_R \to X \to 0 $, we have $\m X=(0)$.
Thus $R$ is an almost Gorenstein local ring (see Lemma \ref{3.1}). 

Now suppose that $d>1$ and that our assertion holds true for $d-1$.
Look at the exact sequence
$ 0 \to G \to \rmK_G(d-1) \to C \to 0  $
of graded $G$-modules with $\mu_G(C)=\e_\fkM^0(C)$.
Choose $f \in G_1$ so that $f$ is $G$-regular and 
$\fkM C=(f,f_2,\ldots,f_{d-1})C$
for some $f_2,f_3,\ldots,f_{d-1} \in G_1$ (Proposition \ref{2.2} (2)).
We then have the exact sequence
$ 0 \to G/fG \to (\rmK_G/f \rmK_G)(d-1) \to C/fC \to 0 $
of graded $G/fG$-modules, which guarantees that $G/fG$ is an almost Gorenstein graded ring (remember that $(\rmK_G/f \rmK_G)(d-1) \cong \rmK_{G/fG}(d-2)$).
Consequently, thanks to the hypothesis of induction, the local ring $R/(a)$ (here $a \in \m$ such that $f=\overline{a}$ in $\m/\m^2 = [\gr_{\m}(R)]_1$) is an almost Gorenstein local ring and therefore by Theorem \ref{3.9}, $R$ is an almost Gorenstein local ring, because $a$ is $R$-regular.
\end{proof}

When $\rmv (R) = \e(R) + d - 1$, the almost Gorensteinness of $R$ is equivalent to the Gorensteinness of $\rmQ (G)$, as we show in the following.

\begin{cor}\label{8.3} Suppose that $\rmv(R)=\e(R)+d-1$.
Then the following are equivalent.
\begin{enumerate}[\rm(1)]
\item
$R$ is an almost Gorenstein local ring,
\item
$G$ is an almost Gorenstein graded ring,
\item
$\rmQ (G)$ is a Gorenstein ring.
\end{enumerate}
\end{cor}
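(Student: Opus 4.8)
The plan is to derive Corollary \ref{8.3} by fitting together Theorem \ref{8.1}, which links the almost Gorensteinness of $R$ with that of the tangent cone $G=\gr_{\m}(R)$, and Theorem \ref{7.4}, which characterizes almost Gorenstein homogeneous rings; the standing hypothesis $\rmv(R)=\e(R)+d-1$ is precisely what places $G$ in the scope of the latter. First I would record the structural input: by \cite{S1}, $\rmv(R)=\e(R)+d-1$ is equivalent to $\m^2=Q\m$ for some minimal reduction $Q$ of $\m$, and in that case $G$ is a Cohen-Macaulay standard graded ring over the infinite field $G_0=R/\m$ with $\rma(G)=1-d$ whenever $R$ is not regular; Proposition \ref{7.1} then shows such a $G$ is moreover a level graded ring. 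I would dispose of two degenerate cases up front: if $R$ is regular, $G$ is a polynomial ring over $R/\m$, so $R$, $G$ and $\rmQ(G)$ are all Gorenstein and (1), (2), (3) hold; and if $d=0$, then $\m^2=(0)$ and $\rmQ(G)=G$, and a direct inspection using Lemma \ref{3.1}(3) shows each of (1), (2), (3) amounts to $R$ being Gorenstein. So from now on I may assume $d\ge1$ and $R$ not regular, whence $G$ is Cohen-Macaulay, standard graded over the infinite field $R/\m$, level, and $\rma(G)=1-d$.

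Next I would prove $(1)\Leftrightarrow(2)$. The implication $(1)\Rightarrow(2)$ is exactly Theorem \ref{8.1}(1). For $(2)\Rightarrow(1)$, given that $G$ is an almost Gorenstein graded ring, I would invoke levelness of $G$ to conclude that $G$ is an almost Gorenstein \emph{level} graded ring, and then Theorem \ref{8.1}(2) delivers that $R$ is an almost Gorenstein local ring. (Levelness of $G$ is used only at this point.)

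Then I would prove $(2)\Leftrightarrow(3)$. For $(2)\Rightarrow(3)$: fix the exact sequence $0\to G\to\rmK_G(-\rma(G))\to C\to0$ supplied by Definition \ref{6.1}; if $C=(0)$ then $G$ is Gorenstein and so is $\rmQ(G)$, whereas if $C\ne(0)$ then $\dim_G C=d-1<d$ by Lemma \ref{3.1}(2), so $\rmQ(G)$ is Gorenstein by Lemma \ref{3.1}(1) (applied to the underlying sequence of $G$-modules). For $(3)\Rightarrow(2)$: with $\rmQ(G)$ Gorenstein and $\rma(G)=1-d$, Theorem \ref{7.4} applies to the Cohen-Macaulay homogeneous ring $G$ over the infinite field $R/\m$ and yields that $G$ is an almost Gorenstein graded ring. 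Assembling the two equivalences completes the argument. I do not expect a genuine obstacle here: the proof is essentially an assembly of earlier results, and the closest thing to a subtlety is the bookkeeping that certifies $G$ as an object to which Theorems \ref{8.1} and \ref{7.4} apply (standardness, levelness, $\rma(G)=1-d$), all of which come from \cite{S1} and Proposition \ref{7.1}, together with the isolated treatment of the cases $R$ regular and $d=0$.
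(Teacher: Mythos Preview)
Your proposal is correct and follows essentially the same route as the paper: after noting via \cite{S1} and Proposition \ref{7.1} that $G$ is a Cohen--Macaulay level homogeneous ring with $\rma(G)=1-d$, you obtain $(1)\Leftrightarrow(2)$ from Theorem \ref{8.1}, $(2)\Rightarrow(3)$ from Lemma \ref{3.1}(1), and $(3)\Rightarrow(2)$ from Theorem \ref{7.4}. Your explicit treatment of the regular and $d=0$ cases is more careful than the paper's terse proof, but the substance is identical.
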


\begin{proof}
Since $G$ is a Cohen-Macaulay level graded ring (Proposition \ref{7.1} and \cite{S1}), the equivalence $(1) \Leftrightarrow (2)$ follows from Theorem \ref{8.1}.
See Theorem \ref{7.4} (resp. Lemma \ref{3.1} (1)) for the implication $(3) \Rightarrow (2)$ (resp. $(2) \Rightarrow (3)$).
\end{proof}

We say that $\fkm$ is a normal ideal, if $\fkm^n$ is an integrally closed ideal for all $ n \ge 1$.

\begin{cor}\label{8.4} Suppose that $\rmv(R)=\e(R)+d-1$ and that $R$ is a normal ring. If $\m$ is a normal ideal, then $R$ is an almost Gorenstein local ring.
\end{cor}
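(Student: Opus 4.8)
\textbf{Proof strategy for Corollary \ref{8.4}.}
The plan is to deduce the statement from Corollary \ref{8.3}, so the whole task reduces to checking that $\rmQ(G)$ is a Gorenstein ring, where $G = \gr_\fkm(R)$. First I would recall that since $\rmv(R) = \e(R)+d-1$, the associated graded ring $G$ is Cohen--Macaulay and is a level ring with $\rma(G) = 1-d$ (this is the content of \cite{S1} together with Proposition \ref{7.1}), so the three conditions in Corollary \ref{8.3} are genuinely equivalent for our $R$; hence it suffices to produce the Gorensteinness of the total quotient ring $\rmQ(G)$. The key input is the normality of the ideal $\fkm$: by a theorem of the standard theory (see, e.g., \cite{BH}), if $R$ is a normal domain and $\fkm^n$ is integrally closed for every $n\ge 1$, then the Rees algebra $\mathcal R(\fkm) = R[\fkm t]$ is a normal domain, and consequently $G = \gr_\fkm(R) \cong \mathcal R(\fkm)/\fkm\mathcal R(\fkm)$ satisfies Serre's condition $(\rmR_0)$ — indeed $G$ is reduced, being $\mathcal R(\fkm)/t^{-1}\mathcal R'(\fkm)$ with $\mathcal R'(\fkm)$ normal and $t^{-1}$ a prime element, so $G$ has no embedded or nilpotent contribution in codimension zero.

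Having established that $G$ is a reduced Cohen--Macaulay graded ring, I would then argue that a reduced Cohen--Macaulay ring has $\rmQ(G)$ a finite product of fields, and each such field is trivially Gorenstein, so $\rmQ(G)$ is a Gorenstein ring. Plugging this into the implication $(3)\Rightarrow(1)$ of Corollary \ref{8.3} gives that $R$ is an almost Gorenstein local ring, which is exactly the assertion. If one prefers to avoid invoking the normality of $\mathcal R(\fkm)$ directly, an alternative route is: normality of $\fkm$ forces $G$ to be reduced by the Rees--valuation description of integral closure (the associated primes of $\fkm^n/\fkm^{n+1}$ are controlled by the Rees valuations, and integral closedness of all $\fkm^n$ kills the nilpotents of $G$), after which the same argument applies.

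\textbf{Main obstacle.} The delicate point is the step "$\fkm$ normal and $R$ normal $\Rightarrow$ $G$ reduced." One must be careful that it is genuinely the normality of all the powers $\fkm^n$ (equivalently, normality of the Rees algebra) that is needed, not merely that $R$ is a normal ring; the reducedness of $G$ does not follow from normality of $R$ alone. Once reducedness of $G$ is in hand, the rest is formal: a reduced Noetherian ring has reduced total quotient ring, a reduced Cohen--Macaulay ring has no embedded components so $\rmQ(G)$ is Artinian and reduced, hence a product of fields, hence Gorenstein, and Corollary \ref{8.3} finishes the proof. I would present the corollary's proof in two or three lines, citing \cite{BH} for the normality of the Rees algebra and Corollary \ref{8.3} for the equivalence, since all the real work has already been done in Theorem \ref{8.1}.
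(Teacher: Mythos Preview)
Your approach is exactly the paper's: normality of $R$ together with normality of $\fkm$ makes the extended Rees algebra $\mathcal R' = R[\fkm t, t^{-1}]$ a normal ring, whence $\rmQ(G)$ is Gorenstein, and Corollary \ref{8.3} finishes. One small imprecision: your justification via ``$t^{-1}$ a prime element of $\mathcal R'$'' is too strong in general --- for instance if $R = k[[x,y,z]]/(xy - z^n)$ with $n \ge 3$ (a normal rational double point satisfying all the hypotheses), one has $G = k[x,y,z]/(xy)$, which is reduced but not a domain, so $t^{-1}$ is not prime. The paper sidesteps this entirely and does not claim $G$ is reduced: since $\mathcal R'$ satisfies $(\rmR_1)$, for every minimal prime $\fkq$ of $G$ the localization $G_\fkq$ is a nonzero quotient of a discrete valuation ring, hence Artinian Gorenstein regardless of whether $t^{-1}$ is a uniformizer there; as $G$ is Cohen--Macaulay, $\rmQ(G) = \prod_{\fkq \in \Min G} G_\fkq$ is then Gorenstein with no appeal to reducedness.
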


\begin{proof}
Let $\calR'=\calR'(\m)=R[{\m}t, t^{-1}]$ be the extended Rees algebra of $\fkm$, where $t$ is an indeterminate.
Then $\calR'$ is a normal ring, because $R$ is a normal local ring and $\m$ is a normal ideal. Hence the total ring of fractions of $G=\calR'/t^{-1}\calR'$ is a Gorenstein ring, so that $R$ is almost Gorenstein by Corollary \ref{8.3}.
\end{proof}

We now reach the goal of this section.

\begin{cor}\label{8.5}
Every $2$-dimensional rational singularity is an almost Gorenstein local ring.
\end{cor}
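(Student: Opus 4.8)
The plan is to deduce Corollary~\ref{8.5} from Corollary~\ref{8.4}, so the real task is to check that a two-dimensional rational singularity $(R,\fkm)$ satisfies the hypotheses of that corollary: namely that $R$ is Cohen-Macaulay normal with canonical module $\rmK_R$, that $\rmv(R) = \e(R) + d - 1$ (so $\fkm^2 = Q\fkm$ for a minimal reduction $Q$), and that $\fkm$ is a normal ideal. Here $d = \dim R = 2$. We may assume $R/\fkm$ is infinite after the standard faithfully flat base change $R \to R[X]_{\fkm R[X]}$, which preserves rational singularity, the equality $\rmv(R) = \e(R)+1$, normality of $R$ and of $\fkm$, and reflects the almost Gorenstein property by Theorem~\ref{3.5}.

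First I would recall the classical facts about two-dimensional rational singularities due to Lipman and Artin. A two-dimensional rational singularity is by definition a normal Cohen-Macaulay local ring $R$ (essentially of finite type over, or complete over, an algebraically closed field, or more generally excellent normal with a resolution) such that $\mathrm{H}^1(\widetilde{X},\calO_{\widetilde{X}}) = 0$ for one (equivalently every) resolution of singularities $\widetilde{X} \to \Spec R$. Since $R$ is excellent and normal of dimension two, it is Cohen-Macaulay and admits a canonical module $\rmK_R$; moreover $\rmK_R$ is exactly the canonical ideal coming from the dualizing sheaf, and rationality is equivalent to $\mathrm{H}^1(\widetilde X, \omega_{\widetilde X}) $ vanishing by duality. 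The two key structural results I would invoke are: (a) Lipman's theorem that for a two-dimensional rational singularity every integrally closed $\fkm$-primary ideal $I$ is stable, i.e. $I^2 = QI$ for a minimal reduction $Q$, and products of integrally closed ideals are integrally closed; in particular $\fkm^n$ is integrally closed for all $n$, so $\fkm$ is a normal ideal, and $\fkm^2 = Q\fkm$; and (b) the equality $\fkm^2 = Q\fkm$ together with \cite{S1} (Sally's theorem, already cited in Section~\ref{tancon}) gives $\rmv(R) = \e(R) + d - 1$ and that $G = \gr_\fkm(R)$ is Cohen-Macaulay.

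With these facts in hand the proof is short: by (a) the maximal ideal $\fkm$ is a normal ideal, and combining (a) with (b) we get $\rmv(R) = \e(R) + 1 = \e(R) + d - 1$. Since $R$ is normal and two-dimensional rational singularities are Cohen-Macaulay, all hypotheses of Corollary~\ref{8.4} are met, and we conclude that $R$ is an almost Gorenstein local ring. Equivalently, one can phrase the last step via Corollary~\ref{8.3}: the extended Rees algebra $\calR'(\fkm) = R[\fkm t, t^{-1}]$ is normal because $R$ is normal and $\fkm$ is a normal ideal, hence $G = \calR'(\fkm)/t^{-1}\calR'(\fkm)$ has Gorenstein total quotient ring $\rmQ(G)$ (being the associated graded of a normal two-dimensional ring, $G$ is reduced and equidimensional with Gorenstein localizations at minimal primes), and then $\rmv(R) = \e(R)+d-1$ forces $R$ almost Gorenstein.

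The main obstacle is entirely in justifying step (a): that on a two-dimensional rational singularity powers of the maximal ideal are integrally closed and satisfy $\fkm^2 = Q\fkm$. This is not elementary — it rests on Lipman's analysis of integrally closed ideals on rational singularities (the correspondence with antinef cycles on a resolution and the factorization theorem for complete ideals), and one should cite it rather than reprove it. The remaining points (reduction to infinite residue field, Cohen-Macaulayness and existence of $\rmK_R$, normality of $\calR'(\fkm)$, Gorensteinness of $\rmQ(G)$) are standard and follow from excellence, Serre's criterion, and the cited results. I would therefore present the proof as: reduce to $R/\fkm$ infinite; invoke Lipman to get $\fkm$ normal and $\fkm^2 = Q\fkm$; invoke \cite{S1} to get $\rmv(R)=\e(R)+1$; apply Corollary~\ref{8.4}.
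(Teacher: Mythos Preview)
Your proposal is correct and follows exactly the route the paper intends: Corollary~\ref{8.5} is stated without proof as an immediate consequence of Corollary~\ref{8.4}, and you have correctly supplied the standard justifications (Lipman's theory of complete ideals on rational surface singularities giving $\fkm$ normal and $\fkm^2=Q\fkm$, hence $\rmv(R)=\e(R)+1$) for the hypotheses of that corollary.
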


Auslander's theorem \cite{A} says that every two-dimensional Cohen-Macaulay complete local ring $R$ of finite Cohen-Macaulay representation type is a rational singularity, provided $R$ contains a field of characteristic $0$. Hence by Corollary \ref{8.5} we get the following.

\begin{cor}\label{8.6} Every two-dimensional Cohen-Macaulay complete local ring $R$ of finite Cohen-Macaulay representation type is an almost Gorenstein local ring, provided $R$ contains a field of characteristic $0$. 
\end{cor}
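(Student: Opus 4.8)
The plan is to read off Corollary~\ref{8.6} as an immediate consequence of the quoted theorem of Auslander together with Corollary~\ref{8.5}; no new argument is really required, so the "proof" is a short assembly of inputs that are already in place. First I would check that the hypotheses of Section~\ref{tancon} are met for our $R$. Since $R$ is a complete local ring containing a field, the Cohen structure theorem furnishes a coefficient field, which is the residue class field $R/\fkm$ and has characteristic $0$; hence $R/\fkm$ contains $\Bbb Q$ and is in particular infinite, as required in Section~\ref{tancon}. Moreover, being complete and Noetherian, $R$ is a homomorphic image of a regular (hence Gorenstein) local ring, so $R$ possesses the canonical module $\rmK_R$; together with the Cohen--Macaulay assumption, this puts us squarely in the setting where almost Gorensteinness is defined.

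Next I would apply Auslander's theorem \cite{A} exactly as stated in the line preceding the corollary: a two-dimensional Cohen--Macaulay complete local ring of finite Cohen--Macaulay representation type containing a field of characteristic $0$ is a rational singularity. Feeding this conclusion into Corollary~\ref{8.5}, which asserts that every two-dimensional rational singularity is an almost Gorenstein local ring, we obtain at once that $R$ is an almost Gorenstein local ring. That completes the deduction.

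Because the statement is a formal corollary of two results established earlier, there is no genuine obstacle to surmount at this final step; the substantive work lives upstream. The real weight is carried by Corollary~\ref{8.5}, which in turn rests on Corollary~\ref{8.4} and Theorem~\ref{8.1}: for a two-dimensional rational singularity one uses that $R$ is normal, that $\fkm$ is a normal ideal, and that $\fkm$ has reduction number one (so $\rmv(R)=\e(R)+d-1$ by \cite{S1}), whence the total quotient ring of $G=\gr_\fkm(R)$ is Gorenstein and Theorem~\ref{8.1}~(2) yields that $R$ is almost Gorenstein. If any point in writing this section demanded care, it would be recording those classical properties of two-dimensional rational singularities — but these are precisely the inputs already invoked in the proof of Corollary~\ref{8.5}, so nothing further is needed to justify Corollary~\ref{8.6} itself.
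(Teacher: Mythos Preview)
Your proposal is correct and matches the paper's own justification exactly: the paper simply observes that Auslander's theorem makes $R$ a rational singularity, and then invokes Corollary~\ref{8.5}. Your additional checks (infinite residue field, existence of $\rmK_R$) are appropriate hygiene but the paper treats the corollary as immediate and gives no formal proof beyond the sentence preceding the statement.
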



\section{One-dimensional Cohen-Macaulay local rings of finite CM-representation type are almost Gorenstein}\label{fCM}
The purpose of this section is to prove the following.

\begin{thm}\label{12.1} 
Let $A$ be a Cohen-Macaulay complete local ring of dimension one and assume that the residue class field of $A$ is algebraically closed of characterisic $0$. 
If $A$ has finite Cohen-Macaulay representation type, then $A$ is an almost Gorenstein local ring.
\end{thm}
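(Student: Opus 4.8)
The plan is to reduce the statement, via the classification of one‑dimensional Cohen--Macaulay complete local rings of finite Cohen--Macaulay representation type, to a short list of rings and then invoke the structural results of Sections~\ref{defag} and~\ref{tancon}. First I would dispose of the case that $A$ is a Gorenstein ring, where there is nothing to prove; so assume henceforth that $A$ is not Gorenstein. Under our hypotheses $A$ is reduced (finite representation type forces this for one‑dimensional complete local rings with algebraically closed residue field), hence its integral closure $\overline{A}$ in $\mathrm{Q}(A)$ is a module‑finite extension which is a finite direct product of discrete valuation rings each having residue field $k$, and $\mathrm{Q}(A)$ is a finite product of fields. In particular $\mathrm{Q}(A)$ is a Gorenstein ring, so by Remark~\ref{3.2} the ring $A$ has a canonical ideal and we may fix a fractional ideal $K$ with $A \subseteq K \subseteq \overline{A}$ and $K \cong \mathrm{K}_A$.

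Next I would read off the numerical consequences of finite representation type. The Drozd--Roiter conditions, necessary for finite Cohen--Macaulay representation type, give $\mu_A(\overline{A}) \le 3$; since $A$ is reduced and complete with algebraically closed residue field one has $\mathrm{e}_{\fkm}^0(A) = \mu_A(\overline{A})$ (compute both sides branch by branch), so $\mathrm{e}_{\fkm}^0(A) \le 3$. Because $A$ is not Gorenstein it is not a hypersurface, so $\mathrm{v}(A) \ne 2$ (a one‑dimensional Cohen--Macaulay complete local ring of embedding dimension $2$ is a hypersurface, by an Auslander--Buchsbaum argument), and as $2 \le \mathrm{e}_{\fkm}^0(A) \le 3$ this forces $\mathrm{v}(A) = \mathrm{e}_{\fkm}^0(A) = 3$: the ring $A$ has maximal embedding dimension. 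Consequently $\fkm^2 = Q\fkm$ for a minimal reduction $Q = (x)$ of $\fkm$, and a short length count (or \cite{S1}) gives $\mathrm{r}(A) = \mathrm{e}_{\fkm}^0(A) - 1 = 2$.

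At this point Section~\ref{tancon} applies. Since $\mathrm{v}(A) = \mathrm{e}_{\fkm}^0(A) + \dim A - 1$, Corollary~\ref{8.3} says that $A$ is an almost Gorenstein local ring if and only if $\mathrm{Q}(\gr_{\fkm}(A))$ is a Gorenstein ring; so it remains to check that $G = \gr_{\fkm}(A)$ is generically Gorenstein. For this I would use the explicit classification: up to analytic isomorphism the non‑Gorenstein rings in our class form a short list --- the monomial curves $k[[t^3,t^4,t^5]]$ and $k[[t^3,t^5,t^7]]$, the coordinate axes $k[[x,y,z]]/(xy,yz,zx)$, and the finitely many two‑branch configurations with $\mathrm{e}_{\fkm}^0 = 3$ --- and for each of these one verifies generic Gorensteinness of the associated graded ring by direct computation (for example $\gr_{\fkm}(k[[t^3,t^4,t^5]]) \cong k[X,Y,Z]/(YZ,\ Z^2,\ Y^2 - XZ)$, whose total ring of fractions is the Gorenstein Artinian ring $k(X)[Y]/(Y^3)$). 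Several members of the list are, moreover, already known to be almost Gorenstein --- for instance the coordinate axes over an infinite field, by Proposition~\ref{3.4} and \cite{GMP} --- which may be quoted instead of the computation with $G$. Together with the reduction above this completes the argument.

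The main obstacle is the third step, and more precisely the reliance on the classification. Finite Cohen--Macaulay representation type is strictly stronger than maximal embedding dimension, and it is exactly this extra hypothesis that forces $\mathrm{Q}(\gr_{\fkm}(A))$ to be Gorenstein --- maximal embedding dimension alone does not suffice, as Corollary~\ref{8.3} makes clear. Thus the real work is to enumerate the finitely many non‑Gorenstein rings that can occur and to run the routine, but not wholly automatic, check that each has Gorenstein associated‑graded total quotient ring (or is covered by \cite{GMP}). A subsidiary point needing care is the reducedness of $A$ used at the start, which should be cited from the representation‑theoretic literature rather than reproved here.
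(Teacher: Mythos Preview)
Your reduction is correct and takes a genuinely different route from the paper. The observation that a non-Gorenstein $A$ of finite CM type must satisfy $\mathrm{e}(A)=\mathrm{v}(A)=3$ is valid (either via Drozd--Roiter, or more directly from Yoshino's sandwich $R\subseteq A\subseteq\overline{R}$ with $R$ a simple ADE curve, all of which have multiplicity $\le 3$, and multiplicity can only drop in an overring), and Corollary~\ref{8.3} then cleanly reduces the statement to showing that $\mathrm{Q}(\gr_\fkm(A))$ is Gorenstein.

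The paper does \emph{not} go through Corollary~\ref{8.3}. It takes Yoshino's classification as the starting point and shows that every local intermediate ring $R\subseteq A\subseteq\overline{R}$ is almost Gorenstein, one ADE type at a time. The tools are different: a tower structure for overrings of multiplicity-$2$ rings (Proposition~\ref{12.3}, Corollary~\ref{12.5}) to enumerate the intermediate rings, then Proposition~\ref{12.6} (the first overring $R:\fkm$ of a Gorenstein $R$ with $\mathrm{e}(R)=3$ is automatically almost Gorenstein) and the matrix criterion of Theorem~\ref{p} to verify almost Gorensteinness for the remaining rings by writing their defining ideals as $2\times 2$ minors of an explicit $2\times 3$ matrix whose second row is a regular system of parameters.

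Where your proposal falls short is that the final step is only asserted, not carried out. Your enumeration of the non-Gorenstein cases is incomplete: beyond $k[[t^3,t^4,t^5]]$, $k[[t^3,t^5,t^7]]$, and the coordinate axes, the $D_n$ and $E_7$ analyses produce parametrized families of two-branch rings (one for each $n$ and each intermediate position $q$), and the generic Gorensteinness of their tangent cones must be checked uniformly in these parameters. That check is not harder than the paper's case work, but it is not lighter either: one still needs an explicit presentation of each $A$, which is precisely what Lemmas~\ref{12.2}--\ref{12.5} and the subsequent computations supply. So your Corollary~\ref{8.3} reduction repackages the problem attractively but does not shortcut the enumeration; the paper's Proposition~\ref{12.6} and Theorem~\ref{p} play the role that your tangent-cone verification would have to play.
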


Let $A$ be a Cohen-Macaulay complete local ring of dimension one  with  algebraically closed residue class field $k$ of characterisic $0$. Assume that $A$ has finite Cohen-Macaulay representation type. Then by \cite[(9.2)]{Y} one obtains  a \textit{simple singularity} $R$ so that $R \subseteq A \subseteq \overline{R}$, where $\overline{R}$ denotes the integral closure of $R$ in the total ring $\rmQ (R)$ of fractions. We remember that 
$R=S/(F),$ where $S = k[[X,Y]]$ is the formal power series ring over $k$  and $F$ is one of the following polynomials (\cite[(8.5)]{Y}).
\begin{enumerate}
\item[$(\rmA_n)$\ ]
$X^2-Y^{n+1} \quad(n\ge1)$
\item[$(\rmD_n)$\ ]
$X^2Y-Y^{n-1} \quad (n\ge4)$
\item[$(\rmE_6)$\ ]
$X^3-Y^4$
\item[$(\rmE_7)$\ ]
$X^3-XY^3$
\item[$(\rmE_8)$\ ]
$X^3-Y^5$.
\end{enumerate}
Our purpose is, therefore, to show that all the intermediate local rings $R \subseteq A \subseteq \overline{R}$ are almost Gorenstein.

Let us begin with the analysis of overrings of local rings with multiplicity $2$. 

\begin{lem}\label{12.2}
Let $(R,\fkm)$ be a Cohen-Macaulay local ring with $\dim R = 1$ and $\e_\fkm^0(R) = 2$. Let $R \subseteq S \subseteq \rmQ(R)$ be an intermediate ring such that  $S$ is a finitely generated $R$-module. Then the following assertions hold true.\begin{enumerate}
\item[$(1)$] If $S$ is a local ring with maximal ideal $\fkn$, then $\e_\fkn^0(S) \le 2$.  We have $\e^0_\fkn(S) = 2$ and $R/\fkm \cong S/\fkn$, if $S \subsetneq  \overline{R}$.
\item[$(2)$] Suppose that $S$ is not  a local ring. Then $\widehat{R}\otimes_RS \cong V_1 \times V_2$ with discrete valuation rings $V_1$ and $V_2$, where $\widehat{R}$ denotes the $\fkm$-adic completion of $R$. Hence $S$ is a regular ring and $S = \overline{R}$.
\item[$(3)$] Let $A = R:\fkm$ in $\rmQ(R)$. Then $A = \fkm:\fkm$ and if $R \subsetneq S$, then $A \subseteq S$.
\end{enumerate}
\end{lem}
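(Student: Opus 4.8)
The plan is to reduce all three assertions to a single length identity. First I would record the general facts: being a module‑finite torsion‑free extension of the one‑dimensional Cohen--Macaulay ring $R$ inside $\rmQ(R)$, the ring $S$ is itself a one‑dimensional Cohen--Macaulay ring with $\rmQ(S)=\rmQ(R)$, and $S/R$ has finite length because each element of $S$ is killed modulo $R$ by a non‑zerodivisor of $R$. Next I would pass, if necessary, to $R[X]_{\fkm R[X]}$ and $S\otimes_R R[X]_{\fkm R[X]}$ — a faithfully flat base change with regular fibres along which locality of $S$, the multiplicities, the residue degree $[S/\fkn:R/\fkm]$, and all the containments in question are preserved — so as to assume $R/\fkm$ infinite. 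Then $\e^0_\fkm(R)=2$ gives a minimal reduction $(a)$ of $\fkm$ with $\ell_R(R/aR)=\e^0_{(a)}(R)=2$, whence (the socle of $R/aR$ being one‑dimensional) $aR:_R\fkm=\fkm$, and then easily $\fkm^2=a\fkm$. The identity I would lean on throughout is $\ell_R(S/aS)=\ell_R(R/aR)=2$: multiplication by $a$ gives an $R$‑isomorphism $S/R\cong aS/aR$, and comparing lengths along $aR\subseteq R\subseteq S$ and $aR\subseteq aS\subseteq S$ yields $\ell_R(S/aS)=\ell_R(S/R)+\ell_R(R/aR)-\ell_R(aS/aR)=\ell_R(R/aR)$.

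For (1), with $S$ local with maximal ideal $\fkn$, I would use that $a$ is a non‑zerodivisor of the Cohen--Macaulay ring $S$, so $aS$ is $\fkn$‑primary and $\e^0_\fkn(S)\le\e^0_{aS}(S)=\ell_S(S/aS)\le\ell_R(S/aS)=2$, the middle step from $\ell_R(S/aS)=[S/\fkn:R/\fkm]\cdot\ell_S(S/aS)$. For the supplementary claim: if $S\subsetneq\overline R$ then $S$ is not a discrete valuation ring, since any regular — hence normal — module‑finite overring of $R$ inside $\rmQ(R)$ must equal $\overline R$ (because $\overline R$ is then integral over it and it is integrally closed in $\rmQ(R)=\rmQ(S)$); therefore $\e^0_\fkn(S)\ne1$, so $\e^0_\fkn(S)=2$, and equality throughout the displayed chain forces $[S/\fkn:R/\fkm]=1$, i.e. $R/\fkm\cong S/\fkn$.

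For (2), with $S$ non‑local with maximal ideals $\fkn_1,\dots,\fkn_t$ ($t\ge2$), I would split the Artinian ring $S/aS$ as $\prod_{i=1}^t S_{\fkn_i}/aS_{\fkn_i}$ with all factors nonzero (as $a\in\fkm S\subseteq\fkn_i$), so that $2=\ell_R(S/aS)=\sum_i\ell_R(S_{\fkn_i}/aS_{\fkn_i})$ forces $t=2$ and each summand $=1$. Hence each $S_{\fkn_i}/aS_{\fkn_i}$ is a field, $\fkn_i=aS_{\fkn_i}$ is principal, and $S_{\fkn_i}$ is a discrete valuation ring, so $\widehat R\otimes_R S=\widehat{S_{\fkn_1}}\times\widehat{S_{\fkn_2}}$ is a product of two discrete valuation rings, $S$ is regular, and $S=\overline R$ by the observation used in (1).

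For (3), I would first show $A=\fkm:\fkm$: the inclusion $\fkm:\fkm\subseteq R:\fkm=A$ is clear, and if some $x\in A$ had $x\fkm\not\subseteq\fkm$, then $x\fkm$, being an ideal of $R$ not contained in $\fkm$, would equal $R$, making $\fkm$ invertible and $R$ a discrete valuation ring, contradicting $\e^0_\fkm(R)=2$. Then, using $aR:_R\fkm=\fkm$ and $\fkm^2=a\fkm$, I would check directly that $A=R:\fkm=\tfrac1a\fkm$ (for $m\in\fkm$ one has $\tfrac ma\fkm\subseteq\tfrac1a\fkm^2=\fkm\subseteq R$; and if $z\fkm\subseteq R$ then $az\in z\fkm\subseteq R$ and $az\fkm\subseteq aR$, so $az\in aR:_R\fkm=\fkm$), whence $A/R\cong\fkm/aR$ has length $\ell_R(R/aR)-1=1$. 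Finally, if $R\subsetneq S$, the finite‑length module $S/R$ has nonzero socle, so there is $x\in S\setminus R$ with $x\fkm\subseteq R$, i.e. $x\in(S\cap A)\setminus R$; since $\ell_R(A/R)=1$ this forces $A=R+Rx\subseteq S$. I expect the main nuisances to be the reduction to infinite residue field (verifying that locality of $S$ and each of the three conclusions descend along $R\to R[X]_{\fkm R[X]}$) together with the load‑bearing input that $\e^0_\fkm(R)=2$ produces a parameter $a$ with $\ell_R(R/aR)=2$, $aR:_R\fkm=\fkm$ and $\fkm^2=a\fkm$; the remainder is careful bookkeeping with lengths.
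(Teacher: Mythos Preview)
Your proof is correct and follows essentially the same route as the paper: both arguments hinge on a parameter $a$ with $\fkm^2=a\fkm$ and $\ell_R(R/aR)=2$, then use $\ell_R(S/aS)=2$ to bound $\e^0_\fkn(S)$, count maximal ideals of $S$, and deduce $\ell_R(A/R)=1$. The variations are minor and cosmetic: the paper obtains $\fkm^2=f\fkm$ directly from a result of Sally--Vasconcelos and so avoids your reduction to infinite residue field; it reaches $\ell_R(S/fS)=2$ via additivity of multiplicity ($\e_0(Q,R)=\e_0(Q,S)$ since $\ell_R(S/R)<\infty$) rather than your length comparison through $S/R\cong aS/aR$; and it obtains $\ell_R(A/R)=1$ by observing that $R$ is Gorenstein and citing Herzog--Kunz, where you instead verify $A=\tfrac{1}{a}\fkm$ by hand. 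Your version is slightly more self-contained, theirs slightly shorter; neither introduces an idea the other lacks.
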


\begin{proof} (1), (2) We have $\fkm^2 = f\fkm$ for some $f \in \fkm$, since $\e^0_\fkm(R) = 2$ (see \cite[Theorem 3.4]{SV2}). Let $Q = (f)$. Then 
$$2 = \e_\fkm^0(R) = \e_0(Q,R) = \e_0(Q,S) = \ell_R(S/fS) \ge \ell_S(S/fS) \ge \mu_R(S),$$
because $\ell_R(S/R) < \infty$. Therefore assertion (1) follows, since $\ell_S(S/fS) \ge \e^0_\fkn(S)$. We have $R/\fkm = S/\fkn$, if $\e^0_\fkn(S) = 2$ (remember that $\ell_R(S/fS) = [S/\fkn : R/\fkm]{\cdot}\ell_S(S/fS)$).  Assume now that $S$ is not a local ring. To see assertion (2), passing to $\widehat{R}$, we may assume that $R$ is complete. Then $\mu_R(S)= 2$. Hence $S$ contains exactly two maximal ideals $\fkn_1$ and $\fkn_2$, so that $S \cong S_{\fkn_1} \times S_{\fkn_2}$ with $\mu_R(S_{\fkn_i}) = 1$ for $i = 1,2$. Because $\ell_R(S/fS) = 2$, we get $\ell_R(S_{\fkn_i}/fS_{\fkn_i}) = 1$, whence the local rings $S_{\fkn_i}$ are discrete valuation rings. Therefore $S$ is  regular.  

(3) Because $R$ is not a discrete valuation ring, we get $A = \fkm:\fkm$. We also have $\ell_R(A/R) = 1$ (\cite[Satz 1.46]{HK}), since $R$ is a Gorenstein ring. Therefore $A \subseteq S$, if $R \subsetneq S$.
\end{proof}

\begin{prop}\label{12.3}
Let $(R,\fkm)$ be a Cohen-Macaulay local ring with $\dim R = 1$ and $\e_\fkm^0(R) = 2$. Let $R \subsetneq S \subseteq \rmQ(R)$ be an intermediate ring such that  $S$ is a finitely generated $R$-module. Let $n = \ell_R(S/R)$.  We then have the following.
\begin{enumerate}
\item[$(1)$] There exists a unique chain of intermediate rings$$R = A_0 \subsetneq A_1 \subsetneq \ldots \subsetneq A_n = S.$$
\item[$(2)$] Every intermediate ring $R \subseteq A \subseteq S$ appears as one of $\{A_i\}_{0 \le i \le n}$.
\end{enumerate}
\end{prop}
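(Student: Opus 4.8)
The plan is to reduce to the case where $R$ is complete and to argue by induction on $n = \ell_R(S/R)$, using the minimal overring $A = R:\fkm = \fkm:\fkm$ from Lemma \ref{12.2} (3) as the first step in the chain. The key structural input is that multiplicity $2$ is very rigid: by Lemma \ref{12.2}, every intermediate local overring $T$ with $R \subseteq T \subseteq S$ again has $\e^0(T) \le 2$, and, as long as $T \subsetneq \overline R$, the extension $R \hookrightarrow T$ (and more generally each one-step extension) is of colength one because the relevant rings are Gorenstein with $\ell(A/R) = 1$ by \cite[Satz 1.46]{HK}. So the chain is forced to grow one colength-unit at a time, and the real content is uniqueness — that at each stage there is only one way to go up by one step.

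First I would reduce to $R$ complete: completion is faithfully flat, $\widehat R \otimes_R S$ is module-finite over $\widehat R$, intermediate rings and their colengths are preserved under $\widehat R \otimes_R -$, and uniqueness downstairs follows from uniqueness upstairs. Next, if $S$ is not local then by Lemma \ref{12.2} (2) $\widehat R \otimes_R S \cong V_1 \times V_2$ is regular and equals $\overline R$; in that case the first overstep is forced to be $A = R:\fkm$ (any intermediate ring strictly containing $R$ contains $A$ by Lemma \ref{12.2} (3)), and one proceeds inductively on $S$ as an $A$-module — here I would note that $A$ has the same shape, being a subring of $\overline R$ of multiplicity $2$ (or a product of DVRs), so the induction hypothesis applies. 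If $S$ is local with maximal ideal $\fkn$, then again $A \subseteq S$ and $\ell_R(S/A) = n-1$; now $A$ is a Cohen-Macaulay local ring of dimension one with $\e^0(A) \le 2$, and if $\e^0(A) = 1$ then $A$ is a DVR, so $A = \overline R = S$ forcing $n = 1$ and the chain $R \subsetneq S$ is trivially unique; if $\e^0(A) = 2$ we apply the induction hypothesis to $A \subsetneq S$ to get a unique chain $A = A_1 \subsetneq \cdots \subsetneq A_n = S$, and prepend $R$.

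For part (2), let $R \subseteq B \subseteq S$ be any intermediate ring. If $B = R$ it is $A_0$; otherwise $R \subsetneq B$ forces $A \subseteq B$ by Lemma \ref{12.2} (3), and then $B$ is an intermediate ring between $A$ and $S$, so by the inductive form of part (2) applied to the pair $A \subseteq S$ we get $B = A_i$ for some $i \ge 1$. The base case $n = 0$ (i.e. $R = S$) is vacuous, and $n = 1$ says any proper intermediate extension of colength one is just $R \subsetneq S$, which is immediate.

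The main obstacle I anticipate is verifying uniqueness of the one-step extension cleanly, i.e. that $A = R:\fkm$ is genuinely the \emph{only} ring properly containing $R$ inside any given $S \ne R$: this rests on the Gorenstein colength-one fact $\ell_R((\fkm:\fkm)/R) = 1$ together with the observation that any $R \subsetneq B$ must contain a nonzero element of $(R:\fkm)\setminus R$, so $B \supseteq R + A = A$. One should also be slightly careful that $A$ inherits the hypotheses of the proposition — that $A$ is again Cohen-Macaulay local (or a finite product of DVRs) of dimension one with multiplicity at most $2$ and that $A \subsetneq S$ remains module-finite — but all of this is supplied by Lemma \ref{12.2}, so the induction machinery goes through without further surprises.
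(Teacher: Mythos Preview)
Your approach is essentially the same as the paper's: induct on $n$, take $A_1 = R:\fkm$ as the first step, and use Lemma \ref{12.2} (3) to see that every proper overring contains $A_1$; then apply the induction hypothesis to the pair $A_1 \subsetneq S$. Two small remarks. First, the completion reduction and the case-split on whether $S$ is local are unnecessary: the paper works directly over $R$, observing that when $n > 1$ one has $A_1 \subsetneq S \subseteq \overline{R}$, so by Lemma \ref{12.2} (1)--(2) $A_1$ is automatically local with $\e^0_{\fkn_1}(A_1) = 2$. Second, for the induction step you need $\ell_{A_1}(S/A_1) = n-1$, not just $\ell_R(S/A_1) = n-1$; the paper makes this explicit by noting $R/\fkm \cong A_1/\fkn_1$ (again Lemma \ref{12.2} (1)), which you should record as well.
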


\begin{proof}
Let $A_1 = R:\fkm$. Then by Lemma \ref{12.2} (3) $A_1 \subseteq A$ for every intermediate ring $R \subsetneq A \subseteq S$, which enables us to assume  that $n>1$ and that the assertion holds true for $n-1$. As $R \subsetneq A_1 \subsetneq S$, by Lemma \ref{12.2} $A_1$ is a local ring with $\e_{\fkn_1}^0(A_1) = 2$ and $R/\fkm \cong A_1/\n_1$, where $\fkn_1$ is the maximal ideal of $A_1$. Hence $\ell_{A_1}(S/A_1) = \ell_{R}(S/A_1)= n-1$, so that the assertion follows from the hypothesis of induction on $n$. 
\end{proof}

\begin{cor}\label{12.4}
Let $(R,\fkm)$ be a Cohen-Macaulay local ring with $\dim R = 1$ and $\e_\fkm^0(R) = 2$ and let $R \subseteq A, B \subseteq \rmQ(R)$ be intermediate rings such that $A$ and $B$ are finitely generated $R$-modules. Then $A \subseteq B$ or $B \subseteq A$. 
\end{cor}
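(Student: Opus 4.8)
The plan is to reduce the statement to Proposition \ref{12.3} by passing to the compositum of $A$ and $B$ inside $\rmQ(R)$. First I would dispose of the trivial cases: if $A = R$ then $A \subseteq B$, and symmetrically if $B = R$; so from now on assume $R \subsetneq A$ and $R \subsetneq B$.

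Next I would form $C = AB$, the $R$-subalgebra of $\rmQ(R)$ generated by $A \cup B$. Since $A$ and $B$ are commutative subrings of $\rmQ(R)$ containing $R$, one has $C = \{\sum_k a_kb_k \mid a_k \in A,\ b_k \in B\}$, which is visibly closed under addition and multiplication and contains $R$. Writing $A = \sum_i Ra_i$ and $B = \sum_j Rb_j$ with finitely many $a_i, b_j$, we get $C = \sum_{i,j} Ra_ib_j$, so $C$ is a finitely generated $R$-module. Moreover $R \subsetneq C$, because $A \subseteq C$ and $R \subsetneq A$. Thus $R \subsetneq C \subseteq \rmQ(R)$ is an intermediate ring which is module-finite over $R$, and Proposition \ref{12.3} applies with $S = C$.

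Finally, Proposition \ref{12.3}(2) tells us that every intermediate ring between $R$ and $C$ occurs in the unique chain $R = A_0 \subsetneq A_1 \subsetneq \cdots \subsetneq A_n = C$ of Proposition \ref{12.3}(1); in particular $A = A_i$ and $B = A_j$ for suitable $i$ and $j$. Taking, say, $i \le j$, we conclude $A = A_i \subseteq A_j = B$, which is the desired comparability. I do not expect a genuine obstacle here; the only point that needs a word of care is verifying that the compositum $AB$ is still a ring and still a finitely generated $R$-module lying between $R$ and $\rmQ(R)$, after which the conclusion is immediate from Proposition \ref{12.3}.
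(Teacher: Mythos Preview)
Your argument is correct and matches the paper's proof essentially verbatim: the paper sets $S = R[A,B]$ (your compositum $C$), observes it is a finitely generated $R$-module in $\rmQ(R)$, and invokes Proposition~\ref{12.3}. Your explicit handling of the trivial case $A = R$ or $B = R$ is a small clarification the paper omits.
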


\begin{proof}
Let $S = R[A,B]$. Then $R \subseteq S \subseteq \rmQ(R)$ and $S$ is a finitely generated $R$-module. Hence assertion follows from Proposition \ref{12.3}.
\end{proof}

For a Cohen-Macaulay local ring $R$ of dimension one, let $\calX_R$ denote the set of intermediate rings $R \subseteq A \subseteq \rmQ(R)$ such that $A$ is a finitely generated $R$-module.

\begin{cor}\label{12.5} The following assertions hold true.
\begin{enumerate}
\item[$(1)$] Let $V=k[[t]]$ be the formal power series ring over a field $k$ and  $R = k[[t^2, t^{2\ell + 1}]]$ where $\ell >0$. Then
$\calX _R = \{k[[t^2, t^{2q + 1}]] \mid 0 \le q \le \ell\}$. 
\item[$(2)$]  Let $S = k[[X,Y]]$ be the formal power series ring over a field $k$ of $\mathrm{ch} k$ $\ne 2$ and $R = S/(X^2 - Y^{2\ell})$ with $\ell >0$. Let $x, y$ denote the images of $X,Y$ in $R$, respectively. Then $\calX_R = \{R[\frac{x}{y^q}] \mid 0 \le q \le \ell\}$.
\end{enumerate}
\end{cor}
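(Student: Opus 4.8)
The goal is to determine $\calX_R$ for the two families $R = k[[t^2, t^{2\ell+1}]]$ and $R = S/(X^2 - Y^{2\ell})$, both of which have $\dim R = 1$ and multiplicity $2$, so Corollary~\ref{12.4} and Proposition~\ref{12.3} apply. The plan is to verify in each case that the list on the right-hand side really does consist of subrings of $\rmQ(R)$ containing $R$, that it is a strictly increasing chain, and that its length equals $\ell_R(\overline{R}/R)$; then Proposition~\ref{12.3}(2) forces every intermediate ring to be one of the listed ones.

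For part~(1): set $V = k[[t]]$, which is the integral closure $\overline{R}$ of $R = k[[t^2,t^{2\ell+1}]]$ in $\rmQ(R) = k((t))$. First I would check that each $A_q := k[[t^2, t^{2q+1}]]$ for $0 \le q \le \ell$ is genuinely a ring (closed under multiplication, which is clear since $t^{2q+1}\cdot t^{2q+1} = t^{4q+2} = (t^2)^{2q+1} \in A_q$) and that $R = A_\ell \subseteq A_{\ell-1} \subseteq \cdots \subseteq A_0 = k[[t^2,t]] = V$; note the indices run the opposite way to the nesting. Next, compute $\ell_R(A_q/A_{q+1})$: as a $k$-vector space $A_q$ has $k$-basis $\{1, t^{2q+1}, t^{2q+3}, \ldots\} \cup \{t^2, t^4, \ldots\}$, i.e. it omits precisely the odd powers $t, t^3, \ldots, t^{2q-1}$ from $V$, so $\ell_R(V/A_q) = q$ and each step $A_{q+1}\subsetneq A_q$ has colength one. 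Hence $R = A_\ell \subsetneq A_{\ell-1}\subsetneq\cdots\subsetneq A_0 = V$ is a chain of length $\ell = \ell_R(V/R)$, and by Proposition~\ref{12.3} applied with $S = V$ it is \emph{the} unique maximal chain and every intermediate ring occurs in it. That gives $\calX_R = \{A_q : 0\le q\le \ell\}$.

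For part~(2): with $\operatorname{ch}k \ne 2$ and $R = S/(X^2 - Y^{2\ell})$, writing $x,y$ for the images of $X,Y$, the ring $R$ is reduced (since $X^2 - Y^{2\ell} = (X - Y^\ell)(X + Y^\ell)$ and $\operatorname{ch}k\ne2$), so $\rmQ(R) \cong k((u))\times k((w))$ after the isomorphism sending $x\mapsto (u^\ell, -w^\ell)$, $y \mapsto (u,w)$; concretely $R \cong k[[u^\ell, \ldots]]$ — it is cleaner to observe that $\overline{R} \cong k[[s]]\times k[[t']]$ with $x \leftrightarrow (s^\ell, -t'^\ell)$, $y\leftrightarrow (s,t')$, and that $R$ is the subring of pairs agreeing suitably. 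The element $\frac{x}{y^q}$ makes sense in $\rmQ(R)$ for each $q$ because $y$ is a nonzerodivisor, and $\frac{x}{y^q}$ satisfies $(\frac{x}{y^q})^2 = \frac{x^2}{y^{2q}} = y^{2\ell - 2q}\in R$ for $q\le\ell$, so each $B_q := R[\frac{x}{y^q}]$ is a module-finite extension lying in $\rmQ(R)$; moreover $B_0 = R$, $B_{q}\subseteq B_{q+1}$ (since $\frac{x}{y^q} = y\cdot\frac{x}{y^{q+1}}$), and $B_\ell = R[\frac{x}{y^\ell}] = \overline{R}$ because $\frac{x}{y^\ell} = (1,-1)$ is the idempotent that splits $\overline R$. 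One then checks $\ell_R(B_{q+1}/B_q) = 1$ for each $q$ (e.g. by a direct length computation, or by Lemma~\ref{12.2}(1) which forces each proper step in a chain inside $\overline R$ to have colength one since residue fields coincide), so the chain $R = B_0\subsetneq B_1\subsetneq\cdots\subsetneq B_\ell = \overline{R}$ has length $\ell = \ell_R(\overline R/R)$ and Proposition~\ref{12.3}(2) again yields $\calX_R = \{B_q : 0\le q\le\ell\}$.

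The main obstacle I anticipate is the bookkeeping in part~(2): making the identification of $\overline{R}$ with a product of two power series rings precise, verifying that $R[\frac{x}{y^q}]$ is exactly a module-finite extension of colength $q$ over $R$ (rather than something larger or smaller), and confirming that $\frac{x}{y^\ell}$ generates the full normalization. This is essentially a concrete computation with the two branches $x = \pm y^\ell$, and the cleanest route is probably to set up coordinates on each branch, express each $B_q$ as an explicit $k$-submodule of $\overline R = k[[s]]\times k[[t']]$, and read off colengths; once that is done, the appeal to Proposition~\ref{12.3} is immediate. Part~(1) is essentially routine by comparison, being a single power series ring.
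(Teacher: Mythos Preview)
Your approach is essentially identical to the paper's: exhibit the explicit tower of rings, observe that its length equals $\ell_R(\overline{R}/R)$, and invoke Proposition~\ref{12.3} to conclude that every member of $\calX_R$ appears in the tower. The paper is terser (it simply asserts the strict chain and the value of $\ell_R(\overline{R}/R)$, the latter read off from the exact sequence $0 \to R \to S/(X-Y^\ell)\oplus S/(X+Y^\ell) \to S/(X,Y^\ell) \to 0$), while you fill in more of the verification. One small correction: $\frac{x}{y^\ell} = (1,-1)$ is not an idempotent but a square root of $1$; the idempotents $(1,0)$ and $(0,1)$ are obtained as $\tfrac{1}{2}(1\pm\frac{x}{y^\ell})$, which is precisely where the hypothesis $\operatorname{ch} k \ne 2$ is used.
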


\begin{proof}
(1) Let $R_q = k[[t^2, t^{2q + 1}]]$ for $0 \le q \le \ell$. Then we have the tower 
$$R = R_\ell \subsetneq R_{\ell -1} \subsetneq \ldots \subsetneq R_0 = V$$
of intermediate rings. Hence the result follows from Corollary \ref{12.3}.

(2) Let $R_q = R[\frac{x}{y^q}]$ for $0 \le q \le \ell$. We then have a tower  
$$R = R_0 \subsetneq R_{1} \subsetneq \ldots \subsetneq R_\ell \subseteq  \overline{R}$$
of intermediate rings. Since $\ell_R(\overline{R}/R) = \ell$ (remember that $\overline{R} = S/(X+Y^\ell) \oplus S/(X-Y^\ell)$, because $\mathrm{ch} k \ne 2$), the assertion follows Corollary \ref{12.3}.
\end{proof}

We need one more result.

\begin{prop}\label{12.6}
Let $(R,\fkm)$ be a Gorenstein local ring with $\dim R = 1$ and assume that  there is an element $f \in \fkm$ such that $fR$ is a reduction of $\fkm$. Let $R \subseteq A \subseteq \rmQ(R)$ be an intermediate ring and assume that $\ell_R(A/R) = 1$ and that  $A$ is a local ring with maximal ideal $\fkn$. 
Then the following assertions hold.
\begin{enumerate}
\item[$(1)$] If $A$ has maximal embedding dimension and $fA$ is a reduction of $\fkn$, then $A$ is an almost Gorenstein local ring.
\item[$(2)$] If $\e_\fkm^0(R) = 3$, then $A$ is an almost Gorenstein local ring.
\end{enumerate}
\end{prop}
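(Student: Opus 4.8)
The statement concerns a one-dimensional Gorenstein local ring $(R,\fkm)$ with a principal reduction $f$ of $\fkm$, and an intermediate ring $R\subseteq A\subseteq \rmQ(R)$ with $\ell_R(A/R)=1$ and $A$ local with maximal ideal $\fkn$; we must show $A$ is almost Gorenstein under either (1) $A$ has maximal embedding dimension and $fA$ reduces $\fkn$, or (2) $\e^0_\fkm(R)=3$. The key point is that $\ell_R(A/R)=1$ forces $A=\fkm\colon\fkm$ in $\rmQ(R)$ (since $R$ is Gorenstein of dimension one, so $\ell_R((\fkm\colon\fkm)/R)=1$ and $A\subseteq\fkm\colon\fkm$ by the standard argument that $\fkm A\subseteq R$, hence $\fkm A=\fkm$ and $A\subseteq\fkm\colon\fkm$). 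Thus $\fkm A=\fkm$, so $\fkm$ is a common ideal of $R$ and $A$, and $\fkm\cong\Hom_A(A,\fkm)$ sits inside $\rmQ(A)$; one checks $\fkm\cong\rmK_A$ or an honest canonical ideal of $A$ up to the usual bookkeeping, because $\rmK_R\cong R$ and $\rmK_A\cong\Hom_R(A,\rmK_R)=\Hom_R(A,R)\cong (R\colon A)$, and $R\colon A=\fkm$ as $A=\fkm\colon\fkm$. So the canonical ideal of $A$ is (isomorphic to) $\fkm$.

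\textbf{Main line.} Having identified $\rmK_A$ with the ideal $\fkm$ of $A$, I would invoke Theorem~\ref{4.1} (or directly the one-dimensional criterion, Proposition~\ref{3.4} together with \cite[Theorem 3.11]{GMP}): $A$ is almost Gorenstein iff there is an element $g\in\fkm$ (viewed in $\rmK_A=\fkm$) with $gA$ a reduction of $\fkn$ and $\fkn\fkm=\fkn\cdot g$, i.e. $\fkn\cdot(\fkm+gA)=\fkn\cdot gA$. The natural candidate is $g=f$ itself, since $fR$ reduces $\fkm$ and $\fkm=\fkm A$, so $fA$ reduces $\fkm A=\fkm$ and (in case (1)) reduces $\fkn$ by hypothesis. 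It then remains to prove $\fkn\fkm=f\fkn$, equivalently $\fkn\fkm\subseteq fA$. For case (1): maximal embedding dimension of $A$ means $\fkn^2=f\fkn$ (an $\fkn$ with a principal reduction $fA$ has maximal embedding dimension iff $\fkn^2=f\fkn$, by Sally's theorem \cite{S1} as used elsewhere in the paper); since $\fkm\subseteq\fkn$ we get $\fkn\fkm\subseteq\fkn^2=f\fkn\subseteq fA$, done. For case (2): here $\e^0_\fkm(R)=3$, and $\fkm A=\fkm$ while $A=\fkm\colon\fkm$; one has $\e^0_\fkn(A)\le 3$ by the argument of Lemma~\ref{12.2}(1) (read off from $\ell_R(A/fA)=\e^0_\fkm(R)=3$), and $\ell_R(A/R)=1$ pins down $\fkn$ precisely, forcing either $\e^0_\fkn(A)=2$ (so $A$ has multiplicity two, hence $\fkn^2=f\fkn$ and we conclude as in case (1)) or $\e^0_\fkn(A)=3$ with $\mu_A(\fkn)=2$, in which case $A$ has maximal embedding dimension and again $\fkn^2=f\fkn$; in all subcases $\fkn\fkm\subseteq fA$.

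\textbf{Key steps, in order.} First, show $A=\fkm\colon\fkm$ and $\fkm A=\fkm$ from $\ell_R(A/R)=1$ and Gorensteinness of $R$. Second, compute $\rmK_A\cong R\colon A=\fkm$, so that $\fkm$ serves as a canonical ideal of $A$. Third, verify $fA$ is a reduction of $\fkn$ (given in (1); to be checked in (2) from the multiplicity analysis) and reduce the almost-Gorenstein condition, via \cite[Theorem 3.11]{GMP}, to the single equality $\fkn\fkm=f\fkn$. Fourth, establish $\fkn^2=f\fkn$ — from maximal embedding dimension in case (1), and from the multiplicity/minimal-generator count for $A=\fkm\colon\fkm$ over a Gorenstein $R$ with $\e^0_\fkm(R)=3$ in case (2) — and deduce $\fkn\fkm\subseteq\fkn^2=f\fkn\subseteq fA$, hence $\fkn(\fkm+fA)=f\fkn$, so $A$ is almost Gorenstein.

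\textbf{Expected obstacle.} The delicate part is case (2): translating $\e^0_\fkm(R)=3$ plus $\ell_R(A/R)=1$ into control of $\e^0_\fkn(A)$ and $\mu_A(\fkn)$, and ruling out a configuration where $A$ is neither of multiplicity two nor of maximal embedding dimension. This needs the inequality chain $\ell_R(A/fA)=\e_0((f),A)=\e^0_\fkm(R)=3\ge\e^0_\fkn(A)\ge\mu_R(A)\ge\mu_A(\fkn)-\text{(suitable terms)}$ from Lemma~\ref{12.2}-type reasoning, combined with the fact that $R/\fkm\cong A/\fkn$ when $A\subsetneq\overline R$; the residue-field equality is what makes the length counts tight enough to force $\fkn^2=f\fkn$. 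Cases (1) and (2) then both funnel into the same final verification $\fkn\fkm=f\fkn$, which is routine once that structural fact is in hand.
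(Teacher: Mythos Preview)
Your argument for (1) is essentially identical to the paper's: identify $A=R:\fkm=\fkm:\fkm$ (so $\fkm A=\fkm$), compute $\rmK_A\cong R:A=\fkm$, and then use $\fkn^2=f\fkn$ (from maximal embedding dimension plus $fA$ reducing $\fkn$) together with $\fkm\subseteq\fkn$ to get $\fkn\cdot(\fkm/fA)=(0)$. That part is fine.

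The problem is your case analysis in (2). You write that one is forced into ``either $\e^0_\fkn(A)=2$ \ldots\ or $\e^0_\fkn(A)=3$ with $\mu_A(\fkn)=2$, in which case $A$ has maximal embedding dimension.'' But $\e^0_\fkn(A)=3$ with $\mu_A(\fkn)=2$ is \emph{not} maximal embedding dimension (in dimension one that means $\rmv(A)=\e^0_\fkn(A)$); it is the hypersurface case, and more importantly you have omitted the principal case $\e^0_\fkn(A)=\mu_A(\fkn)=3$. The clean way (and what the paper does) is to first dispose of $\rmv(A)\le 2$, where $A$ is a hypersurface and hence Gorenstein, and then assume $\rmv(A)\ge 3$. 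One then runs the squeeze
\[
3=\e^0_\fkm(R)=\e_0(fR,A)=\ell_R(A/fA)\ \ge\ \ell_A(A/fA)\ \ge\ \e^0_\fkn(A)\ \ge\ \rmv(A)\ \ge\ 3,
\]
forcing all terms to equal $3$. From $\ell_A(A/fA)=\e_0(fA,A)=\e^0_\fkn(A)$ one invokes Rees' theorem (the ring $A$ is Cohen--Macaulay, hence formally equidimensional) to conclude that $fA$ is a reduction of $\fkn$; and $\rmv(A)=\e^0_\fkn(A)=3$ gives maximal embedding dimension. Now part (1) applies. Your ``expected obstacle'' paragraph gestures at this chain, but the version you wrote (involving $\mu_R(A)$) is not the right inequality, and you never isolate the step $\ell_A(A/fA)=\e^0_\fkn(A)\Rightarrow fA$ reduces $\fkn$, which is exactly what is needed to feed back into (1). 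In the subcase $\e^0_\fkn(A)=2$ you also claim $\fkn^2=f\fkn$ without first knowing $fA$ reduces $\fkn$; this is harmless only because that subcase is already Gorenstein.
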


\begin{proof}
(1) We get $A = R:\fkm$, since $R \subsetneq A \subseteq R:\fkm$ and $\ell_R([R:\fkm]/R) = 1$. Hence $\rmK_A = R:A = R:(R:\fkm) = \fkm$ (\cite[5.19, Definition 2.4
]{HK}). Since $fA$ is a reduction of $\fkn$ and $A$ has maximal embedding dimension, we get $\fkn^2 = f\fkn$, so that $\fkn{\cdot}(\fkm/fA) = (0)$, because $\fkm/fA \subseteq \fkn/fA$. Hence $A$ is an almost Gorenstein local ring (Lemma \ref{3.1} (1)).

(2) We may assume $\rmv(A)\ge 3$, where $\rmv(A)$ denotes the embedding dimension of $A$. Remember
$$3 = \e_\fkm^0(R) = \e_0(fR,A) = \ell_R(A/fA) \ge \ell_A(A/fA) = \e_\fkn^0(A) \ge \rmv(A)\ge 3,$$
so that $\ell_A(A/fA) = \e_\fkn^0(A)= \rmv (A) = 3$.  Hence $fA$ is a reduction of $\fkn$ by a theorem of Rees \cite{Rees} and the assertion follows from assertion (1), because $A$ has maximal embedding dimension.
\end{proof}

We shall now check, for the five cases from ($\rmA_n$) to ($\rmD_n$) separately, that all the intermediate local rings $R \subseteq A \subseteq \overline{R}$ are almost Gorenstein.

{\bf (1) The case  $(\rmA_n)$.} Let $R \subseteq A \subseteq \overline{R}$ be an intermediate local ring such that $A$ is a finitely generated $R$-module. Let $\fkn$ be the maximal ideal of $A$. Then  $\e_\fkn^0(A) \le 2$ by Lemma \ref{12.2} (2), so that $A$ is Gorenstein.

{\bf (2) The cases  $(\rmE_6)$ and $(\rmE_8)$.} Let $V = k[[t]]$ be the formal power series ring over $k$. We then have $S/(X^3 -Y^4) \cong k[[t^3,t^4]]$ and $S/(X^3 - Y^5) \cong k[[t^3,t^5]]$. We begin with the following.

\begin{prop}\label{1.4}
The following assertions hold true.
\begin{enumerate}
\item[$(1)$] $\calX_{k[[t^3,t^4,t^5]]} = \left\{k[[t^3,t^4,t^5]], k[[t^2, t^3]], V\right\}$.
\item[$(2)$] $\calX_{k[[t^3,t^4]]} = \left\{k[[t^3,t^4]], k[[t^3,t^4,t^5]], k[[t^2, t^3]], V\right\}$.
\item[$(3)$] $\calX_{k[[t^3,t^5]]} = \left\{k[[t^3,t^5]], k[[t^3,t^5,t^7]], k[[t^3,t^4,t^5]], k[[t^2, t^3]], V\right\}$.
\end{enumerate}
\end{prop}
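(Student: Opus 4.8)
The plan is to reduce the statement to an enumeration of numerical semigroups. In each case write $V=k[[t]]=\overline{R}$, and for a ring $A$ with $R\subseteq A\subseteq V$ which is a finitely generated $R$-module let $v(A)=\{v(a)\mid 0\ne a\in A\}\subseteq\Bbb N$ be its value semigroup, $v$ denoting the $t$-adic valuation. Since $V$ is a discrete valuation ring (hence local) and $A$ is integral over $R$, every such $A$ is local, and $v(A)$ is a numerical semigroup containing $v(R)$. Comparing the filtration $\{t^nV\}_{n\ge0}$ of $V$ with its trace on $A$ gives the standard equality $\ell_R(V/A)=\#(\Bbb N\setminus v(A))$, so that $\ell_R(A/R)=\#(\Bbb N\setminus v(R))-\#(\Bbb N\setminus v(A))$. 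As $\#(\Bbb N\setminus v(R))$ equals $2,3,4$ for $R=k[[t^3,t^4,t^5]]$, $k[[t^3,t^4]]$, $k[[t^3,t^5]]$ respectively, $\calX_R$ is finite.

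The first real step is to show that every such $A$ is a monomial ring, i.e.\ $A=k[[v(A)]]$. Let $c$ be the conductor of $v(R)$, so that $t^cV\subseteq R$ (one has $c=3,6,8$ in the three cases). Given $A$ and a minimal generator $s$ of $S:=v(A)$: if $s\in v(R)$ then $t^s\in R\subseteq A$; if $s\notin v(R)$ then necessarily $s<c$, and I claim $t^s\in A$. Indeed, for the semigroups that actually occur such a generator $s$ satisfies either $s+1\ge c$ (so that, choosing $a_s\in A$ with $v(a_s)=s$ and leading coefficient $1$, the relation $a_s=t^s+(\text{terms of valuation}\ge c)$ puts $t^s\in A$, those terms lying in $t^cV\subseteq R$), or else every integer in $[s,c-1]$ already lies in $S$, in which case a descending induction on valuation, subtracting off monomials of $V$ of valuation $\ge c$ (which lie in $R$) and monomials $t^j$ with $s<j<c$ already shown to lie in $A$, again gives $t^s\in A$. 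Hence $k[[S]]\subseteq A$, and since both rings have value semigroup $S$ they share the $\delta$-invariant $\#(\Bbb N\setminus S)$, forcing $A=k[[S]]$; conversely $k[[S]]\in\calX_R$ for every numerical semigroup $S\supseteq v(R)$. Thus $\calX_R=\{\,k[[S]]\mid v(R)\subseteq S\subseteq\Bbb N\text{ a numerical semigroup}\,\}$. This monomiality step is the crux: it is not a general phenomenon that module-finite overrings of a numerical semigroup ring are monomial (it already fails for $k[[t^4,t^5,t^6,t^7]]$), and here it relies essentially on the smallness of the conductors $3,6,8$; I expect carrying it out cleanly for the three rings to be the main technical point.

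It then remains to list the numerical semigroups $S$ with $v(R)\subseteq S\subseteq\Bbb N$. Since the genus of $S$ is at most that of $v(R)$, and any $\{0\}\cup T$ with $\Bbb N\setminus T$ finite is a semigroup precisely when it is closed under addition, this is a short finite check, performed on the (few) gaps. For $v(R)=\langle3,4,5\rangle$ one obtains $S\in\{\Bbb N,\langle2,3\rangle,\langle3,4,5\rangle\}$; for $v(R)=\langle3,4\rangle$ the only genus-$3$ option is $\langle3,4\rangle$ itself, giving $S\in\{\Bbb N,\langle2,3\rangle,\langle3,4,5\rangle,\langle3,4\rangle\}$; for $v(R)=\langle3,5\rangle$ the genus-$3$ semigroups containing it reduce to $\langle3,5,7\rangle$, giving $S\in\{\Bbb N,\langle2,3\rangle,\langle3,4,5\rangle,\langle3,5,7\rangle,\langle3,5\rangle\}$. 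Translating back through $A=k[[S]]$ (with $k[[\langle2,3\rangle]]=k[[t^2,t^3]]$, $k[[\langle3,5,7\rangle]]=k[[t^3,t^5,t^7]]$, $k[[\Bbb N]]=V$, and so on) yields the three displayed equalities, completing the proof.
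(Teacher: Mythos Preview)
Your argument is correct, and it takes a genuinely different route from the paper. The paper proceeds case by case: for each of the three base rings $A$ and any $B\in\calX_A$ with $B\ne A$, it picks an element $f\in B\setminus A$, expands $f$ in terms of the gap monomials plus a remainder in $A$, and then shows by explicit ad hoc multiplications (e.g.\ in case~(2), from $t^2+c_5t^5\in B$ one multiplies by $t^3\in A$ to get $t^5+c_5t^8\in B$, hence $t^5\in B$) that a suitable monomial $t^s$ lies in $B$, forcing one of the listed rings to be contained in $B$. A short length count then pins $B$ down.

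Your approach factors this into two clean steps: (i) the value semigroup $S=v(A)$ already determines $A$ (the monomiality step), and (ii) the oversemigroups of $v(R)$ can be listed combinatorially. Step~(i) is where the substance lies, and your descending-induction argument (using that for the semigroups $S$ that arise, every integer in $[s,c-1]$ already belongs to $S$ whenever $s$ is a new generator) is exactly right; one should just note that logically the enumeration in step~(ii) has to come first, since the induction hypothesis ``every integer in $[s,c-1]$ lies in $S$'' is verified semigroup by semigroup. What your approach buys is transparency: it isolates the reason the classification is discrete (small conductor forces monomiality) and reduces the rest to a finite poset of numerical semigroups. The paper's approach is more hands-on and avoids stating the monomiality lemma, but the actual manipulations it performs are precisely the base cases of your descending induction. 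Your remark that monomiality fails already for $k[[t^4,t^5,t^6,t^7]]$ is a nice sanity check explaining why neither argument generalizes blindly.
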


\begin{proof}
(1) 
Let $A = k[[t^3,t^4,t^5]]$ and let $B \in \calX_A$ such that $B \ne A$. We choose $f \in B \setminus A$ and write $$f = c_1t + c_2t^2 + g$$ with $c_1, c_2 \in k$ and $g \in A$. If $c_1 \ne 0$, then $fV = tV$, so that $V = k[[f]] \subseteq B$. Suppose $c_1 = 0$. Then $f = c_2t^2 + g$ and $c_2 \ne 0$, so that $t^2 \in B$. Hence $k[[t^2, t^3]] \subseteq B$, which shows $B = k[[t^2, t^3]]$ or $B = V$, because $\ell_{k[[t^2, t^3]]}(V/k[[t^2, t^3]]) = 1$.

(2)
Let $A = k[[t^3,t^4]]$ and let $B \in \calX_{A}$ such that $B \ne A$. We choose $f \in B \setminus A$ and write
$$f = c_1t + c_2t^2 + c_5t^5 + g$$
with $c_i \in k$ and $g \in A$. If $c_1 \ne 0$, then $V = k[[f]] \subseteq B$. Suppose $c_1 = 0$ but $c_2 \ne 0$. Then, rechoosing $f$ so that $c_2 = 1$,we get $t^2 + c_5t^5 \in B$. Hence $t^3(t^2 + c_5t^5) = t^5 + c_5t^8 \in B$. Therefore $t^5 \in B$, because $\left<3,4\right> \ni n$ for all $n \ge 8$ (here $\left<3,4\right>$ denotes the numerical semigroup generated by $3, 4$). Thus $k[[t^3, t^4,t^5]] \subsetneq B$, whence $B =k[[t^2,t^3]]$ or $B = V$ by Lemma \ref{1.3}. Suppose that $c_1 = c_2 = 0$ for any choice of $f \in B \setminus A$. We then have $t^5 \in B$ since $c_5 \ne 0$, so that  $B =k[[t^3,t^4,t^5]]$.   

(3)
Let $A = k[[t^3,t^5]]$ and let $B \in \calX_{A}$ such that $B \ne A$. We choose $f \in B \setminus A$ and write
$$f = c_1t + c_2t^2 + c_4t^4 + c_7t^7 + g$$
with $c_i \in k$ and $g \in A$. If $c_1 \ne 0$, then $B = V$. Suppose $c_1 = 0$ but $c_2 \ne 0$, say $c_2 = 1$. Then, since $t^5f = t^7 + c_4t^9 + c_7t^{12} + t^5g \in B$, we get $t^7 \in B$ (remember that $\left<3,5\right> \ni n$ for all $n \ge 8$). Hence $(t^2 + c_4t^4)^2 \in B$, so that $t^4 \in B$. Therefore $k[[t^2, t^3]] \subseteq B$. If $c_1 = c_2 = 0$ but $c_4 = 1$, then $t^4 + c_7t^7 \in B$, so that $t^7 \in B$ because $t^3(t^4 + c_7t^7) \in B$. Hence $k[[t^3,t^4]] \subseteq B$. Suppose that  $c_1 = c_2 = c_7 = 0$ for any choice of $f \in B \setminus A$. We then have $t^7 \in B$, whence $B =k[[t^3,t^5,t^7]]$.   
\end{proof}

It is standard to check that  $k[[t^3,t^4,t^5]]$ and $k[[t^3,t^5,t^7]]$ are almost Gorenstein local rings, which proves the case   $(\rmE_6)$ or $(\rmE_8)$.

{\bf (3) The case  $(\rmE_7)$.} We consider $F = X^3 - XY^3$.
Let $f=X^2-Y^3$. Then $X, f$ is a system of parameters of $S=k[[X,Y]]$. Therefore $(F)=(X)\cap (f)$. Let $k[[t]]$ be the formal power series ring and we get a tower $$R = S/(F) \subseteq S/(X) \oplus S/(f) = k[[Y]] \oplus k[[t^2,t^3]] \subseteq k[[Y]] \oplus k[[t]] = \overline{R}$$
of rings, 
where we naturally identify $S/(X) = k[[Y]]$ and $S/(f) = k[[t^2, t^3]] \subseteq k[[t]]$. Let $R \subsetneq A \subseteq \overline{R} = k[[Y]] \oplus k[[t]]$ be an intermediate local ring. 
Let $p_2:k[[Y]] \oplus k[[t]] \to k[[t]], ~(a,b) \mapsto b$ be the projection and set $C = p_2(A)$.  Then $k[[t^2,t^3]] \subseteq C \subseteq V$, whence $C=k[[t^2,t^3]]$ or $C = V$ (Corollary \ref{12.5}).

We firstly  consider the case where $C=V$. Let $\fkn$ denote the maximal ideal of $A$.

\begin{claim}
There exists an element $z \in A$ such that $z=(0, t)$
\end{claim}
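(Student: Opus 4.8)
The plan is to locate an element of $A$ whose first coordinate vanishes and whose second coordinate is a uniformizer of $V=k[[t]]$. Since $C=p_2(A)=V$, there certainly exists some $w\in A$ with $p_2(w)=t$; write $w=(h,t)$ with $h\in k[[Y]]$. The difficulty is that $h$ need not be zero. First I would use that $R\subseteq A$ and that $R$ contains the element coming from $Y\in S/(F)$, which under our identification is the pair $(Y, t^2)\in k[[Y]]\oplus k[[t^2,t^3]]\subseteq A$ (here $Y$ maps to $Y$ in $S/(X)$ and to $t^2$ in $S/(f)=k[[t^2,t^3]]$, because $X^2=Y^3$ forces us to set $X=t^3$, $Y=t^2$). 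More generally every power series in $Y$ gives an element $(\phi(Y),\phi(t^2))\in A$.

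The key step is then to subtract a suitable element of $R$ from $w$ to kill the first coordinate. Since $h\in k[[Y]]$ and the elements $(\phi(Y),\phi(t^2))$ range over all of $k[[Y]]$ in the first coordinate, choose $\phi$ with $\phi(Y)=h$; then $w-(\,h,\,\phi(t^2)\,)=(0,\ t-\phi(t^2))$ lies in $A$. Now $t-\phi(t^2)$ has order $1$ in $t$ (its lowest-degree term is $t$, since $\phi(t^2)$ is a power series in $t^2$ with no linear term), so it is a uniformizer of $V$; that is, $t-\phi(t^2)=t\cdot u$ for a unit $u\in V$. Replacing $w$ by a further adjustment, or simply renaming, I would then argue we can arrange the second coordinate to be exactly $t$: having $(0,tu)\in A$ with $u$ a unit of $V=C=p_2(A)$, and knowing $A$ is a ring containing $(0,tu)$, one multiplies by an element of $A$ projecting (in the second coordinate) to $u^{-1}$; but $u^{-1}\in V=p_2(A)$, so there is $v\in A$ with $p_2(v)=u^{-1}$, and then $(0,tu)\cdot v$ has second coordinate $t$, while its first coordinate is $0$ times something, hence $0$. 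This produces $z\in A$ with $z=(0,t)$, as required.

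The main obstacle I anticipate is bookkeeping the two identifications correctly — making sure that the copy of $R$ inside $\overline R=k[[Y]]\oplus k[[t]]$ really does surject onto $k[[Y]]$ in the first coordinate (so that the first coordinate of $w$ can be cancelled), and that the decomposition $(F)=(X)\cap(f)$ with $f=X^2-Y^3$ is exact, i.e. that $X,f$ form a regular sequence in $S$ so that $S/(X)\oplus S/(f)$ is genuinely an intermediate ring between $R$ and $\overline R$ of the stated colength. Once those identifications are pinned down, the argument above is a short computation: use surjectivity of $R\to k[[Y]]$ to clear the first coordinate, use $p_2(A)=V$ to normalize the second coordinate to a uniformizer and then to $t$ itself.
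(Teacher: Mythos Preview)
Your approach is essentially the paper's: pick a preimage of $t$, subtract the image of $h\in k[[Y]]\subseteq S$ in $A$ to clear the first coordinate, then normalize the second coordinate using that $p_2(A)=V$. One small gap: you need $h(0)=0$, for otherwise $h(t^2)$ has a nonzero constant term and $t-h(t^2)$ is a unit of $V$, not a uniformizer (absence of a linear term is not enough). This holds because $w=(h,t)\in\fkn$ (since $p_2(w)=t$ lies in the maximal ideal of $V$ and $A$ is local) and the surjection $p_1\colon A\to k[[Y]]$ carries $\fkn$ into $(Y)$; the paper records this by noting $z\in\fkn$ and writing $g=Y^n\varepsilon$ with $n>0$. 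Your final normalization---multiplying $(0,tu)$ by any $v\in A$ with $p_2(v)=u^{-1}$, exploiting that the first coordinate is already $0$---is a harmless variant of the paper's (which instead lifts the unit to a unit of $A$).
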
 
\begin{proof}[Proof of Claim 2]
Since $t \in C$, there exists $z \in A$ such that $z=(g,t)$ with $g \in k[[Y]]$. Then $z \in \n$. Suppose $g \neq 0$ and write $g=Y^n\varepsilon$, where $n>0$ and $\varepsilon \in \rmU(k[[Y]])$. Let $\overline{g}$ denotes the image of $g \in S = k[[X,Y]]$ in $A$. Then since $\overline{g} = (g, g(t^2))$ in $S/(X)\oplus k[[t]]$,  we have $z-\overline{g} = (0, t-g(t^2))$ and $t-g(t^2) = t-t^{2n}{\cdot}\varepsilon(t^2) = tu_2$ with $u_2 = 1-t^{2n-1}{\cdot}\varepsilon(t^2)$. Hence because $u_2$ is a unit of $C = k[[t]]$,  we may choose a unit $u \in A$ so that $p_2(u)=u_2$. We then have 
$
(0,t)= u^{-1}(z-\overline{g}) \in A.
$
Thus $z'=u^{-1}(z-\overline{g})$ is a required element of $A$.
\end{proof}

Let $z = (0,t)$. Let $x=\overline{X}$ and $y=\overline{Y}$ denote the images of $X,Y$ in $A$, respectively. Hence $x=(0, t^3), ~y=(Y,t^2)$, and therefore $x=z^3$ and $z(y-z^2)=0$. We consider the $k$-algebra map $\psi: k[[Y,Z]] \to A$ defined by  $\psi(Y)=y$, $\psi(Z)=z$. Then $Z(Y-Z^2) \in \Ker\psi$. We now consider  the following commutative diagram
\[
\xymatrix{
0 \ar[r] & A \ar[r] & \overline{R}=k[[Y]]\oplus V \ar[r] & \overline{R}/A \ar[r] & 0 \\
0 \ar[r] & \frac{k[[Y,Z]]}{(Z(Y-Z^2))} \ar[u]_{\bar{\psi}} \ar[r] & \frac{k[[Y,Z]]}{(Z)}\oplus \frac{k[[Y,Z]]}{(Y-Z^2)} \ar[u]_{\cong} \ar[r] & \frac{k[[Y, Z]]}{(Y,Z)} \ar[r] 
& 0, \\
}
\]
where the rows are canonical exact sequences and $\bar{\psi} : k[[Y,Z]] \to A$ is the homomorphism derived from $\psi$. Then the induced homomorphism $\rho : k[[Y,Z]]/(Y,Z) \to \overline{R}/A$ has to be bijective, because $\overline{R}/A \ne (0)$ (remember that $A$ is a \textit{local} ring) and $\rho$ is surjective. Consequently, $\bar{\psi} : k[[Y,Z]]/(Z(Y - Z^2)) \to  A$ is an isomorphism, so that $A$ is a Gorenstein ring.

Next we consider the case where $C=k[[t^2, t^3]]$. Hence $R \subsetneq A \subsetneq k[[Y]]\oplus k[[t^2, t^3]]$. We set $B = k[[t^2, t^3]]$ and $T = k[[Y]]\oplus B$. Remember that $\ell_R(T/R) = 3$, whence $\ell_R(A/R) = 1$ or $2$. If $\ell_R(A/R) = 1$, then by Proposition \ref{12.6} (2) $A$ is an almost Gorenstein local ring.

Suppose that $\ell_R(A/R)=2$. Hence $\ell_R(T/A)=1$. Therefore  as  $$\ell_R(T/A) = [A/\n : R/\m]{\cdot}\ell_A(T/A),$$ we get $R/\fkm \cong A/\fkn$ and $\ell_A(T/A)=1$, whence $\fkn = (0):_AT/A$ is an ideal of $T$.  Let $J$ denote the Jacobson radical of $T$ and consider the exact sequence
$$0 \to A/\fkn \to T/\fkn \to T/A \to 0$$
of $A$-modules. We then have $\ell_A(T/\fkn) = 2$, so that $\fkn = J$, because $\fkn \subseteq J$ and $\ell_A(T/J) = \ell_R(T/J)= 2$. Hence $A = k + J$ and $\fkn =((0,t^3), (Y,0), (0,t^2))$.
Let $\psi: k[[X, Y, Z]] \to A$ be the $k$-algebra map defined by $\psi(X)=(0, t^3)$, $\psi(Y)=(Y, 0)$, $\psi(Z)=(0, t^2)$. Then $X^2-Z^3, XY, YZ\in \Ker\psi$ and we get  the following commutative diagram
\[
\xymatrix{
0 \ar[r] & A \ar[r] & T= k[[Y]]\oplus k[[t^2, t^3]] \ar[r] & T/A \ar[r] & 0 \\
0 \ar[r] & \frac{k[[X, Y, Z]]}{(X, Z)\cap (X^2-Z^3, Y)} \ar[u]_{\bar{\psi}} \ar[r] & \frac{k[[X, Y, Z]]}{(X, Z)}\oplus \frac{k[[X, Y, Z]]}{(X^2-Z^3, Y)} \ar[u]_{\cong} \ar[r] & \frac{k[[X, Y, Z]]}{(X, Y, Z)} \ar[r] 
& 0. \\
}
\]
For the same reason as above, the induced homomorphism $k[[X, Y,Z]]/(X, Y,Z) \to T/A$ has to be bijective, so that 
$
A \cong k[[X, Y, Z]]/{(X, Z)\cap (X^2-Z^3, Y)}.
$
Notice now that 
$$(X, Z)\cap (X^2-Z^3, Y) = \rmI_2\left(\begin{smallmatrix}
Z^2 & X & Y\\
X&Z&0
\end{smallmatrix}\right) =\rmI_2\left(\begin{smallmatrix}
Z^2 & X & Y\\
X + Z^2 & X + Z & Y
\end{smallmatrix}\right).$$
Then thanks to the theorem of Hilbert-Burch and Theorem \ref{p}, $A$ is an almost Gorenstein local ring, because $X + Z^2, X + Z, Y$ is a regular system of parameters of $k[[X, Y, Z]]$. This completes the proof of the case $(\rmE_7)$.


{\bf (4) The case  $(\rmD_n)$.} 

{\bf (i) The case where $n = 2\ell +1$ with $\ell \ge 1$.} 
We consider $F = Y(X^2-Y^{2\ell +1})$.
Let $f=X^2-Y^{2\ell +1 }$. Then $X, f$ is a system of parameters of $S=k[[X,Y]]$. Therefore $(F)=(Y)\cap (f)$ and we get a tower $$R = S/(F) \subseteq S/(Y) \oplus S/(f) = k[[X]] \oplus k[[t^2,t^{2\ell + 1}]] \subseteq k[[X]] \oplus k[[t]] = \overline{R}$$
of rings, 
where we naturally identify $S/(Y) = k[[X]]$ and $S/(f) = k[[t^2, t^{2\ell + 1}]] \subseteq k[[t]]$. 
\if0

If $n$ is odd, we may assume $Y(X^2-Y^{2\ell +1}) ~(\ell \ge 1)$. 
Let $f=X^2-Y^{2\ell +1 }$. Then we have $(Y, f)= (X^2, Y)$, whence $Y, f$ is a system of parameters of $S=k[[X, Y]]$. Let $\varphi: S \to V$ be the $k$-algebra map such that $\varphi(X) = t^{2\ell +1}$, $\varphi(Y)=t^2$. Then $\Ker\varphi = (f)$, and $S/(f) \cong k[[t^2, t^{2\ell +1}]]$. Thus we get the following diagram.
\[
\xymatrix{
	0 \ar[r] & S/(Y{\cdot}f) \ar[r]^{\alpha} \ar[rdd]_{j} & S/(Y)\oplus S/(f) \ar[r]^{\beta}  \ar[d]^{\cong} & S/(Y, f) \ar[r] & 0 \\
                 &     &         k[[X]]\oplus k[[t^2, t^{2\ell +1}]] \ar[d]^{inclusion} & & \\ 
         &  &    k[[X]] \oplus V = \overline{R} & & \\
}
\]
where $\alpha: S/(Y{\cdot}f) \to S/(Y)\oplus S/(f)$, $a \mapsto (a,a)$, and $\beta: S/(Y)\oplus S/(f) \to  S/(Y, f)$ $(a,b) \mapsto a-b$.

\fi
Let $R \subsetneq A \subseteq \overline{R}$ be an intermediate ring and assume that $(A, \n)$ is a local ring. Let $p_2: \overline{R} \to V$ be the projection and set $B= p_2(A)$. Then since $k[[t^2, t^{2\ell +1}]] \subseteq B \subseteq V$, by Corollary \ref{12.5} (1)  $B = k[[t^2, t^{2q+1}]]$ for some $0 \le q \le \ell$. We choose an element $z \in A$ so that $z=(g, t^{2q+1})$ in $\overline{R}= k[[X]] \oplus k[[t]]$ with $g \in k[[X]]$. Suppose $g \neq 0$ and write $g=X^n \varepsilon~(n>0, \varepsilon \in \rmU(k[[X]])$. Denote by $\overline{g}$ the image of $g \in S=k[[X,Y]]$ in $A$. We have 
\begin{eqnarray*}
z-\overline{g} &=& z - (g, (t^{2\ell +1})^n {\cdot}\varepsilon(t^{2\ell + 1})) \\
 &=& (0, t^{2q+1}(1-t^{(2\ell+1)n-(2q+1)}{\cdot}\varepsilon(t^{2\ell + 1}))).
\end{eqnarray*}
Here we notice that $(2\ell+1)n-(2q+1) \ge 0$ and that $(2\ell+1)n-(2q+1)= 0$ if and only if $n=1$ and $\ell = q$.

If $(2\ell+1)n-(2q+1) > 0$, we set $u_2 = 1-t^{(2\ell+1)n-(2q+1)}{\cdot}\varepsilon(t^{2\ell + 1})$. Then $u_2 \in \rmU(B)$. We choose an element $u \in \rmU(A)$ so that $u_2=p_2(u)$. Then 
$$
z' = u^{-1}(z-\overline{g})=(0, t^{2q+1}).
$$
Therefore, replacing $z$ with $z'$, we may assume without loss of generality that $z \in A$ such that $z=(0, t^{2q+1})$. Let $x=\overline{X}$ and $y=\overline{Y}$, where $\overline{X}, \overline{Y}$ respectively denote the images of $X, Y \in S = k[[X,Y]]$ in $A$. Then $x=(X, t^{2q+1})$ and $y=(0, t^2)$, so that  
$$
y^{2q+1}=z^2, ~~ y(x-y^{\ell-q}z)=0, ~~~\text{and}~~~z(x-y^{\ell-q}z)=0.
$$
Let $\psi: k[[X,Y,Z]] \to A$ be the $k$-algebra map defined by $\psi(X)=x$, $\psi(Y)=y$, $\psi(Z)=z$. Then $\Ker\psi \supseteq (Y,Z)\cap (Z^2-Y^{2q+1}, X-Y^{\ell-q}Z)$. Therefore, considering the commutative diagram
\[
\resizebox{\hsize}{!}{
\xymatrix{
0 \ar[r] & A \ar[r] & T=k[[X]]\oplus k[[t^2, t^{2q + 1}]] \ar[r] & T/A \ar[r] & 0 \\
0 \ar[r] & \frac{k[[X,Y,Z]]}{(Y,Z)\cap (Z^2-Y^{2\ell +1},X-Y^{\ell -q}Z)} \ar[u]_{\bar{\psi}} \ar[r] & \frac{k[[X,Y,Z]]}{(Y, Z)}\oplus \frac{k[[X,Y,Z]]}{(Z^2-Y^{2\ell +1},X-Y^{\ell -q}Z)} \ar[u]_{\cong} \ar[r] & \frac{k[[X,Y,Z]]}{(X,Y,Z)} \ar[r] 
& 0, \\
}}
\]
we see that 
$$
A \cong k[[X,Y,Z]]/{(Y,Z)\cap (X-Y^{\ell-q}Z, Z^2-Y^{2q+1})},
$$
because $T/A \neq (0)$. Notice now that 
$$(X, Z)\cap (X^2-Z^3, Y) = \rmI_2\left(\begin{smallmatrix}
Y^{2q}&Z&X-Y^{\ell - q}\\
Z &Y& 0
\end{smallmatrix}\right) =\rmI_2\left(\begin{smallmatrix}
Y^{2q} &Z &X-Y^{\ell-q}Z\\
Z-Y^{2q} & Y-Z & Y^{\ell-q}Z -X
\end{smallmatrix}\right).$$
Then  by Theorem \ref{p} $A$ is an almost Gorenstein local ring, because $Z-Y^{2q}, Y-Z,Y^{\ell-q}Z -X$ is a regular system of parameters of $k[[X, Y, Z]]$.

If $n=1$ and $\ell=q$, then  $R \subsetneq A \subsetneq k[[X]]\oplus k[[t^2, t^{2\ell+1}]]$, so that $\ell_R(A/R)=1$ (remember that $\ell_R((k[[X]]\oplus k[[t^2, t^{2\ell+1}]])/R) = 2$). Hence $A$ is an almost Gorenstein local ring by Proposition \ref{12.6} (2).

{\bf (ii) The case where $n = 2\ell$ with $\ell \ge 1$.} 
Let $f=X^2-Y^{2\ell}=(X+Y^{\ell})(X-Y^{\ell})$ and $T = S/(f)$. Since $\operatorname{ch} k \ne 2$, $X+Y^\ell$, $X-Y^\ell$ is a system of parameters of $S=k[[X,Y]]$, so we get the exact sequence
$$
0 \to T \stackrel{\alpha} \longrightarrow S/(X+Y^\ell)\oplus S/(X-Y^\ell) \stackrel{\beta} \longrightarrow S/(X, Y^\ell) \to 0.
$$ Hence $\ell_T(\overline{T}/T)=\ell$.
We look at the tower 
$$R \subseteq k[[X]] \oplus T \subseteq k[[X]] \oplus \overline{T} = \overline{R} $$
of rings and consider an intermediate ring $R \subsetneq A \subsetneq \overline{R}$ such that $(A, \n)$ is a local ring. 
Let $p_2 : k[[X]] \oplus \overline{T} \to \overline{T}$ be the projection and set  $B= p_2(A)$.  We denote by $x, y$ the images of $X,Y$ in $T=S/(f)$, respectively. Then by Corollary \ref{12.5} (2) $B = T[\frac{x}{y^{q}}]$ for some $0 \le q \le \ell$. Here we notice that $q < \ell$, since $A \ne \overline{R}$. We choose an element $z \in A$ so that $z=(g, \frac{x}{y^q})$, where $g \in k[[X]]$. If $g \neq 0$, then we write $g=X^n \varepsilon~(n>0, \varepsilon \in \rmU(k[[X]])$. Let $\overline{g}$ be the image of $g \in S$ in $A$. We then have 
$$
z-\overline{g} = (g, \frac{x}{y^q}) - (g, x^n{\cdot}\varepsilon(x)) = (0, \frac{x}{y^q}{\cdot}(1-(\frac{x}{y^q})^{n-1}){\cdot}y^{nq}{\cdot}\varepsilon(x)).
$$

Suppose now that 
$
1-(\frac{x}{y^q})^{n-1})y^{nq}\varepsilon(x) \in \rmU(B)
$
(this is the case if $n>1$ or if $n=1$ and $q>0$). We then have $(0, \frac{x}{y^q}) \in A$ for the same reason as above. Let $x_1, y_1$ be the images of $X,Y \in S$ in $A$, respectively. Hence $x_1=(X, x)$ and $y_1=(0, y)$, so that  
$$
z^2-y_1^{2(\ell-q)}, ~~ y_1(x_1-y_1^qz)=0, \ \ \text{and}\ \  z(x_1-y_1^qz)=0.
$$
Let $\psi: k[[X,Y,Z]] \to A$ be the $k$-algebra map defined by $\psi(X)=x_1$, $\psi(Y)=y_1$, $\psi(Z)=z$. Then $\Ker\psi \supseteq (Y,Z)\cap (Z^2-Y^{2(\ell-q)}, X-Y^qZ)$, and by the commutative diagram
\[
\resizebox{\hsize}{!}{
\xymatrix{
0 \ar[r] & A \ar[r] & k[[X]]\oplus B \ar[r] & D \ar[r] & 0 \\
0 \ar[r] & \frac{k[[X,Y,Z]]}{(Y,Z)\cap (Z^2-Y^{2(\ell-q)},X-Y^qZ)} \ar[u]_{\bar{\psi}} \ar[r] & \frac{k[[X,Y,Z]]}{(Y, Z)}\oplus \frac{k[[X,Y,Z]]}{(Z^2-Y^{2(\ell-q)},X-Y^qZ)} \ar[u]_{\cong} \ar[r] & \frac{k[[X,Y,Z]]}{(X,Y,Z)} \ar[r] 
 & 0 \\
}}
\]
we get
$$
A \cong k[[X,Y,Z]]/{(Y,Z)\cap (X-Y^{q}Z, Z^2-Y^{2(\ell-q)})}.
$$
Notice that
{\footnotesize 
$$(Y, Z)\cap (Z^2-Y^{2\ell - q},X- Y^qZ) = \rmI_2\left(\begin{smallmatrix}
Y &Z& 0\\
Z&Y^{2\ell - q)-1}&X - Y^qZ
\end{smallmatrix}\right) =\rmI_2\left(\begin{smallmatrix}
Y & Z & 0\\
Z + Y&Z + Y^{2(\ell - q) - 1}&X-Y^{q}Z
\end{smallmatrix}\right).$$
} If  $\ell - q =  1$, then $Z, Y^{2\ell - q)-1}, X-Y^{q}Z$ is a regular system of parameters of $k[[X,Y,Z]]$ and if $\ell - q \ge 2$, then $Z+Y, Y^{2(\ell-q)-1}+Z, X-Y^{q}Z$ is a regular system of parameters of $k[[X,Y,Z]]$, so that $A$ is an almost Gorenstein local ring in any case.

If $n=1$ and $q=0$, then $R \subsetneq A \subsetneq k[[X]] \oplus T$, so that $\ell_R(A/R)=1$, because  $\ell_R(\left(k[[X]] \oplus T\right)/R) = 2$. Therefore $A$ is an almost Gorenstein local ring by Proposition \ref{12.6} (2). This completes the proof of Theorem \ref{12.1} as well as the proof of the case $(\rmD_n)$.



\end{document}